\DeclareSymbolFont{arrows3}{LS2}{stixtt}{m}{n}
\DeclareMathSymbol{\triangletimes}{\mathbin}{arrows3}{"AC}
\newcommand*{\Cdot}{{\raisebox{-0.5ex}{\scalebox{1.8}{$\cdot$}}}} 
\newtheorem{theorem}{Theorem}[section]
\newtheorem{lemma}[theorem]{Lemma}
\newtheorem{proposition}[theorem]{Proposition}
\newenvironment{customprop}[1]
{\innercustomprop}
{\endinnercustomprop}
\newtheorem{corollary}[theorem]{Corollary}
\theoremstyle{definition}
\newtheorem{remark}[theorem]{Remark}
\numberwithin{equation}{section}
\acrodef{LDP}{Large Deviation Principle}
\acrodef{TASEP}{Totally Asymmetric Simple Exclusion Process}
\acrodef{LPP}{Last Passage Percolation}
\acrodef{6VM}{Six Vertex Model}
\acrodef{CGM}{Corner Growth Model}
\newcommand{\Ex}{ \mathbf{E} }				
\renewcommand{\Pr}{ \mathbf{P} }			
\newcommand{\Prr}{ {\mathbf{Q}} }			
\newcommand{\Pois}{\text{Pois}}		
\newcommand{\dist}{d_{C(\R)}}		
\newcommand{\distt}{d_\text{S}}		
\newcommand{\gen}{L}				
\newcommand{\genn}{\widetilde{L}}	
\newcommand{\ind}{ \mathbf{1} }		
\newcommand{\set}[1]{ {\{#1\}} }	
\newcommand{\edge}{\!\!-\!\!}
\newcommand{\prf}{\psi}					
\newcommand{\prff}{\overline{\psi}}		
\newcommand{\rf}{I^{(2)}}				
\newcommand{\rfup}{I^{(1)}}				
\newcommand{\rff}{\widetilde{I}}		
\newcommand{\lrf}{J^{(2)}}				
\newcommand{\lrfup}{J^{(1)}}			
\newcommand{\lrfupa}{J^{(1)}_{a}}		
\newcommand{\hlf}{\theta}						
\newcommand{\HLf}{\Theta}						
\newcommand{\hl}[2]{\mathscr{G}[#1,#2]}			
\newcommand{\hll}[3]{\mathscr{G}_{#3}[#1,#2]}	
\newcommand{\moll}{\omega}				
\newcommand{\Speed}{\Lambda}					
\newcommand{\Speedd}{\widetilde{\Lambda}}		
\newcommand{\speed}{S}				
\newcommand{\doob}{q}					
\newcommand{\mob}{\phi}					
\newcommand{\mobb}{\Phi^{(2)}}				
\newcommand{\mobbup}{\Phi^{(1)}}			
\newcommand{\filt}{\mathscr{F}}				
\newcommand{\maxlam}{\overline{\lambda}}	
\newcommand{\minlam}{\underline{\lambda}}	
\newcommand{\Sp}{\mathscr{D}}				
\newcommand{\Splip}{\mathscr{E}}			
\newcommand{\Spd}{\mathscr{D}_\text{d}}		
\newcommand{\Spf}{\mathscr{E}_\mathbb{Z}}	
\newcommand{\Sph}{\mathscr{E}_\mathbb{Z}}	
\newcommand{\DE}{E^\text{d}}				
\newcommand{\VE}{E^\text{v}}				
\newcommand{\RTri}{\Sigma^{\times}}			
\newcommand{\de}{{e^\text{d}}}			
\newcommand{\ve}{{e^\text{v}}}			
\newcommand{\slab}{\mathcal{S}}						
\newcommand{\tz}{\mathcal{T}}						
\newcommand{\rTri}{{\scriptstyle\triangletimes}}	
\newcommand{\rtri}{{\scriptscriptstyle\triangletimes}}	
\newcommand{\buffer}{\mathcal{B}}			
\newcommand{\intm}{\mathcal{I}}				
\newcommand{\resd}{\mathcal{R}}				
\newcommand{\stripp}{\mathcal{N}}			
\newcommand{\prtn}{\mathscr{Z}}				
\newcommand{\prtnU}{\mathscr{X}}			
\newcommand{\prtnC}{\widehat{\mathscr{Z}}}	
\newcommand{\region}{\mathcal{Z}}			
\newcommand{\ske}{\text{Ske}(\prtnU)}		
\newcommand{\skeE}{\mathcal{E}}				
\newcommand{\trapD}{\mathcal{D}}			
\newcommand{\scl}{\ell}
\newcommand{\scll}{m}
\newcommand{\sclll}{n}
\newcommand{\ling}{g}									
\newcommand{\lingi}{\Gamma}								
\newcommand{\lingiC}{\widehat{\Gamma}}				
\newcommand{\linkap}{{\kappa}_\triangle}		
\newcommand{\linrho}{{\rho}_\triangle}			
\newcommand{\linlam}{{\lambda}_\triangle}		
\newcommand{\PN}{\mathbf{P}_N}				
\newcommand{\QN}{\mathbf{Q}_N}				
\newcommand{\h}{\mathsf{h}}				
\newcommand{\f}{\mathsf{f}}				
\newcommand{\hN}{\mathsf{h}_N}			
\newcommand{\g}{\mathsf{g}}				
\newcommand{\gN}{\mathsf{g}_N}			
\newcommand{\barg}{\overline{\mathsf{g}}}	
\newcommand{\undg}{\underline{\mathsf{g}}}	
\newcommand{\bargic}{\overline{\mathsf{g}}^\text{ic}}	
\newcommand{\undgic}{\underline{\mathsf{g}}^\text{ic}}	
\newcommand{\barenv}{\overline{\mathsf{b}}}		
\newcommand{\undenv}{\underline{\mathsf{b}}}	
\newcommand{\hic}{\mathsf{h}^\text{ic}}		
\newcommand{\hIC}{h^\text{ic}}				
\newcommand{\gic}{\mathsf{g}^\text{ic}}		
\newcommand{\gIC}{g^\text{ic}}				
\newcommand{\ic}{\text{ic}}	
\newcommand{\icint}{\mathcal{V}}
\newcommand{\undt}{\underline{t}}
\newcommand{\bart}{\overline{t}}
\newcommand{\N}{\mathbb{N}}
\newcommand{\R}{\mathbb{R}}
\newcommand{\T}{\mathbb{T}}
\newcommand{\TN}{\frac{1}{N}\mathbb{T}_N}
\newcommand{\Tn}{\mathbb{T}_N}
\newcommand{\Z}{\mathbb{Z}}
\newcommand{\calA}{\mathcal{A}}
\newcommand{\calB}{\mathcal{B}}
\newcommand{\calC}{\mathcal{C}}
\newcommand{\calE}{\mathcal{E}}
\newcommand{\calF}{\mathcal{F}}
\newcommand{\calK}{\mathcal{K}}
\newcommand{\calO}{\mathcal{O}}
\newcommand{\calU}{\mathcal{U}}
\newcommand{\calV}{\mathcal{V}}
\newcommand{\calW}{\mathcal{W}}
\newcommand{\calX}{\mathcal{X}}
\renewcommand{\d}{\delta}
\newcommand{\e}{\varepsilon}
\renewcommand{\hat}{\widehat}
\newcommand{\til}{\widetilde}
\renewcommand{\bar}{\overline}
\newcommand{\und}{\underline}
\newcommand{\diri}{\mathfrak{D}}
\newcommand{\probm}{\mathcal{M}_1}
\newcommand{\young}{\mathbb{Y}}
\newcommand{\Spdm}{\mathscr{D}^\text{m}}
\newcommand{\bareta}{\bar{\eta}}
\title[Speed-$ N^2 $ LDP of the TASEP]
{Exceedingly Large Deviations\\{}of the Totally Asymmetric Exclusion Process}
\author[S.\ Olla]{Stefano Olla}
\address{Stefano Olla,
	CEREMADE, UMR CNRS 7534, 
          Universit\`{e} Paris--Dauphine, PSL Research University,
	\newline\hphantom{\quad \ Stefano Olla}
	75016 Paris, France
	}
\email{olla@ceremade.dauphine.fr}
\author[L.-C.\ Tsai]{Li-Cheng Tsai}
\address{Li-Cheng Tsai,
	Departments of Mathematics, Columbia University,
	\newline\hphantom{\quad \ Li-Cheng Tsai}
	2990 Broadway, New York, NY 10027
	}
\email{lctsai.math@gmail.com}
\subjclass[2010]{%
Primary 60F10, 		
Secondary 82C22. 	
}
\keywords{
Large deviations.
Exclusion processes.
Totally asymmetric.
Corner Growth Model.
Variational formula.
}
\begin{document}
\begin{abstract}
Consider the \ac*{TASEP} on the integer lattice $ \Z $. 
We study the functional Large Deviations of the integrated current
$ \h(t,x) $ under the hyperbolic scaling of space and time by $ N $,
i.e., $ \hN(t,\xi) := \frac{1}{N}\h(Nt,N\xi) $.
As hinted by the asymmetry in the upper- and lower-tail large deviations 
of the exponential Last Passage Percolation, 
the TASEP exhibits two types of deviations. 
One type of deviations occur with probability $ \exp(-O(N)) $, referred to as speed-$ N $;
while the other with probability $ \exp(-O(N^{2})) $, referred to as speed-$ N^2 $. 
In this work we study the speed-$ N^2 $ functional \ac{LDP} of the TASEP, 
and establish (non-matching) large deviation upper and lower bounds.
%
\end{abstract}

\maketitle

\section{Introduction}
\label{sect:intro}

In this article we study the large deviations of 
two equivalent models, the \ac{CGM} and the \ac{TASEP}.
The \ac{CGM} is a stochastic model of surface growth in one dimension.
The state space 
\begin{align}
	\label{eq:Spf}
	\Spf:= \big\{ \f: \Z\to\Z \, : \, \f(x+1)-\f(x) \in \{0,1\}, \, \forall x\in\Z \big\}
\end{align}
consists of $ \Z $-valued height profiles defined on the integer lattice $ \Z $, 
with discrete gradient being either $ 0 $ or $ 1 $.
Starting from a given initial condition $ \h(0,\Cdot) = \hic(\Cdot)\in\Spf $,
the process $ \h(t,\Cdot) $ evolves in $ t $ as a Markov process according to the following mechanism.
At each site $ x\in\Z $ sits an independent Poisson clock of unit rate,
and, upon ringing of the clock, 
the height at $ x $ increases by $ 1 $ if $ \h(t,x+1)-\h(t,x)=1 $ and 
$ \h(t,x)-\h(t,x-1)=0 $.
Otherwise $ \h $ stays unchanged.
On the other hand,
the \ac{TASEP} is an interacting particle system~\cite{liggett05},
consisting of indistinguishable particles occupying the half-integer lattice $ \frac12+\Z $.
Each particle waits an independent Poisson clock of unit rate,
and, upon ringing of the clock, attempts to jump one step to the \emph{left}, 
under the constraint that each site holds at most one particle.
With 
\begin{align*}
	\eta(y) = \left\{\begin{array}{l@{,}l}
		1	&\text{ if the site } y \text{ is occupied},
		\\
		0	&\text{ if the site } y \text{ is empty}
	\end{array}\right.
\end{align*}
denoting the occupation variables,
the \ac{TASEP} is a Markov process with state space $ \{0,1\}^{\frac12+\Z} $,
where each $ (\eta(y))_{y\in\frac12+\Z} \in \{0,1\}^{\frac12+\Z} $ represents a particle configuration on $ \frac12+\Z $.
Given a \ac{CGM} with height process $ \h(t,x) $, 
we identify each slope $ 1 $ segment of $ \h(t,\Cdot) $ with a particle
and each slope $ 0 $ segment of $ \h(t,\Cdot) $ with an empty site,
i.e., 
\begin{align}
	\label{eq:CGMtoTASEP}
	\h(t,y+\tfrac12)-\h(t,y-\tfrac12) =: \eta(t,y);
\end{align}
see Figure~\ref{fig:cgm}.
One readily check that, under such an identification, the resulting particles evolves as the \ac{TASEP}.
Conversely, given the \ac{TASEP}, the integrated current
\begin{align}
	\label{eq:TASEPtoCGM}
	\h (t,x) :=  \#\Big\{ \text{particles crossing } (x-\tfrac12,x+\tfrac12)
	\text{ within } [0,t]\Big\}
	+ \text{sign}(x) \sum_{y\in(0,|x|)} \eta(0,y)
\end{align}
defines an $ \Spf $-valued process that evolves as the \ac{CGM}.
Associated to a given height profile $ \f\in\Splip $ and $ x\in\Z $ 
is the \textbf{mobility function}, defined as
\begin{align}
	\label{eq:mob}
	\mob(\f,x) &:= (\f(x+1)-\f(x))(1-\f(x)+\f(x-1)) 
	=
	\ind_\set{f(x+1)-\f(x)=1,\f(x)-\f(x-1)=0}
\\
	\notag
	& =\eta(x+\tfrac12)(1-\eta(x-\tfrac12)),
	\quad
	\text{where }\eta(y) := \f(y+\tfrac12)-\f(y-\tfrac12).
\end{align}
\begin{remark}
The standard terminology for $ \mob(\f,x) $ in the literature is instantaneous current.
We adopt a different term for $ \mob(\f,x) $ here (i.e., mobility function) to avoid confusion with other terms (i.e., instantaneous flux)
we use in the following.
\end{remark}
\noindent
Formally speaking,
with $ \f^{x} := \f+\ind_{x} $ denoting the profile obtained 
by increasing the value of $ \f $ by $ 1 $ at site $ x $, 
the \ac{CGM} is a Markov process with state space $ \Spf $,  characterized by the generator
\begin{align}
	\label{eq:gen}
	\gen F(\f) := \sum_{x\in\Z} \mob(\f,x) (F(\f^{x}) - F(\f)).
\end{align}
Given this map between the \ac{CGM} and \ac{TASEP},
throughout this article we will operate in both the languages of surface growths and of particle systems. 
To avoid redundancy, hereafter we will refer \emph{solely} to the \ac{TASEP} as our working model,
and associate the height process $ \h $ to the \ac{TASEP}.
\begin{figure}[h]
\includegraphics[width=.75\textwidth]{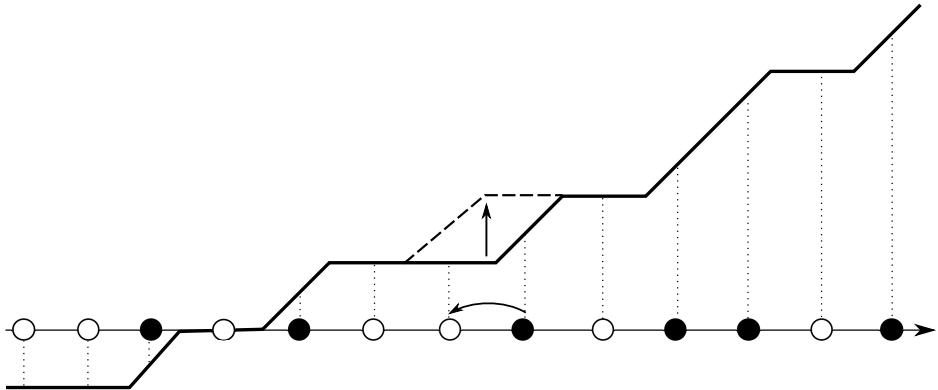}
\caption{The \ac{CGM} and \ac{TASEP}}
\label{fig:cgm}
\end{figure}

The \ac{TASEP} is a special case of exclusion processes
that is connected to a host of physical phenomena.
In addition to surface growth mentioned previously,
the \ac{TASEP} serves as a simple model of traffic, fluid and queuing,
and is linked to last passage percolation, non-intersecting line ensembles and random matrix theory.
Furthermore, the \ac{TASEP} owns rich mathematical structures, which has been the ground of intensive research:
to name a few,
the exact solvability via Bethe ansatz \cite{schuetz97}; 
the relation to the Robinson--Schensted--Knuth correspondence \cite{johansson00};
a reaction-diffusion type identity \cite{balazs10};
and being an attractive particle system \cite{rezakhanlou91}.  

Among known results on the \ac{TASEP} is its hydrodynamic limit.
Let $ N $ denote a scaling parameter that tends to $ \infty $,
and, for any $t\ge 0$ and $\xi\in \R$,
consider the hyperbolic scaling $ \hN(t,\xi) := \frac{1}{N} \h(Nt,N\xi) $ of the height process.
Through this article
we linearly interpolate $ \hN $ in the variable $ \xi\in\frac{1}{N}\Z $
to obtain a process $ \hN(t,\xi) $ defined for all $ \xi\in\R $.
It is well-known \cite{rost81,rezakhanlou91,seppaelaeinen98a} that, as $ N\to\infty $,
$ \hN $ converges to a deterministic function $ h $, given by the unique entropy solution of 
the integrated, \emph{inviscid} Burgers equation:
\begin{align}
	\label{eq:burgers}
	h_t = h_\xi(1-h_\xi).
\end{align}
The limiting equation~\eqref{eq:burgers}, being nonlinear and hyperbolic,
exhibits non-differentiability due to the presence of shock waves.
This is in sharp contrast with the diffusive behavior of the symmetric exclusion processes, which,
under the diffusive space time rescaling $ \hN(t,\xi) := \frac{1}{N} \h(N^2t,N\xi) $,
converge to the linear heat equation.

A natural question following hydrodynamic limit concerns the corresponding large deviations.
At this level, the \ac{TASEP} continues to exhibit drastic difference with
its reversible counterpart, symmetric exclusion processes.
The \ac{LDP} for symmetric exclusion processes is obtained in \cite{kipnis89},
and the typical large deviations have speed $ N $, i.e, of probability $ \exp(-O(N)) $,
and are characterized by solutions of parabolic conservative PDEs.
On the other hand,
under the wedge initial condition,
one-point large deviations of the \ac{TASEP} exhibits \emph{asymmetric} tails:
the lower tail of $ \h(N,0) $ has probability $ \exp(-O(N)) $ 
while the upper tail has probability $ \exp(-O(N^2)) $, i.e.,
\begin{align}
	\label{eq:lowertails}
	&\frac{1}{N}\log\Pr\Big( \tfrac1N\h(N,0) < h(1,0) -\alpha \Big) \rightarrow I^{\text{lw}}(\alpha),
	\quad
	\alpha \in (0,h(1,0)),
\\
	\label{eq:uppertails}
	&
	\frac{1}{N^2}\log\Pr\Big( \tfrac1N\h(N,0) > h(1,0) + \alpha \Big) \rightarrow I^{\text{up}}(\alpha),
	\quad
	\alpha >0.
\end{align}
The lower tail large deviations with an exact rate function as in~\eqref{eq:lowertails}
was obtained in \cite{seppaelaeinen98a} using coupling techniques;
for a different but closely related model,
the complete one-point large deviations as in~\eqref{eq:lowertails}--\eqref{eq:uppertails}
was obtained \cite{deuschel99} using combinatorics tools.
These results show that the \ac{TASEP} in general exhibits two levels of deviations,
one of speed $ N $ and the other of speed $ N^2 $.
We note in the passing that similar two-scale behaviors are also
observed in random matrix theory (e.g., the asymmetric tails of the Tracy-Widom distributions, c.f., \cite{tracy94}),
and in stochastic scalar conservation laws \cite{mariani10}.

The existence of two-scale large deviations can be easily understood in the context of the exclusion processes.
Recall that, for the \ac{TASEP}, $ \h(t,x) $ records the total number of particle passing through $ x $.
The lower deviation $ \frac1N\h(N,0) < h(1,0) -\alpha $ (with $ \alpha>0 $) can be achieved by slowing down the Poisson clock at $ x=0 $.
Doing so creates a blockage and decelerates particle flow across $ x=0 $.
Such a situation involves slowdown of a \emph{single} Poisson clock for time $ t\in[0,N] $, and occurs with probability $ \exp(-O(N)) $.
On the other hand, for the upper deviation $ \frac1N\h(N,0) > h(1,0)+\alpha $,
isolated accelerations have no effect due to the nature of exclusion.
Instead, one needs to speed up the Poisson clocks \emph{jointly} at $ N $ sites, which happens with probability $ (\exp(-O(N)))^N=\exp(-O(N^2)) $.

The speed-$ N $ \emph{functional} large deviations has been studied by
Jensen and Varadhan \cite{jensen00,varadhan04}.
It is shown therein that, up to probability $ \exp(-O(N)) $,
configurations concentrate around weak, generally non-entropy, 
solutions of the Burgers equation~\eqref{eq:burgers}.
The rate function in this case essentially measures how `non-entropic' the given solution is.
In more broad terms,
the speed-$ N $ large deviations 
of asymmetric exclusion processes have captured much attentions,
partly due to their connection with the Kardar--Parisi--Zhang universality
and their accessibility via Bethe ansatz.
In particular, much interest has been surrounding the problems
of open systems with boundaries in contact with stochastic reservoirs,
where rich physical phenomena emerge.
We mention \cite{derrida98,derrida03,bodineau06} and the references therein
for a non-exhaustive list of works in these directions.

In this article, we study the speed-$ N^2 $ functional large deviations
that corresponds to the upper tail in~\eqref{eq:uppertails}.
These deviations are larger than those considered in \cite{jensen00,varadhan04},
and stretch beyond weak solutions of the Burgers equation.
Furthermore, the speed-$ N^2 $ deviations studied here have interpretation in terms of tiling models.
As noted in \cite{borodin16}, asymmetric simple exclusion processes (and hence the \ac{TASEP})
can be obtained as a continuous-time limit of the stochastic \ac{6VM}.
The \ac{6VM} is a model of random tiling on $ \Z^2 $, with six ice-type tiles,
and the stochastic \ac{6VM} is specialization where
tiles are updated in a Markov fashion \cite{gwa92,borodin16}.
Associated to these tiling models are height functions.
Due to the strong geometric constraints among tiles,
the height functions exhibit intriguing shapes reflecting the influence of a prescribed boundary condition. 
A preliminary step toward understanding these shapes is to establish the corresponding 
variational problem via the speed-$ N^2 $ large deviations.
For the \ac{6VM} at the free fermion point, or equivalently the dimer model,
much progress has been obtained thanks to the determinantal structure.
In particular, the speed-$ N^2 $ \ac{LDP} of the dimer model is established in \cite{cohn01}.
%
%

\subsection{Statement of the Result}
We begin by setting up the configuration space and topology.
Consider the space
\begin{align}
\label{eq:Splip}
\begin{split}
	\Splip &:= \big\{ 
		f\in C(\R) :  
		0\leq f(\xi)-f(\zeta) \leq \xi-\zeta, \ \forall \zeta\leq\xi\in\R
	\big\}
\\
	 &=
	\big\{ f\in C(\R) : \text{ Lipschitz, } f'\in[0,1] \text{ a.e.} \big\}
\end{split}
\end{align}
of Lipschitz functions with $ [0,1] $-valued derivatives.
Hereafter, `a.e.'\ abbreviates `almost everywhere/every with respect to Lebesgue measure'.
Indeed, for any height profile $ \f\in\Spf $,
the corresponding scaled profile $ \f_N (\xi) = \frac1N \f (N\xi)$ is $ \Splip $-valued 
(after the prescribed linear interpolation).
Endow the $ \Splip $ with the uniform topology over compact subsets of $ \R $.
More explicitly, 
writing $ \Vert f \Vert_{C[-r,r]} := \sup_{[-r,r]}|f| $ for the uniform norm restricted to $ [-r,r] $,
on $ C(\R) \supset \Splip $ we define the following metric
\begin{align}
	\label{eq:dist}
	\dist(f^1,f^2) 
	:=
	\sum_{k=1}^\infty 2^{-k} \big( \Vert f^1-f^2 \Vert_{C[-k,k]} \wedge 1 \big).
\end{align}
Having defined the configuration space $ \Splip $ and its topology, we turn to the path space.
To avoid technical sophistication regarding topology,
we fix a finite time horizon $ [0,T] $, $ T\in(0,\infty) $ hereafter.
Adopt the standard notation $ D([0,T],\Splip) $
for the space of right-continuous-with-left-limits paths $ t\mapsto h(t,\Cdot) \in \Splip $.
We define the following path space:
\begin{align}
	\label{eq:sp1}
	\Sp := \big\{ h\in D([0,T],\Splip) :
	h(s,\xi) \leq h(t,\xi), \ \forall s\leq t\in T, \xi\in\R \}.
\end{align}
Throughout this article, we endow the space $ \Sp $ with Skorokhod's $ J_1 $ topology.

We say a function $ h\in\Sp $ has (first order) derivatives if,
for some Borel measurable functions $ h^1,h^2:[0,T]\times\R\in[0,\infty) $,
\begin{align}
	\label{eq:ht}
	h(t',\xi) - h(t,\xi) &= \int_t^{t'} h^1(s,\xi) ds,
	\
	\text{for all } t<t' \in[0,T], \quad \text{for a.e. } \xi\in\R,
	\\
	\label{eq:hx}
	h(t,\xi') - h(t,\xi) &= \int_{\xi}^{\xi'} h^2(t,\zeta) d\zeta,
	\
	\text{for all } \xi<\xi' \in\R, \quad \text{for a.e. } t\in[0,T].
\end{align}
For a given $ h\in\Sp $, if such functions $ h^1, h^2 $ exist,
they must be unique up to sets of Lebesgue measure zero.
We hence let $ h^1 $ and $ h^2 $ be denoted by $ h_t $ and $ h_\xi $, respectively,
and refer to them as \emph{the} $ t $- and $ \xi $-derivatives of $ h $.
Set
\begin{align}
	\label{eq:Spd}
	\Spd := \{ h\in\Sp: \ h \text{ has derivatives in the sense of \eqref{eq:ht}--\eqref{eq:hx}} \}.
\end{align}
Referring back to \eqref{eq:Splip}, we see that each $ h\in\Sp $
\emph{automatically} has $ \xi $-derivative in the sense of~\eqref{eq:hx}, so
\begin{align}
	\label{eq:Spd:}
	\Spd = \{ h\in\Sp: \ h \text{ has }  t\text{-derivatives in the sense of \eqref{eq:ht}} \}.
\end{align}
Recall from~\eqref{eq:TASEPtoCGM} that $ h $ has the interpretation of integrated current of particles.
Under such an interpretation,
$ h_t\in[0,\infty) $ corresponds to the (instantaneous) flux, 
and $ h_\xi\in[0,1] $ represents the (local) density of particles.

Next, consider the large deviation rate function of Poisson variables:
\begin{align}
	\label{eq:prf}
	\prf(\lambda|u) := \lambda\log(\tfrac{\lambda}{u})-(\lambda-u).
\end{align}
More precisely, recall from \cite[(4.5)]{seppaelaeinen98} that, for $ X_N\sim\Pois(Nu) $, we have
\begin{align*} 
	\lim_{\e\to 0}\lim_{N\to\infty} \frac{1}{N} \log \Pr\big( X_N \in (-N\e+N\lambda,N\lambda+N\e) \big) 
	= -\prf(\lambda|u).
\end{align*}
When $ u=1 $, we write $ \prf(\lambda):=\prf(\lambda|1) $ to simplify notations.
%
Consider the truncated function 
$ \prff(\lambda) := \prf(\lambda\vee 1) = \prf(\lambda)\ind_\set{\lambda \geq 1} $.
We define
\begin{align}
	\label{eq:lrfup}
	&
	\lrfup: [0,\infty)\times[0,1]\to [0,\infty],
	\quad
	\lrfup(\kappa,\rho) := \big(\rho\wedge(1-\rho)\big) 
		\prff\big( \tfrac{\kappa}{\rho\wedge(1-\rho)} \big),
\\
	\label{eq:lrf}
	&\lrf: [0,\infty)\times[0,1]\to [0,\infty],
	\quad
	\lrf(\kappa,\rho) := \rho(1-\rho) \prff\big( \tfrac{\kappa}{\rho(1-\rho)} \big),
\end{align}
under the convention that $ J^{(i)}(\kappa,0) := \lim_{\rho \downarrow 0} J^{(i)}(\kappa,\rho) $
and $ J^{(i)}(\kappa,1) := \lim_{\rho \uparrow 1} J^{(i)}(\kappa,\rho) $.
More explicitly, $ J^{(i)}(\kappa,0)|_{\kappa>0}:=\infty $,
$ J^{(i)}(\kappa,1)|_{\kappa>0}:=\infty $ and $ J^{(i)}(0,0):=0, J^{(i)}(0,1):=0 $.
%
To simplify notations,
for processes such as $ \h(t,x) $, $ h(t,\xi) $,
in the sequel we often write $ \h(t):=\h(t,\Cdot) $, $ h(t):=h(t,\Cdot) $
for the corresponding fixed-time profiles.
Hereafter throughout this particle, we fix a macroscopic initial condition $ \hIC\in\Splip $.
Under these notations, we define
\begin{align}
	\label{eq:rfup}
	&
	\rfup(h)	
	:= 
	\left\{\begin{array}{l@{}l}
		\displaystyle
		\int_0^T \int_{\R} \lrfup(h_t,h_\xi) \; dtd\xi, 
		&\quad\text{if } h\in\Spd \text{ and } h(0)=\hIC,
		\\
		~
		\\
		\infty,	&\quad\text{if } h\notin\Spd \text{ or } h(0)\neq\hIC,
	\end{array}\right.
\\
	\label{eq:rf}
	&
	\rf(h)	
	:= 
	\left\{\begin{array}{l@{}l}
		\displaystyle
		\int_0^T \int_{\R} \lrf(h_t,h_\xi) \; dtd\xi, 
		&\quad\text{if } h\in\Spd \text{ and } h(0)=\hIC,
		\\
		~
		\\
		\infty,	&\quad\text{if } h\notin\Spd \text{ or } h(0)\neq\hIC.
	\end{array}\right.
\end{align}

With the macroscopic initial condition $ \hIC $ fixed as in the preceding,
we fix further a \emph{deterministic} microscopic initial condition $ \hic\in\Spf $ of the \ac{TASEP} such that,
with $ \hic_N(\frac{x}{N}) := \frac1N\hic(x) $ 
(and linearly interpolated onto $ \R $),
\begin{align}
	\label{eq:ic:cnvg}
	\lim_{N\to\infty}
	\dist(\hic_N, \hIC ) = 0.
\end{align}
\begin{remark}
\label{rmk:ic:cnvt}
We \emph{allow} $ \hic $ to depend on $ N $ as long as \eqref{eq:ic:cnvg} holds,
but \emph{omit} such a dependence in the notation.
This is to avoid confusion with subscripts in $ N $, such as $ \hic_N $, which denote scaled processes.
\end{remark}

With the initial condition $ \hic \in\Spf $ being fixed,
throughout this article we let $ \h(t,x) $ and $ \hN(t,\xi) = \frac 1N \h(Nt, N\xi)$
denote the micro- and macroscopic height processes starting from $ \hic $,
and write $ \PN $ for the law of the \ac{TASEP}.
The following is our main result:

\begin{theorem}
\label{thm:main}
Let $ \hIC\in\Sp $ and $ \hN $ be given as in the preceding.
\begin{enumerate}[label=(\alph*),leftmargin=5ex]
\item \label{enu:upb} For any given closed $ \calC\subset \Sp $,
\begin{align}
	\label{eq:upb}
	\limsup_{N\to\infty} \frac{1}{N^2} \log \PN(\h_N\in\calC) \leq -\inf_{h\in \calC} \rfup(h).
\end{align}
\item \label{enu:lwb} 
For any given open $ \calO\subset \Sp $, we have
\begin{align}
	\label{eq:lwb}
	\liminf_{N\to\infty} \frac{1}{N^2} \log \PN(\hN\in\calO) \geq -\inf_{h\in \calO } \rf(h).
\end{align}
\end{enumerate}
\end{theorem}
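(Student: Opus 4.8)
The plan is to prove the two bounds by independent but complementary strategies. For the upper bound \eqref{eq:upb}, the natural route is a discretization-and-compactness argument. First I would fix a compact space-time box $[0,T]\times[-r,r]$ and a mesh size, and discretize the height process on a grid of boxes; on each such box the increment of $\h_N$ in time is, up to exclusion constraints, controlled by a sum of independent Poisson clocks, so the one-box probability of an atypically large flux is governed by $\prf$. The key point is that a large \emph{upward} fluctuation of the current across a region of macroscopic density $\rho$ must be carried by the particles actually crossing; because at most one of every pair of adjacent sites is occupied, the effective number of clocks that can contribute is $\asymp \rho\wedge(1-\rho)$ per site, which is exactly the coefficient appearing in $\lrfup$ (rather than the $\rho(1-\rho)$ of $\lrf$). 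I would then (i) use a union bound over a finite collection of grid-boxes together with independence of the Poisson clocks to get an exponential bound at speed $N^2$ with rate the Riemann sum of $\lrfup(h_t,h_\xi)$; (ii) pass to the limit in the mesh, using lower semicontinuity of $\lrfup$ and the superadditivity/convexity of $\prff$ to get a clean liminf; and (iii) exhaust $\R$ by compacts, noting that the contribution from outside a large box is cheap because $h\in\Splip$ forces $h_\xi\in[0,1]$. A standard argument upgrades the bound from "closed balls" to arbitrary closed sets $\calC$ via the exponential tightness that follows from the Lipschitz constraint in $\Splip$.

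For the lower bound \eqref{eq:lwb}, the plan is the usual change-of-measure (tilting) strategy. Given an open set $\calO$ and a target profile $h\in\calO\cap\Spd$ with $\rf(h)<\infty$, I would construct a sequence of modified \ac{TASEP} dynamics $\QN$ in which the Poisson clock at site $x$ at time $t$ is accelerated to a space-time dependent rate $r_N(t,x)\approx r(t/N,x/N)$ chosen so that, under $\QN$, the law of large numbers forces $\hN$ to concentrate near $h$. The correct choice is dictated by the hydrodynamics of the tilted process: if the local density is $\rho$ and we want instantaneous flux $\kappa=h_t$, the clock should fire at rate $\kappa/(\rho(1-\rho))$, since the unperturbed flux is $\rho(1-\rho)$; this is what produces $\lrf$ rather than $\lrfup$ as the cost. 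Then $\frac{1}{N^2}\log\PN(\hN\in\calO) \ge \frac{1}{N^2}\Ex_{\QN}\big[\ind_{\hN\in\calO}\,\frac{d\PN}{d\QN}\big]$, and by Jensen applied on the event $\{\hN\in\calO\}$ (whose $\QN$-probability tends to $1$), the right side is bounded below by $-\frac{1}{N^2}\Ex_{\QN}[\log\frac{d\QN}{d\PN}\,\big|\,\hN\in\calO] + o(1)$. The Radon--Nikodym derivative of a rate-$r$ Poisson clock against a rate-$1$ clock contributes, per clock, a $\log$-term that integrates (by the $\QN$-hydrodynamics and the ergodic theorem for the tilted process) precisely to $\int_0^T\!\int_\R \rho(1-\rho)\prff(h_t/(\rho(1-\rho)))\,dtd\xi = \rf(h)$; optimizing over $h\in\calO$ gives \eqref{eq:lwb}. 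One must first establish the result for smooth $h$ with $h_\xi$ bounded away from $0$ and $1$, then remove that restriction by a density/approximation argument in $\Spd$, using continuity properties of $\rf$ and the fact that $\calO$ is open.

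The main obstacle, and the reason the two bounds do not match, is precisely the discrepancy between the $\rho\wedge(1-\rho)$ and $\rho(1-\rho)$ coefficients: the union bound in the upper bound ignores the correlations built up by the exclusion interaction over macroscopic times, so it overcounts the available clocks and yields the (larger, hence weaker as a rate) $\lrfup$; conversely the tilting construction in the lower bound produces a \emph{genuinely achievable} cost $\lrf$ but there is no obvious way to certify that $\lrf$ is also an upper-bound rate, since that would require a much finer, dynamic control of how the exclusion constraint throttles the current — essentially a hydrodynamic large-deviation analysis of the tilted process rather than a static box estimate. Secondary technical difficulties that I expect to consume real work are: handling the Skorokhod $J_1$ topology on $\Sp$ (in particular checking exponential tightness at speed $N^2$, which needs modulus-of-continuity estimates for $t\mapsto\hN(t,\cdot)$ uniform in $N$); the passage from the microscopic initial data $\hic$ to the macroscopic $\hIC$ under \eqref{eq:ic:cnvg}; and the boundary cases $\rho\in\{0,1\}$ where $\lrf,\lrfup$ blow up, which force a careful truncation when $h_\xi$ approaches $0$ or $1$ on sets of positive measure.
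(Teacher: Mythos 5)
There are two genuine gaps, one in each half.

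\textbf{Upper bound.} Your plan to get the coefficient $\rho\wedge(1-\rho)$ from ``a union bound over grid-boxes together with independence of the Poisson clocks'' does not work as stated. A ringing clock at $x$ only increments the height when the mobility $\mob(\h(t),x)$ equals $1$ at that instant, and the set of such instants is \emph{adapted to the dynamics}, not fixed in advance; a static count of independent clocks therefore only yields the crude coefficient $1$ (this is exactly the bound the paper uses for exponential tightness in Lemma~\ref{lem:tight}, which produces $\prf(\tfrac{\e}{2}|\tfrac{T}{n})$ with no density dependence). To extract $\rho\wedge(1-\rho)$ one must relate the \emph{time-integrated} mobility $\int\sum_x\mob(\h(t),x)\,dt$ on a box to the density profile under the conditioned/deviated event, via the pointwise bound $\sum_x\mob(\h,x)\le\min(\#\text{particles},\#\text{holes})$, and then couple this with the clock statistics through an exponential-martingale or relative-entropy identity. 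The paper does this by Doob-conditioning $\PN$ on the tubular event, deriving an It\^o formula and an explicit entropy formula for the conditioned law (Propositions~\ref{prop:QNIto}--\ref{prop:QN:ent}), and applying convexity of $\prff$ box by box; some such dynamic argument is unavoidable, and your proposal is missing it.

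\textbf{Lower bound.} Uniformly tilting the clock at $(t,x)$ to rate $\lambda=h_t/(h_\xi(1-h_\xi))$ produces an entropy cost whose density is $\rho(1-\rho)\,\prf(\lambda)$ with the \emph{untruncated} $\prf$, not $\rho(1-\rho)\,\prff(\lambda)$ as you assert. Since $\prf(\lambda)>0=\prff(\lambda)$ for $\lambda<1$, your construction fails to achieve the rate $\rf(h)$ precisely on strict subsolutions of the Burgers equation ($h_t<h_\xi(1-h_\xi)$), which is the regime where the theorem has real content. The missing idea is the intermittent construction of Section~\ref{sect:intro:heu} and Sections~\ref{sect:speedd}--\ref{sect:speed}: one slows the clocks only on vanishingly thin spatial windows and lets the exclusion rule propagate the blockage, driving the cost to zero while still producing the target flux; making this rigorous requires the hydrodynamic limit for space--time inhomogeneous TASEP and the Hopf--Lax variational analysis (including the buffer zones that prevent diverging characteristics), none of which is replaced by the ``ergodic theorem for the tilted process'' you invoke. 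Your identification of the non-matching coefficients as the reason the bounds differ is correct, but the route you propose to each bound would not deliver the stated rate functions.
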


After posting of this article, the recent work \cite{de18} gives an explicit characterization of the rate function for the five-vertex model.
This is done by taking the $ N\to\infty $ limit of the free energy obtained from the Bethe ansatz.
Since the the Bethe roots of the \ac{TASEP} and the five-vertex model exhibits very similar structures,
the result~\cite{de18} points to a way of obtaining the rate function of the \ac{TASEP}.


\subsection{A heuristic of Theorem~\ref{thm:main}\ref{enu:lwb}}
\label{sect:intro:heu}
Here we give a heuristic of Theorem~\ref{thm:main}\ref{enu:lwb}.
As mentioned previously, the \ac{TASEP} is a degeneration of the stochastic \ac{6VM}.
The latter, as a tiling model, enjoys the Gibbs conditioning property.
That is, given a subset $ \calA\subset \Z^2 $, 
conditioned on the tiles along the boundary of $ \calA $,
the tiling within $ \calA $ is independent of the tiling outside of $ \calA $.
Such a property suggests a rate function of the form $ I(h) = \int J(h_t,h_\xi) dtd\xi $.
To see this, take a triangulation of $ \R^2 $, and approximate $ h $ by a \emph{linear} function within each triangle.
Thanks to the Gibbs property, the rate function is approximated by the sum of the rates on each triangle.
The latter, since $ h $ is approximated by a linear function on each triangle,
should take the form $ J(h_t,h_x,h)|\triangle| $, where $ |\triangle| $ denotes the area of the triangle.
Further, since the probability law of the \ac{6VM} is invariant under height shifts ($ \h\mapsto \h+\alpha $),
$ J $ should not depend on $ h $, suggesting $ J=J(h_t,h_x) $.

The \ac{TASEP}, being a degeneration of the stochastic \ac{6VM},
should also possess a rate function of the aforementioned form
$$ 
I(h) = \int_0^T\int_\R J(h_t,h_\xi) dt d\xi .
$$
Next, consider a linear deviation $ h^*(t,\xi) = \alpha + \kappa t + \rho \xi $, 
and consider all possible probability laws $ \QN $ on $ \Sp $ such that, 
under $ \QN $, the resulting process $ \hN $ approximates $ h^* $.
The rate $ I(h^*) $ should then be the infimum of the relative entropy $ \frac{1}{N^2} H(\QN|\PN) $ among all such $ \QN $.
Put it differently, we seek the most entropy-cost-effective fashion
of perturbing the law of the \ac{TASEP},
under the constraint that the resulting process $ \hN $ approximates $ h^* $.

Let $ \lambda:=\frac{\kappa}{\rho(1-\rho)} $.
The linear function $ h^* $ is an entropy solution of the equation
$
	h^*_t = \lambda h^*_\xi(1-h^*_\xi).
$
This is the Burgers equation~\eqref{eq:burgers} 
with a time-rescaling $ h(t,\xi) \mapsto h(\lambda t,\xi) $.
In view of the aforementioned hydrodynamic limit result of the \ac{TASEP},
one possible candidate of $ \QN^\lambda $, 
is to change the underlying Poisson clocks to have rate $ \lambda $ instead of unity. 
Equivalently, $ \QN^\lambda $ is obtained by
rescaling entire process $ \h(t) \mapsto \h(\lambda t) $ by a factor $ \lambda $.
This being the case, $ \hN $ necessarily converges to $ h^* $ under $ \QN^\lambda $.
We next calculate the cost of $ \QN^\lambda $.
Recall the definition of the mobility function $ \mob(\f,x) $ from~\eqref{eq:mob}.
Roughly speaking, the cost per site $ x\in\Z $ per unit amount of time is $ \prf(\lambda)\mob(\h(Nt),x) dt $.
This is accounted by the rate $ \prf(\lambda) $ of perturbing each Poisson clock,
modulated by the mobility function $ \mob(\h(Nt),x) $, since disallowed jumps are irrelevant. 
Since, under $ \QN^\lambda $, we expect $ \hN $ to approximate the targeted function $ h^* $,
referring back to the expression~\eqref{eq:mob},
we \emph{informally} approximate $ \mob(\h(Nt),x) $ by $ (1-h^*_\xi)h^*_\xi $.
Such an informal calculation gives
\begin{align*}
	\frac{1}{N^2} H(\QN^\lambda|\PN) 
	\approx
	\int\int \hat\lrf(\kappa,\rho) dtd\xi,
	\quad
	\hat\lrf(\kappa,\rho) 
	:= 
	h^*_\xi(1-h^*_\xi) \prf(\lambda)
	=
	\rho(1-\rho)\prf(\tfrac{\kappa}{\rho(1-\rho)}).	
\end{align*}
Of course, the last integral is infinite, but our discussion here focuses on the density $ \hat\lrf(\kappa,\rho) $.

The aforementioned $ \QN^\lambda $ being a candidate for the law $ \QN $,
we must have $ J(\kappa,\rho) \leq \hat\lrf(\kappa,\rho) $.
As it turns out, for $ \lambda < 1 $, we can device another choice of law such that the cost is \emph{zero}.
To see this, consider an axillary parameter $ \d\downarrow 0 $. 
Our goal is to maintain a constant flux $ \kappa $, 
\emph{lower} than the hydrodynamic value $ \rho(1-\rho) $,
together with the constant density $ \rho $, in the most cost-effective fashion.
Instead of slowing down the Poisson clocks uniformly by $ \lambda $,
let us slow down only in windows $ \calW_i $ of macroscopic width $ \d^2 $,
 every distance $ \d(1-\d) $ apart;
see Figure~\ref{fig:interm}.
We refer to this as the `intermittent construction'.
\begin{figure}[h]
\psfrag{W}{$ \color{blue}\calW_i $}
\psfrag{A}[b]{\color{blue} rate$ =\lambda $}
\psfrag{B}[b]{rate$ =1 $}
\psfrag{D}[b]{$ \color{blue}\d^2 $}
\psfrag{E}[b]{$ \d(1-\d) $}
\includegraphics[width=.8\textwidth]{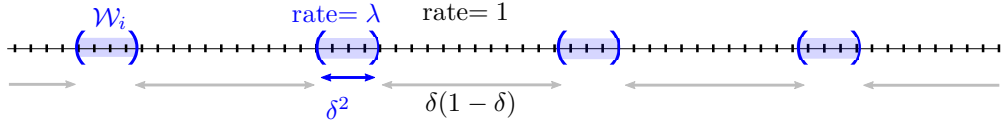}
\caption{The Intermittent construction. The ticks represent the scaled lattice $ \frac{1}{N}(\frac12+\Z) $
where particles reside.}
\label{fig:interm}
\end{figure}
Even though slow-down is only enforced on the $ \calW_i $'s,
since particles cannot jump ahead of each other,
this construction achieves an overall constant flux $ \kappa $ through \emph{blocking}.
More explicitly, it is conceivable that, under the intermittent construction,
particles exhibit the macroscopic stationary density profile as depicted in Figure~\ref{fig:intermDen}.
In between the windows $ \calW_i $, the density takes two values $ \rho_1,\rho_2 $,
with $ \rho_1 > \rho > \rho_2 $, as a result of blocking.
Even though the density varies among the values $ \rho,\rho_1,\rho_2 $, referring to Figure~\ref{fig:intermDen},
we see that as $ \d\downarrow 0 $ the density profile converges to $ \rho $ in an average sense. 
As for the cost, since the region $ \{\calW_i\}_i $ has fraction $ \d $,
as $ \d\downarrow 0 $ the cost in entropy per unit length (in $ \xi\in\R $) goes to zero.
This suggests that the intermittent construction gives approximately zero cost for $\lambda<1$.
\begin{figure}[h]
\psfrag{R}{$ \rho $}
\psfrag{S}{$ \rho_1 $}
\psfrag{T}{$ \rho_2 $}
\psfrag{L}[c]{$ \color{blue}\lambda $}
\psfrag{U}[c]{rate$ =1 $}
\psfrag{D}[t]{$ \color{blue}\d^2 $}
\psfrag{E}[c]{$ \scriptstyle r_1 $}
\psfrag{F}[c]{$ \scriptstyle r_2 $}
\psfrag{G}[t]{$ \d(1-\d) $}
\includegraphics[width=.9\textwidth]{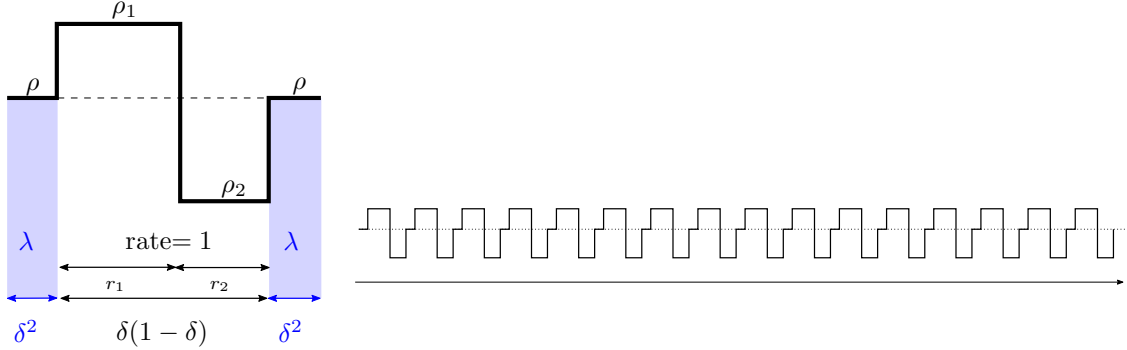}
\caption{%
	Expected macroscopic density under the intermittent construction.
	Here $ \rho_1>\rho_2\in[0,1] $ are the unique solutions of the equation $ \rho_i(1-\rho_i)=\kappa $,
	and $ r_1,r_2 $ are such that $ r_1+r_2=\d(1-\d) $, $ r_1\rho_1+r_2\rho_2 = (r_1+r_2)\rho $.
}
\label{fig:intermDen}
\end{figure}

Combining the preceding discussions for the cases $ \lambda \geq 1 $ and $ \lambda < 1 $,
we have then
\begin{align*}
	J(\kappa,\rho)
	\leq
	\left\{\begin{array}{l@{,}l}
		(1-\rho)\rho\prf(\tfrac{\kappa}{\rho(1-\rho)})	&	\text{ if } \lambda \geq 1
	\\
		0												&	\text{ if } \lambda < 1
	\end{array}\right\}
	=
	(1-\rho)\rho\prff(\tfrac{\kappa}{\rho(1-\rho)}) = \lrf(\kappa,\rho).
\end{align*}
This heuristic gives an upper bound $ J^{(2)} $ on the rate function $ J $,
which corresponds to a large deviation \emph{lower} bound (i.e., lower bound on the probability) as in Theorem~\ref{thm:main}\ref{enu:lwb}.
On the other hand, for the lower bound on $ J $ (i.e., large deviation upper bound, as in Theorem~\ref{thm:main}\ref{enu:upb}), 
we are only able to prove $ J^{(1)} \le J $, 
obtained by bounding the mobility by $ \rho\wedge(1-\rho) $.
The bounds $ J^{(1)}, J^{(2)} $ do not match, and finding the actual rate function remains an open question.

It follows from our result that deviations that are subsolutions of Burgers equation, i.e. $ h_t \le (1-h_\xi)h_\xi $, 
have probability larger than $e^{-cN^2}$. On the other hand, by the Jensen-Varadhan large deviation result\cite{varadhan04}, 
they are smaller than  $e^{-cN}$.
It remains an open problem to determine the order on these subsolution as well as other intermediate deviations. In this direction 
in appendix \ref{append:smalldevi} we investigate large deviations in a finer topology that track the oscillations of $h_\xi$ through Young measures, 
in the spirit of \cite{mariani10}.

\subsection*{Acknowledgements}
SO thanks Lorenzo Bertini, Bernard Derrida and  S.\ R.\ Srinivasa Varadhan 
for useful discussions.
LCT thanks Ivan Corwin, Amir Dembo, Fraydoun Rezakhanlou, Timo Sepp{\"a}l{\"a}inen and S.\ R.\ Srinivasa Varadhan
for useful discussions.

SO's research is supported by ANR-15-CE40-0020-01 grant LSD.
LCT's research was partially supported by a Junior Fellow award from the Simons Foundation,
and by the NSF through DMS-1712575. 

\subsection*{Outline}
In Section~\ref{sect:rf} we establish some useful properties of the functions $ I^{(i)} $ and $ J^{(i)} $.
The lower semi-continuity is not used in the rest of the article, but we include it as a useful property for future reference.
In Section~\ref{sect:pfmian},
we prove Theorem~\ref{thm:main}\ref{enu:upb} and \ref{enu:lwb}
assuming Propositions~\ref{prop:ent:lw} and \ref{prop:ent:up}, respectively.
These propositions concern bounds on relative entropies.
We settle Proposition~\ref{prop:ent:lw} in Section~\ref{sect:pfentlw},
and then devote the rest of the article, Sections~\ref{sect:ITASEP}--\ref{sect:speed},
to showing Proposition~\ref{prop:ent:up}.

\subsection*{Convention}
Throughout this article,
$ x,i,j,k,\ell,m,n\in\Z $ (and similarly for $ x_1,i' $, etc.) denote integers,
and $ \xi,\zeta\in\R $ denote real numbers.
The letters $ s,t $ always denote time variables, with either $ s,t\in [0,T] $ or $ [0,NT] $;
We use $ \h $, $ \g $, etc, to denote un-scaled, \ac{TASEP} height processes,
with $ \hN $, $ \gN $ being the corresponding scaled processes.
The same convention applies also for the initial conditions $ \hic $, $ \gic $, $ \hic_N $, $ \gic_N $
of these processes.

\section{Properties of Functions $ I^{(i)} $ and $ J^{(i)} $}
\label{sect:rf}

Recall that $ \prff(\lambda) := \prff(1\wedge\lambda) $
denote the truncated rate function for Poisson variables.
Under this convention, we still have that $ \lambda\mapsto \prff(\lambda) $ is convex,
and that $ \prff'(\lambda) = \log(\lambda\vee 1) $.

The functions $ \lrfup, \lrf $, defined in~\eqref{eq:lrfup}--\eqref{eq:lrf}, take infinite value at $ \rho=0,1 $.
This property posts undesirable technical issues for our analysis,
and hence we consider the following truncations.
Let $ \mobbup(\rho):= \rho \wedge(1-\rho) $ and $ \mobb(\rho):=\rho(1-\rho) $.
For small $ a\in(0,\frac12) $, define the following truncations
\begin{align}
	&
	\label{eq:fluxeup}
	\mobbup_a(\rho) := (1-a^2) \mobbup(\rho) + a^2,
\\
	&
	\label{eq:fluxe}
	\mobb_a(\rho) :=
	\left\{\begin{array}{l@{,}l}
		\rho(1-\rho) &\text{ when } \rho\in[a,1-a],
	\\
		a(1-a)+(1-2a)(\rho-a)	&\text{ when } \rho\in[0,a),
	\\
		a(1-a)+(2a-1)(\rho-(1-a))	&\text{ when } \rho\in(1-a,1].		
	\end{array}\right.
\end{align}
From this construction, it is clear that $ \Phi^{(i)}_a\geq\Phi^{(i)} $, $ \Phi^{(i)}_a \geq a^2 >0 $
and that $ \rho\mapsto\Phi^{(i)}_a(\rho) $ is concave.
We then define 
\begin{align}
	\label{eq:lrfeup}
	&\lrfupa(\kappa,\rho) := \mobbup_a(\rho) \prff\big( \tfrac{\kappa}{\mobbup_a(\rho)} \big),
\\
	\label{eq:lrfe}
	&\lrf_a(\kappa,\rho) := \mobb_a(\rho) \prff\big( \tfrac{\kappa}{\mobb_a(\rho)} \big).
\end{align}
A straightforward differentiation
$ \frac{d~}{d\xi}(\xi\prff(\frac{\kappa}{\xi})) = -(\frac{\kappa}{\xi}-1)_+ \leq 0 $ shows that
\begin{align}
	\label{eq:nonincr:mob}
	\xi\mapsto \xi\prff(\tfrac{\kappa}{\xi}) \text{ is nonincreasing, } \forall \text{ fixed } \kappa\in[0,\infty).
\end{align}
so in particular $ \lrfupa(\kappa,\rho) \leq a^2 \prff(\kappa a^{-2}) <\infty. $

\begin{lemma}
\label{lem:lrf:cnvx}
The functions $ J^{(i)}: [0,\infty)\times[0,1] \to [0,\infty] $ and
$ J^{(i)}_a: [0,\infty)\times[0,1] \to [0,\infty) $ are convex, for $ i=1,2 $.
\end{lemma}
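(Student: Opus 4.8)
The plan is to reduce everything to a single elementary fact: for a fixed concave function $\Phi:[0,1]\to(0,\infty)$, the map $(\kappa,\rho)\mapsto \Phi(\rho)\,\prff(\kappa/\Phi(\rho))$ is convex on $[0,\infty)\times[0,1]$. Granting this, Lemma~\ref{lem:lrf:cnvx} follows at once: for $J^{(i)}_a$ we apply it with $\Phi=\mobbup_a$ and $\Phi=\mobb_a$, which are concave and strictly positive by construction (they are $\geq a^2>0$); for $J^{(i)}$ we apply it with $\Phi=\mobbup=\rho\wedge(1-\rho)$ and $\Phi=\mobb=\rho(1-\rho)$, both concave, and handle the boundary values $\rho\in\{0,1\}$ by noting that $J^{(i)}$ is defined there as a limit, and convexity is preserved under pointwise limits (a supremum of affine minorants, or simply lower semicontinuity of the limit combined with convexity on the open set, forces convexity of the closure).

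The key elementary fact itself is a standard perspective-function argument. Recall that $\prff$ is convex with $\prff'(\lambda)=\log(\lambda\vee 1)$, as noted just before the lemma. The \emph{perspective} of a convex function $\psi$, namely $(\kappa,u)\mapsto u\,\psi(\kappa/u)$ for $u>0$, is jointly convex in $(\kappa,u)$; this is classical. Hence $g(\kappa,u):=u\,\prff(\kappa/u)$ is jointly convex on $[0,\infty)\times(0,\infty)$. Now $\Phi$ is concave and $g$ is nonincreasing in its second argument $u$ for each fixed $\kappa$ — this is exactly \eqref{eq:nonincr:mob}. Composing a convex nonincreasing-in-$u$ function with a concave $u=\Phi(\rho)$ preserves convexity in the sense that $(\kappa,\rho)\mapsto g(\kappa,\Phi(\rho))$ is convex: for $\rho_\theta:=\theta\rho_0+(1-\theta)\rho_1$ and $\kappa_\theta:=\theta\kappa_0+(1-\theta)\kappa_1$ we have $\Phi(\rho_\theta)\geq\theta\Phi(\rho_0)+(1-\theta)\Phi(\rho_1)=:u_\theta$, so by monotonicity $g(\kappa_\theta,\Phi(\rho_\theta))\leq g(\kappa_\theta,u_\theta)$, and then joint convexity of $g$ in $(\kappa,u)$ gives $g(\kappa_\theta,u_\theta)\leq \theta g(\kappa_0,\Phi(\rho_0))+(1-\theta)g(\kappa_1,\Phi(\rho_1))$. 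Finiteness of $J^{(i)}_a$ is the remark already in the text: $\Phi\geq a^2$ forces $J^{(1)}_a(\kappa,\rho)\leq a^2\prff(\kappa a^{-2})<\infty$, and similarly for $J^{(2)}_a$.

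The only genuine care needed — and the one place I expect friction — is the boundary behavior of $J^{(i)}$ at $\rho\in\{0,1\}$, where $\Phi$ vanishes and $g(\kappa,u)\to\infty$ as $u\downarrow0$ for $\kappa>0$ while $g(0,u)=0$. Here one should argue directly: $J^{(i)}$ restricted to the open strip $[0,\infty)\times(0,1)$ is convex by the above, and it is also lower semicontinuous up to the boundary (the defining limits are monotone in the relevant sense, or one checks directly that $\liminf_{\rho\to 0}J^{(i)}(\kappa,\rho)=J^{(i)}(\kappa,0)$), so the extension to the closed strip remains convex. Concretely, for any segment with at least one endpoint on the boundary, take the limit of the convexity inequality along interior points; since $J^{(i)}(\kappa,0)=J^{(i)}(\kappa,1)=+\infty$ for $\kappa>0$ the inequality is trivial unless both endpoints have $\kappa=0$, and the segment $\{0\}\times\{0\}$ to $\{0\}\times\{1\}$ — or rather any segment along $\kappa\equiv 0$ — is handled by $J^{(i)}(0,\rho)=0$ for all $\rho\in[0,1]$ (since $\prff(0)=0$), which is trivially convex. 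This disposes of all cases.
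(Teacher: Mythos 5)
Your argument is correct, but it runs along a genuinely different line from the paper's. The paper's proof is a one-line dual representation: it writes $J^{(i)}(\kappa,\rho)=\sup_{\alpha\geq 0}\big\{\kappa\alpha-\Phi^{(i)}(\rho)(e^{\alpha}-1)\big\}$ (and the same with $\Phi^{(i)}_a$ for $J^{(i)}_a$), notes that each function inside the supremum is convex in $(\kappa,\rho)$ --- affine in $\kappa$, and convex in $\rho$ because $\Phi^{(i)},\Phi^{(i)}_a$ are concave and $e^{\alpha}-1\geq 0$ --- and concludes since a supremum of convex functions is convex. You instead use the primal picture: the perspective $(\kappa,u)\mapsto u\,\prff(\kappa/u)$ is jointly convex for $u>0$, it is nonincreasing in $u$ by \eqref{eq:nonincr:mob}, and composing a jointly convex, $u$-nonincreasing function with the concave $u=\Phi(\rho)$ preserves convexity; your direct verification of that composition rule is correct, and it settles $J^{(i)}_a$ on all of $[0,\infty)\times[0,1]$ (since $\Phi^{(i)}_a\geq a^2>0$) and $J^{(i)}$ on $[0,\infty)\times(0,1)$. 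The two arguments are essentially Legendre duals of one another and both rest only on concavity of the mobility; what the paper's version buys is that the sup formula remains valid at $\rho\in\{0,1\}$ (it gives $\infty$ for $\kappa>0$ and $0$ for $\kappa=0$ there), so the boundary convention is absorbed with no extra work, whereas your version requires the separate boundary discussion.

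In that boundary discussion there is one inaccurate sentence: ``the inequality is trivial unless both endpoints have $\kappa=0$.'' A segment joining a boundary point with $\kappa=0$ (where $J^{(i)}=0$) to an interior point with $\kappa>0$ has no infinite endpoint and does not lie on $\{\kappa=0\}$, so it falls into neither branch of your dichotomy. It is, however, handled by the limiting device you name in the same sentence: replace the boundary endpoint $(0,\rho_0)$, $\rho_0\in\{0,1\}$, by interior points $(0,\rho_0\pm\e)$, use $J^{(i)}(0,\Cdot)\equiv 0$ and convexity on $[0,\infty)\times(0,1)$, and let $\e\downarrow 0$, using continuity of $J^{(i)}$ at the (interior) intermediate point. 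With that reading the case analysis closes and the proof is complete; still, tightening that sentence (or simply adopting the paper's sup representation, which makes the boundary automatic) would be worthwhile.
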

\begin{proof}
It is straightforward to verify that
\begin{align*}
	J^{(i)}(\kappa,\rho) = \sup_{\alpha \geq 0} \big\{ \kappa \alpha - \Phi^{(i)}(\rho)(e^\alpha-1) \big\},
	\quad
	J^{(i)}_a(\kappa,\rho) = \sup_{\alpha \geq 0} \big\{ \kappa \alpha - \Phi^{(i)}_a(\rho)(e^\alpha-1) \big\}.
\end{align*}
Using these expressions and the concavity of $ \rho\mapsto \Phi^{(i)}(\rho) $ and  $ \rho\mapsto \Phi^{(i)}_a(\rho) $
gives the desired result.
\end{proof}

We next establish a few technical results.
To setup notations, let $ \{\sigma^n_i:=\frac{iT}{2^n}\}_{i=0}^{2^n} $ 
be an equally spaced partition of $ [0,T] $, dyadic in $ n $.
Define, for $ h\in\Sp $, the following quantities
\begin{align}
	\label{eq:rffn}
	\rff_n(h,\xi) 
	&:= 
	\sum_{i=1}^{2^n} \frac{T}{2^n} 
	\prff\Big(\frac{h(\sigma^n_{i},\xi)-h(\sigma^n_{i-1},\xi)}{\sigma^n_{i}-\sigma^n_{i-1}} \Big),
\\	
	\label{eq:rff}
	\rff(h) 
	& :=
	\sup_{n}\int_{\R} \rff_n(h,\xi) d\xi.
\end{align}
\begin{lemma}
\label{lem:rf<inf}
For any $ h\in\Sp $, if $ \rff(h)<\infty $ then $ h\in\Spd $.
\end{lemma}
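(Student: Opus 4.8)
The plan is to show that $\rff(h)<\infty$ forces the time-section map $t\mapsto h(t,\xi)$ to be absolutely continuous for a.e. $\xi$, with an integrable $t$-derivative, so that \eqref{eq:ht} holds; since \eqref{eq:hx} is automatic on $\Sp$ by \eqref{eq:Spd:}, this gives $h\in\Spd$. The key observation is that $\prff(\lambda)=\prf(\lambda)\ind_{\{\lambda\geq 1\}}$ is superlinear at $\infty$ (indeed $\prff(\lambda)/\lambda=\log\lambda - 1 + 1/\lambda\to\infty$), which is exactly the de la Vallée-Poussin criterion for uniform integrability. So the finiteness of $\rff_n(h,\xi)$, uniformly in $n$, should control the increments $h(\sigma^n_i,\xi)-h(\sigma^n_{i-1},\xi)$ well enough to extract an $L^1$ density in $t$.

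First I would fix a $\xi$ for which $\sup_n\rff_n(h,\xi)<\infty$; by \eqref{eq:rff} and Fatou/monotone convergence this holds for a.e. $\xi$ (one has to be a little careful: $\rff(h)=\sup_n\int\rff_n\,d\xi<\infty$, and since $h$ is nondecreasing in $t$ each $\xi\mapsto\rff_n(h,\xi)$ is measurable and nonnegative, so $\int\sup_n\rff_n\,d\xi\le\sup_n\int\rff_n\,d\xi$ would require a monotonicity-in-$n$ statement — I would instead argue that $\int\liminf_n\rff_n\,d\xi\le\liminf_n\int\rff_n\,d\xi\le\rff(h)<\infty$, which suffices if $\rff_n(h,\xi)$ is eventually monotone or at least $\liminf_n\rff_n(h,\xi)$ is finite a.e.; by convexity of $\prff$ and Jensen, $n\mapsto\rff_n(h,\xi)$ is in fact nondecreasing along the dyadic refinement, so $\sup_n=\lim_n$ and $\int\sup_n\rff_n\,d\xi=\sup_n\int\rff_n\,d\xi=\rff(h)<\infty$ by monotone convergence). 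For such a $\xi$, set $F(t):=h(t,\xi)$, a nondecreasing function, and let $\mu$ be the associated Stieltjes measure on $[0,T]$. The bound $\sup_n\rff_n(h,\xi)<\infty$ says $\sum_i \frac{T}{2^n}\prff\big(\frac{\mu(J^n_i)}{|J^n_i|}\big)\le C(\xi)$ for all $n$, where $J^n_i=[\sigma^n_{i-1},\sigma^n_i]$. Writing $\mu=f\,dt+\mu_s$ for the Lebesgue decomposition, I would show this bound is incompatible with $\mu_s\neq 0$ or with $f\notin L^1$: on an interval where the average density $\mu(J^n_i)/|J^n_i|$ is large (which happens near an atom or a singular part as $n\to\infty$, or on a set of positive density for a non-integrable $f$), superlinearity of $\prff$ makes the corresponding term blow up. Concretely, by Jensen and the monotonicity \eqref{eq:nonincr:mob}-type convexity, $\int_0^T \prff(f(t))\,dt \le \liminf_n \rff_n(h,\xi) \le C(\xi)<\infty$, and a separate estimate rules out a singular part (if $\mu_s([a,b])>0$ then choosing $n$ large so that some $J^n_i\subset[a,b]$ carries most of that mass gives a term $\ge \frac{T}{2^n}\prff(c\,2^n/T)\to\infty$). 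Hence $\mu$ is absolutely continuous with density $f_\xi\in L^1[0,T]$, i.e. \eqref{eq:ht} holds for this $\xi$.

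Then I would assemble the pointwise-in-$\xi$ statement into the functional form \eqref{eq:ht}: define $h^1(t,\xi):=\limsup_{\delta\downarrow 0}\frac{h(t+\delta,\xi)-h(t,\xi)}{\delta}$ (or the Lebesgue density), check joint measurability using monotonicity of $h$ in both variables together with the dyadic structure, and verify $h(t',\xi)-h(t,\xi)=\int_t^{t'}h^1(s,\xi)\,ds$ for all $t<t'$ for a.e. $\xi$ — which is precisely the definition of $h$ having a $t$-derivative in the sense of \eqref{eq:ht}. Uniqueness of $h^1$ up to null sets is already noted in the paper. Combined with $\Spd = \{h\in\Sp: h \text{ has } t\text{-derivatives}\}$ from \eqref{eq:Spd:}, this yields $h\in\Spd$.

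The main obstacle I anticipate is the measurability/Fubini bookkeeping needed to pass from "for a.e. $\xi$, $F(\cdot)=h(\cdot,\xi)$ is absolutely continuous in $t$" to the joint statement \eqref{eq:ht} with a Borel-measurable $h^1$ — one needs the exceptional $\xi$-set to be genuinely null and the candidate density to be jointly measurable, which is routine but fiddly given that $h\in D([0,T],\Splip)$ only has cadlag (not continuous) time regularity a priori. The analytic heart — that superlinearity of $\prff$ plus a uniform-in-$n$ Riemann-sum bound forces absolute continuity — is the de la Vallée-Poussin argument and should go through cleanly once the right $\xi$ is fixed.
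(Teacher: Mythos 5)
Your proposal is correct, and its skeleton coincides with the paper's: reduce membership in $\Spd$ to absolute continuity of $t\mapsto h(t,\xi)$ for a.e.\ $\xi$ (using that the $\xi$-derivative is automatic on $\Sp$), and obtain that absolute continuity from $\sup_n\rff_n(h,\xi)<\infty$ via the superlinearity of $\prff$ (a step the paper likewise only sketches, as ``standard''). Where you genuinely diverge is the intermediate step of showing $\sup_n\rff_n(h,\xi)<\infty$ for a.e.\ $\xi$. You observe that $n\mapsto\rff_n(h,\xi)$ is nondecreasing along the dyadic refinement (Jensen applied to the convex $\prff$ on each split interval — this is exactly \eqref{eq:Ginc} in the paper) and then conclude by monotone convergence that $\int_\R\sup_n\rff_n(h,\xi)\,d\xi=\sup_n\int_\R\rff_n(h,\xi)\,d\xi=\rff(h)<\infty$, whence the set where the supremum is infinite is null; measurability of $\xi\mapsto\rff_n(h,\xi)$ is immediate from the continuity of $h(t,\Cdot)$. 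The paper instead runs a covering argument: it builds the exceptional sets $\calU(n)$ from dyadic $\xi$-intervals, uses the spatial Lipschitz bound \eqref{eq:slopecmp} and the comparison \eqref{eq:lambda12} to show $\rff_n(h,\Cdot)$ is uniformly comparable on each such interval, and bounds the total measure of the union by $\rff(h)$ times a vanishing constant. Your route is shorter and does not need the spatial regularity of $h$ at all; the paper's buys nothing extra here beyond an explicit (and in fact weaker) quantitative bound that a direct Chebyshev estimate also yields. Your closing concern about joint measurability of the candidate density $h^1$ when assembling \eqref{eq:ht} is legitimate but is also left implicit in the paper, so it does not put you at a disadvantage relative to the published argument.
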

\begin{proof}
Recall that, we say a function $ f:[0,T]\to \R $ is absolutely continuous if, for any given $ \e>0 $,
there exists $ \d>0 $ such that for any
finite sequence of pairwise disjoint subintervals 
$ \{[s_0,t_0], [s_1 t_1], \ldots, [s_n, t_n]\} $ of $ [0,T] $
with $ \sum_{i} (t_i-s_{i}) \leq \d $,
we always have $ \sum_{i=1}^n |f(t_{i})-f(s_{i})| \leq \e $.
Recall from~\eqref{eq:Spd:} that,
to show $ h\in\Spd $, it suffices to show the existence of $ t $-derivate of $ h $, 
in the sense of~\eqref{eq:ht}.
This, by standard theory of real analysis,
is equivalent to showing
\begin{align}
\label{eq:ac:goal}
	t\mapsto h(t,\xi) \text{ is absolutely continuous },
	\text{for a.e. } \xi\in \R.
\end{align}

With $ \rff_n(h,\xi) $ defined in~\eqref{eq:rff},
by the convexity of $ \lambda\mapsto \prff(\lambda) $, we have that
\begin{align}
	\label{eq:Ginc}
	\rff_n(h,\xi) \leq \rff_m(h,\xi), \quad \forall n<m.
\end{align}
Fix an arbitrary radius $ r<\infty $.
Alongside with the partition $ \{\sigma^n_i\}_{i=0}^{2^n} $ of time,
we consider also the equally spaced, dyadic partition 
$ \{\xi^n_j := \frac{jr}{2^n}\}_{j=-2^n}^{2^n} $ of $ [-r,r] $.
Let $ U^n_j := [\xi^n_{j-1},\xi^n_{j}) $, $ j=-2^n,\ldots, 2^n-1 $,
denote the intervals associated with the partition. 
Fixing arbitrary $ 0<\e<1 $,
we inductively construct sets $ \calU(n)\subset [-r,r] $ as follows.
Set $ \calU(0)=\emptyset $, and, for $ n\geq 1 $, let
\begin{align}
\label{eq:Dn}
	\calU(n) 
	:= 
	\bigcup\Big\{ 
		U^{n}_j :  
		\sup_{ \xi\in U^n_j\setminus \calU(n-1) } \rff_n(h,\xi) \geq 
		\e^{-1}
	\Big\}
\end{align}
denote the union of intervals $ U^n_j $ on which the function $ \rff_n(h,\xi) $ 
exceeds the threshold $ \e^{-1} $,
\emph{excluding} those points from the previous iteration $ \calU(n-1) $.
Since $ h(t)\in\Splip $,
we have that $ |h(t,\xi_1)-h(t,\xi_2)| \leq |\xi_1-\xi_2| \leq r2^{-n} $,
for all $ \xi_1,\xi_2\in U^n_j $.
This gives
\begin{align}
\label{eq:slopecmp}
	\Big|
	\frac{h(\sigma^n_{i},\xi_1)-h(\sigma^n_{i-1},\xi_1)}{\sigma^n_{i}-\sigma^n_{i-1}}
	-
	\frac{h(\sigma^n_{i},\xi_2)-h(\sigma^n_{i-1},\xi_2)}{\sigma^n_{i}-\sigma^n_{i-1}}
	\Big|
	\leq
	\frac{r2^{-n+1}}{\sigma^n_{i}-\sigma^n_{i-1}}
	=
	\frac{2r}{T},
	\quad
	\forall \xi_1,\xi_2\in U^n_j.
\end{align}
That is, the argument of $ \prff(\Cdot) $ in~\eqref{eq:rffn}
differs by at most $ \frac{2r}{T} $ as $ \xi $ varies among $ U^n_j $.
With $ \prff'(\lambda) = \log(\lambda\vee 1) $,
it is straightforward to verify that, for all $ \lambda_1<\lambda_2\in [0,\infty) $
with $ |\lambda_2-\lambda_1| \leq \frac{2r}{T} $, we have
\begin{align*}
	\big| \prff(\lambda_2) - \prff(\lambda_1)| 
	\leq 
	\tfrac{2r}{T} \log((\lambda_1\wedge\lambda_2+\tfrac{2r}{T})\vee 1) 
	\leq
	c \big( \prff(\lambda_1)\wedge \prff(\lambda_2) + 1 ),
\end{align*}
for some constant $ c<\infty $ depending only on $ \frac{2r}{T} $.
In particular, for such $ \lambda_1,\lambda_2 $,
the maximal and minimal of $ \prff(\lambda_1) $ and $ \prff(\lambda_2) $
are comparable in the following sense: 
\begin{align}
	\label{eq:lambda12}
	\prff(\lambda_1) \vee \prff(\lambda_2)
	\leq
	(c+1)\big( \prff(\lambda_1)\wedge \prff(\lambda_2) \big) + c.
\end{align}
In view of~\eqref{eq:slopecmp},
we apply \eqref{eq:lambda12} with 
$ \lambda_i = \frac{h(\sigma^n_{i},\xi_1)-h(\sigma^n_{i-1},\xi_i)}{\sigma^n_{i}-\sigma^n_{i-1}} $,
for all $ \xi_1,\xi_2\in U^n_i $, 
to obtain
\begin{align}
\label{eq:Dn:bd:}
	\inf_{\xi\in U^n_j} \rff_n(h,\xi)
	\geq
	\frac{1}{c+1}\Big(\sup_{\xi\in U^n_j} \rff_n(h,\xi) \Big) -1
	\geq
	\frac{\e^{-1}}{c+1}-1,
	\quad
	\forall U^n_j \subset \calU(n).	
\end{align}
Sum the inequality~\eqref{eq:Dn:bd:} over all $ U^n_j \subset \calU(n) $,
and multiply both sides by $ |\calU(n)| $.
We then obtain
$	
	\int_{\calU(n)} \rff_n(h,\xi) d\xi
	\geq
	(\frac{\e^{-1}}{c+1}-1)|\calU(n)|.
$
From this and~\eqref{eq:Ginc}, we further deduce
\begin{align}
\label{eq:Dn:bd}
	\Big( \frac{\e^{-1}}{c+1}-1 \Big) |\calU(n)| 
	\leq 
	\int_{\calU(n)} \rff_n(h,\xi) d\xi
	\leq
	\int_{\calU(n)} \rff_m(\xi) d\xi,
	\quad
	\forall n\leq m.
\end{align}
Referring back to~\eqref{eq:Dn}, the sets~$ \calU(1),\calU(2),\ldots $ are disjoint.
Under this property, we let $ \calF(m) := \cup_{n=1}^m \calU(m) $ denote the union of the first $ m $ sets,
and sum~\eqref{eq:Dn:bd} over $ n=1,\ldots, m $ to obtain
\begin{align}
\label{eq:Dm:bd}
	\Big( \frac{\e^{-1}}{c+1}-1 \Big)  |\calF(m)| 
	\leq 
	\int_{\calF(m)} \rff_m(\xi) d\xi
	\leq
	\rff(h).
\end{align}
Set $  \calF_*:= \cup_{n=1}^\infty \calU(n) $.
Letting $ m\to\infty $ in \eqref{eq:Dm:bd} gives
$
	|\calF_*| \leq \rff(h)( \frac{\e^{-1}}{c+1}-1 ).
$
Now, with\\
$ \calF_* \supset \{ \xi\in [r,-r) : \sup_n \rff_n (h,\xi) \geq \e^{-1} \} $,
further letting $ \e \downarrow 0 $, we arrive at
\begin{align}
	\label{eq:E**}
	\Big|\Big\{ \xi\in [r,-r) : \sup_n \rff_n (h,\xi) =\infty \Big\}\Big| = 0.
\end{align}

With the properties $ \prff \geq 0 $ and $ \lim_{\lambda\to\infty}\frac{\prff(\lambda)}{\lambda} =\infty $,
it is standard to show that $ \sup_{n} \rff_n(h,\xi)<\infty $
implies the absolute continuity of $ t \mapsto h(t,\xi) $.
This together with~\eqref{eq:E**} 
shows that $ t \mapsto h(t,\xi) $ is absolutely continuous for a.e.\ $ \xi\in [-r,r) $.
As $ r<\infty $ is arbitrary,
taking a sequence $ r_n\uparrow \infty $ concludes the desired result~\eqref{eq:ac:goal}.
\end{proof}

\begin{lemma}
\label{lem:rff<rf}
For all $ h\in\Sp $, we have $ \rff(h) \leq I^j(h) $, $ j=1,2 $.
\end{lemma}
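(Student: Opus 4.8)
The plan is to bound each $\rff_n(h,\xi)$ by Jensen's inequality in the time variable and then integrate in $\xi$. Since $\rff(h)\le I^{(j)}(h)$ is trivial when $I^{(j)}(h)=\infty$, I would assume $I^{(j)}(h)<\infty$; by~\eqref{eq:rfup}--\eqref{eq:rf} this forces $h\in\Spd$ and $h(0)=\hIC$, so that $I^{(1)}(h)=\int_0^T\!\int_\R \lrfup(h_t,h_\xi)\,d\xi\,ds$ and $I^{(2)}(h)=\int_0^T\!\int_\R \lrf(h_t,h_\xi)\,d\xi\,ds$. The only input about the functions themselves is the pointwise chain
$$
	\prff(\kappa)\ \le\ \lrfup(\kappa,\rho)\ \le\ \lrf(\kappa,\rho),
	\qquad (\kappa,\rho)\in[0,\infty)\times[0,1],
$$
which follows from the monotonicity~\eqref{eq:nonincr:mob} of $\xi\mapsto \xi\,\prff(\kappa/\xi)$ together with the elementary inequalities $\mobb(\rho)\le\mobbup(\rho)\le 1$; the boundary cases $\rho\in\{0,1\}$ are read off directly from the stated conventions on $J^{(i)}$.

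Next, fix $n$. Since $h\in\Spd$, for a.e.\ $\xi\in\R$ the map $s\mapsto h(s,\xi)$ is absolutely continuous with derivative $h_t(\cdot,\xi)$, hence $h(\sigma^n_{i},\xi)-h(\sigma^n_{i-1},\xi)=\int_{\sigma^n_{i-1}}^{\sigma^n_{i}}h_t(s,\xi)\,ds$ for every $i=1,\dots,2^n$ and every such $\xi$ simultaneously, the exceptional null set being a countable union over the dyadic pairs $\sigma^n_{i-1}<\sigma^n_{i}$. Applying Jensen's inequality to the convex function $\prff$ on each interval of length $\sigma^n_{i}-\sigma^n_{i-1}=T2^{-n}$ gives
$$
	\tfrac{T}{2^n}\,\prff\!\Big(\tfrac{2^n}{T}\,\textstyle\int_{\sigma^n_{i-1}}^{\sigma^n_{i}}h_t(s,\xi)\,ds\Big)
	\ \le\
	\int_{\sigma^n_{i-1}}^{\sigma^n_{i}}\prff\big(h_t(s,\xi)\big)\,ds,
$$
and summing over $i$ telescopes to $\rff_n(h,\xi)\le\int_0^T\prff(h_t(s,\xi))\,ds$ for a.e.\ $\xi$.

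Finally, I would integrate in $\xi$: since all integrands are nonnegative, Tonelli's theorem and the pointwise chain above yield
$$
	\int_\R \rff_n(h,\xi)\,d\xi
	\ \le\
	\int_0^T\!\!\int_\R \prff\big(h_t(s,\xi)\big)\,d\xi\,ds
	\ \le\
	\int_0^T\!\!\int_\R J^{(j)}\big(h_t(s,\xi),h_\xi(s,\xi)\big)\,d\xi\,ds
	\ =\ I^{(j)}(h),
$$
using that $h(s,\Cdot)\in\Splip$ forces $h_\xi\in[0,1]$ a.e., so that substituting $\rho=h_\xi$ into the chain is legitimate. Taking the supremum over $n$ in~\eqref{eq:rff} gives $\rff(h)\le I^{(j)}(h)$ for $j=1,2$. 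The argument is essentially routine; the only points that require a little care are the measure-zero bookkeeping in $\xi$ when invoking absolute continuity (handled by the countability of the dyadic times) and the pointwise comparison $\prff(\kappa)\le J^{(j)}(\kappa,\rho)$, so I do not anticipate a substantive obstacle.
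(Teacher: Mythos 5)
Your proposal is correct and follows essentially the same route as the paper: the pointwise bound $J^{(j)}(\kappa,\rho)\geq\prff(\kappa)$ via the monotonicity~\eqref{eq:nonincr:mob} and $\Phi^{(j)}\leq 1$, then Jensen's inequality for the convex $\prff$ on each dyadic time interval to get $\rff_n(h,\xi)\leq\int_0^T\prff(h_t)\,dt$, followed by integration in $\xi$ and the supremum over $n$. The extra care you take with the a.e.-$\xi$ null set and the reduction to the finite case is fine but not a substantive difference.
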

\begin{proof}
Assume without lost of generality $ h\in\Spd $ and $ h(0)=\hIC $, 
otherwise $ \rff(h)=\infty $.
Since $ \Phi^j(\rho) \leq 1 $ for all $ \rho\in[0,1] $,
by~\eqref{eq:nonincr:mob},
\begin{align}
	\label{eq:nonincr:mob:}
	J^j(h_t,h_\xi) 
	= \Phi^{(i)}(h_\xi) \prff(\tfrac{h_t}{\Phi^j(h_\xi)}) 
	\geq 
	\xi\prff(\tfrac{h_t}{\xi})|_{\xi=1}
	= 
	\prff(h_t).
\end{align}
Integrating this inequality over $ [0,T]\times\R $ gives
\begin{align}
	\label{eq:rff<rf1}
	\int_{\R} \Big( \int_0^T \prff(h_t) dt \Big) d\xi \leq I^j(h).
\end{align}
By the convexity of $ \lambda \mapsto \prff(\lambda) $,
we have that
\begin{align}
	\label{eq:rff<rf2}
	\frac{T}{2^n} \prff\Big(\frac{h(\sigma^n_{i},\xi)-h(\sigma^n_{i-1},\xi)}{\sigma^n_{i}-\sigma^n_{i-1}} \Big)
	\leq
	\int_{\sigma^n_{i-1}}^{\sigma^n_i} \prff(h_t(t,\xi)) dt.	
\end{align}
Summing the inequality~\eqref{eq:rff<rf2} over $ i=1,\ldots,2^n $,
gives $ \rff_n(h,\xi) \leq \int_0^T \prff(h_t) dt $.
Integrate this inequality over $ \xi\in\R $,
combine the result with~\eqref{eq:rff<rf1},
and take the supremum over $ n $.
We thus conclude the desired result $ \rff(h) := \sup_n \int_\R \rff_n(h,\xi) d\xi \leq I^j(h) $.
\end{proof}

The next result concerns local approximation of the derivatives $ h_t,h_\xi $ 
of a given deviation $ h\in\Spd $.
To setup the notations,
for given $ r<\infty $ and $ \ell<\infty $, 
we consider a partition
\begin{align}
	\label{eq:Rprtn}
	R_\ell(r)
	:=
	\big\{ 
		\square = [\tfrac{(i-1)T}{\ell},\tfrac{iT}{\ell}]\times[\tfrac{(j-1)r}{\ell},\tfrac{jr}{\ell}]:
		i=1,\ldots,\ell,
		j=-\ell+1,\ldots,\ell
	\big\}
\end{align}
of $ [0,T]\times[-r,r] $ into equal rectangles.
Write $ \fint_A fdtd\xi := \frac{1}{|A|} \int_A fdtd\xi $ for the average over a set $ A $.

\begin{lemma}
\label{lem:local:apprx}
For any fixed $ h\in\Spd $, we have that
\begin{align}
	\label{eq:rect:apprx}
	\limsup_{(r,a)\to(\infty,0)} 
	\limsup_{\ell\to\infty}
	\Bigg\{
		\sum_{\square\in R_\ell(r)} 
		|\square| \
		\lrfupa\Big({\textstyle\fint_{\square}} h_tdtd\xi \,,\, {\textstyle\fint_{\square}} h_\xi dtd\xi\Big)
	\Bigg\}
	\geq
	\int_0^T \int_\R \lrfup(h_t,h_\xi) dtd\xi.
\end{align}
\end{lemma}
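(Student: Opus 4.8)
The plan is to recover the integral $\int_0^T\int_\R \lrfup(h_t,h_\xi)\,dtd\xi$ from the Riemann-type sums on the left by a two-step approximation: first replace $\lrfupa$ by $\lrfup$ (sending $a\to 0$), then replace the cell-averages of $h_t,h_\xi$ by the pointwise values via Jensen's inequality and a Lebesgue-differentiation/martingale argument as $\ell\to\infty$. Concretely, fix $r<\infty$ and fix a generic $\square\in R_\ell(r)$. Since $(\kappa,\rho)\mapsto\lrfupa(\kappa,\rho)$ is \emph{jointly convex} (Lemma~\ref{lem:lrf:cnvx}), Jensen's inequality gives
\begin{align*}
	|\square|\,\lrfupa\Big({\textstyle\fint_{\square}} h_t\,,\,{\textstyle\fint_\square} h_\xi\Big)
	\le
	\int_\square \lrfupa(h_t,h_\xi)\,dtd\xi .
\end{align*}
This is the wrong direction, so instead I would run the argument the other way: the family of $\sigma$-algebras generated by $R_\ell(r)$ as $\ell\to\infty$ is increasing along the dyadic subsequence and generates the Borel $\sigma$-algebra on $[0,T]\times[-r,r]$, so the pair of conditional expectations $\big(\fint_{\square(\cdot)}h_t,\fint_{\square(\cdot)}h_\xi\big)$ converges a.e.\ (and in $L^1$) to $(h_t,h_\xi)$ as $\ell\to\infty$ (martingale convergence, or Lebesgue differentiation restricted to the dyadic net). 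Since $\sum_{\square}|\square|\,\lrfupa(\fint_\square h_t,\fint_\square h_\xi) = \int_{[0,T]\times[-r,r]}\lrfupa\big(\fint_{\square(t,\xi)}h_t,\fint_{\square(t,\xi)}h_\xi\big)\,dtd\xi$, the main task is to pass the limit $\ell\to\infty$ inside this integral.

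For that I would use Fatou's lemma, which requires no integrability hypothesis and only needs $\lrfupa\ge 0$ together with lower semicontinuity of $\lrfupa$ to identify the liminf of the integrand with $\lrfupa(h_t,h_\xi)$ at a.e.\ point. Since $\lrfupa$ is convex and finite (by~\eqref{eq:lrfeup} and the bound $\lrfupa(\kappa,\rho)\le a^2\prff(\kappa a^{-2})<\infty$ noted after~\eqref{eq:nonincr:mob}), it is continuous on $[0,\infty)\times[0,1]$, so along the a.e.-convergent subsequence the integrand converges pointwise to $\lrfupa(h_t,h_\xi)$. Fatou then yields
\begin{align*}
	\liminf_{\ell\to\infty}\sum_{\square\in R_\ell(r)}|\square|\,\lrfupa\Big({\textstyle\fint_\square}h_t,{\textstyle\fint_\square}h_\xi\Big)
	\ge
	\int_0^T\!\int_{-r}^{r}\lrfupa(h_t,h_\xi)\,dtd\xi .
\end{align*}
Replacing $\liminf$ by $\limsup$ only strengthens the inequality, so this bounds the inner $\limsup_\ell$ from below. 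Next, send $(r,a)\to(\infty,0)$: for fixed $(t,\xi)$ with $h_\xi(t,\xi)\in(0,1)$ one checks $\mobbup_a(h_\xi)\downarrow \mobbup(h_\xi)$ as $a\to 0$, hence $\lrfupa(h_t,h_\xi)\to\lrfup(h_t,h_\xi)$ pointwise; moreover $\lrfupa$ decreases in $a$ on $[0,1/2)$ (monotone in the truncation parameter, using~\eqref{eq:nonincr:mob} applied to $\xi=\mobbup_a(\rho)$ which decreases to $\mobbup(\rho)$), so monotone convergence gives $\int_0^T\!\int_{-r}^r\lrfupa(h_t,h_\xi)\to\int_0^T\!\int_{-r}^r\lrfup(h_t,h_\xi)$, and then monotone convergence again in $r$ gives the full integral over $\R$. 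Chaining the two limits produces~\eqref{eq:rect:apprx}.

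The main obstacle is the measure-theoretic bookkeeping in the first step: justifying that the piecewise-constant approximations $\fint_{\square(t,\xi)}h_t$ and $\fint_{\square(t,\xi)}h_\xi$ converge to $h_t,h_\xi$ a.e.\ as $\ell\to\infty$. The subtlety is that $h_t$ is only defined as an $L^1_{loc}$ density via~\eqref{eq:ht}, and the net $R_\ell(r)$ indexed by $\ell\in\N$ is not itself nested (only its dyadic subsequence $\ell=2^n$ is), so I would either restrict attention to $\ell=2^n$ and invoke the $L^1$ martingale convergence theorem for the filtration generated by the dyadic rectangles (whose union generates the Borel sets, giving a.e.\ convergence), or appeal directly to the Lebesgue differentiation theorem along the regular family $R_\ell(r)$ (rectangles of bounded eccentricity shrinking to each point), which also delivers a.e.\ convergence for all $\ell\to\infty$. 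Either route is standard; once a.e.\ convergence is in hand, Fatou and the two monotone limits are routine, so I expect the proof to be short.
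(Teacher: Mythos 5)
Your proposal is correct, and it takes a genuinely different route from the paper. The paper first truncates the flux variable, working with $\lrfupa(\kappa_*\wedge h_t,h_\xi)$ so that the integrand is bounded, and then proves the quantitative Lusin-type statement \eqref{eq:local:apprx:} — that the cell-average error $E_\square(h)$ exceeds $\e$ only on a union of rectangles of measure at most $\e$ — by a measure-density argument with compact/open approximations and a classification of rectangles into desired, undesired and boundary types; this yields the two-sided limit \eqref{eq:rect:apprx:} (an equality) for the truncated integrand, after which the truncation $\kappa_*$ is removed by monotonicity and the limits in $(r,a)$ are taken by monotone convergence. You instead rewrite the Riemann sum as $\int_{[0,T]\times[-r,r]}\lrfupa\bigl(\fint_{\square(t,\xi)}h_t,\fint_{\square(t,\xi)}h_\xi\bigr)\,dtd\xi$, invoke Lebesgue differentiation along the regular family $R_\ell(r)$ (the rectangles have fixed eccentricity $T/r$, so they shrink nicely to a.e.\ point, and $h_t\in L^1_{\mathrm{loc}}$ since $\int_0^T h_t\,dt=h(T,\xi)-h(0,\xi)$ is locally bounded), use the continuity of $\lrfupa$ on $[0,\infty)\times[0,1]$ (which holds because $\mobbup_a\geq a^2>0$), and apply Fatou. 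Since Fatou needs only nonnegativity, you bypass both the $\kappa_*$ truncation and the entire set-decomposition machinery; the price is that you obtain only the one-sided bound, whereas the paper's argument gives the exact limit for the truncated integrand — but the lemma asserts only the inequality, so nothing is lost. Your final two monotone-convergence steps (in $a$, using \eqref{eq:nonincr:mob} to see that $\lrfupa$ increases as $a\downarrow 0$, and then in $r$) coincide with the paper's. Your argument is shorter and complete; the only point worth making explicit in a write-up is the regularity of the family $R_\ell(r)$ needed for differentiation along non-nested, non-cubical cells, which you already flag.
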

\begin{proof}
Fix arbitrary $ \kappa_*,r<\infty $, $ a>0 $ and $ \e>0 $.
Recall the definition of the truncated rate density $ \lrfupa $ from~\eqref{eq:lrfeup}.
We begin by proving the following statement:
there exists $ \ell_*<\infty $ such that, for all $ \ell\geq\ell_* $,
\begin{align}
	\label{eq:local:apprx:}
	&\big| \bigcup\{ \square\in R_\ell(r) : E_\square(h) \geq \e \} \big| \leq \e,
\\
	\label{eq:local:apprx}
	&
	\quad\quad
	\text{where }
	E_\square(h) 
	:=
	\Big| 	
			\lrfupa\Big(\kappa_*\wedge{\textstyle\fint_{\square}} h_tdtd\xi \,,\, {\textstyle\fint_{\square}} h_\xi dtd\xi\Big) 
			- \fint_{\square} \lrfupa(\kappa_*\wedge h_t,h_\xi)dtd\xi 
	\Big|.
\end{align}

Given that $ \lrfupa(\kappa_*\wedge\Cdot,\Cdot) $ is bounded and Borel-measurable,
the statement~\eqref{eq:local:apprx:}
follows from standard real analysis, similarly to the proof of \cite[Lemma~2.2]{cohn01}.
We given a formal proof here for the sake of completeness.
In addition to $ \ell_* $, we consider an axillary parameter $ \ell_{**}\ $. 
Both $ \ell_* $ and $ \ell_{**} $ will be specified in the sequel.
Write $ h_t\wedge\kappa_* =: h^{\kappa_*}_t $ to simplify notations.
Regard the pair of derivatives $ F := (h^{\kappa_*}_t,h_\xi) $
as a measurable map $ F: [0,T]\times[-r,r] \to [0,\kappa^*]\times [0,1] $.
Partition the range $ [0,\kappa_*]\times[0,1] $ of $ F $
into subsets $ U_1,\ldots,U_n $, each of diameter at most $ \frac{1}{\ell_{**}} $.
We let $ V_i := F^{-1}(U_i) $ be the preimage of $ U_i $.
With $ B_b(t,\xi)\subset\R^2 $ denoting the ball of radius $ b $ centered at $ (t,\xi) $,
by the theory of measure density (see, e.g., \cite[Section~7.12]{rudin87}),
we have that
\begin{align*}
	\lim_{b\downarrow 0} \frac{|B_b(t,\xi)\cap V_i|}{|B_b(t,\xi)|}
	=
	1	\text{ for a.e. } (t,\xi) \in V_i,
	\quad
	i=1,\ldots,n.
\end{align*}
This being the case, 
there exists a compact set $ K_i \subset V_i $, with $ |K_i| \geq |V_i|-\frac{1}{2\ell_{*}} $, such that
\begin{align*}
	\lim_{b\downarrow 0} \frac{|B_b(t,\xi)\cap V_i|}{|B_a(t,\xi)|}
	=
	1
	\quad
	\text{for every } (t,\xi)\in K_i.
\end{align*}
From this and the compactness of $ K_i $,
we further constructed  a finite union of open balls $ O_i\supset K_i $, 
such that
\begin{align}
	\label{eq:OiVi}
	|O_i| \geq |K_i|-\tfrac{1}{2\ell_{*}}  \geq |V_i| - \tfrac{1}{\ell_{*}}.
\end{align}
Now, for a fix $ O_i $, we classify rectangles $ \square\in R_{\ell}(r) $
that intersects with $ O_i $ (i.e., $ \square\cap O_i\neq\emptyset $) into three types:
\begin{itemize}[leftmargin=5ex]
	\item Desired rectangles: $ \square\subset O_i $ with $ |\square\cap V_i| \geq (1-\frac{1}{\ell_{**}})|\square| $;
	\item Undesired rectangles: $ \square\subset O_i $ with $ |\square\cap V_i| < (1-\frac{1}{\ell_{**}})|\square| $;
	\item Boundary rectangles: $ \square\cap O_i \neq \emptyset $ and $ \square\cap O_i^c \neq \emptyset $.
\end{itemize}
Let $ \calA^i_\text{des} $,
$ \calA^i_\text{und} $ and $ \calA^i_\text{bdy} $
denote the respective sets of desired, undesired, and boundary rectangles with respect to $ O_i $,
and let $ A^i_\text{des} $, $ A^i_\text{und} $ and $ A^i_\text{bdy} $
denote the areas (i.e., Lebesgue measure) of the union of rectangles 
in $ \calA^i_\text{des} $, $ \calA^i_\text{und} $ and $ \calA^i_\text{bdy} $, respectively.
First, for each of the desired rectangle $ \square\in\calA^i_\text{des} $, 
\begin{align}
	\label{eq:local:desired}
	&|V_i\cap\square| \geq (1-\tfrac{1}{\ell_{**}})|\square|,
\\
	\label{eq:local:desired:}
	&|h^{\kappa_*}_t(t,\xi)-h^{\kappa_*}_t(t',\xi')|,\ |h_\xi(t,\xi)-h_\xi(t',\xi')| \leq \tfrac{1}{\ell_{**}},
	\quad
	\forall (t,\xi), \ (t',\xi')\in V_i\cap\square.
\end{align}
Recall the definition of $ E_\square(h) $ from~\eqref{eq:local:apprx}.
Since $ h^{\kappa_*}_t $ and $ h_\xi $ are bounded,
and since $ (\kappa,\rho) \mapsto \lrfupa(\kappa,\rho) $ is continuous,
for some large enough $ \ell_{**}\in \N $,
the condition~\eqref{eq:local:desired}--\eqref{eq:local:desired:} implies
\begin{align}
	\label{eq:desired}
	E_\square(h)
	\leq
	\e,
	\quad
	\forall\, \square \in \bigcup_{i=1}^n \calA^i_\text{des}.
\end{align}
Next, since each $ O_i $ is finite union of open balls, 
and since the rectangles $ \square\in R_\ell(r) $ in $ R_\ell(r) $ shrinks uniformly as $ \ell\to\infty $,
there exists $ \ell_{*}\in\Z\cap[3n\ell_{**},\infty) $ such that
\begin{align}
	\label{eq:Abdy}
	\sum_{i=1}^n A^i_\text{bdy} \leq \frac{\e}{3\ell_{**}},
	\quad
	\forall \ell \geq \ell_*.
\end{align}
Moving onto undesirable rectangles. 
From the preceding definition of undesirable rectangles, 
we have $ A^i_\text{und}(1-\frac{1}{\ell_{**}}) + A^i_\text{des} + A^i_\text{bdy} \geq |V_i| $.
Combining this with \eqref{eq:OiVi} gives
\begin{align}
	\label{eq:Aundesired:}
	A^i_\text{und}(1-\tfrac{1}{\ell_{**}}) + A^i_\text{des} + A^i_\text{bdy}
	\geq 
	|O_i|-\tfrac{1}{\ell_{*}}
	\geq
	(A^i_\text{und} + A^i_\text{des} )-\tfrac{1}{\ell_{*}}.
\end{align}
Rearrange terms in~\eqref{eq:Aundesired:} and sum over $ i $ to obtain 
\begin{align}
	\label{eq:Aundesired}
	\sum_{i=1}^n A^i_\text{und}
	\leq 
	\sum_{i=1}^n \ell_{**}\Big( \frac{1}{\ell_{*}}+A^i_\text{bdy}\Big)
	\leq
	\frac{n\ell_*}{\ell_{**}}+\frac{\e}{3}
	\leq
	\frac{2\e}{3}.
\end{align}
Combining \eqref{eq:desired}--\eqref{eq:Abdy} and \eqref{eq:Aundesired}, we  conclude~\eqref{eq:local:apprx:}.

Having established~\eqref{eq:local:apprx:},
we now let $ \ell\to\infty $ in~\eqref{eq:local:apprx:} to get
\begin{align*}
	\limsup_{\ell\to\infty}
	\Bigg|
		\sum_{\square\in R_\ell(r)} 
		|\square| \
		\lrfupa\Big(\kappa_*\wedge{\textstyle\fint_{\square}} h_tdtd\xi \,,\, {\textstyle\fint_{\square}} h_\xi dtd\xi\Big)
		-
		\int_0^T \int_{-r}^r &\lrfupa(\kappa_*\wedge h_t,h_\xi)dtd\xi
	\Bigg|	
\\
	&\leq
	2rT\e + \Vert \lrfupa(\kappa_*\wedge\Cdot,\Cdot) \Vert_\infty\e.
\end{align*}
As $ \e>0 $ is arbitrary, further letting $ \e\downarrow 0 $ gives
\begin{align}
	\label{eq:rect:apprx:}
	\lim_{\ell\to\infty}
	\Bigg\{
		\sum_{\square\in R_\ell(r)} 
		|\square| \
		\lrfupa\Big(\kappa_*\wedge{\textstyle\fint_{\square}} h_tdtd\xi \,,\, {\textstyle\fint_{\square}} h_\xi dtd\xi\Big)
	\Bigg\}	
	=
	\int_0^T \int_{-r}^r \lrfupa(\kappa_*\wedge h_t,h_\xi)dtd\xi.
\end{align}
Indeed, $ \lrfupa(h_t\wedge\kappa_*,h_\xi) $ increases as $ \kappa^* $ increases.
We then remove~$ \kappa_*\wedge\Cdot $ on the l.h.s.\ of~\eqref{eq:rect:apprx:} to make the resulting quantity larger,
and let $ \kappa_*\to\infty $ using the monotone convergence theorem on the r.h.s.
This gives
\begin{align*}
	\limsup_{\ell\to\infty}
	\Bigg\{
		\sum_{\square\in R_\ell(r)} 
		|\square| \
		\lrfupa\Big({\textstyle\fint_{\square}} h_tdtd\xi \,,\, {\textstyle\fint_{\square}} h_\xi dtd\xi\Big)
	\Bigg\}	
	\geq
	\int_0^T \int_{-r}^r \lrfupa(h_t,h_\xi)dtd\xi.
\end{align*}
Further letting $ (r,a)\to(\infty,0) $,
using the monotone convergence theorem on the r.h.s.\ ($ \lrfupa $ increases as $ a $ decrease),
%
%
%
we conclude the desired result~\eqref{eq:rect:apprx}.
\end{proof}

\section{Proof of Theorem~\ref{thm:main}}
\label{sect:pfmian}
\subsection{Upper bound}
\label{sect:pfupb}
We begin by establishing the exponential tightness of $ \PN $.
To this end, consider, for $ h\in\Sp $, $ n,r<\infty $, the following modulo of continuity
\begin{align}
	\label{eq:w'}
	w'(h,n,r) := \sup_{i=1,\ldots, n} \Vert h(\tfrac{iT}{n})-h(\tfrac{(i-1)T}{n}) \Vert_{C[-r,r]}.
\end{align}
Note that for $ h\in\Sp $, we have 
$ h(\tfrac{iT}{n},\xi)-h(\tfrac{(i-1)T}{n},\xi) = |h(\tfrac{iT}{n},\xi) -h(\tfrac{(i-1)T}{n},\xi)| $.
The main step of showing exponential tightness is the following.
\begin{lemma}
\label{lem:tight}
For each fixed $ \e>0 $ and $ r<\infty $, we have that
\begin{align}
	\label{eq:tight:}
	\limsup_{n\to \infty} \
	\limsup_{N\to\infty} \frac{1}{N^2} \log \PN \big( w'(\hN,n,r)\geq \e \big)
	=
	-\infty.
\end{align}
\end{lemma}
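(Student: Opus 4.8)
The proof of Lemma~\ref{lem:tight} will rest on a graphical-representation argument showing that the event $ \{w'(\hN,n,r)\geq\e\} $ forces a macroscopic number of \emph{independent} Poisson clocks each to ring an atypically large number of times; it is this product of $ \Theta(N) $ speed-$ N $ events that produces the speed $ N^2 $. We may assume $ \e<r $. First I would apply the union bound over the $ n $ time slices and reduce each slice to a spatial problem. Writing $ g_i(\xi):=\hN(\tfrac{iT}{n},\xi)-\hN(\tfrac{(i-1)T}{n},\xi) $, note that $ g_i\geq 0 $ (since $ \hN\in\Sp $ is nondecreasing in $ t $) and that $ g_i $ is $ 1 $-Lipschitz in $ \xi $ (a difference of two nondecreasing $ 1 $-Lipschitz functions has slope in $ [-1,1] $). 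Hence $ \Vert g_i\Vert_{C[-r,r]}\geq\e $ forces $ g_i\geq\tfrac\e2 $ on some subinterval of $ [-r,r] $ of length $ \tfrac\e2 $; covering $ [-r,r] $ by $ K=O(r/\e) $ intervals $ J_1,\dots,J_K $ of length $ \tfrac\e4 $ such that every length-$ \tfrac\e2 $ subinterval contains some $ J_k $, I obtain
\begin{align*}
	\PN\big( w'(\hN,n,r)\geq\e\big)
	\leq
	\sum_{i=1}^n\sum_{k=1}^K \PN\big( g_i(\xi)\geq\tfrac\e2\ \text{ for all }\xi\in J_k\big).
\end{align*}

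Next I would realize the \ac{CGM} through its standard graphical construction: an independent rate-$ 1 $ Poisson clock $ P_y $ at every site $ y\in\Z $, with $ \h(\Cdot,y) $ increasing by one at a ring of $ P_y $ at time $ s $ whenever $ \mob(\h(s^-),y)=1 $ and staying put otherwise (consistent with~\eqref{eq:gen} because $ \mob\in\{0,1\} $). Then $ \h(t',y)-\h(t,y)\leq P_y(t,t'] $ for all $ y $ and $ t<t' $, with the $ P_y $ mutually independent and $ P_y(t,t']\sim\Pois(t'-t) $. On $ \{g_i(\xi)\geq\tfrac\e2\ \forall\xi\in J_k\} $ we have $ \h(\tfrac{NiT}{n},y)-\h(\tfrac{N(i-1)T}{n},y)\geq\tfrac{N\e}2 $ for every lattice site $ y $ with $ \tfrac yN\in J_k $, of which there are at least $ N\tfrac\e4-1\geq N\tfrac\e8 $ for $ N $ large; hence each such $ P_y $ rings at least $ \tfrac{N\e}2 $ times during an interval of length $ \tfrac{NT}{n} $. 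By independence of the clocks,
\begin{align*}
	\PN\big( g_i(\xi)\geq\tfrac\e2\ \text{ for all }\xi\in J_k\big)
	\leq
	\Big( \PN\big(\Pois(\tfrac{NT}{n})\geq\tfrac{N\e}2\big)\Big)^{\lceil N\e/8\rceil}.
\end{align*}

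Finally I would insert the Poisson tail estimate: for $ n $ large enough that $ \tfrac Tn<\tfrac\e2 $, Chernoff's inequality gives $ \PN(\Pois(\tfrac{NT}{n})\geq\tfrac{N\e}2)\leq e^{-N\beta_n} $, where $ \beta_n:=\tfrac\e2\log\tfrac{\e n}{2T}-\tfrac\e2+\tfrac Tn=\prf(\tfrac\e2|\tfrac Tn) $ satisfies $ \beta_n\to\infty $ as $ n\to\infty $. Combining the three displays, $ \PN(w'(\hN,n,r)\geq\e)\leq nK\,e^{-(\e\beta_n/8)N^2} $ with the prefactor $ nK $ independent of $ N $; therefore $ \limsup_{N\to\infty}\tfrac1{N^2}\log\PN(w'(\hN,n,r)\geq\e)\leq-\tfrac{\e\beta_n}{8} $, and letting $ n\to\infty $ yields~\eqref{eq:tight:}.

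I expect the only genuinely delicate point to be the graphical-construction step — namely, observing that the event pins the deviation down \emph{simultaneously at every site} of a macroscopic window, so that the bound factorizes over $ \Theta(N) $ independent clocks; this is precisely the mechanism separating the speed-$ N^2 $ scale here from the speed-$ N $ scale of a single-site (or single-bond) deviation. The spatial Lipschitz/monotonicity reduction, the covering argument, and the Poisson estimate are all routine.
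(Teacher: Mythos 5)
Your proof is correct and follows essentially the same route as the paper's: reduce $\{w'(\hN,n,r)\geq\e\}$ via the spatial Lipschitz property to a uniform lower bound on the height increment over a macroscopic window of $\Theta(N)$ sites, factorize over the independent Poisson clocks, and apply the Poisson tail estimate with rate $\prf(\tfrac\e2|\tfrac Tn)\to\infty$. The only cosmetic differences are your covering by length-$\tfrac\e4$ intervals in place of the paper's partition into intervals of length $\tfrac{r}{m}\le\tfrac\e4$, and your use of a non-asymptotic Chernoff bound in place of the asymptotic Poisson LDP.
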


\begin{proof}
Write $ t_i := \frac{iT}{n} $ to simplify notations.
Our goal is to bound the following probability:
\begin{align}
	\label{eq:pN}
	p_N:=
	\PN \Big( 
		\bigcup_{i=1}^n
		\bigcup_{\frac{x}{N}\in [-r,r]} 
		\Big\{ \hN(t_{i},\tfrac{x}{N})-\hN(t_{i-1},\tfrac{x}{N}) \geq \e \Big\}
	\Big).
\end{align}
Let $ m:= \lceil \frac{4r}{\e} \rceil $ and partition $ [-r,r] $ into subintervals 
$
	U_j := [\tfrac{r(j-1)}{m}, \tfrac{rj}{m}]
$,
$ j=1-m,\ldots, m $.
Since $ \hN(t)\in\Splip $, for each $ x,x'$ such that $\frac xN, \frac{x'}{N}\in U_j $, 
we have
\begin{align*}
	\big| \big(\hN(t_i,\tfrac{x}{N}) - \hN(t_{i-1},\tfrac{x}{N})\big) - 
  \big(\hN(t_i,\tfrac{x'}{N}) - \hN(t_{i-1},\tfrac{x'}{N})\big) \big|
	\leq
        2|\tfrac xN -\tfrac{x'}{N}|
	\leq
	\tfrac{2r}{m}
	\leq
	\tfrac{\e}{2}. 
\end{align*}
Consequently, if $ h(t_i,\tfrac{x}{N}) - h(t_{i-1},\tfrac{x}{N}) \geq \e $
for some $ \frac{x}{N} \in U_j $,
then $ h(t_i,\tfrac{x'}{N}) - h(t_{i-1},\tfrac{x'}{N}) \geq \frac{\e}{2} $ \emph{for all} $ \frac{x'}{N}\in U_j $.
This gives
\begin{align}
	\notag
	p_N
	&\leq
	\PN \Big( \bigcup_{i=1}^n \bigcup_{j=1-m}^m 
		\Big( \bigcup_{U_j}
			\Big\{ 
			\hN(t_{i},\tfrac{x}{N})-\hN(t_{i-1},\tfrac{x}{N}) \geq \e 
			\Big\}
		\Big)
	\Big)
\\
	\label{eq:tight:union}
	&\leq
	\sum_{i=1}^n \sum_{j=1-m}^m
	\PN \Big( 
		\bigcap_{U_j}
		\Big\{ 
			\hN(t_{i},\tfrac{x}{N})-\hN(t_{i-1},\tfrac{x}{N}) \geq \tfrac{\e}{2}
		\Big\}
	\Big)	.
\end{align}
Under the law $ \PN $, the condition $ \hN(t_i,\tfrac{x}{N})-\hN(t_{i-1},\tfrac{x}{N}) \geq \tfrac{\e}{2} $
forces the underlying Poisson clock at site $ x $ to tick at least $ N\frac{\e}{2} $ times in a time interval of length $\frac {NT}{n}$.
Using this in~\eqref{eq:tight:union} gives
\begin{align*}
	p_N \leq
	n \sum_{j=1-m}^m \Pr( X_N \geq N\tfrac{\e}{2} )^{\#(U_j\cap\frac{\Z}{N})},
\end{align*}
where $ X_N \sim \Pois(\tfrac{NT}{n}) $.
Since $ U_j $ is an interval of length $ \frac{r}{m} $, $ m:= \lceil \frac{4r}{\e} \rceil $,
we necessarily have $ \#(U_j\cap\frac{\Z}{N}) \geq \frac{\e N}{5} $, for all $ N $ large enough.
This yields
\begin{align}
	\label{eq:pNbd}
	p_N \leq
	2mn \Pr\big( X_N \geq N\tfrac{\e}{2} \big)^{\frac{{\e N}}{5}}.
\end{align}
Recall from~\eqref{eq:prf} that $ \prf(\lambda|u) $ 
denotes the large deviation rate function for Poisson variables.
In particular,
$
	\lim_{N\to\infty} 
	\frac{1}{N} \log \PN( X_N \geq N\frac{\e}{2} )
	= -\prf(\frac{\e}{2}|\frac{T}{n}).
$
Using this in~\eqref{eq:pNbd} gives
\begin{align}
	\label{eq:pNbd:}
	\limsup_{N\to\infty} \frac{1}{N^2}\log p_N 
	\leq
	- \frac{\e}{5}\prf\big(\tfrac{\e}{2}\big|\tfrac{T}{n}\big).	
\end{align}

Now, combining~\eqref{eq:pN} and \eqref{eq:pNbd:} gives
\begin{align*}
	\limsup_{N\to\infty} \frac{1}{N^2} \log \PN(w'(\hN,n,r)\geq \e) 
	\leq
	-\frac{\e}{5}
	 \prf\big(\tfrac{\e}{2}\big|\tfrac{T}{n}\big).
\end{align*}
The last expression tends to $ -\infty $ as $ n\to\infty $.
This concludes the desired result~\label{eq:tight:goal}.
\end{proof}

Given Lemma~\ref{lem:tight}, the exponential tightness follows by standard argument,
as follows.

\begin{proposition}
\label{prop:tight}
Given any $ b<\infty $, there exists a compact set $ \calK\subset\Sp $ such that
\begin{align}
	\label{eq:tight}
	\limsup_{N\to\infty} \frac{1}{N^2} \log \PN(\hN\notin \calK) \leq -b.
\end{align}
\end{proposition}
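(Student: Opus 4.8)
The plan is to deduce exponential tightness in the Skorokhod $J_1$ topology from the modulus-of-continuity estimate in Lemma~\ref{lem:tight} together with a uniform control on how far $\hN$ can reach at a fixed time. The strategy mirrors the classical construction of compact sets in $D([0,T],E)$ (see, e.g., Ethier--Kurtz or Billingsley): a subset of $\Sp$ is precompact if (i) the values at the times of a countable dense set lie in a compact subset of $\Splip$, and (ii) a uniform modulus-of-continuity bound holds. Since $\Splip$ with the metric $\dist$ in~\eqref{eq:dist} is \emph{not} itself compact (the functions are only pinned down up to an additive constant and a choice of slope), I first need to cut down to a genuinely compact family of profiles, and only then invoke Lemma~\ref{lem:tight}.

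First I would build, for each $k\in\N$, a compact set $K_k\subset\Splip$. Fix the normalization coming from the initial condition: by~\eqref{eq:ic:cnvg} we have $\hic_N(0)=\frac1N\hic(0)\to\hIC(0)$, so $|\hN(0,0)|$ is bounded uniformly in $N$ by some constant $c_0$. For $h\in\Sp$ one has, for all $\xi\in[-r,r]$ and $t\in[0,T]$, the two-sided bound $h(0,0)\le h(t,\xi)$ (monotonicity in $t$ and the Lipschitz/gradient constraint give $h(t,0)\ge h(0,0)$ and then $|h(t,\xi)-h(t,0)|\le r$), while the upper bound $h(t,\xi)\le h(0,0)+ (\text{flux at }0\text{ up to time }t) + r$ is controlled once we know $\hN(t,0)-\hN(0,0)$ is not too large. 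The latter is exactly a one-point upper-tail bound: $\hN(T,0)-\hN(0,0)$ is at most $\frac1N$ times the number of clock rings at site $0$ in $[0,NT]$ plus a density term, and by the Poisson large-deviation estimate behind~\eqref{eq:pNbd:} this exceeds $M$ with $\PN$-probability $\exp(-\Omega(N^2))$ once $M$ is large. Hence for any $b$ there is $M_b<\infty$ such that, off an event of $\PN$-probability $\le \exp(-bN^2)$ (for $N$ large), $\hN$ takes values in $K_k:=\{f\in\Splip: |f(\xi)|\le c_0+M_b+k \ \forall \xi\in[-k,k]\}$ on $[-k,k]$ for every $k$; each $K_k$ is compact by Arzelà--Ascoli ($\Splip$ functions are uniformly Lipschitz). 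One then patches these into a single compact $\calK_0\subset\Splip$ adapted to the metric $\dist$ by the usual diagonal/tail argument: choose $k$ large enough that $\sum_{j>k}2^{-j}<$ the desired tolerance.

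Next I would combine this with Lemma~\ref{lem:tight}. Given $b<\infty$, for each $\ell\in\N$ pick, using Lemma~\ref{lem:tight} with $\e=1/\ell$ and $r=\ell$, integers $n_\ell$ and then $N_\ell$ so that $\PN(w'(\hN,n_\ell,\ell)\ge 1/\ell)\le \exp(-(b+\ell)N^2)$ for $N\ge N_\ell$; also pick, as above, the compact value-set with exceptional probability $\le \exp(-(b+1)N^2)$. Define
\[
 \calK := \overline{\Big\{ h\in\Sp : h(t)\in\calK_0 \ \forall t, \ \ w'(h,n_\ell,\ell)<\tfrac1\ell \ \forall \ell \Big\}}.
\]
By the Skorokhod compactness criterion this $\calK$ is compact in $(\Sp,J_1)$: the uniform modulus bound $w'(h,n_\ell,\ell)<1/\ell$ for all $\ell$ forces equicontinuity at a common rate (after the usual passage from the fixed-grid modulus $w'$ to the Skorokhod modulus $w''$, valid because the grids refine and $h$ is monotone in $t$), and the values live in the fixed compact $\calK_0$. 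A union bound over $\ell$ gives
\[
 \PN(\hN\notin\calK)\ \le\ \sum_{\ell\ge1}\exp\big(-(b+\ell)N^2\big)+\exp\big(-(b+1)N^2\big)\ \le\ C\exp(-bN^2),
\]
whence $\limsup_{N}\frac1{N^2}\log\PN(\hN\notin\calK)\le -b$, as claimed.

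The main obstacle is topological bookkeeping rather than probability: one must check carefully that the fixed-grid modulus $w'(h,n,r)$ appearing in Lemma~\ref{lem:tight}, together with value-compactness, really does imply $J_1$-precompactness in $D([0,T],\Splip)$ over the non-compact base space $\R$. Two points need care — (a) reducing from $\Splip$ on all of $\R$ with the metric $\dist$ to uniform control on each $[-k,k]$, handled by the $2^{-k}$ tails of $\dist$; and (b) upgrading $w'$ (oscillation over a prescribed dyadic-type partition) to the genuine Skorokhod modulus $w''$, which is where monotonicity of $t\mapsto h(t,\xi)$ is essential: a monotone path has no oscillation beyond its total increment over a subinterval, so controlling increments on a fine enough fixed grid controls $w''$. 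I would isolate (b) as the one genuinely non-routine step and verify it via the standard inequality relating $w'$ and $w''$ for monotone (hence $w''\le w'$ up to grid refinement) càdlàg paths.
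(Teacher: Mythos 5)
Your proposal follows the paper's route: Lemma~\ref{lem:tight} supplies the modulus bound, one picks scales $n_\ell$ so that the exceptional probabilities are $\exp(-(b+\ell)N^2)$, takes a union bound, and invokes the standard precompactness criterion for $D([0,T],\cdot)$. Your insistence on a separate value-compactness step is a legitimate (indeed necessary) addition: the paper disposes of condition (i) by asserting that $\Splip$ is compact in $C(\R)$, which is not literally true, since the constant functions form an unbounded, $\dist$-separated family in $\Splip$; pinning down $h(0,0)$ via the fixed initial condition and bounding the total increment, as you propose, is the right repair.

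The one step that would fail as written is your justification of the $\exp(-\Omega(N^2))$ upper-tail bound for $\hN(T,0)-\hN(0,0)$ by the clock count at site $0$: a single rate-one clock ringing $NM$ times in $[0,NT]$ has probability $\exp(-N\prf(M|T))$, i.e.\ speed $N$ only, which is useless for speed-$N^2$ exponential tightness. The mechanism actually behind \eqref{eq:pNbd:} is different: the gradient constraint $h_\xi\in[0,1]$ forces the increment to exceed $M/2$ at all of the $\sim NM$ neighbouring sites simultaneously, each of whose clocks must then ring at least $NM/2$ times, giving $\big(\exp(-cN)\big)^{cNM}$. Equivalently, run the proof of Lemma~\ref{lem:tight} with $n=1$ held fixed and $\e=M$ large (and $r=r(M)$ large enough that $[-r,r]$ contains the required $\e N/5$ sites); the resulting rate $\tfrac{M}{5}\prf\big(\tfrac{M}{2}\big|T\big)$ diverges as $M\to\infty$, which is what you need to choose $M_b$. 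With that substitution your set $\calK_0$ exists and the rest of the argument closes as you describe.
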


\begin{proof}
Define, for $ h\in D([0,T],C(\R)) $, 
the modulo of oscillation as
\begin{align}
	\label{eq:w}
	w(h,\d) := \inf_{\set{t_i}} \ \max_i \sup_{s\in[t_{i-1},t_{i})} \dist(h(s), h(t) ),
\end{align}	
where the infimum goes over all partitions $ \{0=t_0<t_1<\ldots<t_n=T\} $ of $ [0,T] $
such that $ t_i-t_{i-1} \geq \d $, $ i=1,\ldots,n $.
Note that $ w(h,\d) $ decreases as $ \d $ decreases.
Under these notations,
recall from \cite[Theorem~3.6.3]{ethier09} that $ \calA\subset D([0,T],C(\R)) $
is precompact if:
\begin{enumerate}
	\item \label{enu:cmp1}
		there exists compact $ \calK'\subset C(\R) $ such that
		$ h(t)\in\calK' $, $ \forall t\in[0,T] $, $ h\in\calA $;
	\item \label{enu:cmp2} 
		For each $ h\in\calA $, $ \lim_{\d\downarrow 0} w(h,\d) = \lim_{n\to\infty} w(h,\frac{T}{n}) =0 $.
\end{enumerate}
The condition~\eqref{enu:cmp1} holds automatically for any $ \calA\subset\Sp $
because $ \Splip $ is already a compact subset of $ C(\R) $.
In~\eqref{eq:w}, take the equally the spaced partition $ \{0<\frac{T}{n}<\ldots<T\} $
we obtain that, for $ h\in\Sp $ and $ k<\infty $,
\begin{align}
\label{eq:ww'}
	w(h,\tfrac{T}{n}) 
	\leq 
	\max_{i=1,\ldots,n} 
	\Vert f(\tfrac{(i-1)T}{n})-f(\tfrac{iT}{n}) \Vert_{C[-k,k]} + 2^{-k} 
	=
	w'(h,n,k) + 2^{-k}.
\end{align}
For each fixed $ k<\infty $,
using Lemma~\ref{lem:tight} with $ \e = 2^{-k} $ to bound the term~$ w'(h,n,k) $ in~\eqref{eq:ww'} gives
\begin{align*}
	\limsup_{n\to\infty}
	\limsup_{N\to\infty}
	\frac{1}{N^2} \log \PN\big( w(\hN,\tfrac{T}{n}) \geq 2^{-k+1} \big) = -\infty.
\end{align*}
Fix further $ b<\infty $. We then obtain $ n_*(b,k), N_*(b,k)<\infty $, depending only on $ b,k $,
such that
\begin{align}
	\label{eq:wtight}
	\frac{1}{N^2} \log \PN\big( w(\hN,\tfrac{T}{n}) \geq 2^{-k+1} \big) < -kb,
	\quad
	\forall n \geq n_*(b,k),
	\
	N \geq N_*(b,k).
\end{align}
Further, for each $ N\in\{1,\ldots, N_*(b,k)\} $,
it is straightforward to show that
$ \lim_{\d\downarrow 0} \PN(w(\hN,\delta) \geq 2^{-k+1}) = 0 $.
Hence, by making $ n_*(b,k) $ larger in~\eqref{eq:wtight} if necessary,
the inequality~\eqref{eq:wtight} actually holds for all $ N \geq 1 $, i.e.,
\begin{align}
	\label{eq:wtight:}
	\frac{1}{N^2} \log \PN\big( w(\hN,\tfrac{T}{n}) \geq 2^{-k+1} \big) < -kb,
	\quad
	\forall n \geq n_*(b,k),
	\
	N \geq 1.
\end{align}
Now let $ \calA := \cap_{k=1}^\infty \{h: w(h,\frac{T}{n_*(b,k)}) \geq 2^{-k+1} \} $.
By the previously stated criteria~\eqref{enu:cmp1}--\eqref{enu:cmp2},
the set $ \calA $ is precompact.
Rewriting \eqref{eq:wtight:} as $ \PN( w(\hN,\frac{T}{n_*(b,k)}) \geq 2^{-k+1} ) \leq e^{-kbN^2} $
and taking the union bound over $ k\geq 1 $,
we obtain $ \PN(\calA^c) \leq c(b) e^{-bN^2} $,
for some constant $ c(b)<\infty $ depending only on $ b $.
This concludes~\eqref{eq:tight} for $ \calK := \bar{\calA} $.
\end{proof}

We next prepare a lemma that allows us to ignore discontinuous deviations~$ g $ 
in proving Theorem~\ref{thm:main}\ref{enu:upb}.
\begin{lemma}
\label{lem:disconti}
Given any $ b<\infty $ and any $ g\in\Sp\setminus C([0,T],C(\R)) $,
i.e., discontinuous $ g $,
there exists a neighborhood $ \calO $ of $ g $, i.e., an open set with $ g\in\calO $, such that
\begin{align}
	\label{eq:discnti}
	\limsup_{N\to\infty} \frac{1}{N^2} \log \PN(\hN\in\calO) \leq -b.
\end{align}
\end{lemma}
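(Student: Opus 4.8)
The plan is to exploit the fact that a discontinuity of $g$ in the $J_1$ topology forces $\hN$, whenever $\hN\in\calO$ for a small enough neighbourhood $\calO$ of $g$, to increase by a macroscopic amount over a \emph{deterministic} short time window and over a whole macroscopic spatial interval; such an event makes each of $\asymp N$ independent Poisson clocks fire $\asymp N$ times in a short time, hence has probability $\exp(-O(N^2))$ with a constant we can make arbitrarily large by shrinking the window. This is a variant of Lemma~\ref{lem:tight} with the partition replaced by a single well-chosen interval. Concretely, since $g\in D([0,T],\Splip)$ is c\`adl\`ag and discontinuous, there is $t_0\in(0,T]$ with $g(t_0^-)\neq g(t_0)$ in $C(\R)$; the monotonicity in~\eqref{eq:sp1} gives $g(t_0^-,\xi)\le g(t_0,\xi)$ for all $\xi$, so the continuous function $G:=g(t_0,\Cdot)-g(t_0^-,\Cdot)\ge0$ is not identically zero and therefore $G\ge 2\delta_0$ on some interval $[\xi_0-\beta,\xi_0+\beta]$ for suitable $\beta,\delta_0>0$. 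Fix $k\in\N$ with $k\ge\xi_0+\beta$, and, using $g(t_0^-)=\lim_{u\uparrow t_0}g(u)$, pick $\gamma_0\in(0,t_0)$ with $\Vert g(u)-g(t_0^-)\Vert_{C[-k,k]}<\delta_0/4$ for all $u\in(t_0-\gamma_0,t_0)$; set $\gamma:=\gamma_0/2$.

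Next I would set up the neighbourhood and the deterministic window. For $\rho>0$ small there is an open neighbourhood $\calO$ of $g$ in $\Sp$ such that every $h\in\calO$ admits an increasing homeomorphism $\lambda$ of $[0,T]$ with $\Vert\lambda-\mathrm{id}\Vert_\infty<\rho$ and $\sup_{0\le t\le T}\dist(h(t),g(\lambda(t)))<\rho$; hence $\sup_t\Vert h(t)-g(\lambda(t))\Vert_{C[-k,k]}<2^k\rho$ once $\rho<2^{-k}$. With $s_0:=\lambda^{-1}(t_0)$ we get $|s_0-t_0|<\rho$ and $\lambda(s_0-\gamma)\in(t_0-\gamma-2\rho,t_0)\subset(t_0-\gamma_0,t_0)$ provided $\rho<\gamma_0/4$, so for every $\xi\in[\xi_0-\beta,\xi_0+\beta]\subset[-k,k]$,
\begin{align*}
 h(s_0,\xi)-h(s_0-\gamma,\xi)\ \ge\ \big(g(t_0,\xi)-2^k\rho\big)-\big(g(t_0^-,\xi)+\tfrac{\delta_0}{4}+2^k\rho\big)\ \ge\ 2\delta_0-\tfrac{\delta_0}{4}-2^{k+1}\rho\ \ge\ \delta_0
\end{align*}
for $\rho$ small enough. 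Since $[s_0-\gamma,s_0]\subset[a,b]$ for the \emph{deterministic} interval $[a,b]:=\big[(t_0-\gamma-\rho)\vee0,\ (t_0+\rho)\wedge T\big]$, of length $b-a\le\gamma+2\rho<\delta_0$, and $t\mapsto\hN(t,\xi)$ is nondecreasing, we deduce
\begin{align*}
 \{\hN\in\calO\}\ \subseteq\ \Big\{\hN(b,\xi)-\hN(a,\xi)\ge\delta_0\ \text{ for every }\xi\in[\xi_0-\beta,\xi_0+\beta]\Big\}.
\end{align*}

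Finally, on the right-hand event, for each $x\in\Z$ with $\tfrac xN\in[\xi_0-\beta,\xi_0+\beta]$ the increment $\h(Nb,x)-\h(Na,x)\ge\delta_0 N$ forces the clock at $x$ to ring at least $\delta_0 N$ times during $[Na,Nb]$, because in the \ac{CGM} the height $\h(\Cdot,x)$ is nondecreasing and grows by $1$ at each qualifying ring of that clock. These events are independent over $x$ (distinct clocks) and each has probability $\Pr(\Pois(N(b-a))\ge\delta_0 N)\le e^{-N\prf(\delta_0|b-a)}\le e^{-N\prf(\delta_0|\gamma+2\rho)}$, by the Poisson Chernoff bound and the monotonicity of $v\mapsto\prf(\delta_0|v)$ on $(0,\delta_0)$, with $\prf$ as in~\eqref{eq:prf}. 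Since there are at least $\beta N$ such sites once $N\ge\beta^{-1}$,
\begin{align*}
 \PN(\hN\in\calO)\ \le\ \exp\!\big(-\beta N^2\,\prf(\delta_0|\gamma+2\rho)\big)\qquad\text{for all large }N.
\end{align*}
Because $\prf(\delta_0|v)\uparrow\infty$ as $v\downarrow0$ and $\gamma+2\rho=\tfrac{\gamma_0}{2}+2\rho$, I would first fix $\gamma_0$ small and then $\rho$ small (also respecting $\rho<\gamma_0/4$ and the bound on $\rho$ from the previous paragraph) so that $\beta\,\prf(\delta_0|\gamma+2\rho)\ge b$, yielding~\eqref{eq:discnti}.

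The one genuinely delicate point is that $J_1$-closeness identifies the jump time $t_0$ of $g$ only up to a time change, so the forced macroscopic increment of $\hN$ a priori lives on an interval with \emph{random} endpoints; confining it into a fixed window $[a,b]$ is routine, but one must also keep $b-a$ small \emph{uniformly over $h\in\calO$}, and this is exactly what the left-limit regularity $g(t_0^-)=\lim_{u\uparrow t_0}g(u)$ provides (through the choice of $\gamma_0$). Once that is arranged, the probabilistic estimate is a verbatim variant of the Poisson-clock bound already used in Lemma~\ref{lem:tight}.
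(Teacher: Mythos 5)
Your argument is correct and follows essentially the same route as the paper: a $J_1$-ball around a discontinuous $g$ forces $\hN$ to gain a macroscopic height increment over a fixed short time window and a macroscopic spatial interval, which costs at least $\exp(-\beta N^2\,\prf(\delta_0|\gamma+2\rho))$ by the independent Poisson clocks, and the rate blows up as the window shrinks. The only cosmetic difference is that the paper phrases the containment via the modulus $w'$ and invokes (the quantitative content of) Lemma~\ref{lem:tight}, whereas you inline the Poisson--Chernoff estimate and track the time-change bookkeeping explicitly.
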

\begin{proof}
Recall that, Skorokhod's $ J_1 $-topology is induced from the following metric
\begin{align}
	\label{eq:distt}
	\distt(g,h)
	:=
	\sup_{ v } 
	\Big\{ \Big(\sup_{t\in[0,T]} |v(t)-t|\Big)\vee  \Big(\sup_{t\in[0,T]} \dist( g(t), (h\circ v)(t) ) \Big) \Big\}.
\end{align}
Here the supremum goes over all $ v:[0,T]\to[0,T] $ that is bijective, strictly increasing and continuous.	
	
Given $ g\in\Sp\setminus C([0,T],C(\R)) $, there exists $ t\in(0,T] $, $ \xi\in\R $ and $ \e_0>0 $
such that $ g(t,\xi)-g(t^-,\xi) \geq \e_0 $.
From the expression~\eqref{eq:distt} of the Skorohod metric $ \distt(\Cdot,\Cdot) $,
we see that $ \distt(h,g)<\delta $ implies
$ h\big((t+\delta)\wedge T,\xi\big) \geq g(t,\xi)-\delta $
and $ h\big((t-\delta)\vee 0, \xi\big) \leq g(t,\xi)+\delta $.
The last two conditions gives 
\begin{align*}
	w'(h,2\delta,|\xi|) \geq g(t,\xi)-g(t^-,\xi) - 2\d
	\geq 
	\e_0-2\d.
\end{align*}
Equivalent,
\begin{align}
	\label{eq:discnt:}
	\{h:\distt(h,g)<\delta\} \subset \{ w'(h,2\delta) \geq \e_0-2\delta \}.
\end{align}
Now, for any given $ b<\infty $, by Lemma~\ref{lem:tight} there exists some small enough $ \delta>0 $ such that
\begin{align}
	\label{eq:discnt::}
	\limsup_{N\to\infty} \frac{1}{N^2} \log \PN\big( w'(\hN,2\delta,|\xi|)\geq \e_0-2\delta\big) \leq -b.
\end{align}
Combining~\eqref{eq:discnt:}--\eqref{eq:discnt::},
we see that \eqref{eq:discnti} holds for $ \calO:=\{h:\distt(h,g)<\delta\} $.
\end{proof}

We now begin the proof of Theorem~\ref{thm:main}\ref{enu:upb}.
The main ingredient is Proposition~\ref{prop:ent:lw}, which we state in the following.
To setup notations, give a continuous deviation $ g\in\Sp\cap C([0,T],\Splip) $,
we define the following tubular set around $ g $:
\begin{align}
	\label{eq:tubular}
	\calU_{a,r}(g) := \Big\{ h\in\Sp : \sup_{t\in[0,T]} \Vert h(t)-g(t) \Vert_{C[-r,r]} < a \Big\}.
\end{align}
For \emph{generic} $ a_N\downarrow 0 $ and $ r_N\uparrow \infty $,
we consider the following conditioned law:
\begin{align}
	\label{eq:QN}
	\QN := \frac{1}{\PN(\calU_{a_N,r_N}(g))} \PN|_{\calU_{a_N,r_N}(g)}.
\end{align}
Recall that, for probability laws $ Q,P $, the relative entropy of $ Q $ with respect to $ P $ 
is defined as $ H(Q|P):=\Ex_Q(\log\frac{dQ}{dP}) $ if $ Q\ll P $;
and $ H(Q|P):=-\infty $ otherwise.
\begin{proposition}
\label{prop:ent:lw}
Fix a continuous deviation $ g\in \Sp\cap C([0,T],\Splip) $,
and let $ \{\QN\}_N $ and $ \calU_{a_N,r_N}(g) $ be as in~\eqref{eq:QN},
with \emph{generic} $ a_N\downarrow 0 $ and $ r_N\uparrow \infty $.
Then 
\begin{align}
	\label{eq:ent:lw}
	- \limsup_{N\to\infty} \frac{1}{N^2} \log \PN(\hN\in\calU_{a_N,r_N}(g))
	=
	\liminf_{N\to\infty} \frac{1}{N^2} H(\QN|\PN) \geq \rfup(g).
\end{align}
\end{proposition}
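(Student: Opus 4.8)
The plan is to extract a large-deviation lower bound on the probability $\PN(\hN\in\calU_{a_N,r_N}(g))$ by a change-of-measure argument, using the entropy inequality
\[
	H(\QN|\PN) = -\log\PN(\calU_{a_N,r_N}(g)),
\]
which holds because $\QN$ is the normalized restriction of $\PN$ to the tubular set; this gives the first (equality) assertion in \eqref{eq:ent:lw} immediately. The real content is the inequality $\liminf_N \frac{1}{N^2}H(\QN|\PN)\geq \rfup(g)$. First I would note that we may assume the left side is finite, so in particular $g\in\Spd$ and $g(0)=\hIC$, since otherwise there is nothing to prove (and one should check that $\PN(\calU_{a_N,r_N}(g))$ tends to zero fast enough when $g\notin\Spd$ — but this falls out of the same computation, with $\rfup(g)=\infty$). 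The strategy is to \emph{lower} bound the relative entropy by testing against a suitable family of exponential martingales (tilts) of the TASEP: for any bounded predictable $\alpha(t,x)\geq 0$, the process obtained by speeding up the Poisson clock at site $x$ at time $t$ by factor $e^{\alpha(t,x)}$ has a Radon--Nikodym derivative whose logarithm, under any law, is
\[
	\sum_x \int_0^{NT} \alpha(t,x)\, d\h(t,x) - \sum_x \int_0^{NT} \mob(\h(t),x)\,(e^{\alpha(t,x)}-1)\, dt.
\]
By the Donsker--Varadhan variational characterization of relative entropy (or a direct convexity / Jensen argument), $H(\QN|\PN)$ dominates $\Ex_{\QN}$ of the above quantity for every choice of $\alpha$.

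Next I would choose $\alpha$ to depend only on the macroscopic variables: given a rectangle partition $R_\ell(r)$ of $[0,T]\times[-r,r]$ as in \eqref{eq:Rprtn}, and a truncation level $a$, set $\alpha(t,x) = \log\big(\kappa_\square / \mobbup_a(\rho_\square)\big)_+$ on the slab corresponding to $\square$, where $\kappa_\square$ and $\rho_\square$ are the cell-averages $\fint_\square h_t$ and $\fint_\square h_\xi$ of the target $g$. Under $\QN$, on the event $\calU_{a_N,r_N}(g)$, the increment $\frac1N[\h(N\sigma_i,Nx)-\h(N\sigma_{i-1},Nx)]$ is within $a_N$ of the corresponding increment of $g$, which in the cell average is $\approx \kappa_\square\cdot(\text{time length})$; and the time-integrated mobility $\frac{1}{N}\int \mob(\h(Nt),x)dt$ over the slab is bounded above by the time-integrated $\rho\wedge(1-\rho)$ of the local density, which in turn concentrates (again on $\calU_{a_N,r_N}(g)$) near $\fint_\square h_\xi$ — here the elementary bound $\mob(\f,x)=\eta(x+\tfrac12)(1-\eta(x-\tfrac12))\leq \eta(x+\tfrac12)\wedge(1-\eta(x-\tfrac12))$ and summation by parts turn the microscopic mobility into the macroscopic $h_\xi\wedge(1-h_\xi)$, and the truncation $\mobbup_a\geq a^2>0$ keeps everything finite. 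Plugging these in and using the definition \eqref{eq:lrfeup} of $\lrfupa$, one gets
\[
	\liminf_N \frac{1}{N^2}H(\QN|\PN) \;\geq\; \sum_{\square\in R_\ell(r)} |\square|\, \lrfupa\Big(\fint_\square h_t\,,\,\fint_\square h_\xi\Big) - o_{a_N,r_N}(1),
\]
where the error incorporates the $a_N$-fluctuation allowance and the boundary effects of the partition. Finally I would take $\ell\to\infty$ and then $(r,a)\to(\infty,0)$, invoking Lemma~\ref{lem:local:apprx}, which is exactly tailored to conclude that the right side converges up to $\int_0^T\int_\R \lrfup(h_t,h_\xi)\,dtd\xi = \rfup(g)$.

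The main obstacle I anticipate is the control of the time-integrated mobility term $\frac1N\int_0^{NT}\mob(\h(Nt),x)\,dt$ under $\QN$, i.e.\ showing that its cell-averages do not substantially \emph{exceed} $\fint_\square(h_\xi\wedge(1-h_\xi))$ on the tube $\calU_{a_N,r_N}(g)$. The inequality $\mob\leq \eta\wedge(1-\eta)$ and summation by parts give the right bound pathwise in terms of the occupation variables, but one must then pass from a microscopic site-by-site bound on $\eta$ to the macroscopic density $h_\xi$ averaged over the cell; this is where the choice of a partition coarse enough (fixed $\ell$, then $N\to\infty$) relative to $N$ is essential, and where one must be careful that the tube width $a_N\downarrow 0$ slowly enough that the increments are genuinely pinned. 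A secondary technical point is making the Donsker--Varadhan / exponential-tilt lower bound rigorous for this jump process with spatially-and-temporally varying $\alpha$ — this is standard (the tilted law is mutually absolutely continuous with $\PN$ since $\alpha$ is bounded and the jump rates are bounded away from $0$ on the support), but needs to be stated cleanly, perhaps by first establishing it for piecewise-constant $\alpha$ and the corresponding explicit likelihood ratio.
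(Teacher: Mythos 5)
Your proposal is correct, and it reaches the same per-cell estimate $|\square|\,\lrfupa(\kappa_\square,\rho_\square)$ and the same final appeal to Lemma~\ref{lem:local:apprx}, but it gets the entropy lower bound by a genuinely different mechanism than the paper. The paper first proves that the conditioned law $\QN$ is itself a Markov process with Doob-perturbed rates $\lambda(t,x,\f)=\doob(t,\f^x)/\doob(t,\f)$, establishes an It\^o formula and the \emph{exact} identity $\frac{1}{N^2}H(\QN|\PN)=\Ex_{\QN}\big(\frac1N\sum_x\int_0^T\mob\,\prf(\lambda_N)\,dt\big)$ (Propositions~\ref{prop:QNIto}--\ref{prop:QN:ent}, which require the locality and Lipschitz analysis of $\doob$ on the infinite lattice), and only then applies the convexity bound $\prff(\lambda)\geq\prff(\mu)+\prff'(\mu)(\lambda-\mu)$ with $\mu=\kappa_\square/\mobbup_a(\rho_\square)$. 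You instead use the one-sided inequality $H(\QN|\PN)\geq \Ex_{\QN}\log\frac{dP^\alpha}{d\PN}$ for an explicit deterministic, piecewise-constant tilt $\alpha=\prff'(\mu)=\log(\mu\vee1)$, whose log-likelihood is the exponential-martingale expression the paper itself records in~\eqref{eq:RNdrv}; since $e^\alpha-1=(\mu-1)_+$, the per-cell contribution is $\kappa_\square\log(\mu\vee1)-\mobbup_a(\rho_\square)(\mu-1)_+=\lrfupa(\kappa_\square,\rho_\square)$, i.e.\ exactly the Legendre-dual form of the paper's convexity step (consistent with the variational formula in Lemma~\ref{lem:lrf:cnvx}). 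The two remaining ingredients are identical in both arguments: the height increments are pinned by the tube $\calU_{a_N,r_N}(g)$ (you need only a lower bound on them, since $\alpha\geq0$, so no It\^o formula under $\QN$ is required), and the time-integrated mobility is bounded above via $\mob\leq\eta\wedge(1-\eta)$, telescoping over the cell, and Jensen for the concave $\mobbup$. Your route buys a real simplification: it makes the entire Doob-transform apparatus of Section~\ref{sect:pfentlw} unnecessary for the upper bound, at the cost of producing only an inequality rather than an identity for the entropy.

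Two small points you should not leave implicit. First, the case $g\notin\Spd$ (or more generally $\rff(g)=\infty$) is not vacuous: one must \emph{prove} the liminf is $+\infty$. Your computation does cover it, but only in the variant where the mobility is bounded trivially by $1$ and the tilt depends on time alone, with $\fint_\square g_t$ replaced by the dyadic time increments of $g$; this reproduces $\liminf\frac1{N^2}H\geq\rff(g)=\infty$, which is exactly how the paper handles the degenerate case. Second, to justify $\Ex_{\QN}\log\frac{dP^\alpha}{d\PN}$ being well defined you should note that $\alpha$ is bounded with support in $|x|\leq Nr$ and that on the support of $\QN$ the height increments are $O(N)$ per site, so the log-likelihood is bounded on $\calU_{a_N,r_N}(g)$; with that, $H(\QN|\PN)=H(\QN|P^\alpha)+\Ex_{\QN}\log\frac{dP^\alpha}{d\PN}\geq\Ex_{\QN}\log\frac{dP^\alpha}{d\PN}$ is immediate.
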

\noindent
Proposition~\ref{prop:ent:lw} is proven in Section~\ref{sect:pfentlw} in the following.
Assuming this result here, we proceed to complete the proof of Theorem~\ref{thm:main}\ref{enu:upb}.
\begin{proof}[Proof of Theorem~\ref{thm:main}\ref{enu:upb}]
Recall from~\eqref{eq:distt} that $ \distt $ denotes Skorokhod's metric.
Throughout this proof we write $ B_b(h) := \{ \til h\in\Sp: \distt(h,\til h) < b \} $
for the open ball of radius $ b $ centered at a given $ h $.
First, given the exponential tightness from Proposition~\ref{prop:tight},
it suffices to prove the upper bound~\eqref{eq:upb} for \emph{compact} $ \calC $.
Fix a compact $ \calC\subset\Sp $. 
For each given radius $ b>0 $, let $ \{\calB_b(h^b_i)\}_{i=1}^{n(b)}\subset\Sp $ 
be a finite cover of $ \calC $ that consists of open balls of radius $ b $.
Choose a sequence $ b_N \downarrow 0 $ in such a way that $ \frac{1}{N^2}\log n(b_N) \to 0 $,
and write $ h^{b_N}_i := h^N_i $ to simplify notations.
We then have
\begin{align}
	\notag
	\limsup_{N\to\infty}
	\frac{1}{N^2} \log \PN(\calC)
	&\leq
	\limsup_{N\to\infty}
	\frac{1}{N^2} \log \Big( \sum_{i=1}^{n(b_B)} \PN(\hN\in\calB_{b_N}(h^N_i)) \Big) 
\\
	\label{eq:upb:limsup}
	&\leq
	\limsup_{N\to\infty}
	\max_{i=1}^{n(b_N)} \frac{1}{N^2} \log \PN(\hN\in\calB_{b_N}(h^N_i)).	 
\end{align}
In~\eqref{eq:upb:limsup},
pass to a subsequences $ N_M $ and $ i_M $ that achieves the limit,
and write $ \til{h}_M:= h^{N_M}_{i_M} $ and $ \til{b}_M := b_{N_M} $ to simply notations.
As $ \calC $ is compact, the subsequence $ \{\til{h}_M\}_{M=1}^\infty $ has a limit point $ g\in \calC $.
Hence, by refining the subsequences, we assume without lost of generality $ \til{h}_M \to g $, as $ M\to\infty $.

Consider first the case where $ g $ is continuous, i.e., $ g\in C([0,T],\Splip) $.
For such $ g $, converges to $ g $ under the $ J_1 $-topology is equivalent to convergence under the uniform topology.
This being the case, there exist $ a_N \downarrow 0 $ and $ r_N \uparrow \infty $
such that, with $ \calU_{a,r}(g) $ defined in~\eqref{eq:tubular},
$ \calB_{\til{b}_{M}}(\til{h}_{M}) \subset \calU_{a_{N_M},r_{N_M}}(g) $, for all $ M $.
This gives 
\begin{align*}
	\limsup_{N\to\infty}
	\max_{i=1}^{n(b_N)} \frac{1}{N^2} \log \PN(\hN\in\calB_{b_N}(h^N_i))	
	&=
	\lim_{M\to\infty}
	\frac{1}{N_M^2} \log \PN(\h_{M_N}\in\calB_{\til{b}_M}( \til{h}_M))
\\
	&\leq
	\limsup_{N\to\infty} \frac{1}{N^2} \log \PN( \hN\in\calU_{a_N,r_N}(g) ).
\end{align*}
The desired upper bound~\eqref{eq:upb} thus follows from~Proposition~\ref{prop:ent:lw}.

For the case of a discontinuous $ g $,
fix arbitrary $ b<\infty $.
By Lemma~\ref{lem:disconti} there exists a neighborhood $ \calO $ of $ g $ such that \eqref{eq:discnti} holds.
With $ \til h^{M} \to g $ and $ \til b_M \to 0 $,
we have $ \calB_{\til{b}_M}(\til h_M) \subset \calO $, for all $ M $ large enough.
Hence
\begin{align*}
	\limsup_{N\to\infty}
	\max_{i=1}^{n(b_N)} \frac{1}{N^2} \log \PN(\calB_{b_N}(h^N_i))	
	\leq
	\limsup_{N\to\infty} \frac{1}{N^2} \log ( \calO ) \leq -b.
\end{align*}
Letting $ b\to\infty $ gives the desired result~\eqref{eq:upb}.
\end{proof}

\subsection{Lower bound}
\label{sect:pflwb}
We begin by setting up notations and conventions.
In the following, in addition to the process $ \hN $ with initial condition $ \hic_N $ as in~\eqref{eq:ic:cnvg},
we will also consider processes with other initial conditions.
We use different notations to distinguish these processes, e.g., $ \gN $ with initial condition $ \gic_N $.
The initial conditions considered in the following are \emph{deterministic}.
This being the case,
we couple all the processes with different initial conditions together
by the \textbf{basic coupling} (see, for example, \cite{liggett13}).
That is, all the processes are driven by a common set of Poisson clocks.
Abusing notations,
we write $ \PN $ the \emph{joint} law of \emph{all} the processes with distinct initial conditions,
and write $ \PN^\g $ for the marginal law of a given process $ \g $.
It is straightforward to verify that the basic coupling preserves order, i.e.,
\begin{align}
	\label{eq:mono}
	\text{ if } \bar\h(0,x) \geq \und\h(0,x), \, \forall x\in\Z,
	\quad
	\text{ then } \bar\h(t,x) \geq \und\h(t,x), \, \forall t\in[0,NT], \, x\in\Z,
\end{align}
and that height processes are shift-invariant
\begin{align}
	\label{eq:shiftinvt}
	\text{ if } \h^1(0) = \h^2(0)+k,
	\quad
	\text{ then } \h^1(t) = \h^2(t)+k, \ \forall t\in[0,NT].
\end{align}

In the following we will often consider partition of subsets of $ [0,T]\times\R $.
We adopt the convention that the $ t $-axis is vertical, while the $ \xi $-axis is horizontal.
The direction going into larger/smaller $ t $ is referred to as upper/lower,
which the direction going to larger/smaller $ \xi $ is referred to as right/left.
For a given $ \tau=\frac{T}{\scl} $, $ \scl\in\N $,
we let $ \Sigma(\tau,b) $ denote the triangulation of $ [0,T]\times\R $ as depicted in Figure~\ref{fig:Sigma}.
Each triangle $ \triangle\in\Sigma(\tau,b) $ has a vertical edge of length $ \tau $,
and horizontal edge of length $ b $, and a hypotenuse going upper-right-lower-left.
We say a function $ h\in C([0,T]\times\R) $ is \textbf{$ \Sigma(\tau,b) $-piecewise linear}
if $ h $ is linear (i.e., $ \nabla h $ is constant) on each $ \triangle\in\Sigma(\tau,b) $.

\begin{figure}
\psfrag{T}{$ T $}
\psfrag{S}{$ t $}
\psfrag{X}{$ \xi $}
\psfrag{D}[c][c]{$ \cdots $}
\psfrag{H}{$ \tau $}
\psfrag{B}{$ b $}
\includegraphics[width=.8\textwidth]{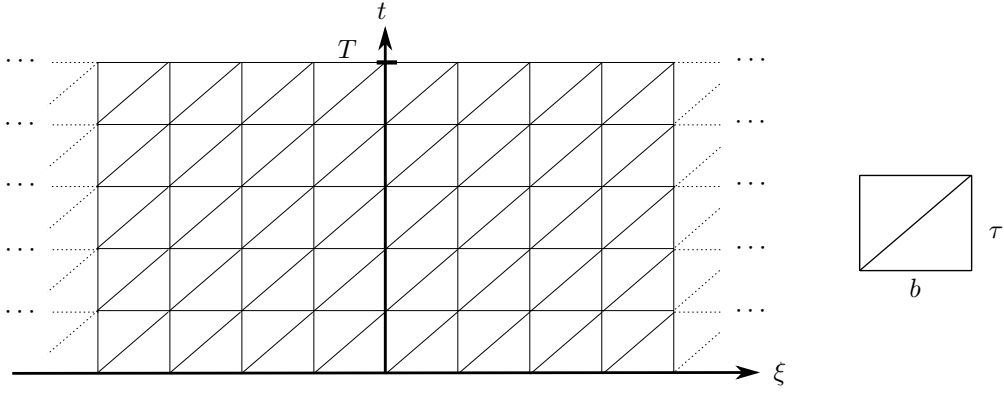}
\caption{The triangulation $ \Sigma(\tau,b) $}
\label{fig:Sigma}
\end{figure}

Recall from~\eqref{eq:tubular} that $ \calU_{a,r}(h) $
denotes a tubular set around a given deviation $ h $.
The main ingredient of the proof is the following proposition.
\begin{proposition}
\label{prop:ent:up}
Fix $ \e_*>0 $, $ r_*<\infty $; 
 $ \tau, b $ such that $ \frac{T}{\tau}, \frac{r_*}{b}\in\N $;
and a $ \Sp $-valued, $ \Sigma(\tau,b) $-piecewise linear deviation $ g $ such that
\begin{align}
	\label{eq:nondg}
	&
	0< \sup_{[0,T]\times\R} g_t <\infty,
\\
	&
	\label{eq:nondg:}
	0< \inf_{[0,T]\times\R} g_\xi \leq \sup_{[0,T]\times\R} g_\xi <1,
\end{align}
Write $ \gIC := g(0) $.
Given a \ac{TASEP} height process $ \gN $, with an initial condition $ \gic $ satisfying
\begin{align}
	\label{eq:gNic}
	\dist(\gic_N,\gIC) \longrightarrow 0,
	\quad
	\text{as } N\to\infty,
\end{align}
there exists a probability law $ \QN  $ on $ \Sp $,
supported on the trajectories of $ \gN $, such that
\begin{align}
	&
	\label{eq:lwb:apprx}
	\lim_{N\to\infty} \QN\big( \gN\in \calU_{\e_*,r_*}(g) \big) = 1,
\\
	&
	\label{eq:lwb:RD:bd}
	\sup_{N} \Ex_{\QN}\Big( \frac{1}{N^2}\log\frac{d\QN}{d\PN^\g} \Big)^2 <\infty,
\\
	&
	\label{eq:lwb:ent}
	\limsup_{N\to\infty} 
	\frac{1}{N^2} H(\QN|\PN^\g) 
	<
	\int_0^T \int_{-r_*}^{r_*} \lrf(g_t,g_\xi)dtd\xi
	+
	\int_0^T \int_{r_*\leq |\xi| \leq r^* } \prff\Big( \frac{g_t}{g_\xi(1-g_\xi)} \Big) dtd\xi,
\end{align}
where $ r^* $ is defined in terms of $ r_* $ and $ g $ as
\begin{align}
	\label{eq:r*}
	r^* &:= r_* + r_* \lceil \tfrac{T\maxlam}{r_*} \rceil,
\\
	\label{eq:maxlam}
	\maxlam &:= \sup_{[0,T]\times\R} \frac{g_t}{g_\xi(1-g_\xi)} \in (0,\infty).
\end{align}
\end{proposition}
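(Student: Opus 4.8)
The plan is to build the law $\QN$ by a two-scale construction that realizes, at the level of the microscopic TASEP, the heuristic described in Section~\ref{sect:intro:heu}. Fix the $\Sigma(\tau,b)$-piecewise linear deviation $g$; on each triangle $\triangle$ it has a constant gradient $(g_t,g_\xi)=(\kappa_\triangle,\rho_\triangle)$, and by \eqref{eq:nondg}--\eqref{eq:nondg:} we have $\kappa_\triangle\in(0,\infty)$ and $\rho_\triangle$ bounded away from $0$ and $1$. Set $\lambda_\triangle:=\kappa_\triangle/(\rho_\triangle(1-\rho_\triangle))$. The first step is to define, \emph{on the microscopic scale}, a space-time-dependent speeding/slowing of the Poisson clocks: on the part of $\Z\times[0,NT]$ corresponding to $\triangle$, use rate $\lambda_\triangle$ if $\lambda_\triangle\ge 1$, and if $\lambda_\triangle<1$ use the \emph{intermittent construction} — rate $\lambda_\triangle$ inside windows $\calW_i$ of macroscopic width $\delta^2$ spaced $\delta(1-\delta)$ apart, and rate $1$ elsewhere, with $\delta=\delta_N\downarrow 0$ chosen slowly (e.g. $\delta_N\sim (\log N)^{-1}$). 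This defines $\QN$ as a change of clock rates from $\PN^\g$; since it only alters Poisson clock intensities on trajectories of $\gN$, it is automatically supported on those trajectories.

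The second step is the hydrodynamic control \eqref{eq:lwb:apprx}: under $\QN$, $\gN$ must stay in the tube $\calU_{\e_*,r_*}(g)$ with probability $\to 1$. For this I would proceed triangle by triangle, propagating control from one time-slab $[\tfrac{(i-1)T}{\ell},\tfrac{iT}{\ell}]$ to the next. Within a slab the evolution with frozen rate $\lambda_\triangle$ is (up to time-change $t\mapsto\lambda_\triangle t$) an ordinary TASEP, so Rost's hydrodynamic limit \cite{rost81,rezakhanlou91,seppaelaeinen98a} applies and the density profile converges to the entropy solution of $h_t=\lambda_\triangle h_\xi(1-h_\xi)$, which on $\triangle$ coincides with $g$. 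For the intermittent case one additionally needs that the blocking produces the stationary profile of Figure~\ref{fig:intermDen}, converging in the averaged sense to the constant $\rho_\triangle$ as $\delta\downarrow0$; since the windows occupy a vanishing fraction, the integrated current stays within $o(1)$ of $\kappa_\triangle$. Gluing the slabs, and using the order-preservation \eqref{eq:mono} and shift-invariance \eqref{eq:shiftinvt} of the basic coupling to handle boundary effects and to sandwich $\gN$ between profiles with controlled initial data (here \eqref{eq:gNic} is used), gives \eqref{eq:lwb:apprx}. The region $r_*\le|\xi|\le r^*$ with $r^*$ as in \eqref{eq:r*}--\eqref{eq:maxlam} is exactly the spatial window into which information can propagate from $[-r_*,r_*]$ over time $[0,T]$ at the maximal speed $\maxlam$, so that what happens outside $[-r^*,r^*]$ cannot affect the tube.

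The third step is the entropy computation \eqref{eq:lwb:ent}. Since $\QN$ is a change of Poisson clock intensities, the relative entropy $H(\QN|\PN^\g)$ has the explicit Girsanov form: it equals the $\QN$-expectation of $\sum_{x,\text{jumps}}$ of the per-clock cost $\prf(\lambda)$ accumulated only when a jump is \emph{actually executed}, i.e. modulated by the mobility function $\mob(\gN(s),x)$. Thus $\tfrac{1}{N^2}H(\QN|\PN^\g)\approx \tfrac{1}{N^2}\Ex_{\QN}\sum_x\int_0^{NT}\prf(\lambda(s,x))\,\mob(\gN(Ns),x)\,ds$. Using \eqref{eq:lwb:apprx} and the identification of $\mob$ with $h_\xi(1-h_\xi)$ in the hydrodynamic limit (more precisely, a local-equilibrium / one-block estimate replacing $\mob(\gN,x)$ by $\rho_\triangle(1-\rho_\triangle)$ on $\triangle$), this converges to $\int\!\!\int g_\xi(1-g_\xi)\prf(\lambda)\,dtd\xi$ inside $[-r_*,r_*]$; for the intermittent triangles ($\lambda_\triangle<1$) the windows carry fraction $\delta$, so the cost is $O(\delta)\to 0$, matching $\lrf(\kappa_\triangle,\rho_\triangle)=\rho_\triangle(1-\rho_\triangle)\prff(\lambda_\triangle)=0$. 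Outside $[-r_*,r_*]$ the profile need not match $g$, but one can crudely bound the mobility by $\le 1$ (in fact by $g_\xi(1-g_\xi)$ with $\prf$ replaced by $\prf$ without truncation, accounting for the second integral in \eqref{eq:lwb:ent}), yielding the stated strict inequality once one uses continuity of $g$ to make the Riemann-sum error over the triangulation negligible. The $L^2$ bound \eqref{eq:lwb:RD:bd} on $\tfrac{1}{N^2}\log\tfrac{d\QN}{d\PN^\g}$ follows from the same Girsanov representation: $\log\tfrac{d\QN}{d\PN^\g}$ is a sum over clock rings of bounded increments ($\lambda_\triangle$ uniformly bounded above and below) minus a compensator, so its $\QN$-variance is $O(N^2)$ and its mean is $O(N^2)$, giving a uniform bound on the normalized second moment.

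The main obstacle I expect is the entropy estimate in the intermittent regime: one must show rigorously both that the blocking mechanism produces \emph{exactly} the right macroscopic flux $\kappa_\triangle$ (and not less, which would push $\gN$ out of the tube) and that the entropy cost genuinely vanishes as $\delta_N\downarrow 0$ — these two requirements pull $\delta_N$ in opposite directions (small $\delta$ makes the cost small but makes the hydrodynamic approximation inside the thin windows fragile, requiring $N\delta_N^2\to\infty$ or similar), so the delicate part is choosing $\delta_N$ and carrying out the attendant local-equilibrium / relative-entropy-method estimates uniformly over the finitely many triangles. The replacement of $\mob(\gN,x)$ by its hydrodynamic value in a way that is quantitatively strong enough to survive division by $N^2$ — rather than merely $N^d$ — is the technical heart, and is presumably what occupies Sections~\ref{sect:ITASEP}--\ref{sect:speed}.
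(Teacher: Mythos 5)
Your high-level strategy---tilting the Poisson clocks to rate $\linlam$ on each triangle, using the intermittent construction where $\linlam<1$, computing the entropy by Girsanov, and truncating outside $[-r^*,r^*]$ by finite propagation speed---is indeed the paper's strategy, but the proposal has two genuine gaps. The first is the hydrodynamic step. The claim that with rate $\linlam$ on each triangle ``the density profile converges to the entropy solution of $h_t=\linlam h_\xi(1-h_\xi)$, which on $\triangle$ coincides with $g$'' is false in general: at a vertical or diagonal edge of $\Sigma(\tau,b)$ the two neighbouring triangles may demand \emph{diverging} backward characteristics (e.g.\ $\rho_{\triangle^-}<\tfrac12<\rho_{\triangle^+}$ at a vertical edge, which is perfectly compatible with continuity of $g$ and with \eqref{eq:nondg}--\eqref{eq:nondg:}), and then the variational/entropy solution of the piecewise-constant-speed equation opens a rarefaction fan along the edge and does \emph{not} equal the piecewise linear $g$, so \eqref{eq:lwb:apprx} fails for your $\QN$. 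This is exactly the obstruction of Figure~\ref{fig:div}; the paper spends most of Section~\ref{sect:speedd} inserting thin buffer zones of width $O(1/\scll)$ around every vertical and diagonal edge, with rates chosen so that characteristics run parallel to the edge (Lemma~\ref{lem:explicit}, conditions \eqref{eq:char:ver}--\eqref{eq:char:diag}). Without this device, or an equivalent one, the construction does not stay in the tube.

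The second gap is the entropy estimate. You reduce it to ``a local-equilibrium / one-block estimate replacing $\mob(\gN,x)$ by $\linrho(1-\linrho)$'' and correctly flag this as the technical heart, but you do not supply it, and your choice of an $N$-dependent intermittency scale $\delta_N\downarrow0$ makes matters worse: the only available hydrodynamic limit for inhomogeneous TASEP (\cite{georgiou10}) is for a \emph{fixed} speed function and comes with no rate, so nothing guarantees uniformity as the windows shrink with $N$. The paper sidesteps both issues. The intermittency scales $\scll,\sclll$ are kept fixed while $N\to\infty$ and are sent to infinity only afterwards, so a purely qualitative hydrodynamic limit suffices; and no local-equilibrium replacement is used at all. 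Instead one exploits the exact martingale identity
$\Ex_{\QN}\int_{t_1}^{t_2}\Speed(t,\tfrac{x}{N})\,\mob(\g(Nt),x)\,dt
=\Ex_{\QN}\big(\gN(t_2,\tfrac{x}{N})-\gN(t_1,\tfrac{x}{N})\big)$,
so that on each space-time rectangle where the speed is constant the mobility-weighted entropy cost equals $\prf(\Speed)/\Speed$ times an expected height increment, which converges by the hydrodynamic limit alone (plus an elementary $L^2$ bound from Poisson domination). You should either adopt these two devices or prove substitutes for them; as written, neither the convergence in \eqref{eq:lwb:apprx} nor the limit in \eqref{eq:lwb:ent} is established.
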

\noindent
Proposition~\ref{prop:ent:up} is proven in Section~\ref{sect:ITASEP}--\ref{sect:speed} in the following.
Here we assume this result, and proceed to complete the proof of Theorem~\ref{thm:main}\ref{enu:lwb}.
To this end, we first prepare a few technical results.
First, using standard change-of-measure techniques, 
we have the following consequence of Proposition~\ref{prop:ent:up}:
\begin{customprop}{\ref*{prop:ent:up}}
\label{prop:ent:up:}
Let $ \gN, g, \e_*,r_*,r^* $ be as in Proposition~\ref{prop:ent:up}.
We have
\begin{align}
	\liminf_{N\to\infty} \frac{1}{N^2} \log 
	\PN\big( \gN\in \calU_{\e_*,r_*}(g) \big)
	> 
	- \int_0^T\int_{\R} \lrf(g_t,g_\xi) dtd\xi 
	\label{eq:ent:up}
	+
	\int_0^T \int_{r_*\leq |\xi| \leq r^* } \hspace{-10pt} \prff\Big( \frac{g_t}{g_\xi(1-g_\xi)} \Big) dtd\xi
	+\e_*.	
\end{align}
\end{customprop}
\begin{proof}
Let $ \{\QN\}_{N} $ be as in Proposition~\ref{prop:ent:up},
and write $ \calU := \calU_{\e_*,r_*}(g)  $ to simply notations.
Changing measures from $ \PN $ to $ \QN $, we write $ \PN(\gN\in\calU) $ as
\begin{align*}
	\PN(\gN\in\calU) 
	= 
	\Ex_{\QN} \Big( \ind_{\calU} \exp\Big( -\log \frac{d\QN}{d\PN^\g}\Big)\Big).
\end{align*}
Apply Jensen's inequality
$ \int F(X) d\mu \geq (\int d\mu) F(\frac{\int X d\mu}{\int d\mu})  $
with the convex function $ F(\xi)= \exp(-\xi) $, and with $ X=\log \frac{d\QN}{d\PN^\g} $ and $ \mu=\Ex_{\QN} (\ind_{\calU}\Cdot) $. 
We then obtain
\begin{align}
\begin{split}
	\label{eq:LDlwbd:jensen}
	\PN(\gN\in\calU)  
	&\geq 
	\QN(\calU) \exp\Big( -\frac{1}{\QN(\calU)}\Ex_{\QN}\Big(\ind_{\calU}\log \frac{d\QN}{d\PN^\g}\Big) \Big)
\\
	&=
	\QN(\calU) \exp\Big( -\frac{1}{\QN(\calU)}H(\QN|\PN^\g) 
        +\frac{1}{\QN(\calU)}\Ex_{\QN}\Big(\ind_{\calU^c}\log\frac{d\QN}{d\PN^\g}\Big) \Big).
\end{split}
\end{align}
Take $ \frac{1}{N^2}\log(\Cdot) $ on both sides of~\eqref{eq:LDlwbd:jensen},
and let $ N\to\infty $. 
We have
\begin{align}
	\notag
	&\liminf_{N\to\infty} \frac{1}{N^2}\log\PN(\gN\in\calU) 
\\
	\label{eq:LDlwbd:jensen:::}
	\geq&
	\liminf_{N\to\infty} 
	\Big(
		\frac{1}{N^2} \log \QN(\calU)
		+
		\frac{1}{\QN(\calU)}\frac{-1}{N^2} H(\QN|\PN^\g)
		+
		\frac{1}{\QN(\calU)} \Ex_{\QN}\Big(\ind_{\calU^c} \frac{1}{N^2}
		\log\frac{d\QN}{d\PN^\g}\Big)
	\Big).
\end{align}
With \eqref{eq:lwb:RD:bd} and $ \Ex_{\QN}(\calU^c) \to 0 $,
we have that
$
	\Ex_{\QN}(\ind_{\calU^c} \frac{1}{N^2}\log\frac{d\QN}{d\PN^\g}) \rightarrow 0.
$
Using this in~\eqref{eq:LDlwbd:jensen:::} gives
\begin{align}
	\liminf_{N\to\infty} \frac{1}{N^2}\log\PN(\gN\in\calU) 
	\label{eq:LDlwbd:jensen::}
	\geq
	\liminf_{N\to\infty} 
	\Big(
		\frac{1}{N^2} \log \QN(\calU)
		+
		\frac{1}{\QN(\calU)}\frac{-1}{N^2} H(\QN|\PN^\g)
	\Big).
\end{align}
Now, in~\eqref{eq:LDlwbd:jensen::},
using~\eqref{eq:lwb:apprx} to replace each~$ \QN(\calU) $ with $ 1 $, 
and then using~\eqref{eq:lwb:ent} to take limit of the last term,
we obtain the desired result~\eqref{eq:ent:up}.
\end{proof}

The next Lemma allows us to approximate $ h^*\in\Sp $ with $ \rf(h^*)<\infty $
with a piecewise linear $ g $ of the form considered in Proposition~\ref{prop:ent:up}.

\begin{lemma}
\label{lem:h*apprx}
Fix $ a,\e_*>0 $, $ r_0<\infty $, and a deviation $ h^*\in\Sp $ such that $ \rf(h^*)<\infty $.
There exist $ r_*\geq r'_*\in [r_0,\infty) $, $ \scl_*\in\N $, 
and a  $ \Sigma(\frac{T}{\scl_*},\frac{r_*}{\scl}) $-piecewise linear function $ \ling\in\Sp $,
such that 
\begin{align}
	\label{eq:ling:apprx}
	\sup_{[0,T]\times[-r'_*,r'_*]} |g-h^*| &< a,
\\
	\label{eq:ling:ent}
	\int_0^T\int_{-r_*}^{r_*} \lrf(g_t,g_\xi) dtd\xi 	
	&< 
	\rf(h_*) + \e_*,
\\
	\label{eq:ling:entail}
	\int_0^T \int_{r_*\leq |\xi| \leq r^* } \prff\Big( \frac{g_t}{g_\xi(1-g_\xi)} \Big) dtd\xi
	&<
	\e_*,
\end{align}
and satisfies~\eqref{eq:nondg}--\eqref{eq:nondg:}, and
\begin{align}
	\label{eq:ICrg+}
	g(0,r'_*) &> \Big( \sup_{[0,T]\times[-r_0,r_0]} g \Big) +\frac{a}{5},
\\
	\label{eq:ICrg-}
	\quad
	g(0,-r'_*)+r'_* &> \Big( \sup_{[0,T]\times[-r_0,r_0]} (g(t,\xi)-\xi) \Big)+\frac{a}{5},
\end{align}
where $ r^*\geq r_* $ is defined in terms of $ r_* $ and $ g $ as in~\eqref{eq:r*}--\eqref{eq:maxlam}.
\end{lemma}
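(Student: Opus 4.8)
The plan is to build $\ling$ from $h^*$ in four stages: a small linear tilt keeping the density off $\{0,1\}$; a space--time mollification to gain smoothness and bounded derivatives; an edge modification that kills the spatial tail and secures \eqref{eq:ICrg+}--\eqref{eq:ICrg-}; and finally a piecewise-linear interpolation on $\Sigma(T/\scl_*,r_*/\scl_*)$. One may assume throughout that $h^*\in\Spd$ with $h^*(0)=\hIC$, since otherwise $\rf(h^*)=\infty$.

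First I would tilt and mollify. For small $\delta\in(0,\tfrac12)$ set $h^\delta:=(1-2\delta)h^*+\delta\,\mathrm{id}$; then $h^\delta_\xi=(1-2\delta)h^*_\xi+\delta\in[\delta,1-\delta]$ and $h^\delta_t=(1-2\delta)h^*_t\ge0$, so $h^\delta\in\Sp$. Extending $h^\delta$ in $t$ by constancy off $[0,T]$ and convolving with a smooth probability kernel $\omega_\eta$ of width $\eta$ produces a $C^\infty$ function $h^{\delta,\eta}$ with the same slope constraints and with derivatives bounded on compacta. By convexity of $\lrf$ (Lemma~\ref{lem:lrf:cnvx}) and Jensen's inequality, $\lrf(h^{\delta,\eta}_t,h^{\delta,\eta}_\xi)\le\omega_\eta\ast\lrf(h^\delta_t,h^\delta_\xi)$ pointwise. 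On $\{h^*_\xi\in(0,1)\}$ a domination $\lrf(h^\delta_t,h^\delta_\xi)\le4\,\lrf(h^*_t,h^*_\xi)+C$ holds (from \eqref{eq:nonincr:mob} and an elementary bound $\prff(4\lambda)\le16\,\prff(\lambda)+C$), while on $\{h^*_\xi\in\{0,1\}\}$ the hypothesis $\rf(h^*)<\infty$ forces $h^*_t=0$ a.e., so both sides vanish; moreover $\lrf(h^\delta_t,h^\delta_\xi)\to\lrf(h^*_t,h^*_\xi)$ a.e.\ as $\delta\downarrow0$ by continuity of $\lrf$ off $\{\rho\in\{0,1\}\}$. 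Dominated convergence on bounded rectangles then gives, for each $R<\infty$ and all $\delta,\eta$ small enough (depending on $R$), both $\int_0^T\!\int_{-R}^R\lrf(h^{\delta,\eta}_t,h^{\delta,\eta}_\xi)\,dtd\xi<\rf(h^*)+\tfrac{\e_*}{3}$ and $\sup_{[0,T]\times[-R,R]}|h^{\delta,\eta}-h^*|<\tfrac a4$.

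Next comes the edge modification, which I regard as the delicate step. By monotonicity of $h^*$ in $t$ and $\xi$ one has $\sup_{[0,T]\times[-r_0,r_0]}h^*=h^*(T,r_0)$; a particle-conservation argument — only $\lim_{\xi\to+\infty}\hIC(\xi)-\hIC(r_0)$ particles lie to the right of $r_0$ at time $0$, and $\rf(h^*)<\infty$ rules out any inflow from $+\infty$ (positive density with positive flux near $+\infty$ has infinite cost) — gives $h^*(T,r_0)\le\lim_{\xi\to+\infty}\hIC(\xi)$, and dually $h^*(T,-r_0)+r_0\le\lim_{\xi\to-\infty}(\hIC(\xi)-\xi)$. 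Since $\hIC(r'_*)\uparrow\lim_{\xi\to+\infty}\hIC(\xi)$ and $\hIC(-r'_*)+r'_*\uparrow\lim_{\xi\to-\infty}(\hIC(\xi)-\xi)$ as $r'_*\uparrow\infty$, I would fix $r'_*\ge r_0$ so large that $\hIC(r'_*)>h^*(T,r_0)-\tfrac a2$ and $\hIC(-r'_*)+r'_*>h^*(T,-r_0)+r_0-\tfrac a2$, and only then shrink $\delta,\eta$ further so that $2\delta\|\hIC\|_{C[-r'_*,r'_*]}+\delta r'_*<\tfrac a{10}$. Define $\ling$ to equal $h^{\delta,\eta}$ on $[0,T]\times[-r'_*,r'_*]$ after a $C^1$ perturbation of size $<\tfrac a{10}$ that pulls $\ling(0,\pm r'_*)$ toward the top of the $a$-tube and $\ling(T,\pm r_0)$ toward the bottom — admissible because $\ling_\xi$ may take any value in $(\delta,1-\delta)$ — and set $\ling_t:=0$, $\ling_\xi:=$ a constant in $(\delta,1-\delta)$ matching continuously, on $[0,T]\times(\R\setminus[-r'_*,r'_*])$. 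Then $\ling\in\Sp$ has globally bounded derivatives with $\ling_\xi\in(\delta,1-\delta)$ everywhere, \eqref{eq:ling:apprx} holds with constant $<a$, and the displayed inequalities combine to give \eqref{eq:ICrg+}--\eqref{eq:ICrg-}. Finally pick any $r_*\ge r'_*$ with $r_*/b\in\N$: as $\ling_t\equiv0$ off the central box, $\prff(\ling_t/(\ling_\xi(1-\ling_\xi)))$ vanishes on $\{r_*\le|\xi|\le r^*\}$ (so \eqref{eq:ling:entail} holds with room to spare) and $\int_0^T\!\int_{-r_*}^{r_*}\lrf(\ling_t,\ling_\xi)=\int_0^T\!\int_{-r'_*}^{r'_*}\lrf(\ling_t,\ling_\xi)<\rf(h^*)+\tfrac{\e_*}{2}$, which is \eqref{eq:ling:ent}; and \eqref{eq:r*}--\eqref{eq:maxlam} are finite since $\ling_t$ is bounded and $\ling_\xi(1-\ling_\xi)\ge\delta(1-\delta)>0$.

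The last stage is to replace $\ling$ by its piecewise-linear interpolant on $\Sigma(T/\scl_*,r_*/\scl_*)$ for $\scl_*$ large. Because the triangulation has vertical and horizontal edges, the gradient of the interpolant on each triangle is a difference quotient of $\ling$ along those edges, so $\ling_t\ge0$, $\ling_\xi\in(\delta,1-\delta)$ and the boundedness of $\ling_t$ are all inherited, preserving $\ling\in\Sp$ and \eqref{eq:nondg}--\eqref{eq:nondg:} (one arranges a genuine nonzero flux so that $\sup\ling_t>0$); on the linear exterior the interpolant is unchanged. Since $\ling$ is Lipschitz with $\nabla\ling$ uniformly continuous on $[0,T]\times[-r_*,r_*]$ (being $C^1$ there), the interpolant and its gradient converge uniformly there as $\scl_*\to\infty$, so \eqref{eq:ling:apprx} persists and, by continuity of $\lrf$ on $[0,\infty)\times[\delta,1-\delta]$, so do \eqref{eq:ling:ent}--\eqref{eq:ling:entail}; the strict inequalities \eqref{eq:ICrg+}--\eqref{eq:ICrg-} survive the small perturbation. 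The main obstacle is the tension inside the edge modification: $\inf\ling_\xi>0$ forces $\ling(0,\cdot)$ to grow at rate $\ge\delta$, so on $[-r'_*,r'_*]$ it cannot outrun $\hIC$ by more than about $a$, whereas $\hIC$ may be essentially flat near $\pm\infty$; this dictates the order of the choices — $r'_*$ must be fixed using the conservation bounds and the $a$-margin \emph{before} $\delta$ is sent to $0$ — and makes verifying \eqref{eq:ICrg+}--\eqref{eq:ICrg-} jointly with \eqref{eq:ling:apprx} the crux of the proof.
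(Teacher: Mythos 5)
Your overall architecture (tilt to push the density off $\{0,1\}$, mollify, interpolate on $\Sigma$, with the conservation bounds $\sup_{[0,T]\times\R}h^*=\lim_{\xi\to\infty}\hIC(\xi)$ driving the choice of $r'_*$) is the same as the paper's, and Step~0 of your argument is essentially correct. But two steps as written fail.

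First, the exterior gluing. You set $\ling_t:=0$ on $[0,T]\times(\R\setminus[-r'_*,r'_*])$ while keeping $\ling_t>0$ up to the boundary of the box. Then for small $\epsilon>0$,
\begin{align*}
	\ling(T,r'_*+\epsilon)-\ling(T,r'_*-\epsilon)
	=\big(\ling(0,r'_*+\epsilon)-\ling(0,r'_*-\epsilon)\big)-\int_0^T\ling_t(s,r'_*-\epsilon)\,ds
	\leq 2\epsilon - \int_0^T\ling_t(s,r'_*-\epsilon)\,ds,
\end{align*}
which is negative unless the time-integrated flux vanishes as $\xi\uparrow r'_*$; so $\ling(T,\cdot)\notin\Splip$ and $\ling\notin\Sp$. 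Matching $\ling_\xi$ continuously does not help --- it is the flux that must be tapered. The paper never zeroes the flux: it keeps the globally tilted and mollified function, chooses $r_*$ so that $\int_0^T\int_{|\xi|\geq r_*}\lrf(\til g_t,\til g_\xi)\,dtd\xi<(\tfrac{a}{4r'_*})^2\e_*$ (possible because $\lrf(\til g_t,\til g_\xi)\leq\lrf(h^*_t,h^*_\xi)$ pointwise and the latter is integrable), and then converts to \eqref{eq:ling:entail} via $\prff\big(\tfrac{g_t}{g_\xi(1-g_\xi)}\big)=\tfrac{1}{g_\xi(1-g_\xi)}\lrf(g_t,g_\xi)\leq(\tfrac{4r'_*}{a})^2\lrf(g_t,g_\xi)$. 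Note that \eqref{eq:ling:entail} involves $\prff$, not $\lrf$, and the conversion factor blows up like $(r'_*/a)^2$ precisely where the density is near $0$ or $1$; your construction never confronts this.

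Second, the margins for \eqref{eq:ICrg+}--\eqref{eq:ICrg-} do not close. You fix $r'_*$ with only $\hIC(r'_*)>h^*(T,r_0)-\tfrac a2$ and then cap the tilt so that $\delta r'_*<\tfrac a{10}$. Since $\hIC(r'_*)-h^*(T,r_0)$ can be exactly $0$ (take $\hIC$ constant on $[r_0,\infty)$, so that $h^*\equiv\alpha^+$ there for all $t$), the only positive contribution to $g(0,r'_*)-\sup_{[0,T]\times[-r_0,r_0]}g$ is the tilt term $\delta(r'_*-r_0)<\tfrac a{10}$, minus the mollification and perturbation errors; this cannot exceed the required $\tfrac a5$, and a ``$C^1$ perturbation of size $<\tfrac a{10}$'' cannot make up the difference. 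The fix is exactly the paper's: take the tilt slope to be $\tfrac{a}{4r'}$, i.e.\ couple $\delta$ to $r'_*$ as $\delta\sim a/r'_*$, so that the tilt contributes a gain of $\tfrac a4$ at $\xi=\pm r'_*$ \emph{uniformly in} $r'_*$ while still contributing only $O(a)$ to the sup-norm error on $[-r'_*,r'_*]$. Your stated order of limits (fix $r'_*$, then send $\delta\downarrow0$) destroys this gain. Relatedly, your entropy comparison via ``$\lrf(h^\delta_t,h^\delta_\xi)\leq4\lrf(h^*_t,h^*_\xi)+C$ plus dominated convergence'' is unnecessarily lossy: since the tilt moves $h^*_\xi$ toward $\tfrac12$ and decreases $h^*_t$, the monotonicity \eqref{eq:nonincr:mob} gives the clean pointwise inequality $\lrf(\til g_t,\til g_\xi)\leq\lrf(h^*_t,h^*_\xi)$ with no constants and no limit in $\delta$, which is also what makes the tail estimate in the first point possible.
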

\begin{remark}
Indeed, $ \Sigma(\frac{T}{\scl},\frac{r'_*}{\scl}) $-piecewise linear functions have 
derivatives everywhere except along the edges of the underlying triangulation.
Slightly abusing notations,
the supremum and infimum in~\eqref{eq:nondg}--\eqref{eq:nondg:} neglect
a set of zero Lebesgue measure where $ \nabla g $ is undefined.
We adopt this convention also in the following.
\end{remark}

\begin{remark}
Here we explain the role of this lemma and the conditions~\eqref{eq:nondg}--\eqref{eq:nondg:}, \eqref{eq:ling:apprx}--\eqref{eq:ICrg-} therein.
The idea behind Lemma~\ref{lem:h*apprx} is to approximate a \emph{generic} $ h^* $
with a \emph{specific} type of deviation $ g $, 
with various properties that facilitates the subsequent analysis. 
Indeed, \eqref{eq:ling:apprx} allows us to approximate the deviation $ h^* $ with $ g $, 
and \eqref{eq:ling:ent}--\eqref{eq:ling:entail} ensure the corresponding cost does not increase,
up to an error of $ \e_* $.
The conditions~\eqref{eq:nondg}--\eqref{eq:nondg:}
assert that $ (\nabla g) $ is bounded away from the boundary of $ (\kappa,\rho)\in[0,\infty)\times[0,1] $.
In particular, the resulting rate density $ \lrf(g_t,g_\xi) $ is uniformly bounded.
The purpose of having~\eqref{eq:ICrg+}--\eqref{eq:ICrg-} is
to incorporate a localization result from Lemma~\ref{lem:icLoc} in following. 
\end{remark}
\begin{proof}~\\
\noindent\textbf{Step 0, some properties of $ h^* $.}~
Fix $ a>0,r_0<\infty $ and $ h^*\in\Sp $ with $ \rf(h^*)<\infty $.
Note that for such $ h^* $ we must have $ h^*(0)=\hIC $.
Before starting the proof, let us first prepare a few useful properties of $ h^* $.
Since $ \hIC\in\Sp $, $ \xi\mapsto\hIC(\xi) $ is nondecreasing and $ \xi\mapsto \hIC(\xi)-\xi $ is nonincreasing.
We let
\begin{align*}
	\alpha^+ := \lim_{\xi\to\infty} \hIC(\xi) = \sup_{\R} \hIC \in \R\cup\{\infty\},
	\quad
	\alpha^- := \lim_{\xi\to-\infty}  (\hIC(\xi) -\xi) = \sup_{\xi\in\R} (\hIC(\xi) -\xi) \in \R\cup\{\infty\}.
\end{align*}
Under the current assumption $ \rf(h^*)<\infty $, we claim that
\begin{align}
	\label{eq:h*:wedgebd:}
	&\alpha^+ = \sup_{[0,T]\times\R} h^*, 
\\
	\label{eq:h*:wedgebd::}	
	&\alpha^- = \sup_{[0,T]\times\R} (h^*(t,\xi)-\xi),
\\
	\label{eq:h*supdiff}
	&
	\sup_{\xi\in\R} (h^*(T,\xi)-h^*(0,\xi))<\infty.
\end{align} 
To see why~\eqref{eq:h*:wedgebd:} should hold,
assume the contrary: $ h^*(t_0,\xi_0) = \alpha >\alpha^+ $, for some $ t_0\in(0,T] $, $ \xi_0\in\R $.
Since $ h^*(t_0)\in\Splip $, we necessarily have that $ h^*(t_0,\xi)|_{\xi \geq \xi_0} \geq \alpha $.
Using \eqref{eq:nonincr:mob:}, we write
\begin{align*}
	\rf(h^*) 
	\geq
	 \int_{\xi_0}^\infty \int_0^{t_0} \lrf(h_t^{*},h_\xi^{{*}}) dt  d\xi
	\geq
	\int_{\xi_0}^\infty \int_0^{t_0} \prff(h_t^{{*}}) dt  d\xi.
\end{align*}
Further utilizing the convexity of $ \lambda\mapsto\prff(\lambda) $ gives
\begin{align*}
	\rf(h^*) 
	\geq
	\int_{\xi_0}^\infty t_0 \prff\Big(\fint_0^{t_0} h_t^{{*}} dt\Big)  d\xi
	\geq
	\int_{\xi_0}^\infty t_0 \prff\Big(\frac{\alpha-\alpha^+}{t_0}\Big)  d\xi
	=
	\infty.	
\end{align*}
This contradicts with the assumption $ \rf(h^*)<\infty $.
Hence~\eqref{eq:h*:wedgebd:} must hold.
Likewise, if \eqref{eq:h*:wedgebd::} fails,
i.e., $ h^*(t_0,\xi_0)-\xi_0 = \alpha' >\alpha^- $, 
we must have $ (h^*(t_0,\xi)-\xi)|_{\xi \leq \xi_0} \geq \alpha' $.
The last inequality gives $ (h^*(t_0,\xi)-\hIC(\xi))|_{\xi\leq\xi_0} \geq \alpha'-\alpha^->0 $.
From here a contradiction is derived by similar calculation to the preceding.
Hence~\eqref{eq:h*:wedgebd::} must also hold.
Turning to~\eqref{eq:h*supdiff}, 
for each $ \xi_0\in\R $, using $ h^*(T),h^*(0)\in\Splip $ we write
\begin{align}
	\label{eq:nondg:v1}
	\big( h^*(T,\xi_0)-h^*(0,\xi_0) \big) - 1
	\leq
	\inf_{|\xi-\xi_0|\leq \frac12}
	\big( h^*(T,\xi)-h^*(0,\xi) \big).
\end{align}
Recall the definitions of $ \rff_n(h,\xi) $ and $ \rff(h) $ from \eqref{eq:rffn}--\eqref{eq:rff}.
With $ \lim_{\lambda\to\infty} \prff(\lambda)\lambda^{-1} =\infty $,
we have that $ \lambda \leq \prff(\lambda)+ c_0 $, $ \forall \lambda\in [0,\infty) $,
for some universal constant $ c_0<\infty $.
Using this for $ \lambda = \frac{h^*(T,\xi)-h^*(0,\xi)}{T} $ on the r.h.s.\ of~\eqref{eq:nondg:v1}, we obtain
\begin{align}
	\notag
	\big( h^*(T,\xi_0)-h^*(0,\xi_0) \big) - 1-	Tc_0
	&\leq
	\int_{|\xi-\xi_0|\leq \frac12}
	T\prff\Big( \frac{h^*(T,\xi)-h^*(0,\xi)}{T} \Big) d\xi
\\
	\label{eq:nondg:v2}
	&
	\leq
	\int_{\R}
	T\prff\Big( \frac{h^*(T,\xi)-h^*(0,\xi)}{T} \Big) d\xi
	=
	\int_{\R} \rff_1(h^*,\xi) d\xi.
\end{align}
By \eqref{eq:rff} and Lemma~\ref{lem:rff<rf},
the last expression in~\eqref{eq:nondg:v2} is bounded by $ \rff(h^*) $, which is further bounded by $ \rf(h^*) $.
Namely,
$
	\big( h^*(T,\xi_0)-h^*(0,\xi_0) \big) - 1-	Tc_0 \leq \rf(h^*).
$
Under the assumption $ \rf(h^*)<\infty $, 
taking the supremum over $ \xi_0\in\R $ gives~\eqref{eq:h*supdiff}.

\noindent\textbf{Step 1, tilting.}~
Our goal is to construct a suitable $ g $ that satisfies all the prescribed conditions.
The construction is done in three steps.
Starting with $ h^* $, 
in each step we perform a surgery on the function from the previous step.
Here, in the first step, we `tilt' $ h^* $ to obtain $ \til g $, described as follows.

Set $ \gamma_0 := \sup_{t\in[0,T]} |h^*(t,0)| <\infty $,
and let $ {r'} > (8a)\vee r_0\vee(2a\gamma_0) $ be an auxiliary parameter. 
We tilt the function $ h^* $ to get
\begin{align}
	\label{eq:tilgr}
	\til g^{r'}(t,\xi) := (1-\tfrac{a}{2{r'}})h^*(t,\xi) + \tfrac{a}{4{r'}}\xi.
\end{align}
Such a tilting ensures the $ \xi $-derivatives are bounded away from $ 0 $ and $ 1 $.
More precisely,
\begin{align}
	\label{eq:tilg:drv}
	\til g^{r'}_\xi = (1-\tfrac{a}{2r'})h^*_\xi+\tfrac{a}{4{r'}} \in [\tfrac{a}{4{r'}},1-\tfrac{a}{4{r'}}],
\end{align}
Also, $ \til g^{r'}_t = (1-\tfrac{a}{2{r'}})h^*_t \leq h^*_t $, and
\begin{align}
	\label{eq:h*tilg:apprx:}
	&
	\sup_{[0,T]\times[-r_0,r_0]} |\til g^{r'}-h^*| 
	\longrightarrow
	0,
	\text{ as } {r'}\to\infty.
\end{align}
Furthermore, evaluating $ \til g^{r'} $ at $ (t,\xi)=(0,\pm {r'}) $ gives
\begin{align}
	\label{eq:tilg:r+}
	\til g^{r'}(0,{r'})
	=
	\big( 1-\tfrac{a}{2{r'}} \big) \hIC({r'}) + \tfrac{a}{4}
	&\longrightarrow
	\alpha^+ + \tfrac{a}{4},
	\quad
	\text{as } {r'}\to\infty,
\\
	\label{eq:tilg:r-}
	\til g^{r'}(0,-{r'})+{r'}
	=
	\big( 1-\tfrac{a}{2{r'}} \big) (\hIC(-{r'})+{r'}) + \tfrac{a}{4} 
	&\longrightarrow
	\alpha^- + {\tfrac{a}{4}},
	\quad
	\text{as } {r'}\to\infty.
\end{align}

We now list a few consequence of the prescribed properties of $ \til g^{r'} $.
Recall that $ \mobb(\rho):=\rho(1-\rho) $.
From~\eqref{eq:tilg:drv}, it is straightforward to verify that $ |\til g^{r'}_\xi-\frac12| \leq |h^*_\xi-\frac12| $,
so in particular $ \mobb(\til g^{r'}_\xi) \geq \mobb(h^*_\xi) $.
Combining this with \eqref{eq:nonincr:mob}, together with $ \til g^{r'}_t \leq h^*_t $, we obtain
\begin{align}
	\label{eq:h*tilg:ent}
	\lrf(\til g^{r'}_t, \til g^{r'}_\xi)
	\leq
	\lrf(h^*_t, h^*_\xi).
\end{align}
Next, with $ |\til g^{r'}(t,\xi) -h^*(t,\xi)| \leq \frac{a}{2{r'}}|h^*(t,\xi)|+\frac{a|\xi|}{4{r'}} $,
$ |h^*(t,\xi)| \leq \gamma_0 +|\xi| $, and $ {r'} > 2a \gamma_0 $
,
we have that
\begin{align}
	\label{eq:h*tilg:apprx}
	\sup_{[0,T]\times[-{r'},{r'}]} |\til g^{r'} -h^*| 
	&\leq \frac{a}{2{r'}}(\gamma_0+{r'}) + \frac{a{r'}}{4{r'}}
	< a.
\end{align}
Further, combining~\eqref{eq:tilg:r+} with~\eqref{eq:h*:wedgebd:} and \eqref{eq:h*tilg:apprx:},
we have that
\begin{align}
	\label{eq:tilg:r+:}
	\liminf_{{r'}\to\infty} \Big( \til g^{{r'}}(0,{r'}) - \sup_{[0,T]\times[-r_0,r_0]} \til g^{{r'}} \Big)
	\geq
	\alpha^+ + \frac a4 - \sup_{[0,T]\times[-r_0,r_0]} h^* 
	\geq
	\frac{a}{4}.
\end{align}
Similarly, \eqref{eq:tilg:r-}, \eqref{eq:h*:wedgebd::} and \eqref{eq:h*tilg:apprx:} gives
\begin{align}
	\label{eq:tilg:r-:}
	\liminf_{{r'}\to\infty} 
	\Big( \til g^{{r'}}(0,-{r'})+r' 
	- \Big(\sup_{[0,T]\times[-r_0,r_0]}\til g^{r'}(t,\xi) +\xi \Big) \Big)
	\geq
	{\frac{a}{4}}.
\end{align}
In view of~\eqref{eq:h*tilg:apprx}--\eqref{eq:tilg:r-:}.
we now \emph{fix} $ r'=r'_* $, and write $ \til g^{r'_*}=: \til g $,
for large enough $ r'_* $ so that
\begin{align}
	\label{eq:h*tilg*:apprx}
	&
	\sup_{[0,T]\times[-{r'_*},{r'_*}]} |\til g -h^*| < a,
\\
	\label{eq:tilg*:r+}
	&
	\til g(0,{r'_*}) > 
	\Big(\sup_{[0,T]\times[-r_0,r_0]} \til g \Big) - \frac{a}{5}.
\\	
	\label{eq:tilg*:r-}
	&
	\til g(0,-{r'_*}) +r'_*
	>
	\Big(\sup_{[0,T]\times[-r_0,r_0]} \big( \til g (t,\xi) -\xi \big)\Big) - \frac{a}{5}.
\end{align}

\noindent\textbf{Step 2, mollification.}~
Having constructed $ \til g $,
we next mollify $ \til g $ to obtain a smooth function $ \hat g $.
To prepare for this, let us first fix the threshold $ r_* $.
From~\eqref{eq:h*tilg:ent},
we have that
\begin{align*}
	\int_0^T \int_{|\xi| \geq r} \lrf(\til g_t,\til g_\xi) dt d\xi
	\leq
	\int_0^T \int_{|\xi| \geq r} \lrf(h^*_t,h^*_\xi) dt d\xi
	\longrightarrow 
	0,
	\quad
	\text{as } r\to\infty.
\end{align*}
This being the case, we fix $ r_*\geq r'_* $ such that
$
	\int_0^T \int_{|\xi| \geq r_*} \lrf(\til g_t,\til g_\xi) dt d\xi
	<
	(\frac{a}{4r'_*})^2 \e_*.
$

Fix a mollifier $ \moll \in C^\infty(\R\times\R) $,
i.e., nonnegative, supported on the unit ball, integrates to unity.
Extend $ \til g $ to $ \R\times\R $ by setting $ \til g(t,\xi)|_{t< 0} := g(0,\xi) $ 
and $ \til g(t,\xi)|_{t> T} := g(T,\xi) $.
Under this setup, for $ \d>0 $, we mollify $ \til g $, and then tilt in $ t $, to obtain
\begin{align}
	\label{eq:mollg}
	\hat g^\d(t,\xi) 
	:= 
	\int_{\R^2}\til g(s,\zeta)\; \moll(\tfrac{t-s}{\d},\tfrac{\xi-\zeta}{\d}) \tfrac{dsd\zeta}{\d^2}
	+\d t.
\end{align}
With $ \til g $ being continuous on $ [0,T]\times[-r'_*,r'_*] $,
the properties~\eqref{eq:h*tilg*:apprx}--\eqref{eq:tilg*:r-} hold also 
for $ \hat g^\d $ in place of $ \til g $, for all $ \d $ small enough.
Further, the convexity of $ (\kappa,\rho)\mapsto \lrf(\kappa,\rho) $ gives
\begin{align*}
	&
	\int_0^T \int_{-r_*}^{r_*} \lrf(\hat g^\d_t,\hat g^\d_\xi) dtd\xi
	\leq
	\int_{-\d}^{T+\d} \int_{|\xi|\leq r_*+\d} \lrf(\til g_t+\d,\til g_\xi) dtd\xi
	\longrightarrow
	\int_{0}^{T} \int_{-r_*}^{r_*} \lrf(\til g_t,\til g_\xi) dtd\xi
	\leq
	\rf(h^*),	
\\
	&
	\int_0^T \int_{-|\xi|\geq r_*} \lrf(\hat g^\d_t,\hat g^\d_\xi) dtd\xi
	\leq
	\int_{-\d}^{T+\d} \int_{|\xi|\geq r_*+\d} \lrf(\til g_t+\d,\til g_\xi) dtd\xi
	\longrightarrow
	\int_{0}^{T} \int_{-|\xi|\geq r_*} \lrf(\til g_t,\til g_\xi) dtd\xi
	<
	\Big(\frac{a}{4r'_*}\Big)^2 \e_*,
\end{align*}
as $ \d\downarrow 0 $.
In view of these properties, we now \emph{fix} small enough $ \d=\d_*>0 $, set $ \hat g:= \hat g^{\d_*} $, so that
\begin{align}
	\label{eq:mollg:apprx}
	&
	\sup_{[0,T]\times[-{r'_*},{r'_*}]} |\hat g -h^*| < a,
\\
	&
	\label{eq:mollg:ent}
	\int_0^T \int_{-r_*}^{r_*} \lrf(\hat g_t,\hat g_\xi) dtd\xi < \rf(h^*)+\e_*,
\\
	&
	\label{eq:mollg:entail:}
	\int_0^T \int_{|\xi| \geq r_*} \lrf(\hat g_t,\hat g_\xi) dtd\xi < \Big(\frac{a}{4r'_*}\Big)^2\e_*,
\\
	\label{eq:mollg:r+}
	&
	\hat g(0,{r'_*}) > 
	\Big(\sup_{[0,T]\times[-r_0,r_0]} \hat g\Big) - \frac{a}{5}.
\\	
	\label{eq:mollg:r-}
	&
	\hat g(0,-{r'_*}) + r'_*
	>
	\Big(\sup_{[0,T]\times[-r_0,r_0]} \big( \hat g(t,\xi) -\xi \big)\Big) - \frac{a}{5}.
\end{align}

Further, since $ \hat g_\xi $ is an average of $ \til g_\xi $, from~\eqref{eq:tilg:drv} we have that
\begin{align}
	\label{eq:mollg:drvx}
	\hat g_\xi \in [\tfrac{a}{4{r'_*}},1-\tfrac{a}{4{r'_*}}].
\end{align}
As for the $ t $-derivative, we claim that, for some fixed constant $ c_*<\infty $ 
(depending on $\delta_*$),
\begin{align}
	\label{eq:mollg:drvt}
	\d_*\leq \sup_{[0,T]\times\R} \hat g_t \leq c_*.
\end{align}
Indeed, the tilting in~\eqref{eq:mollg} ensures $ \hat g_t \geq \d_* $.
To show the upper bound, we use~\eqref{eq:mollg} to write
\begin{align}
	\label{eq:mollg:t}
	\hat g_t(t,\xi) 
	=  \int_{\R^2} \til g(s,\zeta) \moll_t(\tfrac{t-s}{\d_*},\tfrac{\xi-\zeta}{\d_*}) \tfrac{dsd\zeta}{\d_*^3}
	= \int_{\R^2} \big( \til g(s,\zeta) - \til g(0,\zeta) \big) 
	\moll_t(\tfrac{t-s}{\d_*},\tfrac{\xi-\zeta}{\d_*}) \tfrac{dsd\zeta}{\d_*^3}.
\end{align}
Under the convention $ \til g(t,\xi)|_{t< 0} := g(0,\xi) $ and $ \til g(t,\xi)|_{t> T} := g(T,\xi) $,
referring back to~\eqref{eq:tilgr}, we have that
\begin{align*}
	\sup_{t\in\R} \Big( \sup_{\xi\in\R} |\til g(t,\xi)-\til g(0,\xi)| \Big)
	=
	\sup_{\xi\in\R} (\til g(T,\xi) - \til g(0,\xi))
	=
	\Big(1-\frac{a}{4r'_*}\Big)\sup_{\xi\in\R} (h^*(T,\xi) - h^*(0,\xi)).
\end{align*}
The last quantity, by~\eqref{eq:h*supdiff}, is finite,
so in particular $ |\til g(t,\xi)-\til g(0,\xi)| $ is uniformly bounded.
Using this bound in~\eqref{eq:mollg:t} gives $ \sup_{[0,T]\times\R} \hat g_t <\infty $.
This concludes~\eqref{eq:mollg:drvt}.
Also, combining \eqref{eq:mollg:drvx} with~\eqref{eq:mollg:entail:}, we have that
\begin{align}
	\label{eq:mollg:entail}
	\int_0^T \int_{|\xi| \geq r_*} 
	\prff\Big( \frac{\hat g_t}{\hat g_\xi(1-\hat g_\xi)}\Big) dt d\xi 
	=
	\int_0^T \int_{|\xi| \geq r_*} 
	\frac{1}{\hat g_\xi(1-\hat g_\xi)}
	\lrf(\hat g_t,\hat g_\xi) dtd\xi
	<
	\e_*. 
\end{align}

\noindent\textbf{Step 3, linear interpolation.}~
Given the smooth function $ \hat g $, 
we are now ready to construct the piecewise linear $ g $.
Similarly to the preceding,
the construction involves an auxiliary parameter, $ \ell\in\N $, 
which will be fixed toward the end of the proof.
For $ \ell\in\N $, consider the triangulation $ \Sigma(\frac{T}{\ell},\frac{r'_*}{\ell}) $.
Define a $ \Sigma(\frac{T}{\ell},\frac{r'_*}{\ell}) $-piecewise linear function $ g^{\scl} $
by letting $ \ling^{{\scl}}=\til g $ at each of the \emph{vertices} of the triangle 
$ \triangle\in\Sigma(\frac{T}{\ell},\frac{r'_*}{\ell}) $,
and then linearly interpolating within $ \triangle $. 

Similarly to~\eqref{eq:r*}--\eqref{eq:maxlam}, we let
\begin{align*}
	r^{*,\scl} := 
	r_* + r_* 
	\Big\lceil \frac{T}{r_*} \Big( \sup_{[0,T]\times\R} \frac{g^\scl_t}{g^\scl_\xi(1-g^\scl_\xi)} \Big) \Big\rceil.
\end{align*}
Since each $ \triangle $ has a vertical edge and a horizontal edge,
and since $ \ling^{\scl} $ is the linear interpolation of $ \til g $ on $ \triangle $,
the (constant) derivatives $ \ling^{\scl}_t|_{\triangle^\circ} $ and $ \ling^{\scl}_\xi|_{\triangle^\circ} $ 
are the averages of $ \hat g_t $ and $ \hat g_\xi $ 
along the vertical and horizontal edges of $ \triangle $, respectively.
This together with~\eqref{eq:mollg:drvx}--\eqref{eq:mollg:drvt} gives
\begin{align}
	\label{eq:lingtx:bd}
	\ling^\scl_t \in [\d_*, c_*],
	\quad
	\hat g_\xi \in [\tfrac{a}{4{r'_*}},1-\tfrac{a}{4{r'_*}}].
\end{align}
Using this bound~\eqref{eq:lingtx:bd} on the derivatives, we have
\begin{align}
	r^{*,\scl}
	\leq
	r_* + r_* 
	\Big\lceil \frac{T}{r_*} \Big( \sup_{[0,T]\times\R} \frac{c_*}{(a/4r'_*)^2} \Big) \Big\rceil
	:=
	\hat r^{*}.
\end{align}

Now, since $ \hat g \in C^\infty([0,T]\times\R) $ is smooth, we necessarily have that
\begin{align}
	\label{eq:ling:cnvg}
	\lim_{{\scl}\to\infty} \sup_{ [0,T]\times[-r'_{*},r'_{*}] } |\ling^{\scl}-\hat g| =0,
	\quad
	\lim_{{\scl}\to\infty} \sup_{ [0,T]\times[-\hat r^{*},\hat r^{*}] } 
	(|\ling^\scl_t-\hat g_t|+|\ling^\scl_\xi-\hat g_\xi|) 
	=0.
\end{align}
In view of~\eqref{eq:lingtx:bd}--\eqref{eq:ling:cnvg},
and the properties~\eqref{eq:mollg:apprx}--\eqref{eq:mollg:ent}, \eqref{eq:mollg:r+}--\eqref{eq:mollg:r-}, \eqref{eq:mollg:entail}
that $ \til g $ enjoys, we fix some large enough $ \scl=\scl_* $,
so that $ \ling := \ling^{\scl_*} $ satisfies all 
the desired conditions~\eqref{eq:ling:apprx}--\eqref{eq:ICrg-} and \eqref{eq:nondg}--\eqref{eq:nondg:}.
\end{proof}

The next lemma allows us to localize the dependence on initial conditions.
Hereafter, we adopt the convention $ \inf\emptyset :=\infty $ and $ \sup\emptyset :=-\infty $.
To setup notations,
define, for $ \f\in\Splip, b,x_0\in\Z $,
\begin{subequations}
\label{eq:icrg}
\begin{align}
	\label{eq:icrg+}
	k^+(\f,b,x_0) &:= 
		\inf\{ x\in\Z\cap[x_0,\infty) : \f(x) \geq b \}, 
\\
	\label{eq:icrg-}
	k^-(\f,b,x_0) &:= 
		\sup\{ x\in\Z\cap(-\infty,x_0] : \f(x) -x \geq b - x_0  \}, 
\\
	\label{eq:icrg:}
	\icint(\f,b,x_0) 
	&:= 
		[k^-(\f,b,x_0),k^+(\f,b,x_0)]\cap\Z.	
\end{align}
\end{subequations}
\begin{lemma}
\label{lem:icLoc}
Let $ \h^1 $ denote a generic, pre-scale \ac{TASEP} height process,
with initial condition $ \h^1(0)=\f\in\Spf $,
and let $ b,x_0\in\Z $, $ t_0\in[0,NT] $.
%
\begin{enumerate}[label=(\alph*),leftmargin=5ex]
\item \label{enu:icLoc<}
The event $ \{\h^1(t,x_0) < b\} $ depends on the initial condition $ \f $ 
only through its restriction onto $ \icint(\f,b,x_0) $.
That is, given any other process $ \h^2 $ such that 
$ \h^2(0)|_{\icint(\f,b,x_0)}=\f|_{\icint(\f,b,x_0)} $,
(under the prescribed basic coupling) we have
\begin{align*}
	\big\{ \h^1(t_0,x_0) < b \big\} = \big\{ \h^2(t_0,x_0) < b \big\}.
\end{align*}
\item \label{enu:icLoc>}
Similarly, the event $ \{ \h^1(t_0,x_0) > b \} $ dependent on $ \f $
only through its restriction onto $ \icint(\f,b,x_0) $.
\end{enumerate}
\end{lemma}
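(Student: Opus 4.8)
The plan is to reduce the whole lemma to part~\ref{enu:icLoc<}, since $\{\h^1(t_{0},x_{0})>b\}=\{\h^1(t_{0},x_{0})<b+1\}^{c}$ and part~\ref{enu:icLoc<}, applied with $b+1$ in place of $b$, localizes the complementary event onto $\icint(\f,b+1,x_{0})$ (one reads part~\ref{enu:icLoc>} in this sense, or equivalently as the statement about $\{\h^1(t_{0},x_{0})\ge b\}$, whose complement is exactly the event in part~\ref{enu:icLoc<}). For part~\ref{enu:icLoc<}, abbreviate $k^{\pm}:=k^{\pm}(\f,b,x_{0})$. If $\f(x_{0})\ge b$ then $k^{+}=k^{-}=x_{0}$, and since $\h^1$ is nondecreasing in $t$ and $\h^2(0,x_{0})=\f(x_{0})\ge b$, both events $\{\h^i(t_{0},x_{0})<b\}$ are empty; if $k^{+}=\infty$ or $k^{-}=-\infty$ there is nothing to localize on that side. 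So assume $k^{-}<x_{0}<k^{+}$, both finite; since $\f$ has increments in $\{0,1\}$ and $k^{\pm}$ are extremal, one checks $\f(k^{+})=b$ and $\f(k^{-})-k^{-}=b-x_{0}$. Let $\bar\f$ (resp.\ $\und\f$) be the pointwise largest (resp.\ smallest) element of $\Spf$ coinciding with $\f$ on $[k^{-},k^{+}]\cap\Z$, i.e.\ the one with slope $1$ on $(k^{+},\infty)$ and $0$ on $(-\infty,k^{-})$ (resp.\ $0$ on $(k^{+},\infty)$ and $1$ on $(-\infty,k^{-})$). Every $\f'\in\Spf$ coinciding with $\f$ on $[k^{-},k^{+}]\cap\Z$ satisfies $\und\f\le\f'\le\bar\f$, so by the monotonicity~\eqref{eq:mono} of the basic coupling the associated height processes obey $\und\h\le\h'\le\bar\h$ pathwise; in particular $\h^1$ and $\h^2$ are squeezed between $\und\h$ and $\bar\h$. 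It thus suffices to show $\{\bar\h(t_{0},x_{0})<b\}=\{\und\h(t_{0},x_{0})<b\}$, because on this event $\h^i(t_{0},x_{0})\le\bar\h(t_{0},x_{0})<b$ and on its complement $\h^i(t_{0},x_{0})\ge\und\h(t_{0},x_{0})\ge b$.

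To get that equality I would localize one side at a time. Right side: let $\psi\in\Spf$ be any profile with $\psi(k^{+})=b$, and compare $\h_{\flat}$ started from the slope-$0$ extension of $\psi|_{(-\infty,k^{+}]}$ with $\h_{\sharp}$ started from its slope-$1$ extension, both driven by the common clocks. In $\h_{\flat}$ the region $(k^{+},\infty)$ carries no particles and never will (particles only jump left), so the only particles ever crossing the edge $e_{x_{0}}:=(x_{0}-\tfrac12,x_{0}+\tfrac12)$ are the $\psi(k^{+})-\psi(x_{0})=b-\psi(x_{0})$ particles initially in $(x_{0},k^{+})$; hence $\h_{\flat}(t,x_{0})=\psi(x_{0})+\#\{\text{crossings of }e_{x_{0}}\text{ up to time }t\}\le b$ for all $t$, with equality once all those particles have crossed. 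In $\h_{\sharp}$ these ``original'' particles move exactly as in $\h_{\flat}$, because a TASEP particle's trajectory is a functional of its own clock and the configuration to its \emph{left}, which for an original particle agrees in the two processes; and the ``extra'' particles of $\h_{\sharp}$, which initially fill $(k^{+},\infty)$, remain forever to the right of every original particle since the exclusion dynamics preserves their left-to-right order, so no extra particle can reach $x_{0}-\tfrac12$ before every original particle has already crossed $e_{x_{0}}$ — that is, not before $\h_{\sharp}(\cdot,x_{0})$ first attains the value $b$. Hence $\h_{\sharp}(t,x_{0})=\h_{\flat}(t,x_{0})$ for every $t$ before the common first time either process reaches $b$ at $x_{0}$, so $\{\h_{\sharp}(t_{0},x_{0})<b\}=\{\h_{\flat}(t_{0},x_{0})<b\}$; sandwiching an arbitrary extension of $\psi|_{(-\infty,k^{+}]}$ between the slope-$0$ and slope-$1$ ones then shows that $\{\h'(t_{0},x_{0})<b\}$ depends on such $\f'$ only through $\f'|_{(-\infty,k^{+}]}$.

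For the left side I would run the same argument on the reflected process $\tilde\h(t,x):=\h(t,-x)+x$. Inspecting the generator~\eqref{eq:gen} shows $\tilde\h$ is again an $\Spf$-valued corner-growth trajectory, driven by the space-reflected family of clocks; moreover $\h(t,x_{0})<b$ holds if and only if $\tilde\h(t,-x_{0})<b-x_{0}$, and $k^{+}(\tilde\f,b-x_{0},-x_{0})=-k^{-}(\f,b,x_{0})$ with $\tilde\f(-k^{-})=\f(k^{-})-k^{-}=b-x_{0}$, so the previous paragraph applied to $\tilde\h$ shows that $\{\h(t_{0},x_{0})<b\}$ depends on $\f$ only through $\f|_{[k^{-},\infty)}$. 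Finally, let $\f^{m}$ be the extension of $\f|_{[k^{-},k^{+}]}$ that has slope $1$ on both $(-\infty,k^{-})$ and $(k^{+},\infty)$; since $\bar\f$ and $\f^{m}$ agree off $(-\infty,k^{-})$ and $\f^{m}$ and $\und\f$ agree off $(k^{+},\infty)$, the left-side comparison gives $\{\bar\h(t_{0},x_{0})<b\}=\{\h^{m}(t_{0},x_{0})<b\}$ and the right-side comparison gives $\{\h^{m}(t_{0},x_{0})<b\}=\{\und\h(t_{0},x_{0})<b\}$, hence $\{\bar\h(t_{0},x_{0})<b\}=\{\und\h(t_{0},x_{0})<b\}$. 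This finishes part~\ref{enu:icLoc<}, and with it the lemma.

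The only step that is not bookkeeping is the pathwise order-preservation of the exclusion particles, together with the assertion that an original particle's law of motion is insensitive to the configuration to its right. This is classical for attractive systems, but because the lemma asks for an actual equality of events (not merely an equality in law) I would be careful to realize all the coupled processes as deterministic functionals of the common Poisson clocks, so that the comparisons above are genuine pathwise identities rather than distributional ones.
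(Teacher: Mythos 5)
Your proof is correct, and it ultimately rests on the same mechanism as the paper's: in a left-jumping, totally asymmetric exclusion process a particle's trajectory is unaffected by the particles to its right, and the mirror statement on the left follows by particle--hole duality. The packaging differs. The paper tags the particle $Y_b$ and the hole $\til Y_{b-x_0}$ of \eqref{eq:particlen}--\eqref{eq:holen}, observes $\{\h^1(t_0,x_0)<b\}=\{Y_b(t_0)>x_0\}=\{\til Y_{b-x_0}(t_0)<x_0\}$ together with $Y_b(0)+\tfrac12=k^+$ and $\til Y_{b-x_0}(0)-\tfrac12=k^-$, and concludes immediately. You instead sandwich an arbitrary compatible initial condition between the extremal extensions $\und\f\le\f'\le\bar\f$ via the attractiveness \eqref{eq:mono}, reduce to the two extremal processes, and prove each one-sided localization by a crossing-count argument on the right and the spatial reflection $\til\h(t,x)=\h(t,-x)+x$ on the left (in place of the paper's holes). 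This is longer but equally valid; the sandwiching layer is not strictly needed once the two one-sided localizations are in hand (they already exhibit the event as a function of $\f'|_{(-\infty,k^+]}$ and of $\f'|_{[k^-,\infty)}$, and your passage through $\f^m$ is exactly the gluing step), but it does no harm, and your insistence on realizing everything as a pathwise functional of the common clocks is the right level of care for an equality of events. One point you correctly flag deserves emphasis: reducing part~\ref{enu:icLoc>} to part~\ref{enu:icLoc<} localizes $\{\h^1(t_0,x_0)>b\}$ onto $\icint(\f,b+1,x_0)$, which contains but generally strictly contains the $\icint(\f,b,x_0)$ appearing in the statement. This is in fact what any correct argument yields --- the tagged particle governing $\{\h^1(t_0,x_0)>b\}$ is $Y_{b+1}$, not $Y_b$, as the example of the flat profile $\f\equiv 0$ versus the wedge at $x_0=0$, $b=0$ shows --- and the enlargement is immaterial wherever the lemma is invoked, so your reading of part~\ref{enu:icLoc>} is the appropriate one.
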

\begin{proof}
The proof of Part\ref{enu:icLoc<} and \ref{enu:icLoc>} are similar,
so we consider only the former.
The proof goes through the correspondence between surface growths and particle systems.
More precisely, let
\begin{align}
	\label{eq:particlen}
	Y_n(t) := \inf\{ y\in \tfrac12+\Z : \h^1(t,y+\tfrac12) \geq n \},
	\quad
	n\in \Z.
\end{align}
Referring back the correspondence~\eqref{eq:CGMtoTASEP}--\eqref{eq:TASEPtoCGM} 
between height profiles and particle configurations,
one readily verifies that $ \{ \ldots<Y_1(t)<Y_2(t)<\ldots \} $ gives the trajectories of the corresponding particles.
Let $ \f^* := \lim_{x\to\infty} \f(x) \in \Z\cup\{\infty\} $ 
and $ \f_* := \lim_{x\to-\infty} \f(x) \in \Z\cup\{-\infty\} $.
Note that, by definition, $ Y_n(t)\equiv\infty $, $ \forall n > \f^* $
and $ Y_n(t) \equiv -\infty $, $ \forall n\leq f_* $, 
so we allow phantom particles to be placed at $ \pm\infty $ if $ \f^*<\infty $ or $ \f_*>-\infty $.
In addition to particles, we also consider the trajectories of holes (i.e., empty sites). Let
\begin{align}
	\label{eq:holen}
	\til Y_n(t) := \sup\{ y\in \tfrac12+\Z : \h^1(t,y-\tfrac12)-(y-\tfrac12) \geq n \},
	\quad
	n\in \Z.
\end{align}
The holes $ \{ \ldots < \til Y_2(t)<\til Y_1(t) <\ldots \} $ evolve under the reverse dynamics of the particles:
each $ \til Y_n $ attempts to jump to the \emph{right} in continuous time, under the exclusion rule.

Fix $ x_0,b\in\Z $, $ t_0\in[0,NT] $.
Under the preceding setup, we have
\begin{align*}
	\{ \h^1(t_0,x_0) < b \} = \{ Y_{b}(t_0) > x_0 \} = \{ \til Y_{b-x_0}(t_0) < x_0 \}.
\end{align*}
Also, from~\eqref{eq:icrg}--\eqref{eq:holen}, it is straightforward to verify that
\begin{align}
	\label{eq:YtilYid}
	Y_b(0) +\tfrac12 = k^+(\f,b,x_0),
	\quad
	\til Y_{b-x_0}(0) - \tfrac12 = k^-(\f,b,x_0).
\end{align}
Indeed, since particles in \ac{TASEP} always jumps to the left,
all the particles $ \{Y_n\}_{n>b} $ to the right of $ Y_b $ do not affect the motion of $ Y_b $.
In particular, the event $ \{ Y_b(t_0) > x_0 \} $ is independent of $ \{ Y_n(0) \}_{n>b} $.
Translating this statement into the language of height function using~\eqref{eq:YtilYid},
we conclude that $ \{ \h^1(t_0,x_0) < b \} $ is independent of $ \f(x)|_{x>k^+(\f,b,x_0)} $.
The same argument applied to holes in places of particles shows that
$ \{ \h^1(t_0,x_0) < b \} $ is independent of $ \f(x)|_{x<k^-(\f,b,x_0)} $. 
\end{proof}

We now prove~Theorem~\ref{thm:main}\ref{enu:lwb}.
\begin{proof}[Proof of Theorem~\ref{thm:main}\ref{enu:lwb}]

Indeed, the lower bound~\eqref{eq:lwb}, is equivalent to the following statement
\begin{align}
	\label{eq:lwb:}
	\liminf_{N\to\infty} \frac{1}{N^2} \log \PN(\hN\in\calO)
	\geq
	-\rf(h_*),
	\quad
	\forall h_* \in \calO \subset \Sp,
	\
	\calO \text{ open.}
\end{align}
To show~\eqref{eq:lwb:}, we fix $ h_* \in \calO \subset \Sp $ hereafter, 
and assume without lost of generality $ \rf(h_*)<\infty $.
Under such an assumption, $ h^* $ is necessarily continuous (otherwise it is straightforward to show that $ \rff(h^*)=\infty $).
This being the case, there exist $ a>0 $ and $ r_0<\infty $ such that $ \calU_{3a,r'_*}(h^*) \subset \calO $.
Hence it suffices to show
\begin{align}
	\label{eq:lwb::}
	\liminf_{N\to\infty} \frac{1}{N^2} \log \PN\big( \hN\in\calU_{3a,r_0}(h^*) \big)
	\geq
	-\rf(h_*).
\end{align}

The step is to approximate $ h^* $ with $ g $ of the form described in Proposition~\ref{prop:ent:up}.
Fix $ \e_*\in(0,\frac{a}{7}] $.
We apply Lemma~\ref{lem:h*apprx} with the prescribed $ a $, $ r_0 $, $ \e_* $ and $ h^* $, 
to obtain a $ \Sp $-valued, $ \Sigma(\frac{T}{\scl_*},\frac{r_*}{\scl_*}) $-piecewise linear function $ g $
that satisfies \eqref{eq:ling:apprx}--\eqref{eq:ICrg-},
together with $ \scl_*\in\N $ and $ r_*,r'_0 \geq r_0 $.
Write $ \gIC := g(0) $
Next, we discretize $ \gIC $ to obtain $ \gic(x) := \lfloor N g(\tfrac{x}{N}) \rfloor $.
Indeed, with $ \gIC\in\Splip $, this defines a $ \Spf $-valued (see~\eqref{eq:Spf}) profile.
Also, one readily check that the corresponding scaled $ \gic_N $ profile does converge to $ \gIC $,
i.e.,
\begin{align}
	\label{eq:gic:cnvg}
	\lim_{N\to\infty} \dist(\gic_N,\gIC)=0.
\end{align}
Let $ \gN(t,\xi) $ denote the \ac{TASEP} height process starting from~$ \gic_N $.
We apply Proposition~\ref{prop:ent:up:} with the prescribed $ \e_*\leq\frac{a}{5} $, $ r_* \geq r_0 $ and $ g $ to get
\begin{align*}
	\liminf_{N\to\infty} \frac{1}{N^2} \log 
	\PN\Big( \gN \in \calU_{\frac{a}{5},r_*}(g)\Big)
	\geq 
	- \int_0^T\int_{\R} \lrf(g_t,g_\xi) dtd\xi 
	-
	\int_0^T \int_{r_*\leq |\xi| \leq r^* } \hspace{-10pt} \prff\Big( \frac{g_t}{g_\xi(1-g_\xi)} \Big) dtd\xi
	-\e_*.
\end{align*}
Further use~\eqref{eq:ling:ent}--\eqref{eq:ling:entail} 
to bound the r.h.s.\ by $ -\rf(h^*)-3\e_* $ from below,
we obtain
\begin{align}
	\label{eq:lwb:g:}
	\liminf_{N\to\infty} \frac{1}{N^2} \log 
	\PN\Big( \gN \in \calU_{\frac{a}{5},r_0}(g)\Big)
	\geq 
	- \rf(h^*) - 3\e_*.
\end{align}

The next step is to relate the l.h.s.\ of~\eqref{eq:lwb:g:} 
to a bound on $ \frac{1}{N^2} \log\PN( \hN \in \calU_{3,r_0}(h^*)) $.
To this end, we consider the super-process $ \barg $ and sub-process $ \undg $,
which are \ac{TASEP} height processes starting from the following shifted initial conditions:
\begin{align}
	\label{eq:barudng0}
	\bargic := \gic + \lfloor Na \rfloor,
	\quad
	\undgic := \gic- \lfloor Na \rfloor.
\end{align}
Recall from~\eqref{eq:shiftinvt} that height processes are shift-invariant,
so, in fact, $ \barg(t)=\g(t) + \lfloor Na \rfloor $ 
and $ \undg(t)=\g(t) - \lfloor Na \rfloor $, $ \forall t\in[0,NT] $.
In particular,
\begin{align}
	\label{eq:barundA}	
	\big\{ \gN \in \calU_{\frac{a}{5},r_0}(g) \big\}
	\subset
	(\bar\calA(\barg_N)\cap\und\calA(\undg_N)),
\end{align}
where
\begin{align*}
	\bar\calA(\barg_N) &:=
	\Big\{ 
		\barg_N(t,\tfrac{x}{N}) < g(t,\tfrac{x}{N})+a+\tfrac{a}{5}, \ \forall (t,\tfrac{x}{N})\in[0,T]\times[-r_0,r_0] 
	\Big\},
\\
	\und\calA(\undg_N) &:=
	\Big\{ 
		\undg_N(t,\tfrac{x}{N}) > g(t,\tfrac{x}{N})-a-\tfrac{a}{5}, \ \forall (t,\tfrac{x}{N})\in[0,T]\times[-r_0,r_0] 
	\Big\}.
\end{align*}
Furthermore, rewriting~\eqref{eq:ling:apprx} for $ t=0 $ as
$
	g(0,\xi) - a < h^*(0,\xi) < g(0,\xi) + a,
$
$ \forall \xi\in [-r'_*,r'_*] $,
and combining this with~\eqref{eq:barudng0}, \eqref{eq:ic:cnvg} and \eqref{eq:gic:cnvg},
we obtain
\begin{align}
	\label{eq:sand}
	\undgic(x) < \hic(x) < \bargic(x),	\quad \forall x\in [-Nr'_*,Nr'_*],
\end{align}
for all $ N $ large enough.

Our goal is to utilize the ordering property~\eqref{eq:mono} to \emph{sandwich}
the process $ \h(t) $ in between the super- and sub-processes.
However, in order for~\eqref{eq:mono} to apply, we need the inequality in~\eqref{eq:sand} 
to hold for all $ x\in\Z $, not just $ x\in [Nr^-,Nr^+] $.
With this in mind, 
writing $ [-Nr'_*,Nr'_*]\cap\Z = [-x'_*,x'_*] $, $ x'_*\in\N $,
we perform the following surgery on $ \barg(0) $ and $ \undg(0) $:
\begin{align*}
	\g^{*,\ic}(x) &:= 
	\left\{\begin{array}{l@{,}l}
		\bargic(x)				&\text{ for } x\in [-x'_*,x'_*],
		\\
		\bargic(x'_*)+|x-x'_*|	&\text{ for } x\in (x'_*,\infty),
		\\
		\bargic(-x'_*)			&\text{ for } x\in (-\infty,-x'_*),
	\end{array}\right.
\\
	\g^\ic_*(x) &:= 
	\left\{\begin{array}{l@{,}l}
		\undgic(x)				&\text{ for } x\in [-x'_*,x'_*],
		\\
		\undgic(x'_*)			&\text{ for } x\in (x'_*,\infty),
		\\
		\undgic(-x'_*)-|x+x'_*|	&\text{ for } x\in (-\infty,-x'_*).
	\end{array}\right.
\end{align*}
This gives $ \g^\ic_*(x) < \hic(x) < \g^{*,\ic}(x) $, $ \forall x\in \Z $.
Let $ \g^* $ and $ \g_* $ denote the height processes starting from $ \g^\ic_* $ and $ \g^{*,\ic} $, respective.
We then have
\begin{align}
	\label{eq:sand:}
	\g_*(t,x) < \h(t,x) < \g^*(t,x),	\quad \forall (t,x)\in[0,TN]\times\Z.
\end{align}
Next, recall the definition of $ k_\pm(\f,b,x) $ and $ \icint(\f,b,x) $ from~\eqref{eq:icrg}.
By Lemma~\ref{lem:icLoc}, the event $ \bar\calA(\barg_N) $ depends on $ \bargic $ only through
$ \bargic|_{\calV} $, where
\begin{align*}
	\calV := \bigcup_{t\in[0,NT]}\bigcup_{x\in[-Nr_0,Nr_0]} \icint(\bargic, \beta_N, x),
	\quad
	\beta_N(t,x) := \lceil N(g(t,\tfrac{x}{N})+a+\tfrac{a}{5} \rceil.
\end{align*}
Referring to~\eqref{eq:icrg+}--\eqref{eq:icrg-} and~\eqref{eq:ICrg+}--\eqref{eq:ICrg-},
we have that
\begin{align*}
	\lim_{N\to\infty} 
	&\sup\big\{ \tfrac{1}{N} k^+ (\bargic,\beta_N(t,x), x) : x\in[-Nr_0,Nr_0] \big\}
\\
	&=
	\inf \Big\{ \xi \geq r_0 : g(0,\xi)+a >  \Big(\sup_{[0,T]\times[-r_0,r_0]}g\Big) +a+\tfrac{a}{5} \Big\}
	<
	r'_*,
\\
	\lim_{N\to\infty} 
	&\inf\big\{ \tfrac{1}{N} k^- (\bargic,\beta_N(t,x), x) : x\in[-Nr_0,Nr_0] \big\}
\\
	&=
	\sup \Big\{ \xi \leq -r_0 : g(0,\xi)+a -\xi >  \Big(\sup_{[0,T]\times[-r_0,r_0]}g(t,\xi)-\xi\Big) +a+\tfrac{a}{5} \Big\}
	>
	-r'_*.
\end{align*}
Consequently, $ \calV \subset [-Nx'_*,Nx'_*] $, for all $ N $ large enough.
Since, by construction, $ \g^{*,\ic}|_{[-Nx'_*,Nx'_*]} = \bargic|_{[-Nx'_*,Nx'_*]} $,
we thus have $ \bar\calA(\barg_N) = \bar\calA(\g^*_N) $.
A similar argument also gives $ \und\calA(\undg_N) = \und\calA(\g_{*,N}) $. 
Referring back to~\eqref{eq:barundA}, we now have
$
	\{ \gN \in \calU_{\frac{a}{5},r_0}(h^*) \}
	\subset
	(\bar\calA(\g^*_N)\cap\und\calA(\g_{*,N})). 
$
Combining this with~\eqref{eq:sand:} gives
\begin{align}
	\label{eq:sandd}
	\big \{ \gN \in \calU_{\frac{a}{5},r_0}(g) \big \}
	\subset
	\big\{ \hN \in \calU_{(1+\frac{1}{5})a,r_0}(g)  \big\}.
\end{align}
Since $ g $ satisfies~\eqref{eq:ling:apprx} and since $ r'_*\geq r_0 $,
we have 
$ \calU_{(1+\frac{1}{5})a,r_0}(g) \subset \calU_{(2+\frac{1}{6})a,r_0}(h^*)\subset\calU_{3a,r_0}(h^*) $.
Using this to replace~$ \calU_{(1+\frac{1}{5})a,r_0}(g) $ by $ \calU_{3a,r_0}(h^*) $ in~\eqref{eq:sandd},
and inserting the result into~\eqref{eq:lwb:g:},
we arrive at
\begin{align*}
	\liminf_{N\to\infty} \frac{1}{N^2} \log \PN\big( \hN\in\calU_{3a,r_0}(h^*) \big)
	\geq
	-\rf(h_*)-3\e_*.
\end{align*}
Since $ \e_* \in (0,\frac{a}{5}] $ is arbitrary,
letting $ \e_* \downarrow 0 $ gives the desired result~\eqref{eq:lwb::}.
\end{proof}

\section{Upper Bound: Proof of Proposition~\ref{prop:ent:lw}}
\label{sect:pfentlw}

\subsection{The Conditioned Law $ \QN $}
Recall from~\eqref{eq:tubular} the definition of the tubular set $ \calU_{a,r}(g) $.
This purpose of this subsection is to prepare a few basic properties
of the conditioned law $ \QN $ as in~\eqref{eq:QN}.
Roughly speaking, 
Proposition~\ref{prop:QNIto}--\ref{prop:QN:ent} in the following
assert that the conditioned law $ \QN $ is written as a perturbed \ac{TASEP},
where the underlying Poisson clocks have rates $ \lambda(t,x,\h(t)) $
that vary over $ (t,x) $ and depend on the current configuration $ \h(t) $ at the given time.
In a finite state space setting (e.g., \ac{TASEP} on the circle $ \Z/(N\Z) $)
such a result follows at once by standard theories.
For the \ac{TASEP} on the full line $ \Z $ considered here,
as we cannot identify a complete proof of Proposition~\ref{prop:QNIto}--\ref{prop:QN:ent} in the literature,
we include a brief, self-contained treatment in this subsection.

For the rest of this subsection,
fix $ a>0, r<\infty $,  a continuous deviation $ g\in\Sp\cap C([0,T],\Splip) $,
and let $ \calU=\calU_{a,r}(g) $ denote the tubular set around $ g $.
Scaling is irrelevant in this subsection,
we often drop the dependence on $ N $, e.g., writing $ \Pr $ in place of $ \PN $.
Hereafter, for a given $ v\in\R $,  $\lceil v \rceil := \inf\{ i\in \Z: v\le x\}$ and 
$ \lfloor v \rfloor  := \sup\{ i\in \Z: \ge i\}$ denote the correspond round-up and round-down.
We write the tubular set $ \calU $ as
\begin{align}
	\label{eq:tubular:a}
	&\calU = 
	\bigcap_{t\in[0,NT]} \big\{ \h(t) \in \calB(t)  \big\},
\\
	&\label{eq:tubular:b}
	\calB(t) := \bigcap_{x\in[-k_0,k_0]}  \calB(t,x),
	\quad
	\calB(t,x) := \big\{ \f\in\Spf : \, \undenv(t,x) < \f(x) < \barenv(t,x) \big\},
\end{align}
where $ [-k_0,k_0]=[-Nr,Nr]\cap\Z $, and $ t\mapsto \barenv(t,x), \undenv(t,x) $
are the upper and lower envelops, given by 
$ \barenv(t,x) := \lim_{\e\downarrow 0}\lceil N(g(t,\frac{x}{N})+a) + \e \rceil $
and $ \undenv(t,x) := \lim_{\e\downarrow 0} \lfloor N(g(t,\frac{x}{N}) -a) - \e \rfloor $. 

Let us first step up a few notations. Define
\begin{align}
	\label{eq:Sph}
	\Sph(\hic) := \Big\{ \f\in\Spf : \f(x) \geq \hic(x),\forall x\in\Z  \Big\}.
\end{align}
Indeed, the space $ \Sph(\hic) $ contains the set of all possible configurations $ \h(t) $
of the \ac{TASEP} height process starting from $ \hic $,
(because \ac{TASEP} height function \emph{grows} in time).
We say $ F:\Sph(\hic) \to \R $ and $ G:[0,NT]\times\Sph(\hic)\to\R $ are \textbf{local}, 
with support $ \calV=[k^-,k^+] $, if,
\begin{align*}
	&
	F(\f^1) = F(\f^2),
	\quad
	\forall \f^1,\f^2
	\text{ such that }
	\f^1|_{\calV} = \f^2|_{\calV},
\\
	&
	G(t,\f^1) = G(t,\f^2),
	\quad
	\forall t\in[0,NT], 
	\
	\forall \f^1,\f^2\in
	\text{ such that }
	\f^1|_{\Z\cap[-k,k]} = \f^2|_{\calV}.
\end{align*} 
Namely, $ F,G $ are local with support $ \calV $ if they reduce to functions on 
$ \Z^{\Z\cap\calV} $ and $ [0,T]\times \Z^{\Z\cap\calV} $, respectively.
\begin{remark}
We emphasize here that our definition of local functions differs slightly from standard terminologies.
In the conventional setup,
one considers a Markov process with a state space $ \mathscr{S} $,
and functions $ F:\mathscr{S}\to\R $, $ G:[0,NT]\times\mathscr{S}\to\R $ 
that act on the \emph{entire} state space $ \mathscr{S} $.
Under such a setup, functions are local if they have finite supports, independent of the initial conditions of the process.
Here, unlike the conventional setup, we have \emph{fixed} the initial condition $ \hic $,
and consider functions $ F,G $ that act on the subspace $ \Sph(\hic) $. 
The supports of functions consider here may refer to $ \hic $ in general.
\end{remark}
\noindent
Define, for $ \f\in\Sph(\hic) $, Doob's conditioning function
\begin{align}
	\label{eq:doob}
	\doob(t,\f) 
	:= 
	\Pr \Big( \bigcap_{s\in[t,NT]} \big\{ \h(s) \in \calB(s) \big\} \Big| \h(t)=\f \Big).
\end{align}
This function is the building block of various properties of the conditioned law $ \QN $.
We begin by showing the following.

\begin{lemma}
\label{lem:doob}
The function~\eqref{eq:doob} is local,
and, $ t\mapsto \doob(t,\f) $ is Lipschitz, uniformly over $ [0,NT]\times\Sph(\hic) $.
The derivative is given by
\begin{align}
	\label{eq:doob'}
	\tfrac{d~}{dt} \doob(t,\f) = - \big( \gen q (t,\f) \big) \ind_{\calB(t)}(\f),
\end{align}
for all $ (t,\f) \in [0,NT]\times\Sph(\hic) $ where $ \tfrac{d~}{dt} \doob(t,\f) $ is defined.
\end{lemma}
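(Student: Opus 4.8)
The plan is to reduce the infinite–volume setting to a finite window by means of Lemma~\ref{lem:icLoc}, and then to run a short–time expansion of $\doob$ via the Markov property, exploiting the feature of the \ac{TASEP} that a clock ring at a site $x$ alters the height profile only at $x$.

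\emph{Locality.} Writing the $\calU$–constraint on $[t,NT]$ as the intersection over $s\in[t,NT]$, $x\in[-k_0,k_0]$ of the events $\{\h(s,x)<\barenv(s,x)\}$ and $\{\h(s,x)>\undenv(s,x)\}$, I would apply Lemma~\ref{lem:icLoc} (valid with initial time $t$ in place of $0$ by time–homogeneity) to each one: conditioned on $\h(t)=\f$, it depends on $\f$ only through $\f$ restricted to a window $\icint(\f,b,x_0)$, whose threshold $b$ stays bounded since $g$ is continuous on the compact set $[0,T]\times[-r,r]$. Using that $k^\pm(\f,b,x_0)$ is monotone in $(b,x_0)$ and that $k^+(\Cdot)$ decreases while $k^-(\Cdot)$ increases along $\f\mapsto\f'$ with $\f\le\f'$, all these windows sit — for every $\f\in\Sph(\hic)$ — inside one fixed finite interval $\calV$ depending only on $\hic,g,a,r$ (when a threshold exceeds the height range attainable on one side, that one–sided constraint is vacuous and is dropped). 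Hence $\f\mapsto\doob(t,\f)$ is local with a support $\calV$ that can be taken uniform in $t\in[0,NT]$; enlarging $\calV$ by a bounded amount, I also arrange that it contains the supports of all $\doob(s,\Cdot)$, $s\in[0,NT]$, together with a one–site margin.

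\emph{Derivative and Lipschitz bound.} Splitting the constraint on $[t,NT]$ at an intermediate time $t'$ and using the Markov property gives the semigroup identity
\begin{align*}
	\doob(t,\f)=\Ex^{\f}_{t}\Big[\ind\big\{\h(s)\in\calB(s),\ \forall\,s\in[t,t']\big\}\,\doob\big(t',\h(t')\big)\Big],\qquad 0\le t\le t'\le NT,
\end{align*}
with $\Ex^{\f}_{t}$ the expectation for the process started from $\f$ at time $t$. On the event $E_{h}$ that no clock in $\calV$ rings during $[t,t+h]$ — of probability $e^{-|\calV|h}$ — we have $\h(s)|_{\calV}=\f|_{\calV}$ throughout $[t,t+h]$, precisely because a ring at $x$ changes only $\h(\Cdot,x)$. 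Let $\calT\subset[0,NT]$ be the finite set of times at which some envelope $\barenv(\Cdot,x)$ or $\undenv(\Cdot,x)$, $x\in\calV$, jumps (finite since $g$ is continuous and monotone in $t$). For $t\notin\calT$ and $h$ small, on $E_{h}$ the indicator equals $\ind_{\calB(t)}(\f)$ and $\doob(t',\h(t'))=\doob(t+h,\f)$; expanding the complement of $E_{h}$ by conditioning on the first ring in $\calV$ — which produces the move $\f\mapsto\f^{x}$ only when $\mob(\f,x)=1$ — and absorbing the $\ge 2$–ring contribution into an $O(h^{2})$ uniform in $\f$, I would obtain
\begin{align*}
	\doob(t,\f)=\ind_{\calB(t)}(\f)\Big(\doob(t+h,\f)+h\,(\gen\doob)(t+h,\f)\Big)+O(h^{2}),
\end{align*}
where $\calV$–locality of $\doob(t+h,\Cdot)$ identifies $\sum_{x\in\calV}\mob(\f,x)\big(\doob(t+h,\f^{x})-\doob(t+h,\f)\big)$ with $(\gen\doob)(t+h,\f)$. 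The same expansion over $[t-h,t]$ gives the analogous identity with $t$ in place of $t+h$ on the right. Together these yield, for $t\notin\calT$: the Lipschitz bound $|\doob(t+h,\f)-\doob(t,\f)|\le 2|\calV|\,h+O(h^{2})$ uniformly in $\f$ (as $0\le\doob\le1$ and $\mob\le1$), and, after letting $h\downarrow0$ and using the resulting continuity of $(\gen\doob)(\Cdot,\f)$, that both one–sided derivatives of $\doob(\Cdot,\f)$ at $t$ equal $-(\gen\doob)(t,\f)\ind_{\calB(t)}(\f)$, which is~\eqref{eq:doob'}; for $\f\notin\calB(t)$ with $t\notin\calT$ one has $\doob(\Cdot,\f)\equiv0$ near $t$, consistent with the right side vanishing. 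Across the finitely many times of $\calT$ the argument is run separately, which suffices for the later applications of the lemma.

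The main obstacle is the short–time expansion in infinite volume: one must make rigorous that $\h(\Cdot)|_{\calV}$ can only move through a clock in $\calV$ — the substitute for a finite–speed–of–propagation property, which the \ac{TASEP} does not literally enjoy — and use it to control the $O(h^{2})$ error uniformly in $\f$ despite the infinitely many clocks. A subsidiary point to track carefully is the exceptional set $\calT$, so that the derivative identity is only asserted where it actually holds.
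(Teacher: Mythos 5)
Your argument is correct, and it splits from the paper's proof precisely at the derivative identity~\eqref{eq:doob'}. For locality you take the same route as the paper: decompose the tube event into one-point constraints, invoke Lemma~\ref{lem:icLoc}, and use the monotonicity $\icint(\f,b,x)\subset\icint(\hic,b,x)$ for $\f\geq\hic$ (dropping the vacuous constraints with unbounded windows) to extract a single finite support $\calV$. For the Lipschitz bound both proofs freeze the clocks on a finite window for a short time; you are in fact a bit more careful than the paper in freezing all of $\calV$ rather than only $[-k_0,k_0]$ (which is what the identity $\doob(t_2,\h(t_2))=\doob(t_2,\h(t_1))$ actually requires), and in isolating the finitely many envelope jump times, which the paper's claimed identity on the no-ring event quietly ignores. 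The real divergence is in proving~\eqref{eq:doob'}: you do a direct first-ring expansion of the semigroup identity over $[t,t+h]$, identifying the one-ring term with $h\,(\gen\doob)$ and absorbing multiple rings into a uniform $O(h^2)$; the paper instead observes that $\doob(t,\h(t))-\int_{t_1}^{t}(\partial_t+\gen)\doob(s,\h(s))\,ds$ is a bounded $\Pr$-martingale, stops it at the exit time $\sigma$ from the tube (using $\doob(\sigma,\h(\sigma))=0$ to replace the indicator by the stopped process), and differentiates the resulting integral identity at $t_1$. Your expansion is more elementary and self-contained, and it makes the uniformity of the error in $\f$ completely explicit; the paper's stopping-time argument avoids the case analysis near $\partial\calB(t)$ (the stopped martingale automatically discards trajectories after they leave the tube, so one never has to track the indicator $\ind_{\calB(t)}(\f^x)$ in the one-ring term) and sets up exactly the machinery that is reused verbatim in Proposition~\ref{prop:QNIto}. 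One point to keep in mind with your route: in the one-ring term you must use that $\doob(t+h,\f^x)$ vanishes whenever $\f^x\notin\calB(t+h)$, so that the extra indicator produced by the constraint on $[\sigma_1,t+h]$ is harmless; this is implicit in your write-up and worth stating.
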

\begin{proof}
We begin by showing that $ \doob(t,\f) $ is local.
With $ g $ being continuous, 
the upper and lower envelops $ t\mapsto \barenv(t,x), \undenv(t,x) $ are necessarily $ D([0,NT],\Z) $-valued.
We enumerate the discontinuity of $ t\mapsto\und{b}(t,x) $ and $ t\mapsto\bar{b}(t,x) $, $ x\in[-k_0,k_0] $,
within $ t\in[0,NT) $ as $ 0\leq t_1<t_1<\ldots<t_{n-1}< NT $, set $ t_0=0 $ and $ t_n=NT $ for consistency of notations.
Under such notations, we write
\begin{align}
	\bigcap_{s\in[t,NT]}\big\{ \h(s) \in \calB(s)\big\}
	\label{eq:doob:local}
	=
	\bigcap_{x\in[-k_0,k_0]}
	&\Big(
		\bigcap_{ i=1}^{n-1}
		\bigcap_{ s\in [t_{i-1},t_i)\cap[t,NT] } 
		\big\{ \h(s,x) > \undenv(t_{i},x) \big\}
		\cap
		\big\{ \h(s,x) < \barenv(t_{i},x) \big\}
\\
	&
	\notag
	\hphantom{	\bigcap_{x\in[-\ell,\ell]} \Big(\bigcap_{i=0}^{n-1}  }
		\cap
		\big\{ \h(t_n,x) > \undenv(t_n,x) \big\}
		\cap
		\big\{ \h(t_n,x) < \barenv(t_n,x) \big\} \Big).
\end{align}
Our goal is to show that, the probability of the event~\eqref{eq:doob:local},
conditioned on $ \h(t)=\f $, depends on $ \f $ in a local fashion.
Recall the notations~$ k_\pm(\f,b,x) $ and $ \icint(\f,b,x) $ from~\eqref{eq:icrg}.
View $ \f $ as the initial condition of the \ac{TASEP} starting at time $ t $.
Lemma~\ref{lem:icLoc} asserts that the event~$ \big\{ \h(s,x) < b \big\} $
depends on $ \f $ only through $ \f|_{\icint(\f,b,x)} $.
We say $ \f^1 \geq \f^2 \in \Spf $, if $ \f^1(x) \geq \f^2(x) $, $ \forall x\in\Z  $.
From~\eqref{eq:icrg}, one readily checks that $ \icint(\f^1,b,x) \subset \icint(\f^2,b,x) $, if $ \f^1 \geq \f^2 $.
Further, recall from~\eqref{eq:Sph} that $ \f \geq \hic $, $ \forall \f\in\Sph(\hic) $,
so in particular $ \icint(\f,b,x) \subset \icint(\hic,b,x) $.
Now, if $ \icint(\hic,b,x) $ is an unbounded interval, 
i.e., $ k^+(\hic,\barenv(s,x),x) =\infty $ or $ k^-(\hic,\barenv(s,x),x) =-\infty $,
is it straightforward to verify that $ \{ \h(s) < b, \forall s\in[0,NT] \} $ must hold.
In this case, $ \{ \h(s) < b, \forall s\in[t,NT] \} $ holds regardless of $ \f $.
Consequently, the event $ \{ \h(s) < b, \forall s\in[t,NT] \} $ depends on $ \f $
only through its restriction onto
\begin{align*}
	\icint'(\hic,b,x)
	:=
	\left\{
	\begin{array}{l@{,}l}
		\icint(\hic,b,x)	&	\text{ if } \icint(\hic,b,x) \text{ is bounded},
		\\
		\emptyset			&	\text{ otherwise.}
	\end{array}\right.
\end{align*}
A similar argument shows that $ \{ \h(s) > b, \forall s\in[t,NT] \} $ depends on $ \f $
only through its restriction onto $ \icint'(\hic,b,x) $.
Using these properties for $ b=\barenv(t_{i},x),\undenv(t_{i},x) $, $ i=1,\ldots,n $,
and $ x\in[-k_0,k_0] $ in~\eqref{eq:doob:local},
we see that the event $ \bigcap_{s\in[t,NT]}\{ \h(s) \in \calB(s)\} $ depends on $ \f $
only through $ \f|_\calV $, where $ \calV $ is the finite interval
\begin{align*}
	\calV := \bigcup_{i=1}^n \bigcup_{x\in[-k_0,k_0]}\icint'(\hic,b,x).
\end{align*}
This concludes the locality of Doob's function $ \doob(t,\f) $.

Next we turn to the Lipschitz continuity.
Fix $ t_1<t_2\in[0,NT] $.
Referring back to~\eqref{eq:doob}, we have that
\begin{align}
	\label{eq:doob:decomp}
	\doob(t_1,\f) = \Ex\big( \doob(t_2,h(t_2)) \ind_{\cap_{s\in[t_1,t_2]}\set{\h(s)\in \calB(s)} } | \h(t_1)=\f \big). 
\end{align}
Let $ V $ denote the event that none of the underlying Poisson clocks
among sites $ x\in[-k_0,k_0] $ ever ring during $ s\in[t_1,t_2] $.
On the event $ V $, we have that
$ \doob(t_2,h(t_2)) \ind_{\cap_{s\in[t_1,t_2]}\set{\h(s)\in \calB(s)} } = \doob(t_2,\h(t_1)) $.
Using this in~\eqref{eq:doob:decomp} gives
\begin{align}
	\label{eq:doob:lip}
	\doob(t_1,\f) = \doob(t_2,\f) \Pr(V) 
	+ 
	\Ex\big( \doob(t_2,\h(t_2))\ind_{\cap_{s\in[t_1,t_2]}\set{\h(s)\in \calB(s)}\cap V^c} \big| \h(t_1)=\f \big).
\end{align}
For the event $ V $,
there exists a constant $ c<\infty $, depending only on $ k_0 $,
such that $ \Pr(V) \geq 1 - c|t_2-t_1| $.
Using this in \eqref{eq:doob:lip} gives
\begin{align*}
	|\doob(t_1,\f)-\doob(t_2,\f)| \leq c \doob(t_2,\f)|t_2-t_1| \leq c|t_2-t_1|.
\end{align*}
This concludes the Lipschitz continuity of $ t\mapsto\doob(t,\f) $.

To show~\eqref{eq:doob'},
fix $ t_1 $ and let $ \sigma:= \inf\{ s\geq t_1: \h(s)\notin \calB(s) \} $
to be the first hitting time for $ \h(s) $ to be outsides of the tubular set $ \calU $.
Since, by definition, $ \doob(t,\f) =0 $ for $ \f\notin \calB(t) $,
we have that
\begin{align}
	\label{eq:doob:localized}
	\doob(t_2,\h(t_2))\ind_{\cap_{s\in[t_1,t_2]}\set{\h(s)\in \calB(s)}}
	=
	\doob(t_2\wedge\sigma, \h(t_2\wedge\sigma)).
\end{align}
Since $ \doob(t,\f) $ is local and uniformly Lipschitz in $ t $,
we have that
\begin{align}
	\label{eq:doob:mg}
	t\longmapsto \doob(t,\h(t)) - \int_{t_1}^{t} \big( \partial_t + \gen \big) \doob(t,\h(s)) ds
\end{align}
is a $ \Pr $-martingale.
Furthermore, with $ \doob(t,\f) $ being local and uniformly Lipschitz in $ t $, 
the process~\eqref{eq:doob:mg} is bounded.
Hence the localized process
\begin{align*}
	t\longmapsto 
	\doob(t\wedge\sigma,\h(t\wedge\sigma)) - \int_{t_1}^{t\wedge\sigma} 
  \big(\partial_t + \gen \big) \doob(s,\h(s)) ds
\end{align*}
is also a $ \Pr $-martingale.
Combining this with~\eqref{eq:doob:localized} and \eqref{eq:doob:decomp} gives
\begin{align}
	\label{eq:doob':}
	\Ex\Big( \int_{t_1}^{t_2\wedge\sigma} \big(\partial_t + \gen \big)\doob(s,\h(s)) ds \Big| \h(t_1)
  =\f \Big) =0.
\end{align}
Now, consider the case $ \h(t_1)=\f\in \calB(t_1) $.
In this case we necessarily have $ \sigma > t_1 $.
Hence, for fixed $ t_1 $, almost surely as $ t_2\downarrow t_1 $,
\begin{align}
	\label{eq:ascndoob}
	\frac{1}{t_2-t_1}\int_{t_1}^{t_2\wedge\sigma} \big(\partial_t + \gen \big)\doob(s,\h(s)) ds
	\longrightarrow
         \big(\partial_t + \gen \big) \doob(t_1,\h(t_1)).
\end{align}
With $ \doob(t,\f) $ being local and uniformly Lipschitz in $ t $, 
the l.h.s.\ of~\eqref{eq:ascndoob} is uniformly bounded over $ t_2\in(t_1,NT] $.
Hence the almost sure convergence~\eqref{eq:ascndoob} give convergence in expectation, i.e.,
\begin{align}
	\label{eq:ascndoob:}
	\frac{1}{t_2-t_1}\Ex\Big(\int_{t_1}^{t_2\wedge\sigma} \big(\partial_t + \gen \big)\doob(s,\h(s))
  ds\Big|\h(t_1)=\f\Big)
	\longrightarrow
	\partial_t \doob(t_1,\h(t_1)) + \gen\doob(t_1,\h(t_1)).
\end{align}
Combining~\eqref{eq:ascndoob:} with~\eqref{eq:doob':} gives~\eqref{eq:doob'},
for the case $ \f\in\calB(t_1) $.
For the case $ \f=\h(t_1)\not\in \calB(t_1) $,
since the envelops $ \barenv(t,x) $ and $ \undenv(t,x) $ are right-continuous in $ t $,
we have that $ \doob(t_2,\f)=0=\doob(t_1,\f) $, for all $ 0<t_2-t_1 $ small enough.
Hence $ \partial_t \doob(t_1,\f) =0 $ and \eqref{eq:doob'} follows.
\end{proof}

The next step is to derive the It\^{o} formula for $ \h $ under the conditioned law $ \Prr $.
To this end, define, for $ \f\in\Sph(\hic) $, the perturbed rate
\begin{align}
	\label{eq:lambdatxf}
	\lambda(t,x,\f) := \frac{\doob(t,\f^{x})}{\doob(t,\f)}.
\end{align}
Recall that $ \f^{x} := \f + \ind_\set{x} $
and recall from~\eqref{eq:mob} that $ \mob(\f,x) $ denotes the mobility function.
We consider the perturbed, time-dependent generator acting on local $ \f $:
\begin{align}
	\label{eq:genn}
	\big( \genn(t) F \big)(\f) := \left\{\begin{array}{l@{}l}
		\displaystyle \sum_{x\in\Z} \lambda(t,x,\f) \mob(\f,x) \big( F(\f^{x}) - F(\f) \big),
		&
		\text{ if } \f \in \calB(t),
		\\
		~
		\\
		0,	&	\text{ otherwise.}
	\end{array}\right.
\end{align}
Since the term $ 1/\doob(t,\f) $ is unbounded in general,
the expression~\eqref{eq:genn} could potentially cause issues when integrating $ \genn(t)F $ over $ \Ex_{\Prr} $.
We show in the next lemma that this is not the case.
\begin{lemma}
\label{lem:doob:int}
For all $ t\in[0,NT] $,
\begin{align}
	\label{eq:doob:int}
	\Ex_{\Prr} \Big( \frac{1}{\doob(t,\h(t))} \Big)
	\leq
	\frac{1}{\doob(0,\hic)}.
\end{align}
In particular, for any local, bounded $ G:[0,NT]\times\Sph(\hic)\to\R $ with support $ \calV $,
\begin{align*}
	\Ex_{\Prr}\big| \genn(t)G(t,\h(t)) \big|
	\leq
	\frac{\#(\calV\cap\Z)}{\doob(0,\hic)} \Vert G(t,\Cdot) \Vert_{\infty}.
\end{align*}
\end{lemma}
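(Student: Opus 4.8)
The plan is to write $\Prr$ as a change of measure from $\Pr$ on the natural filtration $(\filt_t)_{t\in[0,NT]}$ of the height process, with density built from Doob's function $\doob$, and then to exploit the exact cancellation between that density and the singular factor $1/\doob(t,\h(t))$. Write $Z_N:=\Pr(\calU)$; since $\h$ starts deterministically at $\hic$, the definitions~\eqref{eq:tubular:a}--\eqref{eq:tubular:b} and~\eqref{eq:doob} give $Z_N=\doob(0,\hic)$, and we may assume $Z_N>0$ (otherwise $\Prr$ is not defined). Decompose $\calU=\calU_{\leq t}\cap\calU_{\geq t}$, with $\calU_{\leq t}:=\{\h(s)\in\calB(s),\ \forall s\in[0,t]\}\in\filt_t$ and $\calU_{\geq t}:=\{\h(s)\in\calB(s),\ \forall s\in[t,NT]\}$. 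By the Markov property of the height process and the definition~\eqref{eq:doob} of $\doob$, one gets $\Ex_\Pr(\ind_\calU\,|\,\filt_t)=\ind_{\calU_{\leq t}}\,\doob(t,\h(t))$. Hence for every nonnegative $\filt_t$-measurable $X$ (by monotone convergence, starting from bounded $X$),
\begin{align*}
	\Ex_\Prr(X)=\frac{1}{Z_N}\,\Ex_\Pr\big(X\,\ind_\calU\big)=\frac{1}{Z_N}\,\Ex_\Pr\big(X\,\ind_{\calU_{\leq t}}\,\doob(t,\h(t))\big),
\end{align*}
i.e.\ $\tfrac{d\Prr|_{\filt_t}}{d\Pr|_{\filt_t}}=Z_N^{-1}\,\ind_{\calU_{\leq t}}\,\doob(t,\h(t))$. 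Applying this with $X=\ind_{\{\doob(t,\h(t))=0\}}$ shows $\Prr(\doob(t,\h(t))=0)=0$, so $1/\doob(t,\h(t))$ is $\Prr$-a.s.\ finite.

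For~\eqref{eq:doob:int}, I would feed $X=1/\doob(t,\h(t))$ into the displayed identity, with the convention that the product $\doob(t,\h(t))\cdot\tfrac{1}{\doob(t,\h(t))}$ is set to $0$ on $\{\doob(t,\h(t))=0\}$ — consistent, since the density already vanishes there. The factor $\doob(t,\h(t))$ then cancels, leaving
\begin{align*}
	\Ex_\Prr\Big(\frac{1}{\doob(t,\h(t))}\Big)=\frac{1}{Z_N}\,\Ex_\Pr\big(\ind_{\calU_{\leq t}}\,\ind_{\{\doob(t,\h(t))>0\}}\big)\leq\frac{1}{Z_N}=\frac{1}{\doob(0,\hic)},
\end{align*}
which is~\eqref{eq:doob:int}.

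For the second assertion I would establish a pointwise bound on $\genn(t)G$. By Lemma~\ref{lem:doob} one has $0\leq\doob\leq 1$, hence $\lambda(t,x,\f)=\doob(t,\f^{x})/\doob(t,\f)\leq 1/\doob(t,\f)$ on $\{\doob(t,\f)>0\}$; moreover $\mob(\f,x)\in\{0,1\}$, and since $G(t,\Cdot)$ is supported on $\calV$ the increment $G(t,\f^{x})-G(t,\f)$ vanishes unless $x\in\calV\cap\Z$ and is otherwise bounded by $2\Vert G(t,\Cdot)\Vert_\infty$ in absolute value. Substituting into~\eqref{eq:genn} and recalling $\genn(t)G(t,\f)=0$ for $\f\notin\calB(t)$ gives, for all $(t,\f)$,
\begin{align*}
	\big|\genn(t)G(t,\f)\big|\leq\ind_{\calB(t)}(\f)\,\frac{2\,\#(\calV\cap\Z)\,\Vert G(t,\Cdot)\Vert_\infty}{\doob(t,\f)}.
\end{align*}
Taking $\Ex_\Prr$ and invoking~\eqref{eq:doob:int} then yields $\Ex_\Prr|\genn(t)G(t,\h(t))|\leq 2\,\#(\calV\cap\Z)\,\doob(0,\hic)^{-1}\Vert G(t,\Cdot)\Vert_\infty$, which is the asserted estimate up to the irrelevant absolute constant.

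The only genuine subtlety, which I view as the crux, is the apparent non-integrability of $1/\doob(t,\h(t))$: this function cannot be bounded uniformly, but under $\Prr$ it is a.s.\ strictly positive and, crucially, under the change of measure it is multiplied by a density proportional to $\doob(t,\h(t))$, so the singularity is exactly absorbed and what remains is a probability bounded by $1$. Everything else is bookkeeping, relying only on the Markov property and on the locality and measurability of $\doob$ provided by Lemma~\ref{lem:doob} (which in particular guarantees that $\genn(t)G(t,\h(t))$ is a genuine random variable and that the sum in~\eqref{eq:genn} has finitely many nonzero terms once $G$ has support $\calV$).
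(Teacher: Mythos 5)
Your proof of \eqref{eq:doob:int} is exactly the paper's argument: you unwind $\Ex_{\Prr}$ as $\doob(0,\hic)^{-1}\Ex\big(\ind_{\calU}\,\Cdot\big)$ and condition on $\filt_t$, so that the factor $\ind_{\cap_{s\le t}\set{\h(s)\in\calB(s)}}\,\doob(t,\h(t))$ from \eqref{eq:doob:cnd} cancels the singular $1/\doob(t,\h(t))$ and leaves a probability bounded by one; your explicit handling of the null set $\{\doob(t,\h(t))=0\}$ is a harmless refinement. The ``in particular'' part is not argued in the paper at all, and your pointwise bound is correct — the extra factor $2$ relative to the stated constant is immaterial, since every application of the lemma only needs $L^1$-boundedness.
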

\begin{proof}
Indeed, since $ \Prr $ is the conditioned law around the tubular set $ \calU $, we have 
\begin{align}
	\label{eq:prr:cnd}
	\Ex_{\Prr} \Big( \frac{1}{\doob(t,\h(t)} \Big) 
	= 
	\frac{1}{\doob(0,\hic)} \Ex\Big( \frac{1}{\doob(t,{\h(t)})} \ind_{\cap_{s\in[0,NT]}\set{\h(s)\in \calB(s)}} \Big).
\end{align}
Let $ \filt_t $ denote the canonical filtration of $ \h(t) $.
We indeed have that
\begin{align}
	\label{eq:doob:cnd}
	\Ex\big(\ind_{\cap_{s\in[0,NT]}\set{\h(s)\in \calB(s)}}\big|\filt_t )
	&= {\ind_{\cap_{s\in[0,t]}\set{\h(s)\in \calB(s)}} \Ex\big(\ind_{\cap_{s\in[t,NT]}\set{\h(s)\in \calB(s)}}\big|\filt_t )}
	\nonumber
 \\
	&= {\ind_{\cap_{s\in[0,t]}\set{\h(s)\in \calB(s)}} \doob(t,\h(t)).}
\end{align}
Inserting~\eqref{eq:doob:cnd} into~\eqref{eq:prr:cnd} gives the desired result~\eqref{eq:doob:int}.
\end{proof}

We now derive the It\^{o} formula for $ \h $ under the conditional law $ \Prr $.
\begin{proposition}
\label{prop:QNIto}
Let $ G:[0,NT]\times\Sph(\hic) \to \R $ be a bounded local function which is Lipschitz in $ t $,
uniformly over $ [0,NT]\times\Sph(\hic) $.
We have, for each fixed $ t_1<t_2\in[0,NT] $,
\begin{align}
	\label{eq:QIto}
	\Ex_{\Prr} G(t,\h(t) ) |_{t=t_1}^{t=t_2}
	=
	\Ex_\Prr \int_{t_1}^{t_2} \Big( 
       \partial_t+\genn(t)\Big)G(t,\h(t)) dt.
\end{align}
\end{proposition}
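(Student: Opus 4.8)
The plan is to recognize the conditioned law $\Prr$ as Doob's $h$-transform of $\Pr$ by the space-time harmonic function $\doob$, and to transport the time-dependent Dynkin formula valid under $\Pr$ over to $\Prr$ by means of the conditioning identity~\eqref{eq:doob:cnd}. Write $\calU_{\le t}:=\bigcap_{s\in[0,t]}\set{\h(s)\in\calB(s)}$ for the restriction of the tube to $[0,t]$. For any $\filt_t$-measurable, $\Prr$-integrable $\Phi$, the definition of $\Prr$, the Markov property, and~\eqref{eq:doob:cnd} give
\[
	\Ex_\Prr[\Phi]=\frac{1}{\doob(0,\hic)}\,\Ex_\Pr\big[\Phi\,\ind_{\calU_{\le t}}\,\doob(t,\h(t))\big].
\]
Applying this with $\Phi=G(t,\h(t))$ at $t=t_1,t_2$, and --- after interchanging the $dt$-integral with $\Ex_\Prr$, which is legitimate by Lemma~\ref{lem:doob:int} together with the boundedness recorded below --- with $\Phi=\big((\partial_t+\genn(t))G\big)(t,\h(t))$ for each fixed $t$, reduces~\eqref{eq:QIto} to a statement about the single function $H:=G\doob$ under $\Pr$ alone.

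Next comes an algebraic identity. Spelling out~\eqref{eq:genn} with $\lambda(t,x,\f)=\doob(t,\f^x)/\doob(t,\f)$ from~\eqref{eq:lambdatxf} and telescoping the sum, $\sum_{x}\mob(\f,x)\doob(t,\f^x)\big(G(t,\f^x)-G(t,\f)\big)=\gen(G\doob)(t,\f)-G(t,\f)\,\gen\doob(t,\f)$, so that for every $\f\in\calB(t)$
\[
	\doob(t,\f)\,\big((\partial_t+\genn(t))G\big)(t,\f)=\big((\partial_t+\gen)(G\doob)\big)(t,\f)-G(t,\f)\,\big((\partial_t+\gen)\doob\big)(t,\f),
\]
and the last term vanishes by~\eqref{eq:doob'}. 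Thus, on the tube, $\doob\cdot(\partial_t+\genn(t))G=(\partial_t+\gen)H$; in particular $\doob\cdot\genn(t)G=\gen(\doob G)-G\,\gen\doob$ is bounded, which justifies the integrability claim used above.

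For the main step, since $G$ is bounded and Lipschitz in $t$ and $\doob\in[0,1]$ is local and uniformly Lipschitz in $t$ (Lemma~\ref{lem:doob}), $H$ is local, bounded, and uniformly Lipschitz in $t$; hence $s\mapsto H(s,\h(s))-\int_{t_1}^{s}(\partial_r+\gen)H(r,\h(r))\,dr$ is a bounded $\Pr$-martingale. Let $\sigma:=\inf\set{s\ge t_1:\h(s)\notin\calB(s)}$; stop this martingale at $\sigma$, multiply by the bounded $\filt_{t_1}$-measurable indicator $\ind_{\calU_{\le t_1}}$, and take $\Ex_\Pr$. Using $\doob(\sigma,\h(\sigma))=0$ on $\set{\sigma\le t_2}$ (so the stopped terminal value equals $H(t_2,\h(t_2))\ind_{\set{\sigma>t_2}}$) and $\ind_{\calU_{\le t_1}}\ind_{\set{r<\sigma}}=\ind_{\calU_{\le r}}$ for $r\ge t_1$ up to a Lebesgue-null set of $r$, Fubini gives
\[
	\Ex_\Pr\big[H(t_2,\h(t_2))\ind_{\calU_{\le t_2}}\big]-\Ex_\Pr\big[H(t_1,\h(t_1))\ind_{\calU_{\le t_1}}\big]=\int_{t_1}^{t_2}\Ex_\Pr\big[(\partial_r+\gen)H(r,\h(r))\,\ind_{\calU_{\le r}}\big]\,dr.
\]
On each $\calU_{\le r}\subset\set{\h(r)\in\calB(r)}$ I replace $(\partial_r+\gen)H$ by $\doob(r,\h(r))\big((\partial_r+\genn(r))G\big)(r,\h(r))$ via the algebraic identity, divide through by $\doob(0,\hic)$, and feed the result into the transfer identity of the first step; this is exactly~\eqref{eq:QIto}.

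The main obstacle is making the localization at $\sigma$ fully rigorous: one must (i) invoke the Dynkin formula for functions merely Lipschitz (not $C^1$) in $t$ --- which is harmless here because the envelopes $\barenv(\cdot,x),\undenv(\cdot,x)$ are $\Z$-valued step functions, so on each inter-jump interval $\doob(\cdot,\f)$ solves $\partial_t\doob=-\gen\doob\cdot\ind_{\calB(t)}(\f)$ and is piecewise $C^1$ with finitely many kinks --- and (ii) manage the right-continuity of these envelopes so that $\set{\sigma>r}$ and $\calU_{\le r}$ coincide up to $\Pr$-null and Lebesgue-null sets. Once $\Prr$ is identified as the $h$-transform, the remaining algebra and measure bookkeeping are routine.
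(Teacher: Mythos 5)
Your argument is correct and follows essentially the same route as the paper's proof: both identify $\Prr$ via the conditioning identity \eqref{eq:doob:cnd}, apply the $\Pr$-martingale property of the bounded, local, uniformly Lipschitz function $\doob G$ stopped at the exit time $\sigma$ (using $\doob(\sigma,\h(\sigma))=0$), and conclude with the algebraic identity $\doob\cdot(\partial_t+\genn(t))G=(\partial_t+\gen)(\doob G)$ on the tube, which rests on \eqref{eq:doob'}. The only difference is presentational — you work with unconditional expectations carrying the indicator $\ind_{\calU_{\le t_1}}$, whereas the paper first conditions on $\filt_{t_1}$ and then averages over $\Ex_\Prr$.
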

\begin{proof}
By definition,
\begin{align}
	\label{eq:QIto:1}
	\Ex_{\Prr} \big( G(t_2,\h(t_2) ) \big| \filt_{t_1} \big)
	=
	\frac{1}{\doob(t_1,\h(t_1))} \Ex\big( G(t_2, \h(t_2)) \ind_{\cap_{t\in[t_1,NT]}\set{\h(t)\in \calB(t)} } \big| \filt_{t_1} \big).
\end{align}	
Let $ \sigma:=\inf\{ t\geq t_1: \h(t)\in \calB(t)^c \} $ be the first time that $ \h $ 
reaches outside of the tubular set $ \calU $.
Using~\eqref{eq:doob:cnd} for $ [s,t]=[t_1,t_2] $ on the r.h.s.\ of \eqref{eq:QIto:1}, together with $ \doob(\sigma,\h(\sigma))=0 $, 
we rewrite~\eqref{eq:QIto:1} as
\begin{align}
	\notag
	\Ex_{\Prr} \big( G(t_2,\h(t_2) ) \big| \filt_{t_1} \big)
	&= 
	\frac{1}{\doob(t_1,\h(t_1))}
	\Ex\big(
		\ind_{\cap_{t\in[t_1,t_2]}\set{\h(t)\in \calB(t)}}\big(qG\big)(t_2, \h(t_2)) \big| \filt_{t_1} 
	\big) 
\\
	\label{eq:QIto:2}
	&= 
	\frac{1}{\doob(t_1,\h(t_1))} \Ex\big(\ind_{\cap_{t\in[t_1,\sigma \wedge t_2]}\set{\h(t)\in \calB(t)}}
	(\doob G)(t_2\wedge\sigma, \h(t_2\wedge\sigma)) \big| \filt_{t_1} \big).
\end{align}

Our next step is to express~\eqref{eq:QIto:2} in terms of a time integral.
To this end, note that since $ (\doob G)(t,\f) $ is bounded, local, and uniformly Lipschitz in $ t $, the process
\begin{align*}
	t \longmapsto \int_{t_1}^{t\wedge\sigma} \big( \partial_t +\gen \big) \big(\doob G\big)(t,\h(t)) dt
\end{align*}
is a $ \Pr $-martingale.
Consequently,
\begin{align}
	\notag
	\Ex\big( (\doob G)(t_2\wedge\sigma, &\h(t_2\wedge\sigma)) \big| \filt_{t_1} \big)
\\
	\notag
	&=
	(\doob G)(t_1,\h(t_1)) 
	+	
	\int_{t_1}^{t_2} 
	\Ex\Big( 
		\ind_{\sigma > t_2}
		\big( \partial_t +L \big) \big(\doob G\big)(t,\h(t)) 
		\Big|
			\filt_{t_1}
	\Big) dt
\\ 
	\label{eq:QIto:3}
	&=
	(\doob G)(t_1,\h(t_1)) 
	+
	\int_{t_1}^{t_2} 
		\Ex\Big( \ind_{\cap_{s\in[t_1,t]}\set{\h(s)\in \calB(s)}} \big( \partial_t +\gen \big) \big(\doob G\big)(t,\h(t)) 
	\Big|
		\filt_{t_1}
	\Big) dt.
\end{align}
Next, in~\eqref{eq:QIto:3}, use~\eqref{eq:doob:cnd} to write
$	
	\ind_{\cap_{s\in[t_1,t]}\set{\h(s)\in \calB(s)}} 
	= 
	\frac{1}{\doob(t,\h(t))} \Ex(\ind_{\cap_{s\in[t_1,NT]}\set{\h(s)\in \calB(s)}}|\filt_t)
$,
and divide the resulting equation~\eqref{eq:QIto:3} by $ \doob(t_1,\h(t_1)) $.
We now obtain
\begin{align*}
	\frac{1}{\doob(t_1,\h(t_1))}
	\Ex\big( (\doob G)(t_2\wedge\sigma, \h(t_2\wedge\sigma)) \big| \filt_{t_1} \big)
	=
	G(t_1,\h(t_1))
	+	
	\int_{t_1}^{t_2} 
	 \Ex_\Prr \left( \Big(\frac{1}{\doob}\Big( \partial_t + \gen\Big)(qG)\Big)(t,\h(t)) 
	\Big|
	\filt_{t_1}
	\right) dt.
\end{align*}
Combining this expression with~\eqref{eq:QIto:2} gives
\begin{align*}
	\Ex_{\Prr} \big( G(t_2,\h(t_2) ) \big| \filt_{t_1} \big)
	=
	G(t_1,\h(t_1))
	+	
	\int_{t_1}^{t_2} 
	 \Ex_\Prr \left( \Big(\frac{1}{\doob}\Big( \partial_t + \gen\Big)(qG)\Big)(t,\h(t)) 
	\Big|
	\filt_{t_1}
	\right) dt.
\end{align*}
Now, move the term~$ G(t_1,\h(t_1)) $ to the l.h.s., and aver the result over $ \Ex_{\Prr} $, we arrive at
\begin{align}
	\label{eq:QIto:}
	\Ex_{\Prr} (G(t,\h(t))) |_{t=t_1}^{t=t_2}
	=
	\int_{t_1}^{t_2} \Ex_\Prr\left( \Big(\frac{1}{\doob}\Big( \partial_t + L\Big)(qG)\Big)(t,\h(t)) \right) dt.	
\end{align}
Finally, a straightforward calculation from the definition~\eqref{eq:genn},
together with the identity~\eqref{eq:doob'}, gives\\
$ \frac{1}{\doob}( \partial_t + L) (qG) = (\partial_t + \genn(t)) G $.
Inserting this into~\eqref{eq:QIto:} gives the desired result~\eqref{eq:QIto}.
\end{proof}

Recall that $ \prf(\lambda) $ denote the rate function for Poisson variables. 
We next derive an expression for the relative entropy $ H(\Prr|\Pr^\h) $.
\begin{proposition}
\label{prop:QN:ent}
The relative entropy of the conditioned $ \Prr $ with respect to $ \Pr $ is given by
\begin{align}
	\label{eq:prr:ent}
	H(\Prr|\Pr^\h) =
	\Ex_{\Prr}\Big( \int_0^{NT}  \sum_{x\in\Z} \mob(\h(t),x) \prf\big( \lambda(t,x,\h(t)) \big) dt \Big).
\end{align}
\end{proposition}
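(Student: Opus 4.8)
The plan is to express $H(\Prr|\Pr^\h)$ through Doob's conditioning function $\doob$ and then extract the claimed integral by applying the It\^o formula of Proposition~\ref{prop:QNIto} to a regularized logarithm of $\doob$.

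First I would reduce the identity to computing $-\log\doob(0,\hic)$. Viewing the tubular set $\calU=\calU_{a,r}(g)$ as the path-space event $\bigcap_{t\in[0,NT]}\{\h(t)\in\calB(t)\}$ from~\eqref{eq:tubular:a}, we have $\Pr^\h(\calU)=\doob(0,\hic)$ by~\eqref{eq:doob} evaluated at $(t,\f)=(0,\hic)$ (using $\h(0)=\hic$), so $\frac{d\Prr}{d\Pr^\h}=\doob(0,\hic)^{-1}\ind_{\calU}$; since $\Prr(\calU)=1$, this gives $H(\Prr|\Pr^\h)=\Ex_\Prr\log\frac{d\Prr}{d\Pr^\h}=-\log\doob(0,\hic)$. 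I would also record, for later use, that $\Prr$-almost surely $\h(t)\in\calB(t)$ for all $t$ and that $\doob(t,\h(t))>0$ for each fixed $t$: by~\eqref{eq:doob:cnd}, on $\{\doob(t,\h(t))=0\}$ the $\filt_t$-conditional probability of $\calU$ vanishes, so this event is $\Prr$-null.

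Next, for $\e\in(0,1)$, I would apply Proposition~\ref{prop:QNIto} to $G_\e(t,\f):=\log(\doob(t,\f)+\e)$. By Lemma~\ref{lem:doob} the function $\doob$ is local with some finite support $\calV$, is uniformly Lipschitz in $t$, and takes values in $[0,1]$; hence $G_\e$ is local with support $\calV$, bounded (values in $[\log\e,\log 2]$), and uniformly Lipschitz in $t$, so Proposition~\ref{prop:QNIto} with $t_1=0,t_2=NT$ yields
\[
	\Ex_\Prr G_\e(NT,\h(NT))-G_\e(0,\hic)=\Ex_\Prr\int_0^{NT}\bigl(\partial_t+\genn(t)\bigr)G_\e(t,\h(t))\,dt.
\]
On the left, $\doob(NT,\cdot)=\ind_{\calB(NT)}$ from~\eqref{eq:doob} and $\Prr$-a.s.\ $\h(NT)\in\calB(NT)$, so $G_\e(NT,\h(NT))=\log(1+\e)$ $\Prr$-a.s.; with $\h(0)=\hic$ the left side equals $\log(1+\e)-\log(\doob(0,\hic)+\e)$, which tends to $-\log\doob(0,\hic)=H(\Prr|\Pr^\h)$ as $\e\downarrow 0$.

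It remains to compute the right side and pass to the limit. Writing $b:=\doob(t,\f)$, $a_x:=\doob(t,\f^x)$, so $\lambda(t,x,\f)=a_x/b$ by~\eqref{eq:lambdatxf}, I would combine~\eqref{eq:doob'} (i.e.\ $\partial_t\doob=-\gen\doob$ on $\calB(t)$), the definition~\eqref{eq:genn} of $\genn$, and $\gen\doob(t,\f)=\sum_x\mob(\f,x)(a_x-b)$ to obtain, for $\f\in\calB(t)$ with $b>0$,
\[
	\bigl(\partial_t+\genn(t)\bigr)G_\e(t,\f)=\sum_{x\in\Z}\mob(\f,x)\,\Psi_\e(a_x,b),\qquad \Psi_\e(a,b):=\frac{b-a}{b+\e}+\frac{a}{b}\log\frac{a+\e}{b+\e},
\]
and $\bigl(\partial_t+\genn(t)\bigr)G_\e(t,\f)=0$ for $\f\notin\calB(t)$. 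A one-line differentiation gives $\partial_\e\Psi_\e(a,b)=-\e(b-a)^2\bigl[b(a+\e)(b+\e)^2\bigr]^{-1}\le0$, so $\e\mapsto\Psi_\e(a,b)$ is nonincreasing; since $\Psi_\e(a,b)\to0$ as $\e\to\infty$ it is nonnegative, and as $\e\downarrow0$ it increases to $\Psi_0(a,b)=(1-\tfrac ab)+\tfrac ab\log\tfrac ab=\prf(\tfrac ab)$ (with $0\log0=0$), where $\prf$ is as in~\eqref{eq:prf}. Since $\lambda(t,x,\f)=1$ (hence $\Psi_\e=0$) for $x\notin\calV$, the sums over $x$ are finite. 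Evaluating at $\f=\h(t)$ — which $\Prr$-a.s.\ lies in $\calB(t)$ with $\doob(t,\h(t))>0$ — the integrand is nonnegative and increases, as $\e\downarrow0$, to $\sum_x\mob(\h(t),x)\prf(\lambda(t,x,\h(t)))$, so by monotone convergence the right side converges to $\Ex_\Prr\int_0^{NT}\sum_x\mob(\h(t),x)\prf(\lambda(t,x,\h(t)))\,dt$; equating with the limit of the left side yields~\eqref{eq:prr:ent}. The main points requiring care are the $\Prr$-a.s.\ positivity of $\doob(t,\h(t))$ (so $\lambda$ is well defined wherever it is used) and the verification that $G_\e$ satisfies the hypotheses of Proposition~\ref{prop:QNIto}; the monotonicity of $\Psi_\e$ in $\e$ is the small computation that makes the limit passage unconditional.
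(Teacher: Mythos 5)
Your proof is correct, and its skeleton coincides with the paper's: both reduce $H(\Prr|\Pr^\h)$ to $-\log\doob(0,\hic)$ and apply Proposition~\ref{prop:QNIto} to the regularized logarithm $\log(\doob+\e)$, with the key algebraic fact being that the generator applied to $\log\doob$ produces $\sum_x\mob(\Cdot,x)\prf(\lambda)$ in the limit. Where you genuinely differ is in the passage $\e\downarrow0$. The paper first establishes $0=\Ex_{\Prr}\int_0^{NT}\frac{\dot{\doob}}{\doob}\,dt+\Ex_{\Prr}\int_0^{NT}\frac{\gen\doob}{\doob}\,dt$, which already requires the $L^1$ control on $1/\doob$ from Lemma~\ref{lem:doob:int}, subtracts it from the It\^o identity for $\log(\doob+a)$, and then disposes of three separate remainders $H_1,H_2,H_3$, each again leaning on Lemma~\ref{lem:doob:int} and dominated convergence. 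You instead compute $(\partial_t+\genn(t))\log(\doob+\e)$ in closed form as $\sum_x\mob(\f,x)\Psi_\e\big(\doob(t,\f^x),\doob(t,\f)\big)$ and observe that $\Psi_\e$ is nonnegative and nonincreasing in $\e$ (your derivative computation is right), so a single application of monotone convergence concludes, with no a priori integrability of $1/\doob$ needed; finiteness of the right-hand side of \eqref{eq:prr:ent} then comes for free from finiteness of $-\log\doob(0,\hic)$. This is a real, if modest, streamlining of the limit argument. Your treatment of the two delicate points — the $\Prr$-a.s.\ positivity of $\doob(t,\h(t))$ via \eqref{eq:doob:cnd}, and the verification that $\log(\doob+\e)$ is bounded, local and uniformly Lipschitz in $t$ so that Proposition~\ref{prop:QNIto} applies — is also correct, as is the reduction of the sum over $x\in\Z$ to the finite support $\calV$ via $\lambda(t,x,\f)=1$ off $\calV$.
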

\begin{proof}
From~\eqref{eq:doob'}, we have that
\begin{align}
	\label{eq:prr:ent:1}
	\int_0^{NT} (\tfrac{1}{\doob} (\partial_t +\gen) \doob)(t,\h(t)) dt = 0,
	\quad
	\Prr\text{-a.s.}
\end{align}
Write $ \dot\doob(t,\f) := \frac{d~}{dt}\doob(t,\f) $.
Since $ \doob(t,\f) $ is local, 
the random variables $ \gen\doob =-\dot{\doob} $ are uniformly bounded, i.e.,
\begin{align}
	\label{eq:doob':bdd}
	|\gen\doob(t,\f)|, \ |\dot{\doob}(t,\f)| \leq c,
	\quad
	\forall t\in[0,NT], \ \f\in \calB(t),
\end{align}
for some $ c<\infty $ depending only on the support of $ \doob $.
Combining this with Lemma~\ref{lem:doob:int},
we see that the random variables $ \frac{\gen\doob}{\doob}(t,\h(t)) $ and $ \frac{\dot{\doob}}{\doob}(t,\h(t)) $ 
are $ L^1 $ under $ \Prr $, uniformly over $ t\in[0,NT] $.
Taking expectation $ \Prr $ in~\eqref{eq:prr:ent:1} thus gives
\begin{align}
	\label{eq:prr:ent:2}
	0=
	\Ex_{\Prr} \int_0^{NT} \frac{\dot{\doob}}{\doob}(t,\h(t)) dt 
	+
	\Ex_{\Prr} \int_0^{NT} \frac{\gen\doob}{\doob}(t,\h(t)) dt .
\end{align}
With $ \Prr $ being the conditioned law around the tubular set $ \calU $,
we have $ H(\Prr|\Pr^\h) = -\log \Pr(\calU) = - \log \doob(0,\hic) $.
Subtracting~\eqref{eq:prr:ent:2} from the previous expression gives
\begin{align}
	\label{eq:prr:ent:3}
	H(\Prr|\Pr^\h)
	=
	-\log \doob(0,\hic)
	-\Ex_{\Prr} \int_0^{NT} \frac{\dot{\doob}}{\doob}(t,\h(t)) dt 
	-\Ex_{\Prr} \int_0^{NT} \frac{\gen\doob}{\doob}(t,\h(t)) dt.
\end{align}
The next step is to apply Proposition~\ref{prop:QNIto} with the function $ G(t,\f) = \log(\doob(t,\f)) $.
However, such a function is not Lipschitz in $ t $ due to the singularity at $ \doob(t,\f)=0 $. 
We hence introduce a small threshold $ a>0 $,
and apply Proposition~\ref{prop:QNIto} with $ G(t,\f) = \log(\doob(t,\f)+a) $.
This gives
\begin{align}
\begin{split}
	\label{eq:prr:ent:4}
	0=
	\Ex_{\Prr} &\log\frac{\doob(NT,\h(NT))+a}{\doob(0,\hic)+a}
	-
	\Ex_{\Prr} \int_0^{NT} \frac{\dot{\doob}}{\doob+a}(t,\h(t)) dt
	-
	\Ex_{\Prr} \int_0^{NT} \gen \log(\doob+a) (t,\h(t)) dt.
\end{split}
\end{align}
Since $ \h(NT)\in \calB(NT) $, $ \Prr $-a.s,
the first term in~\eqref{eq:prr:ent:4} is equal to $ \frac{1+a}{\doob(0,\hic)+a} $.
Subtracting~\eqref{eq:prr:ent:4} from~\eqref{eq:prr:ent:3},
we arrive at
\begin{align}
	\label{eq:prr:ent:5}
	H(\Prr|\Pr^\h)
	=
	H_1+H_2+H_3 + \Ex_{\Prr} \int_0^{NT} \Big( \gen \log \doob - \frac{\gen\doob}{\doob} \Big)(t,\h(t)) dt,
\end{align}
where
\begin{align*}
	H_1 &:= \log \frac{\doob(0,\hic)+a}{(1+a)\doob(0,\hic)},
\\
	H_2 &:= \Ex_{\Prr} \int_0^{NT}  \Big(\frac{\dot{\doob}}{\doob+a}-\frac{\dot{\doob}}{\doob}\Big)(t,\h(t)) dt
		= \Ex_{\Prr} \int_0^{NT}  \Big(\frac{a\dot{\doob}}{(\doob+a)\doob} \Big)(t,\h(t)) dt,
\\
	H_3 &:= \Ex_{\Prr} \int_0^{NT} \gen\big(\log(\doob+a)-\log\doob\big)(t,\h(t)) dt.
\end{align*}
A straightforward calculation
shows that
$ 
	(\gen \log \doob - \frac{\gen\doob}{\doob})(t,\f) 
	= \sum_{x\in\Z} \mob(\f,x) \prf(\frac{\doob(t,\f^x)}{\doob(t,\f)}) 
	= \sum_{x\in\Z} \mob(\f,x) \prf(\lambda(t,x,\f))
$.
Refer back to~\eqref{eq:prr:ent:5}.
It now remains only to show $ H_i\to 0 $, as $ a\downarrow 0 $, for $ i=1,2,3 $.

Clearly, $ H_1\to 0 $, as $ a\downarrow 0 $.
As for $ H_2 $, using~\eqref{eq:doob':bdd} to bound $ |\dot\doob| $, we have
\begin{align*}
	|H_2| 
	\leq 
	\Big( 
		\Ex_{\Prr} \int_0^{NT} \frac{ca}{\doob(t,\h(t))(\doob(t,\h(t))+a)}dt 
	\Big)	
	=	
	c\Big( 
		\Ex_{\Prr} \int_0^{NT} \frac{1}{\doob(t,\h(t))} dt 
		- \Ex_{\Prr} \int_0^{NT} \frac{1}{\doob(t,\h(t))+a}dt 
	\Big).
\end{align*}
By Lemma~\ref{lem:doob:int} and the monotone convergence theorem,
the r.h.s.\ tends to zero as $ a\downarrow 0 $.
Turning to $ H_3 $, we let $ \calV $ be a support of $ \doob $,
and write $ H_3 $ as $ H_3 = \Ex_{\Prr} \int_0^{NT} \til{H}_3(t) dt $,
where
\begin{align}
	\label{eq:prr:ent:H3}
	\til{H}_3(t) :=  
	\sum_{x\in\calV} \mob(\h(t),x) \log\frac{\doob(t,\h^{x}(t))+a}{\doob(t,\h(t))+a}\frac{\doob(t,\h(t))}{\doob(t,\h^{x}(t))}.
\end{align}
Clearly, $ \til{H}_3(t) \to 0 $ as $ a\downarrow 0 $, and
\begin{align}
	\label{eq:prr:ent:H3:}
	|\til{H}_3(t)|
	\leq
	\sum_{x\in\calV} \log\frac{1}{\doob(t,\h^{x}(t))(\doob(t,\h(t))}
	\leq
	\sum_{x\in\calV} \Big( \frac{1}{\doob(t,\h(t))} + \frac{1}{\doob(t,\h^{x}(t))} \Big).
\end{align}
By Lemma~\ref{lem:doob:int}, the r.h.s.\ of~\eqref{eq:prr:ent:H3:} is $ L^1 $ with respect to $ \Ex_{\Prr}\int_0^{NT}dt $.
Consequently, by the dominated convergence theorem,
$ H_3 \to 0 $ as $ a\downarrow 0 $.
\end{proof}

For convenience for referencing, 
we now summary Proposition~\ref{prop:QNIto}--\ref{prop:QN:ent}
in the \emph{scaled} form as follows.
\begin{corollary}
\label{cor:RE:Ito}
Let $ \QN $ be as in~\eqref{eq:QN}, $ \lambda(t,x,\f) $ be as in~\eqref{eq:lambdatxf},
and set $ \lambda_N(t,x,\f) := N^{-1}\lambda(Nt,x,\f) $.
For each $ t_1<t_2\in[0,T] $ and $ x\in\Z $,
\begin{align}
	&
	\label{eq:QN:hN}
	\Ex_{\QN}(\hN(t_2,\tfrac{x}{N}) - \hN(t_1,\tfrac{x}{N})) 
	= 
	\Ex_{\QN} \int_{t_1}^{t_2} \mob(\h(Nt),x) \lambda_N(t,x,\h(Nt)) dt,
\\
	&
	\label{eq:QN:ent}
	\frac{1}{N^2} H(\QN|\PN^\h)
	=
	\Ex_{\QN}\Big( \frac{1}{N} \sum_{x\in\Z} \int_0^T \mob(\h(Nt),x) \prf(\lambda_N(t,x,\h(Nt))) dt \Big).
\end{align}
\end{corollary}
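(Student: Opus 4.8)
The plan is to deduce both identities directly from Propositions~\ref{prop:QNIto} and~\ref{prop:QN:ent} by the change of time variable $ s=Nt $, the only delicate point being a truncation needed to apply Proposition~\ref{prop:QNIto} to the unbounded coordinate functional $ \f\mapsto\f(x) $. Throughout I will use that $ \h\mapsto\hN:=\tfrac1N\h(N\,\Cdot) $ is a measurable bijection between the relevant path spaces, so that $ \QN $ is the pushforward of $ \Prr $ under this map, $ H(\QN|\PN^\h)=H(\Prr|\Pr^\h) $, and $ \Ex_{\QN} $ applied to functionals of $ \h(N\,\Cdot) $ coincides with $ \Ex_{\Prr} $.

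For~\eqref{eq:QN:hN}, fix $ x\in\Z $. Since $ \Prr $ is supported on trajectories confined to the tubular set $ \calU $, the constraint $ \f(y+1)-\f(y)\in\{0,1\} $ forces $ \hic(x)\le\h(s,x)\le M_x $ for all $ s\in[0,NT] $, with $ M_x<\infty $ depending only on $ x,g,a,r,N $. I therefore pick a bounded local function $ G $ with support $ \{x\} $ that agrees with $ \f\mapsto\f(x) $ on $ \{\hic(x)\le\f(x)\le M_x+1\} $; it is trivially Lipschitz in $ t $, so Proposition~\ref{prop:QNIto} applies with this $ G $ and the unscaled endpoints $ Nt_1<Nt_2\in[0,NT] $. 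On the conditioning event $ \partial_s G\equiv0 $, and, $ G $ having support $ \{x\} $, only the $ y=x $ summand of $ \genn(s) $ survives, with increment $ G(\f^{x})-G(\f)=1 $; hence $ (\partial_s+\genn(s))G(\h(s))=\mob(\h(s),x)\,\lambda(s,x,\h(s)) $. This gives $ \Ex_{\Prr}\big(\h(Nt_2,x)-\h(Nt_1,x)\big)=\Ex_{\Prr}\int_{Nt_1}^{Nt_2}\mob(\h(s),x)\,\lambda(s,x,\h(s))\,ds $, and substituting $ s=Nt $, dividing by $ N $, and using $ \hN(t,\tfrac xN)=\tfrac1N\h(Nt,x) $ together with the definition of $ \lambda_N $ yields~\eqref{eq:QN:hN}. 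Verifying that replacing $ \f\mapsto\f(x) $ by its bounded modification $ G $ changes neither side of the It\^{o} identity — which is exactly where confinement of $ \Prr $ to $ \calU $ enters — is the only genuine, if minor, obstacle here.

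For~\eqref{eq:QN:ent}, Proposition~\ref{prop:QN:ent} gives $ H(\Prr|\Pr^\h)=\Ex_{\Prr}\int_0^{NT}\sum_{x\in\Z}\mob(\h(s),x)\,\prf\big(\lambda(s,x,\h(s))\big)\,ds $. The $ x $-sum is effectively finite: for $ x $ outside a support of $ \doob $ one has $ \doob(s,\f^{x})=\doob(s,\f) $, hence $ \lambda(s,x,\f)=1 $ and $ \prf(1)=0 $; integrability of the integrand under $ \Ex_{\Prr}\!\int_0^{NT}ds $ is provided by Lemma~\ref{lem:doob:int}. Using $ H(\QN|\PN^\h)=H(\Prr|\Pr^\h) $, dividing by $ N^2 $, and substituting $ s=Nt $ (which produces a factor $ N $, leaving $ \tfrac1N $), one rewrites the right-hand side in terms of $ \lambda_N $ and $ \Ex_{\QN} $ to obtain~\eqref{eq:QN:ent}. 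No further work is needed beyond this bookkeeping.
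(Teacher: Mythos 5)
Your proposal is correct and follows essentially the same route as the paper: both identities are obtained from Propositions~\ref{prop:QNIto} and~\ref{prop:QN:ent} by rescaling time, with the only real work being the truncation of the unbounded coordinate functional $ \f\mapsto\f(x) $. The paper implements this by applying Proposition~\ref{prop:QNIto} to $ G(t,\f)=\f(x)\wedge r $ and letting $ r\to\infty $ using the boundedness of $ \h(t,x) $ under $ \QN $, whereas you replace the coordinate function by a bounded modification agreeing with it on the range the conditioned process can visit — a cosmetic difference only.
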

\begin{proof}
The identity~\eqref{eq:QN:hN} essentially follows from Proposition~\ref{prop:QNIto} for $ G(t,\f)=\f(x) $.
The only twist is that such a function is not bounded above.
(Such $ G $ is bounded below because $ \f(x) \geq \hic(x) $, $ \forall \f\in\Sph(\hic) $, by~\eqref{eq:Sph}).
We hence fix a large threshold $ r<\infty $,
and apply Proposition~\ref{prop:QNIto} with $ G(t,\f) = \f(x)\wedge r $ to obtain
\begin{align*}
	\Ex_{\QN}(\hN(t_2,\tfrac{x}{N})\wedge r) - \Ex_{\QN}(\hN(t_1,\tfrac{x}{N})\wedge r) 
	= 
	\Ex_{\QN} \int_{t_1}^{t_2} \mob(\h(Nt),x) \lambda_N(t,x,\h(Nt)) \ind_\set{\h(Nt)\leq r} dt.
\end{align*}
Referring back to~\eqref{eq:QN},
we have that $ \h(t,x) $ is bounded under $ \QN $,
so let $ r\to\infty $  gives~\eqref{eq:QN:hN}.
The identity~\eqref{eq:QN:ent} follows directly from Proposition~\ref{prop:QN:ent}.
\end{proof}

\subsection{Proof of Proposition~\ref{prop:ent:lw}}
\label{sect:pfentlw:}
To simplify notations, in the following
we often write $ \mob(x)=\mob(\f,x) $ for the mobility function,
and write $ \lambda_N=\lambda_N(t,x)=\lambda_N(t,x,\f) $ for the rate.
Recall the expression of $ \rff(g) $ from~\eqref{eq:rff}.
We consider first the degenerate case $ \rff(g)=\infty $.

\medskip
\noindent\textbf{The case $ \rff(g)=\infty $.}
~We show that, in fact,
\begin{align}
	\label{eq:ent:lw:}
	\liminf_{N\to\infty} \frac{1}{N^2} H(\QN|\PN^\h) =\infty,
\end{align}
so in particular~\eqref{eq:ent:lw} holds.
We achieve~\eqref{eq:ent:lw:} by bounding the expression~\eqref{eq:QN:ent}
of the relative entropy from below.
To this end, fixing arbitrary $ n $, 
we recall that $ \{\sigma_i^n=\frac{iT}{2^n}\}_{i=0}^{2^n} $ denotes a dyadic partition,
and rewrite~\eqref{eq:QN:ent} accordingly as
\begin{align}
	\label{eq:ent:rffinf}
	\frac{1}{N^2} H(\QN|\PN^\h)
	=
 	\frac{1}{N}\sum_{x\in\Z} 
	\sum_{i=1}^{2^{n}}
	\Ex_{\QN} \int_{\sigma_{i-1}^n}^{{\sigma_{i}^n}}
 	\mob(x)\prf\big( \lambda_N(t,x) \big) dt.
\end{align}
In~\eqref{eq:ent:rffinf}, for each fixed $ i\in\{1,\ldots,2^n\} $ and $ x\in\Z $,
view the corresponding expression as an average of $ \prf\big( \lambda_N(t,x) \big) $
over the measure $ \Ex_{\QN} \int_{\sigma_{i-1}^n}^{{\sigma_{i}^n}}(\ \Cdot\ )dt $,
with total mass $ A_{i,x} := \Ex_{\QN}\int_{\sigma^n_{i-1}}^{\sigma^n_i} \mob(\h(Nt),x) dt $.
Using the convexity of $ \lambda\mapsto\prf(\lambda) $,
followed by applying the identity~\eqref{eq:QN:hN} for $ (t_1,t_2)=(\sigma^n_{i-1},\sigma^n_i) $,
we have
\begin{align}
	\notag
	\Ex_{\QN} \int_{\sigma_{i-1}^n}^{{\sigma_{i}^n}}
 	\mob(x)\prf\big( \lambda_N(t,x) \big) dt
 	&\geq
	A_{i,x} \prf\Big( \frac{1}{A_{i,x}} \Ex_{\QN} \int_{\sigma_{i-1}^n}^{{\sigma_{i}^n}} \mob(x) \lambda_N(t,x) dt \Big)
  \\ 
  \notag
	&=
	A_{i,x} \prf\Big( \frac{1}{A_{i,x}} \Ex_{\QN}\big(\hN(t,\tfrac{x}{N})\big)\big|_{\sigma_{i-1}^n}^{{\sigma_{i}^n}} \Big)
 \\
	\label{eq:cnvx:rffinf}
	&\ge A_{i,x} \prff\Big( \frac{1}{A_{i,x}} 
 	\Ex_{\QN}\big(\hN(t,\tfrac{x}{N})\big)\big|_{\sigma_{i-1}^n}^{{\sigma_{i}^n}} \Big).
\end{align}
The mobility function $ \mob(x)=\mob(\f,x) $ is $ \{0,1\} $-valued,
so in particular $ 0\leq A_{i,x} \leq (\sigma^n_i-\sigma^n_{i-1}) = \frac{T}{2^n} $.
Using this and~\eqref{eq:nonincr:mob} (for $ \xi=A_{i,x} $) in~\eqref{eq:cnvx:rffinf},
and inserting the result back into~\eqref{eq:ent:rffinf},
we arrive at 
\begin{align}
	\label{eq:cnvx:rffinf:}
	\frac{1}{N^2} H(\QN|\PN^\h)
	\geq
 	\frac{1}{N}\sum_{x\in\Z} 	
	\frac{T}{2^n} 
	\prff\Big( \frac{1}{\sigma^n_i-\sigma^n_{i-1}} 
	\Ex_{\QN}\big(\hN(t,\tfrac{x}{N})\big)\big|_{\sigma_{i-1}^n}^{{\sigma_{i}^n}} \Big).
\end{align}
Next, with $ \QN $ being the conditioned law as in~\eqref{eq:QN},
we have that
\begin{align}
	\label{eq:QN:cnvg}
	|\Ex_{\QN}(\hN(t,\tfrac{x}{N}))-g(t,\tfrac{x}{N})| \leq a_N ,\quad \forall (t,\tfrac{x}{N})\in[0,T]\times[-r_N,r_N].
\end{align}
In particular,
$	
	\Ex_{\QN}(\hN(t,\tfrac{x}{N}))|_{\sigma_{i-1}^n}^{{\sigma_{i}^n}} 
	\geq
	g(\sigma^n_{i},\tfrac{x}{N})-g(\sigma^n_{i-1},\tfrac{x}{N})-2a_N.
$
In~\eqref{eq:cnvx:rffinf:},
using this, together with the fact that $ \lambda\mapsto\prff(\lambda) $ is nondecreasing,
we further obtain
\begin{align}
	\label{eq:upb:inf:3}
	\frac{1}{N^2} H(\QN|\PN^\h)
	&\geq
	\frac{1}{N}\sum_{\frac{x}{N}\in[-r_N\wedge r,r_N\wedge r]} 
	\sum_{i=1}^{2^{n}} {\frac{T}{2^{n}}}
	\prff\Big(\Big( \frac{g(\sigma^n_{i},\frac{x}{N})-g(\sigma^n_{i-1},\frac{x}{N})-2a_N}{\sigma^n_{i}-\sigma^n_{i-1}} \Big)_+\Big).
\end{align}
Now, fix $ r<\infty $, and let $ N\to\infty $ in~\eqref{eq:upb:inf:3}.
Under this limit $ a_N\downarrow 0 $ and $ r_N \uparrow \infty $, so
\begin{align}
	\label{eq:upb:inf:4}
	\liminf_{N\to\infty} \frac{1}{N^2} H(\QN|\PN^\h)
	&\geq
	\int_{-r}^{r}
	\sum_{i=1}^{2^{n}}{\frac{T}{2^{n}}}
 	\prff\Big( \frac{g(\sigma^n_{i},\xi)-g(\sigma^n_{i-1},\xi)}{\sigma^n_{i}-\sigma^n_{i-1}} \Big) d\xi.
\end{align}
Recall the expression of $ \rff_n(h,\xi)  $ from~\eqref{eq:rffn}.
Upon letting $ r\to\infty $, the r.h.s.\ of~\eqref{eq:upb:inf:4} gives $ \int_\R \rff_n(g,\xi) d\xi $.
Further taking the supremum over $ n $ thus gives the desired result:
\begin{align*}
	\liminf_{N\to\infty} \frac{1}{N^2} H(\QN|\PN^\h) 
	\geq 
	\sup_{n} \int_{\R}\rff_n(g,\xi)
	=
	\rff(g)
	=\infty.
\end{align*}

\medskip
\noindent\textbf{The case $ \rff(g)<\infty $.}
~
Without lost of generality, we assume $ g(0)=\hIC $.
Otherwise, if $ g(0,\xi) \neq \hIC(\xi) $, for some $ \xi\in\R $,
with $ \QN $ being the conditioned law as in~\eqref{eq:QN} and $ a_N\downarrow 0 $,
by the assumption~\eqref{eq:ic:cnvg},
we necessarily have $ \QN \not\ll \PN $, for all large enough of $ N $.
Hence $ \liminf_{N\to\infty} \frac{1}{N^2} H(\QN|\PN^\h) = \infty $.

Under the assumption $ \rff(g)<\infty $, by Lemma~\ref{lem:rf<inf} we have $ g\in\Spd $.
This together with $ g(0) = \hIC $ implies 
$ \rfup (g) = \int_0^T \int_{\R} \lrfupa(g_t,g_\xi)dtd\xi $.
Recall from~\eqref{eq:Rprtn} the partition $ R_\ell(r) $ that consists of rectangles.
The first step is to localize the function $ \rfup(g) $ 
and relative entropy $ \frac{1}{N^2} H(\QN|\PN^\h) $ onto each rectangle $ \square\in R_\ell(r) $.
To this end,
recalling the definition of $ \lrfupa(\kappa,\rho) $ from~\eqref{eq:lrfeup}.
and fixing $ \e>0 $,
we apply Lemma~\ref{lem:local:apprx} for $ h=g $,
to obtain $ r,\ell<\infty $ and $ a>0 $ such that
\begin{align}
	\label{eq:rect::}
	\sum_{\square\in R_\ell(r)} |\square|\,\lrfupa(\kappa_\square,\rho_\square) \geq \rfup(g)\wedge\e^{-1} - \e,
	\quad
	(\kappa_\square,\rho_\square)
	:=
	\big({\textstyle\fint_{\square}} g_tdtd\xi \,,\, {\textstyle\fint_{\square}} g_\xi dtd\xi\big).
\end{align}
As for the relative entropy, in~\eqref{eq:QN:ent},
we drop those terms corresponding to $ \frac{x}{N}\notin [-r,r] $, and write
\begin{align}
	\label{eq:QN:ent:}
	\frac{1}{N^2} H(\QN|\PN^\h)
	\geq
	\Ex_{\QN}\Big( \frac{1}{N}\int_0^T \sum_{\frac{x}{N}\in[-r,r]}   \mob(x) \prf(\lambda_N(t,x)) dt \Big).
\end{align}
Then, decompose the r.h.s.\ of~\eqref{eq:QN:ent:} as
\begin{align}
	\frac{1}{N^2} H(\QN|\PN^\h)
	\geq
	\sum_{\square\in R_\ell(r)} H_N(\square),
	\quad
	\label{eq:HN}
	H_N(\square)
	:=
	\Ex_{\QN}
	\Big(
		\frac{1}{N}	
		{\int \sum} \ind_{\{(t,\frac{x}{N})\in\square\}}
		\mob(x)\prf(\lambda_N(t,x)) dt 
	\Big).
\end{align}

In view of~\eqref{eq:rect::} and~\eqref{eq:HN},
the next step is to show that $ H_N(\square) $ 
approximately bound~$ |\square|\lrfupa(\kappa_\square,\rho_\square) $ from above, for each $ \square\in R_\ell(r) $.
Recall from~\eqref{eq:fluxeup} the definition of $ \mobbup_a $.
Fix $ \square\in R_\ell(r) $, and, for each $ (t,\frac{x}{N})\in \square $,
use the convexity of $ \lambda\mapsto \prff(\lambda) $ to write
\begin{align}
	\label{eq:prff:cnvx:}
	\prff(\lambda_N(t,x))
	\geq
	\prff\Big( \frac{\kappa_\square}{\mobbup_a(\rho_\square)} \Big)
	-
	\prff'\Big( \frac{\kappa_\square}{\mobbup_a(\rho_\square)} \Big)
	\Big(\frac{\kappa_\square}{\mobbup_a(\rho_\square)}-\lambda_N(t,x) \Big).
\end{align}
Set
$ A_N(\square) := \Ex_{\QN}( \frac{1}{N}\int\sum_{(t,\frac{x}{N})\in\square} \mob(x) dt) $
and
$
	B_N(\square) :=
	\Ex_{\QN} ( \frac{1}{N}\int\sum_{(t,\frac{x}{N})\in\square}  \mob(x) \lambda_N(t,x) dt ).
$
In~\eqref{eq:prff:cnvx:}, multiply both sides by $ \mob(x) $,
and apply $ \Ex_{\QN} ( \frac{1}{N}\int\sum_{(t,\frac{x}{N})\in\square}(\ \Cdot\ )dt ) $ to the result.
Using that $\prf \ge \prff$, this gives
\begin{align}
	\label{eq:HN:bd}
	H_N(\square)
	\geq
	\prff\Big( \frac{\kappa_\square}{\mobbup_a(\rho_\square)} \Big) A_N(\square)
	-
	\prff'\Big( \frac{\kappa_\square}{\mobbup_a(\rho_\square)} \Big)
	\frac{\kappa_\square}{\mobbup_a(\rho_\square)}
	A_N(\square)
	+
	\prff'\Big( \frac{\kappa_\square}{\mobbup_a(\rho_\square)} \Big) B_N(\square).
\end{align}
Further, parametrizing the rectangle as
$ \square = [\und t_\square, \bar t_\square]\times[\xi^-_\square, \xi^+_\square] $,
using \eqref{eq:QN:hN} for $ (t_1,t_2) = (\und t_\square, \bar t_\square) $,
we have
\begin{align}
	\label{eq:BNsq}
	B_N(\square) 
	&=
	\Ex_{\QN} \Big( 
		\frac{1}{N}\sum_{\frac{x}{N}\in[\xi^-_\square, \xi^+_\square]} 
		\int_{{\und{t}_\square}}^{{\bar{t}_\square}}  \mob(x) \lambda_N(t,x) dt 
	\Big)
	=
	\Ex_{\QN} \Big( 
		\frac{1}{N}\sum_{\frac{x}{N}\in[\xi^-_\square, \xi^+_\square]} 
		\hN(t,\tfrac{x}{N})|_{t={\und{t}_\square}}^{t={\bar{t}_\square}} 
	\Big).
\end{align}
Letting $ N\to\infty $ in~\eqref{eq:BNsq},
using~\eqref{eq:QN:cnvg} on the r.h.s., we obtain
\begin{align}
	\label{eq:BNsq:cnvg}
	\liminf_{N\to\infty} B_N(\square) 
	\geq
	\int_{\xi^-_\square}^{\xi^+_\square} g(t,\xi)
	\Big|_{t={\und{t}_\square}}^{t={\bar{t}_\square}}  d\xi
	=
	|\square| \, \kappa_\square.
\end{align}
Combining this with~\eqref{eq:HN:bd} gives
\begin{align}
	\label{eq:HN:bd:}
	H_N(\square)
	\geq
	\big(\prff(\lambda)-\lambda\prff'(\lambda)\big)|_{\lambda=\frac{\kappa_\square}{\mobbup_a(\rho_\square)}}
	A_N(\square)
	+
	\prff'(\tfrac{\kappa_\square}{\mobbup_a(\rho_\square)}) |\square| \, \kappa_\square
	+
	\e_N(\square),
\end{align}
for some remainder term such that $\lim_{N} |\e_N(\square)| = 0 $.
Adding and subtracting the expression
$ 
	|\square|\lrfupa(\kappa_\square,\rho_\square) 
	= |\square| \mobb_a(\rho_\square) \prff(\frac{\kappa_\square}{\mobbup_a(\rho_\square)}) 
$ 
on the r.h.s.\ of~\eqref{eq:HN:bd:}, we arrive at
\begin{align}
	\label{eq:HN:bd:1}
	H_N(\square) 
	\geq
	|\square|\lrfupa(\kappa_\square,\rho_\square)
	+
		\big(\lambda\prff'(\lambda)-\prff(\lambda)\big)|_{\lambda=\frac{\kappa_\square}{\mobbup_a(\rho_\square)}}
		\Big(|\square|\,\mobbup_a(\rho_\square)-A_N(\square) \Big)
	+ \e_N(\square).
\end{align}

The expression $ \lambda\prff'(\lambda)-\prff(\lambda) = (\lambda-1)_+ $ 
in~\eqref{eq:HN:bd:1} nonnegative.
Furthermore, with $ A_N(\square) $ defined as in the preceding and with $ \mob(t,x) := \eta(t,x)(1-\eta(t,x)) $, parameterizing
 $ \square := [\und t_\square, \bar t_\square] \times [\xi^-_\square,\xi^+_\square] $,
we have
\begin{align}
	\label{eq:nts:0}
	A_N(\square) 
	\leq 
	 \frac{1}{N} \Ex_{\QN} \int_{\und t_\square}^{\bar t_\square} dt
  \Big(\sum_{\frac{x}{N} \in\square} \eta(Nt,x) \wedge \sum_{\frac{x}{N}\in\square} (1-\eta(Nt,x)) \Big).
\end{align}
Since
\begin{align*}
	\frac{1}{N}\sum_{\frac{x}{N}\in\square} \eta(Nt,x) &=
 |\xi^+_\square-\xi^-_\square| \big( \hN(t,\xi^+_\square) - \hN(t,\xi^-_\square) \big),
\\
	\frac{1}{N}\sum_{\frac{x}{N}\in\square} (1-\eta(Nt,x)) &=
        |\xi^+_\square-\xi^-_\square| \big(  1-\hN(t,\xi^+_\square) - \hN(t,\xi^-_\square) \big).
\end{align*}
It follows that
\begin{align*}
  	A_N(\square) 
	\leq \Ex_{\QN} \int_{\und t_\square}^{\bar t_\square} dt 
          |\xi^+_\square-\xi^-_\square| \mobbup ( \hN(t,\xi^+_\square) - \hN(t,\xi^-_\square) )
\end{align*}
With $ \QN $ being the conditioned law as in~\eqref{eq:QN}, we have
\begin{align}
	\label{eq:nts:1}
	\limsup_{N\to\infty}
	A_N(\square) 
	\leq 
	\int_{\und t_\square}^{\bar t_\square} |\xi^+_\square-\xi^-_\square|
       \mobbup ( g(t,\xi^+_\square) - g(t,\xi^-_\square) ) dt
	= |\square|\mobbup( \rho_\square),
\end{align}

Combining this with~\eqref{eq:HN:bd:1} and the fact that $ \lambda\prff'(\lambda)-\prff(\lambda) \geq 0 $,
we arrive at
\begin{align*}
	\liminf_{N\to\infty}
	H_N(\square) 
	\geq
	|\square|\lrfupa(\kappa_\square,\rho_\square).
\end{align*}

This gives the desired bound on each rectangle $ \square\in R_\ell(r) $.
Referring back to \eqref{eq:rect::} and~\eqref{eq:HN}, we now have
\begin{align*}
	\liminf_{N\to\infty} H_N(\QN|\PN^\h) 
	\geq
	\rfup(g)\wedge \e^{-1} - \e.
\end{align*}
The proof is completed upon letting $ \e\downarrow 0 $.

\section{Lower Bound: Inhomogeneous \ac{TASEP}}
\label{sect:ITASEP}
The remaining of this article, Section~\ref{sect:ITASEP}--\ref{sect:speed},
are devoted to proving Proposition~\ref{prop:ent:up}.
To this end, 
hereafter we fix $ \e_*>0 , r_*<\infty $, $ \tau,b $ such that $ \frac{T}{\tau},\frac{r_*}{b}\in\N $
as in Proposition~\ref{prop:ent:up}.
To simplify notations, we write $ \Sigma = \Sigma(\tau,b) $ for the triangulation.
Fix further a $ \Sp $-valued, $ \Sigma $-piecewise linear function $ g $
that satisfies~\eqref{eq:nondg}--\eqref{eq:nondg:},
write $ \gIC:=g(0) $,
and fix a \ac{TASEP} height process $ \gN $ with initial condition that satisfies~\eqref{eq:gNic},
as in Proposition~\ref{prop:ent:up}.
Let $ r^*,\maxlam $ be given as in~\eqref{eq:r*}--\eqref{eq:maxlam}.
We write $ (\linkap,\linrho) := (g_t,g_\xi)|_{\triangle^\circ} $
for the constant derivatives of $ g $ on a given $ \triangle\in\Sigma $,
and let $ \linlam := \frac{\linkap}{\linrho(1-\linrho)} $.
With $ g $ satisfying the properties~\eqref{eq:nondg}--\eqref{eq:nondg:},
we have
\begin{align}
	\label{eq:nondg:lin}
	0< \inf_{\triangle\in\Sigma} \linkap \leq \sup_{\triangle\in\Sigma} \linkap < \infty,
	\quad
	0< \inf_{\triangle\in\Sigma} \linrho \leq \sup_{\triangle\in\Sigma} \linrho < 1,
	\quad
	0 <\inf_{\triangle\in\Sigma} \linlam \leq \sup_{\triangle\in\Sigma} \linlam \leq \maxlam <\infty.
\end{align}
Let $ \Sigma_* := \{ \triangle\in\Sigma : \triangle\subset[0,T]\times[-r_*,r_*] \} $
denote the restriction of the triangulation onto $ [-r_*,r_*] $,
and similarly $ \Sigma^* := \{ \triangle\in\Sigma : \triangle\subset[0,T]\times[-r^*,r^*] \} $.
%

Proving~Proposition~\ref{prop:ent:up} amounts to \emph{constructing} probability laws $ \{\QN\}_N $
that satisfies~\eqref{eq:lwb:apprx}--\eqref{eq:lwb:ent}.
We will achieve this using \textbf{inhomogeneous \ac{TASEP}}, defined as follows.
We say $ \speed: [0,T)\times \R\to (0,\maxlam] $ is a \textbf{speed function}
if $ \speed $ is Borel measurable, positive, and bonded by $ \maxlam $ from above.
We say $ \speed $ is a \textbf{simple speed function}
if it a speed function that takes the following form
\begin{align}
	\label{eq:simple}
	\speed: &[0,T)\times\R\to (0,\infty),
	\quad
	\speed(t,\xi) := \sum_{i=1}^n\ind_{[t_{i-1},t_i)}(t) \speed_i(\xi),
	\quad
	0=t_0<t_2<\ldots < t_n=T,
\end{align}
where each $\speed_i:\R\to (0,\infty) $
is lower semi-continuous, piecewise constant, with finitely many discontinuities, and $ \lim_{|\xi|\to\infty} \speed_i(\xi) = 1 $.
Now, given a \emph{simple} speed function $ \speed $, 
we define the associated inhomogeneous \ac{TASEP} similarly to the \ac{TASEP},
starting from the initial condition $ \gic_N $ (as fixed in the preceding),
but, instead of having unit-rate Poisson clocks at each $ x\in\Z $, we let the rate be $ \speed(\frac{t}{N},\frac{x}{N}) $.
We do not define the value of $ \speed $ at $ t=T $ for convenience of notations,
and these values $ \speed(T,\xi) $ do not pertain to the dynamics of the inhomogeneous \ac{TASEP}, 
define for $ t\in[0,NT] $.
We write $ \QN^\speed $ for the law of the inhomogeneous \ac{TASEP} with a simple speed function $ \speed $.

For a time-homogeneous (i.e., $ \speed(t,\xi)=\speed(\xi)$, $ \forall t\in[0,T) $) simple speed function,
the corresponding inhomogeneous \ac{TASEP} sits within the scope studied in \cite{georgiou10}.
For simple speed functions of the form \eqref{eq:simple} considered here,
the associated inhomogeneous \ac{TASEP} is constructed inductively in time from the time homogeneous process.
A key tool from~\cite{georgiou10} in our proof is the hydrodynamic limit.
To state this result precisely,
For given $ f\in\Splip $, and a speed function $ \speed $,
we define the Hopf--Lax function $ \hl{\speed}{f} $ via the following variational formula: 
\begin{align}
	\label{eq:hl:frml}
	\hl{\speed}{f}: [0,T]\times\R \to \R,
	\quad
	&
	\hl{\speed}{f}(t,\xi) 
	:= 
	\inf_{w\in W(t,\xi)} \big\{ \HLf_{0,t}(w;\speed) + f(w(0)) \big\},
\end{align}
where $ W(t,\xi) $ is the set of piecewise $ C^1 $ paths $ w:[0,t]\to\R $ connected to $ (t,\xi) $, i.e.,
\begin{align}
	\label{eq:W}
	&
	\quad\hphantom{\text{ where }}
	W(t,\xi) := \{ w:[0,t]\to \R : w \text{ piecewise }C^1, w(t)=\xi \},
\end{align}
and $ \HLf_{t_1,t_2}(w;\speed) $ is a functional on $ (w,\speed) $, defined as
\begin{align}
	\label{eq:HL}
	\HLf_{t_1,t_2}(w;\speed)
	:=
	\int_{t_1}^{t_2} \speed(s,w(s)) &\hlf\Big(\frac{w'(s)}{\speed(s,w(s))}\Big) ds,
\\
	\label{eq:hl}
	&\hlf(\xi) := \left\{\begin{array}{l@{,}l}
		0					&\text{ for } \xi \leq -1,
	\\
		\frac14 (\xi+1)^2	&\text{ for } \xi\in(-1,1),
	\\
		\xi					&\text{ for } \xi \geq 1.
	\end{array}\right.
\end{align}
As we show in Lemma~\ref{lem:hl}\ref{enu:hl:sp} in the following,
the variational formula \eqref{eq:hl:frml} does define a $ \Sp\cap C([0,T],\Splip) $-valued height function.
Such a height function can be viewed as the viscosity solution of the inhomogeneous Burgers equation:
\begin{align*}
	h_t(t,\xi) = \speed h_\xi(1-h_\xi), \quad h(0) = f.
\end{align*}
We will, however, operate entirely with the variational formula~\eqref{eq:HL} and avoid referencing 
to the PDE.

The following is the hydrodynamic result from~\cite{georgiou10}.
\begin{proposition}[\cite{georgiou10}]
\label{prop:inhomo}
Fix a time-homogeneous, simple speed function $ \speed $.
For each fixed $ (t,\xi)\in[0,T]\times\R $,
the random variable $ \gN(t,\xi) $ converges to $ \hl{\speed}{\gIC}(t,\xi) $,  $ \QN^\speed $-in probability.
\end{proposition}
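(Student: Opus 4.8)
This statement is, modulo notation, the hydrodynamic theorem of \cite{georgiou10}, and the plan is to recall its proof and note why the present hypotheses on $\speed$ suffice. First I would pass to the exponential last-passage percolation (LPP) representation of the inhomogeneous \ac{TASEP}: enumerating particles from the initial configuration $\gic_N$, the height value $\gN(t,\xi)$ is a monotone functional of a last-passage time $G$ over up-right lattice paths in a region whose boundary encodes $\gIC$, where the weight attached to site $x$ is exponential with rate $\speed(\frac{x}{N})$. Since $\speed$ is a \emph{simple} speed function (in the sense of~\eqref{eq:simple}), it is constant on each of finitely many maximal intervals, so the weight field is columnar: i.i.d.\ within each block of columns, with only finitely many distinct exponential parameters and parameter tending to $1$ at $\pm\infty$. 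This last feature, built into~\eqref{eq:simple}, is exactly what makes $G$ well-defined on all of $\Z$ and prevents geodesics from running off to infinity.

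The limiting shape is then identified by matching a variational upper and lower bound, both in $\QN^\speed$-probability. On any macroscopic block where $\speed\equiv c$ is constant, the classical shape theorem for exponential LPP (\cite{rost81,seppaelaeinen98a}) applies after a deterministic time change by $c$, and its Legendre dual is precisely $\speed\,\hlf(\,\cdot\,/\speed)$ with $\hlf$ as in~\eqref{eq:hl}; this is the microscopic origin of the integrand in~\eqref{eq:HL}. For the full region I would exploit the superadditivity of $G$: a given piecewise-$C^1$ path $w\in W(t,\xi)$ (see~\eqref{eq:W}) is discretised into one rectangle per constant-speed block it crosses, and concatenating the block-wise shape theorems together with the contribution $\gIC(w(0))$ of the initial data produces an asymptotic bound which, after optimising over $w$, is exactly the Hopf--Lax functional $\hl{\speed}{\gIC}(t,\xi)$ of~\eqref{eq:hl:frml}. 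The reverse inequality comes from the maximising (geodesic) path of $G$: its empirical profile is tight in the space of Lipschitz paths connected to $(t,\xi)$, any subsequential limit $w_*$ lies in $W(t,\xi)$, and applying the same block-wise shape theorems to the pieces of the geodesic gives $\liminf_N \tfrac1N\gN(t,\xi)\ge \HLf_{0,t}(w_*;\speed)+\gIC(w_*(0))\ge \hl{\speed}{\gIC}(t,\xi)$. Well-posedness of this candidate limit as a $\Sp\cap C([0,T],\Splip)$-valued object is the forthcoming Lemma~\ref{lem:hl}\ref{enu:hl:sp}, which I would invoke.

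Finally, upgrading convergence of means to convergence in probability is routine: macroscopic LPP times concentrate (bounded-difference/Efron--Stein estimates, or the known transversal-fluctuation bounds), and the finitely many distinct columns do not affect this. The one genuinely delicate point I expect is the control of geodesics near the finitely many interfaces where $\speed$ jumps: one must show that a geodesic spends only $o(N)$ lattice columns within an $o(N)$-neighbourhood of each interface, so that the discretisation error in the concatenation argument is uniformly negligible. This is where the lower semi-continuity imposed on each $\speed_i$ in~\eqref{eq:simple} enters, combined with the attractivity (monotonicity of $G$ in the weights), exactly as carried out in \cite{georgiou10}.
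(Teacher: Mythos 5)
The paper offers no proof of this proposition: it is imported verbatim from \cite{georgiou10}, and the only work the paper does around it is the extension to time-inhomogeneous simple speed functions (Corollary~\ref{cor:inhomo}, via Lemma~\ref{lem:pt:uniform} and induction over the finitely many temporal discontinuities). So there is nothing in the paper to compare your argument against line by line; what you have written is a reconstruction of the cited reference. As such a reconstruction it is a fair outline of the known strategy: wedge/corner-growth LPP representation with columnar exponential weights, block-wise shape theorems for constant rate, superadditive concatenation along discretised macroscopic paths to get the upper bound, and tightness of geodesics plus the same block estimates for the lower bound, yielding the Hopf--Lax formula~\eqref{eq:hl:frml} with kernel $\speed\,\hlf(\cdot/\speed)$. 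Two points you gloss over and should flag if this were to be written out: (i) passing from the LPP last-passage time to the height function with a \emph{general} initial condition $\gic_N$ requires the variational (envelope) coupling $\h(t,x)=\inf_z\{\ldots\}$ over initial labels, not merely monotonicity in the weights; and (ii) the hypothesis $\lim_{|\xi|\to\infty}\speed_i(\xi)=1$ is not what makes $G$ well defined (the process is defined for any bounded measurable rate); its role, together with the uniform bound $\speed\le\maxlam$, is to confine the relevant optimising paths to a compact window (finite speed of propagation), which is also what the paper's light-cone arguments in Lemma~\ref{lem:hl} exploit downstream. Neither point invalidates the sketch, but both are where the actual content of \cite{georgiou10} lives.
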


Proposition~\ref{prop:inhomo} is readily generalized to the time-inhomogeneous setting considered here.
To see this, we first prepare a simple lemma that leverages pointwise convergence into uniform convergence.
\begin{lemma}
\label{lem:pt:uniform}
Let $ \{h_N\}\subset \Sp $ be a sequence that converges to $ h\in C([0,T],\Splip) $ pointwisely,
i.e., $ h_N(t,\xi)\to h(t,\xi) $, $ \forall (t,\xi)\in[0,T]\times\R $.
Then, in fact, $ \sup\limits_{t\in[0,T]}\dist(h_N(t),h) \to 0 $ holds.
\end{lemma}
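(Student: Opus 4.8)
The plan is a Dini--P\'olya type argument, exploiting that $t\mapsto h_N(t,\xi)$ is nondecreasing (since $h_N\in\Sp$, see~\eqref{eq:sp1}) and that the pointwise limit $t\mapsto h(t,\xi)$ is both nondecreasing and continuous. The proof proceeds by three successive reductions.

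First I would reduce the claim to a statement about uniform norms on bounded intervals. Recalling the explicit form~\eqref{eq:dist} of $\dist$, and that $k\mapsto\Vert\Cdot\Vert_{C[-k,k]}$ is nondecreasing, for any $t$ and any $K\in\N$ one has
\begin{align}
	\label{eq:pt:uniform:tail}
	\sum_{k=1}^\infty 2^{-k}\big(\Vert h_N(t)-h(t)\Vert_{C[-k,k]}\wedge 1\big)
	\leq
	\Vert h_N(t)-h(t)\Vert_{C[-K,K]} + 2^{-K}.
\end{align}
Hence it suffices to show that for every fixed $r<\infty$,
\begin{align}
	\label{eq:pt:uniform:goal}
	\sup_{t\in[0,T]} \Vert h_N(t)-h(t) \Vert_{C[-r,r]} \longrightarrow 0,
	\qquad N\to\infty;
\end{align}
indeed, given~\eqref{eq:pt:uniform:goal} for $r=1,\dots,K$, taking $\sup_{t}$ in~\eqref{eq:pt:uniform:tail} makes $\sup_t\dist(h_N(t),h(t))$ arbitrarily small by first choosing $K$ large and then $N$ large.

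Next, fix $r<\infty$ and $\e>0$. Since every $h_N(t)$ and every $h(t)$ lies in $\Splip$ and is thus $1$-Lipschitz in $\xi$ (see~\eqref{eq:Splip}), a finite $\e$-net $\{\xi_1,\dots,\xi_p\}\subset[-r,r]$ reduces matters to the net points: for any $\xi\in[-r,r]$, choosing $\xi_i$ with $|\xi-\xi_i|\le\e$ gives $|h_N(t,\xi)-h(t,\xi)|\le |h_N(t,\xi_i)-h(t,\xi_i)| + 2\e$ for all $t$, so $\sup_{t}\Vert h_N(t)-h(t)\Vert_{C[-r,r]}\le 2\e + \max_{1\le i\le p}\sup_{t}|h_N(t,\xi_i)-h(t,\xi_i)|$. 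It therefore remains to show, for each fixed $\xi$,
\begin{align}
	\label{eq:pt:uniform:pt}
	\sup_{t\in[0,T]}|h_N(t,\xi)-h(t,\xi)| \longrightarrow 0.
\end{align}

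The final and central step is~\eqref{eq:pt:uniform:pt}, which is P\'olya's theorem. Fix $\xi$ and write $F_N(t):=h_N(t,\xi)$, $F(t):=h(t,\xi)$. Each $F_N$ is nondecreasing on $[0,T]$ because $h_N\in\Sp$; passing to the pointwise limit, $F$ is nondecreasing, and $F$ is continuous because $t\mapsto h(t)$ is $\dist$-continuous while evaluation at $\xi$ is $\dist$-continuous on $\Splip$. Given $\e>0$, uniform continuity of $F$ on the compact interval $[0,T]$ yields a partition $0=s_0<\dots<s_m=T$ with $F(s_j)-F(s_{j-1})<\e$ for all $j$. For $t\in[s_{j-1},s_j]$, monotonicity of $F_N$ and $F$ gives $F_N(s_{j-1})\le F_N(t)\le F_N(s_j)$ and $F(s_{j-1})\le F(t)\le F(s_j)$, whence $|F_N(t)-F(t)|\le \max_{0\le j\le m}|F_N(s_j)-F(s_j)| + \e$. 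Choosing $N$ large enough that $\max_{0\le j\le m}|F_N(s_j)-F(s_j)|<\e$ --- possible since there are finitely many points and $F_N\to F$ pointwise --- gives $\sup_{t}|F_N(t)-F(t)|<2\e$, proving~\eqref{eq:pt:uniform:pt}. Combining the three reductions completes the proof. There is no genuine obstacle here: the only point requiring care is that the partition in the last step and the net in the middle step be chosen independently of $t$ and of $N$, which is exactly what the monotonicity in $t$ and the uniform Lipschitz bound in $\xi$ provide.
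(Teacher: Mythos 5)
Your proof is correct and follows essentially the same route as the paper's: both arguments reduce uniform convergence to pointwise convergence at a finite set of points by exploiting monotonicity in $t$ and the regularity in $\xi$, together with continuity of the limit $h$. The only cosmetic difference is that you treat the two variables separately (a Lipschitz $\e$-net in $\xi$, then P\'olya's argument in $t$ at each net point), whereas the paper runs a single two-dimensional sandwich over the vertices of a rectangular grid using monotonicity of $h_N$ in both $t$ and $\xi$.
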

\begin{proof}
Fix arbitrary $ \e>0 $ and $ r<\infty $,
and consider the partition $ R_\ell(r) $ as in~\eqref{eq:Rprtn}.
As $ h $ is continuous, there exists large enough $ \ell<\infty $ such that,
on each of the rectangle $ \square\in R_\ell(r) $,
\begin{align}
	\label{eq:gu:ocs}
	|h(t,\xi)-h(s,\zeta)| \leq \e,
	\quad
	\forall (t,\xi), (s,\zeta) \in \square.
\end{align}
Fix a rectangle $ \square\in R_\ell(r) $ and
parametrize it as $ [\und t_\square,\bar t_\square]\times[\xi^-_\square,\xi^+_\square] $.
With $ h_N $ being nondecreasing in $ t $ and $ \xi $, for each $ t,\xi\in\square $, we have
\begin{subequations}
\label{eq:gu:oscc}
\begin{align}
	&
	h(t,\xi)-h_N(t,\xi) 
	\leq 
	h(t,\xi) - h_N(\und t_\square,\xi^-_\square)
	=
	\big( h(t,\xi)-h(\und t_\square,\xi^-_\square)\big) + h(\und t_\square,\xi^-_\square) - h_N(\und t_\square,\xi^-_\square),
\\	
	&
	h(t,\xi)-h_N(t,\xi) 
	\geq 
	h(t,\xi) - h_N(\bar t_\square,\xi^+_\square)
	=
	\big( h(t,\xi) - h(\bar t_\square,\bar\xi_\square) \big) + h(\bar t_\square,\bar\xi_\square) - h_N(\bar t_\square,\xi^+_\square).
\end{align}
\end{subequations}
Let $ \calV $ denote the set of all vertices of the rectangles in $ R_\ell(r) $.
Using~\eqref{eq:gu:ocs} in~\eqref{eq:gu:oscc} gives,
\begin{align*}
	h(t,\xi)-h_N(t,\xi) 
	\leq 
	\e + \sup_{\calV} (h - h_N),
	\quad
	h(t,\xi)-h_N(t,\xi) 
	\geq 
	-\e + \inf_{\calV} (h - h_N).
\end{align*}
Equivalently, $ \sup_{[0,T]\times[-r,r]} |h_N-h| \leq \max_{ \calV } |h_N-h| + \e $.
As $ \calV $ is a fixed, finite set, letting $ N\to\infty $ 
gives $ \limsup_{N\to\infty} \sup_{[0,T]\times[-r,r]} |h_N-h| \leq \e $.
With $ \e>0 $ and $ r<\infty $ being arbitrary,
this concludes the desired result~$ \sup_{t\in[0,T]}\dist(h_N(t),h) \to 0 $.
\end{proof}

The following Corollary generalizes Proposition~\ref{prop:inhomo} to the time-inhomogeneous setting considered here.
\begin{corollary}
\label{cor:inhomo}
For any given simple speed function $ \speed $,
\begin{align*}
	\sup_{t\in[0,T]} \dist( \gN(t), \hl{\speed}{\gIC}(t) ) \longrightarrow 0,
	\quad
	\QN^\speed\text{-in probability.}
\end{align*}
\end{corollary}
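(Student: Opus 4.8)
The plan is to reduce Corollary~\ref{cor:inhomo} to the time-homogeneous hydrodynamic limit of Proposition~\ref{prop:inhomo} by exhausting the finitely many discontinuity times of the speed function, and then to upgrade pointwise convergence to locally uniform convergence. Two preliminary observations organize the argument. First, a \emph{concatenation identity} for the variational formula \eqref{eq:hl:frml}--\eqref{eq:HL}: for any $0\le s\le t$, splitting a connecting path at time $s$ and optimizing its restriction to $[0,s]$ gives $\hl{\speed}{f}(t,\xi)=\inf_{w\in W(t,\xi)}\{\HLf_{s,t}(w;\speed)+\hl{\speed}{f}(s,w(s))\}$, together with the monotonicity ($f^1\le f^2\Rightarrow\hl{\speed}{f^1}\le\hl{\speed}{f^2}$) and shift-covariance ($\hl{\speed}{f+c}=\hl{\speed}{f}+c$) of $f\mapsto\hl{\speed}{f}$, both immediate from the additive structure of $\HLf_{t_1,t_2}$ and the infimum. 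Second, an \emph{upgrade lemma} ($\star$): if $\gN\in\Sp$ and $\gN(t,\xi)\to h(t,\xi)$ in probability for every $(t,\xi)$ with $h\in C([0,T],\Splip)$, then $\sup_{t\in[0,T]}\dist(\gN(t),h)\to0$ in probability. This follows from Lemma~\ref{lem:pt:uniform}: along any subsequence extract a further subsequence converging almost surely on a fixed countable dense set containing all vertices of the partitions $R_\ell(r)$; the monotone-squeezing argument proving Lemma~\ref{lem:pt:uniform} uses only those values (since $\gN$ is nondecreasing in both arguments and $h$ is continuous), hence yields almost sure locally uniform convergence along that subsequence, and since the limit is deterministic the full sequence converges in probability.

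Granting these, I would prove Corollary~\ref{cor:inhomo} by induction on the blocks of the partition $0=t_0<t_1<\dots<t_n=T$ underlying the simple speed function $\speed=\sum_{i=1}^n\ind_{[t_{i-1},t_i)}\speed_i$ of \eqref{eq:simple}. The inductive hypothesis is that $\sup_{t\le t_{i-1}}\dist(\gN(t),\hl{\speed}{\gIC}(t))\to0$ in probability; for $i=1$ this is vacuous. To advance, fix $\e>0$ and $r<\infty$; on $[t_{i-1},t_i)$ the process is the \emph{time-homogeneous} inhomogeneous \ac{TASEP} with speed $\speed_i$ restarted from $\gN(Nt_{i-1})$, and by the inductive hypothesis the event $\{\,\Vert\gN(t_{i-1})-\hl{\speed}{\gIC}(t_{i-1})\Vert_{C[-r,r]}<\e\,\}$ has probability tending to $1$ for every $r$. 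On this event I sandwich the discretized configuration $\gN(Nt_{i-1})$ between two \emph{deterministic} $\Spf$-valued profiles that agree on $[-Nr,Nr]$ with the discretization of $\hl{\speed}{\gIC}(t_{i-1})\pm\e$ and are extended outside $[-Nr,Nr]$ by the extremal slopes, exactly as in the boundary surgery used in the proof of Theorem~\ref{thm:main}\ref{enu:lwb}; by finite propagation speed (Lemma~\ref{lem:icLoc}) this extension does not affect the values of the corresponding processes on any fixed compact up to time $T$ once $r$ is large. Applying Proposition~\ref{prop:inhomo} (equivalently, the result of \cite{georgiou10}) with speed $\speed_i$ to these two deterministic initial data, after reindexing time by $t_{i-1}$, and invoking the concatenation identity and shift-covariance, the two sandwiching processes converge pointwise in probability on $[t_{i-1},t_i]$ to $\hl{\speed}{\gIC}(\cdot)\pm\e$. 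By the order-preservation \eqref{eq:mono} of the basic coupling, $\gN$ is squeezed between them, so letting $\e\downarrow0$ and using the continuity of $\hl{\speed}{\gIC}$ (Lemma~\ref{lem:hl}\ref{enu:hl:sp}) gives $\gN(t,\xi)\to\hl{\speed}{\gIC}(t,\xi)$ in probability for every $(t,\xi)$ with $t\le t_i$. Feeding this into ($\star$) completes the inductive step, and after $n$ steps, the proof.

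The main obstacle is precisely this restart step across the discontinuities $t_i$ of $\speed$: the initial data for the next time-homogeneous block is random, so Proposition~\ref{prop:inhomo} is not directly applicable, and the resolution via attractive sandwiching forces careful bookkeeping of the quantifiers — one must already know \emph{locally uniform} (not merely pointwise) concentration of $\gN(t_{i-1})$ before restarting, then pass $\e\downarrow0$, then pass to subsequences to convert the resulting pointwise-in-probability statement back into a locally uniform one. Aside from this, the only substantive checks are the concatenation identity and the monotonicity/shift-covariance of the Hopf--Lax map, both routine from \eqref{eq:hl:frml}--\eqref{eq:HL}, together with the tail surgery ensuring that the monotone coupling \eqref{eq:mono} applies globally on $\Z$ at finite cost on compacts.
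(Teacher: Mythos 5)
Your proof is correct and follows the same skeleton as the paper's: induct over the blocks $[t_{i-1},t_i]$ of the simple speed function, apply the time-homogeneous hydrodynamic limit (Proposition~\ref{prop:inhomo}) on each block, and upgrade pointwise to locally uniform convergence via Lemma~\ref{lem:pt:uniform}. The difference is that you make explicit two steps the paper compresses into the single sentence ``this allows us to progress onto $[t_1,t_2]$.'' First, the restart at $t_{i-1}$ hands the next block a \emph{random} initial configuration, to which Proposition~\ref{prop:inhomo} (stated for deterministic initial data converging to a fixed profile) does not directly apply; your attractive sandwiching between the deterministic discretizations of $\hl{\speed}{\gIC}(t_{i-1})\pm\e$, combined with the tail surgery, the order preservation~\eqref{eq:mono}, finite propagation speed (Lemma~\ref{lem:icLoc} and Lemma~\ref{lem:hl}\ref{enu:hl:match}), and the concatenation/shift-covariance of the Hopf--Lax map (Lemma~\ref{lem:hl}\ref{enu:hl:loc}), is exactly the missing justification. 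Second, Lemma~\ref{lem:pt:uniform} is stated for deterministic sequences, and your subsequence argument converting pointwise convergence in probability into uniform convergence in probability (using that the squeezing in its proof only consults countably many vertex values) is the standard but necessary bridge. So your write-up buys rigor at the restart times; the paper buys brevity by leaving both points implicit. No gaps.
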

\begin{proof}
Let $ 0:=t_0<t_1<\ldots<t_n=T $ denote the discontinuities of $ \speed $.
Combining Proposition~\ref{prop:inhomo} and Lemma~\ref{lem:pt:uniform} 
gives $ \sup_{t\in[t_0,t_1]}\dist(\gN(t),\hl{\speed}{f}(t)) \to 0 $, $ \QN^\speed $-in probability.
In particular $ \dist(\gN(t_1),h(t_1)) \to 0 $, $ \QN^\speed $-in probability.
This allows us to progress onto $ [t_1,t_2] $.
The proof is completed by inductively applying Proposition~\ref{prop:inhomo} and Lemma~\ref{lem:pt:uniform}
for $ [t_1,t_2] $, $ \ldots $,  $ [t_{n-1},t_n] $.
\end{proof}

In addition to the hydrodynamic result Corollary~\ref{cor:inhomo},
to the end of proving Proposition~\ref{prop:ent:up},
we also need a formula for the Radon--Nikodym derivative.
Using the Feynman--Kac formula, it is standard to show that
\begin{align}
	\label{eq:RNdrv}
	\frac{d\QN^\speed}{d\PN^\g}
	=
	\exp\Big( \sum_{x\in\Z} \int_0^{NT} 
		\Big(
			\log\speed\big(Nt,\tfrac{x}{N}\big) d\h(t,x) 
			- \mob(\h(t),x)\big(\speed\big(Nt,\tfrac{x}{N}\big)-1\big) dt
		\Big)
	\Big).
\end{align}
In particular, with $ \prf(\xi):=\xi\log\xi-(\xi-1) $, 
taking $ \Ex_{\QN^{\speed}}(\Cdot) $ in~\eqref{eq:RNdrv} gives
\begin{align}
	\label{eq:ent}
	\frac{1}{N^2}H(\QN^\speed|\PN^\g)	
	=
	\frac{1}{N}
	\sum_{x\in\Z}
	\Ex_{\QN}\Big(  \int_0^{T} 
		\mob(\hN(t),x)\prf\Big(\speed\big(t,\tfrac{x}{N}\big)\Big) dt
	\Big).
\end{align}

Our strategy of proving Proposition~\ref{prop:ent:up}
is to construct a simple speed function $ \speed $, 
so that, $ \QN^{\speed} $ satisfies~\eqref{eq:lwb:apprx}--\eqref{eq:lwb:ent}.
In view of Corollary~\ref{cor:inhomo},
achieving~\eqref{eq:lwb:apprx} amounts to constructing $ \speed $
in such a way that $ \hl{\speed}{\gIC} $ well approximates $ \ling $.
To this end, it is more convenient to consider piecewise constant speed functions that are not necessarily simple.
In Section~\ref{sect:speedd},
we will first construct a speed function $ \Speedd $ that is not simple,
and in Section~\ref{sect:speed},
we obtain the desired simple speed function $ \Speed $ as an approximate of $ \Speedd $.
As the functions $ \Speedd $ and $ \Speed $
depend on the two auxiliary parameters $ \scll,\sclll $ (introduced in the sequel),
hereafter we write $ \Speedd=\Speedd_{\scll,\sclll} $ and $ \Speed=\Speed_{\scll,\sclll} $
to emphasize such dependence.

\section{Lower Bound: Construction of $ \Speedd_{\scll,\sclll} $}
\label{sect:speedd}

\subsection{Overview of the Construction}
\label{sect:speedd:ov}
To motivate the technical construction in the sequel,
in this subsection we give an overview. 
The discussion here is \emph{informal}, and does not constitute any part of the proof. 

Corollary~\ref{cor:inhomo} asserts that $ \gN $ converges to $ \hl{\speed}{\gIC} $ under $ \QN^{\speed} $.
In order to achieve~\eqref{eq:lwb:apprx},
it is desirable to construct construct $ \Speedd_{\scll,\sclll} $ 
so that $ \hl{\Speedd_{\scll,\sclll}}{\gIC} $ approximates $ \ling $ on $ [0,T]\times[-r^*,r^*] $, i.e.,
\begin{align*}
	\sup_{[0,T]\times[-r^*,r^*]} \big| \hl{\Speedd_{\scll,\sclll}}{\ling} - \ling \big| \approx 0.
\end{align*}

Indeed, it is well-known that the Burgers equation~\eqref{eq:burgers} is solved by characteristics,
which a linear trajectories of speed $ 1-2g_\xi $.
We generalize the idea of characteristic velocity to the inhomogeneous setting considered here,
and call $ \Speedd_{\scll,\sclll}(t,\xi)(1-2g_\xi(t,\xi)) $ the characteristic velocity at a given point $ (t,\xi) $.
Informally speaking, the Hopf--Lax function $ h=\hl{\Speedd_{\scll,\sclll}}{\gIC} $
corresponds to a solution of the inhomogeneous equation
$
	h_t = \Speedd_{\scll,\sclll} h_\xi(1-h_\xi)
$
with initial condition $ \gIC $.
As $ \ling $ is a $ \Sigma $-piecewise linear function,
a natural, \emph{preliminary proposal} is to set 
$ \Speedd_{\scll,\sclll}|_{\triangle^\circ} := \linlam $ 
on each triangle $ \triangle\in\Sigma $,
so that $ \ling $ solves the aforementioned inhomogeneous Burgers equation.
One then hopes that (after extending $ \Speedd_{\scll,\sclll} $ onto the edges of the triangulation $ \Sigma $ in a suitable way),
the resulting Hopf--Lax function $ \hl{\Speedd_{\scll,\sclll}}{\gIC} $ matches $ \ling $. 
This is false in general.
To see why, assume $ \hl{\Speedd_{\scll,\sclll}}{\gIC} = \ling $ were the case.
Then, on each $ \triangle\in\Sigma $, characteristic velocity is constant $ \linlam $.
%
Along vertical or diagonal edges of the triangulation $ \Sigma $,
characteristics may: merge, semi-merge, refract, semi-refract, or diverge,
as illustrated in Figure~\ref{fig:char:}.
While the first four scenarios are admissible,
the Hopf--Lax function $ \hl{\Speedd_{\scll,\sclll}}{\gIC} $ does \emph{not} permit diverging characteristics
as depicted in Figure~\ref{fig:div}.
\begin{figure}[h]
\begin{subfigure}{0.15\textwidth}
\includegraphics[width=\textwidth]{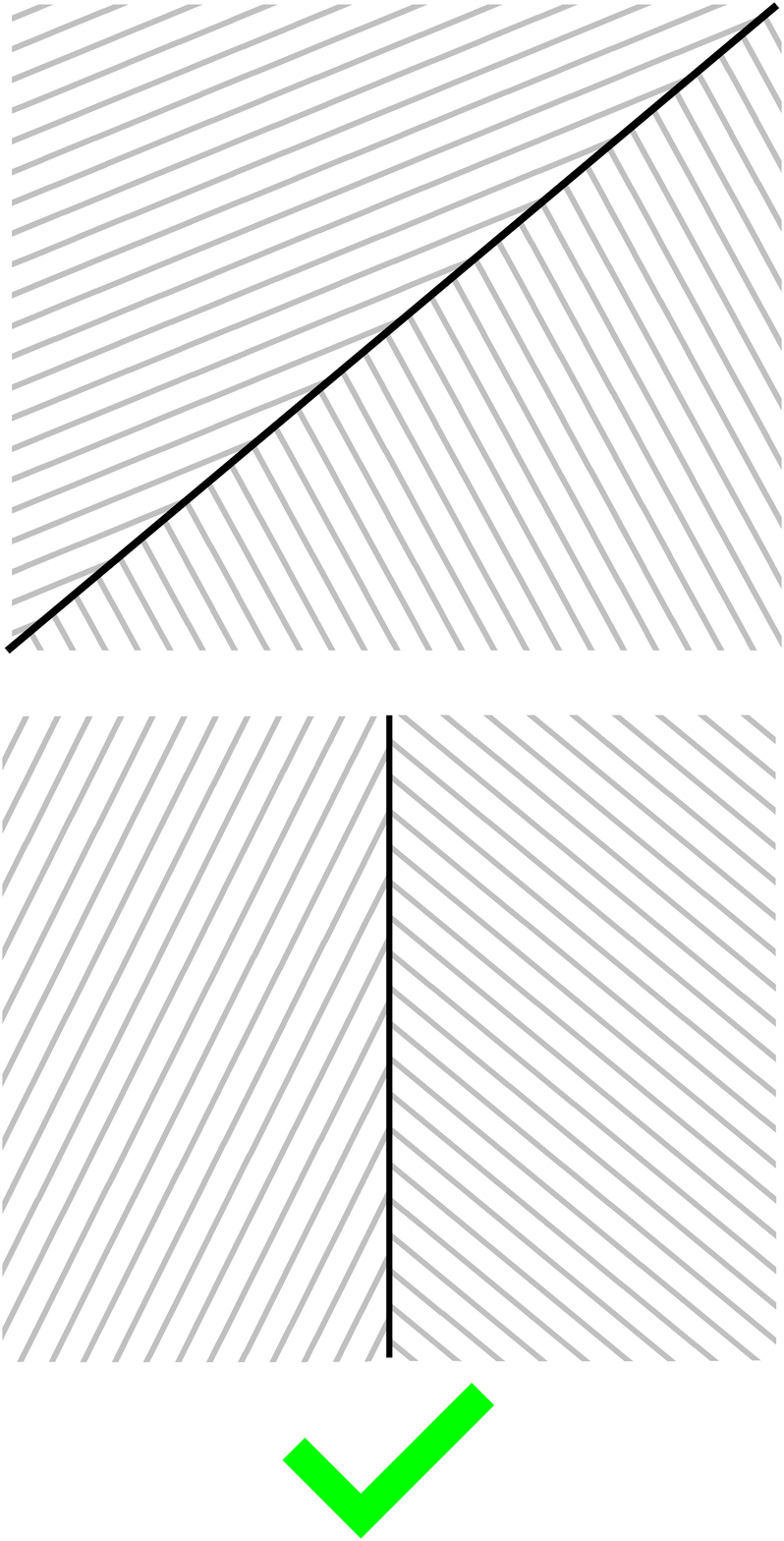}
\caption{\scriptsize{}Merging}
\end{subfigure}
\hfill
\begin{subfigure}{0.15\textwidth}
\includegraphics[width=\textwidth]{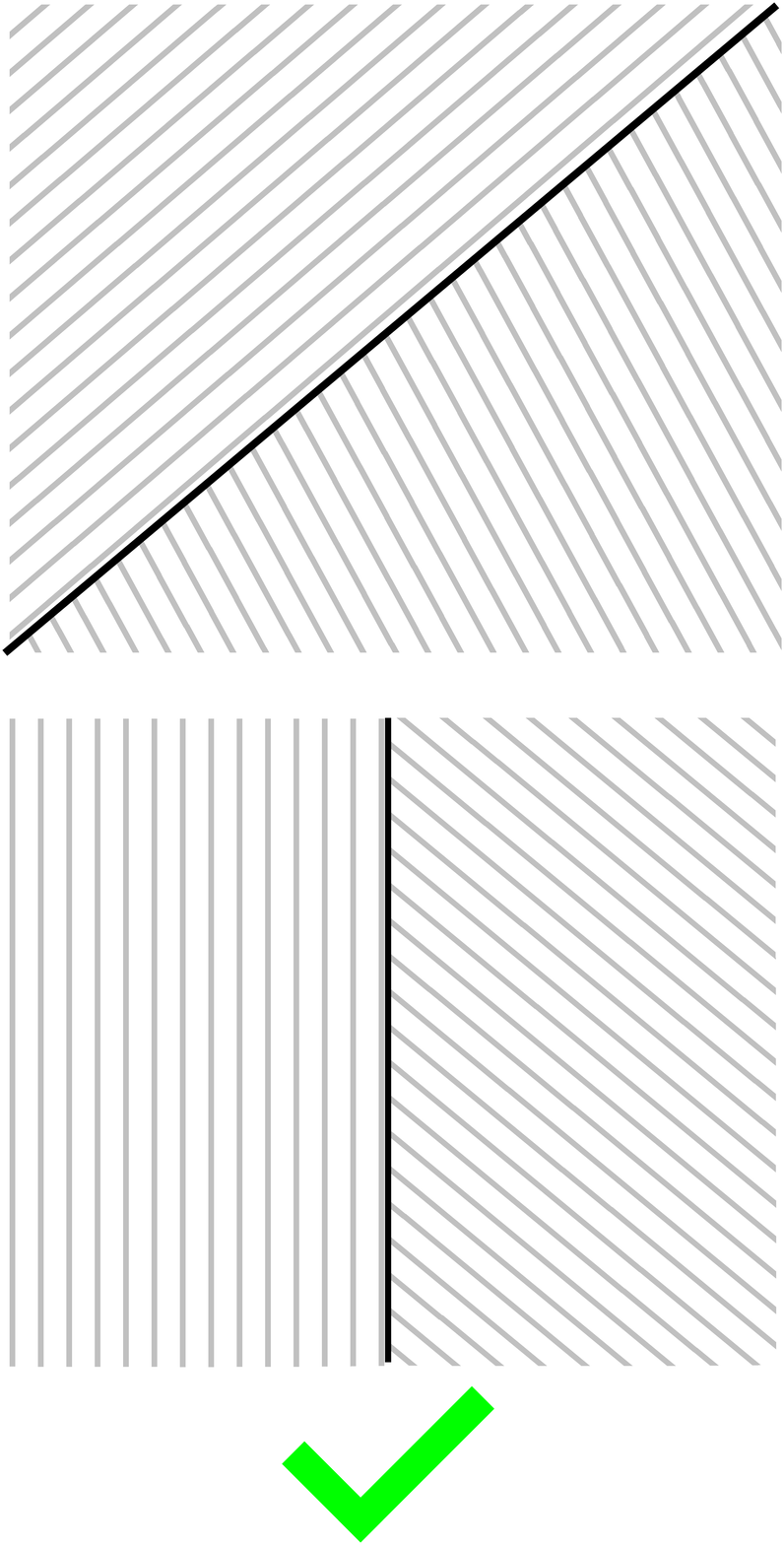}
\caption{\scriptsize{}Semi-merging}
\end{subfigure}
\hfill
\begin{subfigure}{0.15\textwidth}
\includegraphics[width=\textwidth]{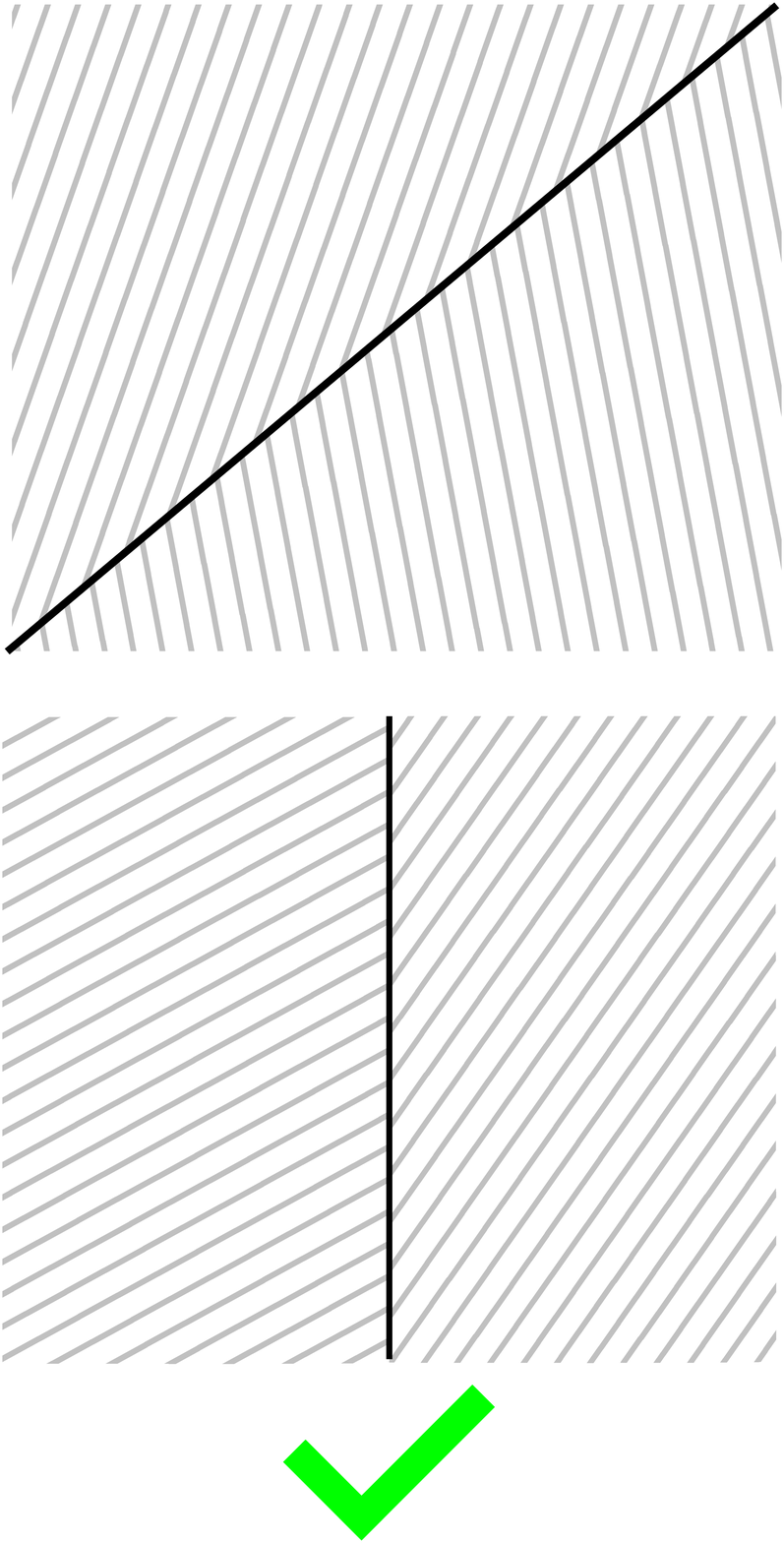}
\caption{\scriptsize{}Refracting}
\end{subfigure}
\hfill
\begin{subfigure}{0.15\textwidth}
\includegraphics[width=\textwidth]{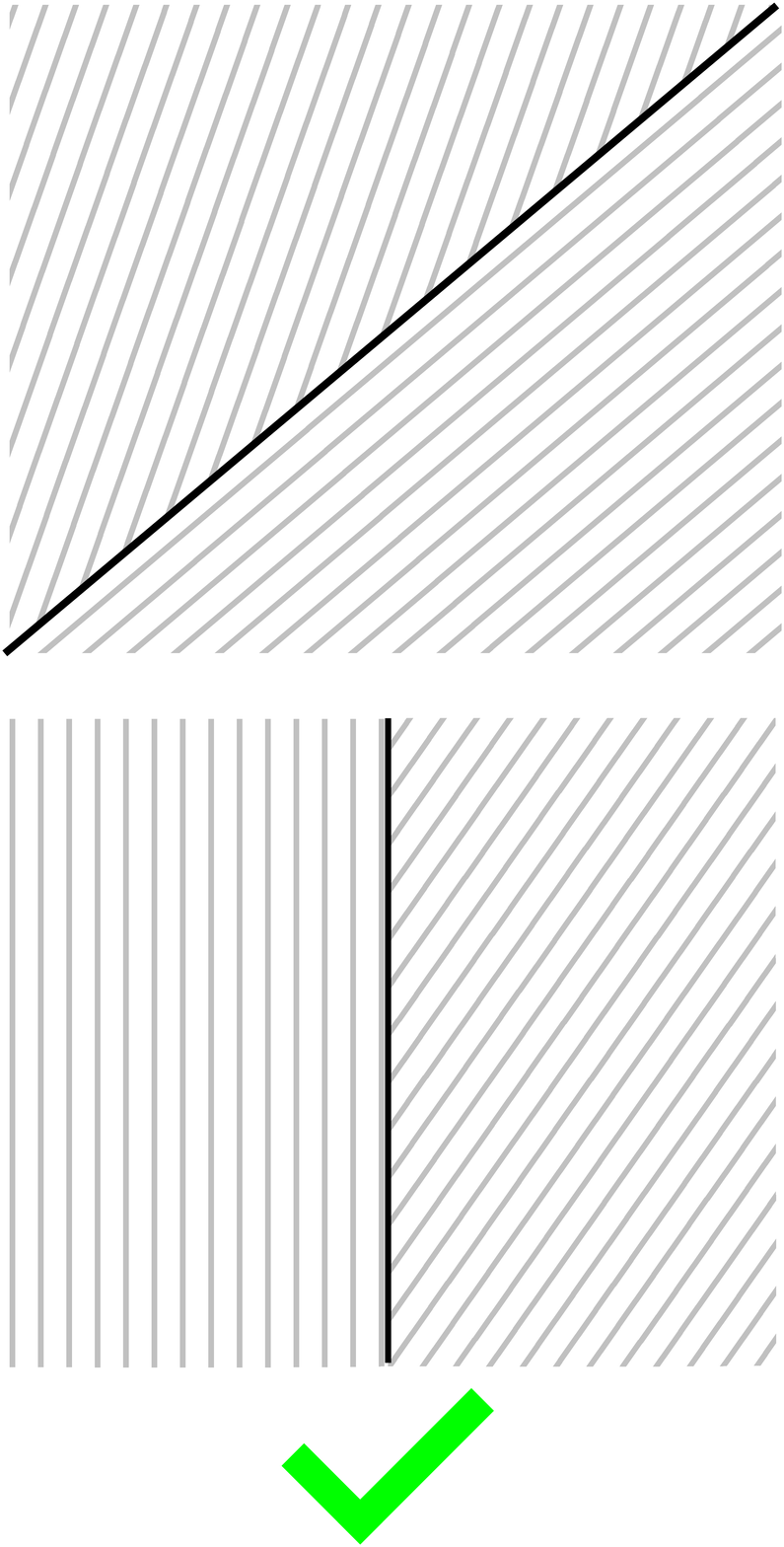}
\caption{\scriptsize{}Semi-refracting}
\end{subfigure}
\hfill
\begin{subfigure}{0.15\textwidth}
\includegraphics[width=\textwidth]{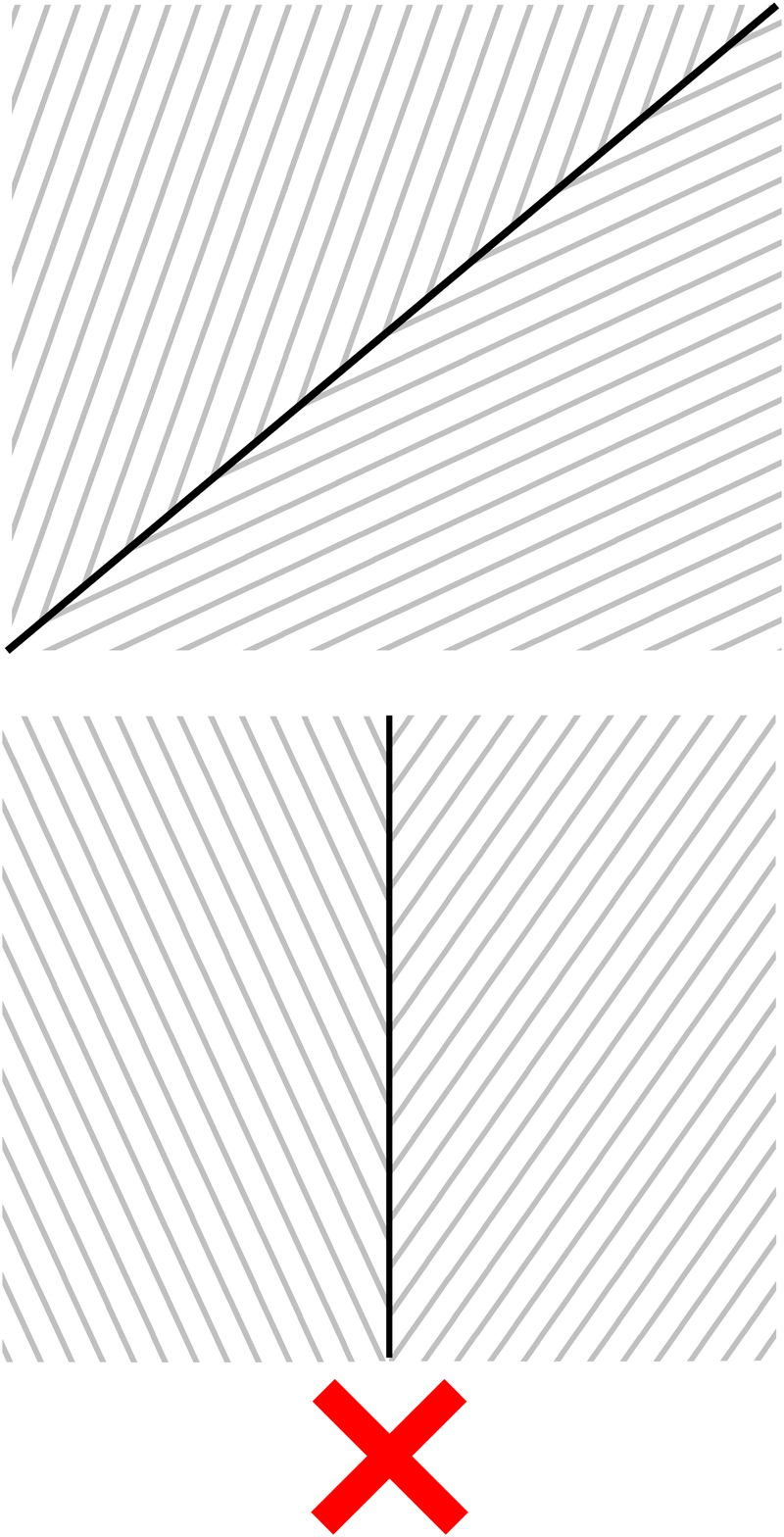}
\caption{\scriptsize{}Diverging}
\label{fig:div}
\end{subfigure}
\caption{Configurations at a vertical or diagonal edge}
\label{fig:char:}
\end{figure} 

We circumvent this problem by introducing buffer zones around vertical and diagonal edges.
These zones are thin stripes of width $ O(\frac{1}{\scll}) $.
If, the neighboring triangles of a given (vertical or diagonal) edge 
demand diverging characteristics as depicted in Figure~\ref{fig:div},
we tune $ \Speedd_{\scll,\sclll} $ on the buffer zone, in such a way that 
characteristics run \emph{parallel} to the edge on in the zone, as depicted in Figure~\ref{fig:buffering}.
This way, instead of diverging characteristics,
along the sides of the buffer zone we have semi-refracting characteristics.
As $ \scll\to\infty $, buffer zones become effectively invisible,
and the resulting $ \hl{\Speedd_{\scll,\sclll}}{\gIC} $ should well-approximate $ \ling $.
\begin{figure}[h]
\psfrag{W}{$ O(\frac{1}{\scll}) $}
\includegraphics[width=.5\textwidth]{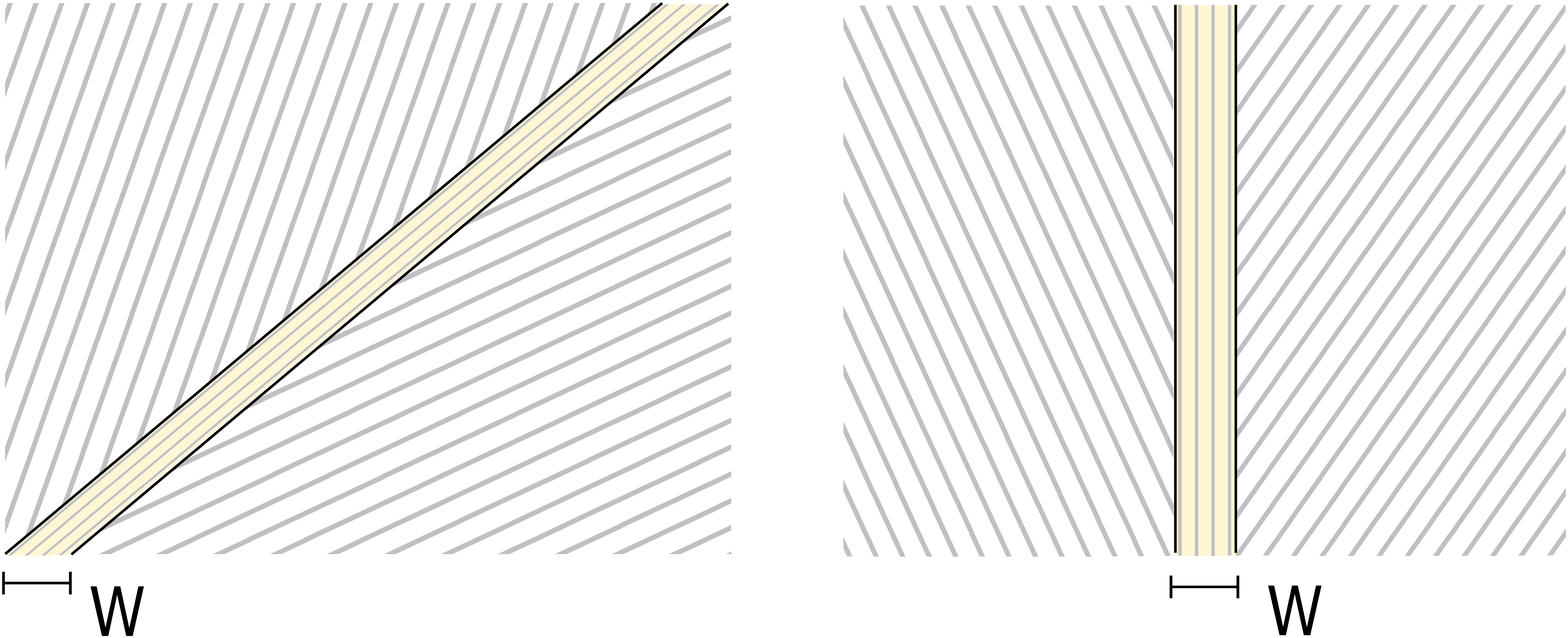}
\caption{Buffer zones (yellow) in action.}
\label{fig:buffering}
\end{figure}

The preceding construction achieves~$ \hl{\Speedd_{\scll,\sclll}}{\gIC}\approx\ling $,
but is not cost efficient in terms of entropy.
A few modifications are in place to improve the entropy cost.
Recall the definition of $ r^* $ and $ \maxlam $ from~\eqref{eq:r*}--\eqref{eq:maxlam}.
First, to avoid the entropy being infinite, we truncation $ \Speedd_{\scll,\sclll} $,
by setting it to unity outsides of $ [-r^*,r^*] $, i.e.,
$ \Speedd_{\scll,\sclll}|_{|\xi|>r^*} := 1. $
Refer to the formula~\eqref{eq:ent} for relative entropy:
the prescribed truncation ensures the cost from $ \{|\xi|>r^*\} $ is zero.
Further, such a truncation does not 
change the value of $ \hl{\Speedd_{\scll,\sclll}}{\gIC}(t,\xi) $ for $ |\xi|\leq r_* $.
To see why, recall that $r^* \ge r_* + T\maxlam$, and note that, with $ 1-2g_\xi\in[-1,1] $ and $ \Speedd_{\scll,\sclll}\leq\maxlam $,
characteristic velocity is always bounded by $ \maxlam $ in magnitude.
This being the case, the value of $ \Speedd_{\scll,\sclll} $ in $ \{|\xi|>r^*\} $
does not affect $ \hl{\Speedd_{\scll,\sclll}}{\gIC}|_{|\xi|\leq r_*} $,
because characteristics starting from $ \{|\xi|>r^*\} $ at $ t=0 $ cannot reach $ \{|\xi|\leq r_*\} $ within $ [0,T] $.

Next, recalling the the discussion in Section~\ref{sect:intro:heu},
we see that on those triangles $ \triangle $ with $ \linlam < 1 $,
having $ \Speedd_{\scll,\sclll} = \linlam $ is too cost ineffective.
Instead, we should perform the intermittent constriction as sketched in Section~\ref{sect:intro:heu}.
On each of the triangle $ \triangle $ with $ \linlam < 1 $,
we place thin vertical stripes of width $ O(\frac{1}{\sclll^2\scll}) $, every distance $ O(\frac{1}{\sclll\scll}) $ apart.
We then set $ \Speedd_{\scll,\sclll} = \linlam $ on those thin stripes,
and set $ \Speedd_{\scll,\sclll} =1 $ for the rest of the triangles.
As explained in Section~\ref{sect:intro:heu},
as $ \sclll\to\infty $, the prescribed construction
should produce approximately the desired linear function on $ \triangle $, at effectively zero cost.

This concludes our overview of the construction of $ \Speedd_{\scll,\sclll} $.
The precise construction is carried out in Section~\ref{sect:speedd:cnst} in the following,
and in Section~\ref{sect:speedd:hl} we verify that the resulting Hopf--Lax function 
$ \hl{\Speed_{\scll,\sclll}}{\gIC} $ converges to $ \ling $, under a limit procedure.
Even though the preceding heuristic discussion invokes inhomogeneous Burgers equation
as a motivation for constructing $ \Speedd_{\scll,\sclll} $,
our analysis in the following completely bypasses references to PDEs.
Instead, we work directly with the variational formula~\eqref{eq:hl:frml} of Hopf and Lax.
To prepare for this, in Section~\ref{sect:speedd:hlprpty}
we establish some elementary properties of the Hopf--Lax function.

\subsection{Properties of the Hopf--Lax function}
\label{sect:speedd:hlprpty}

Let us first setup the notations.
For a given set $ \calA\subset[0,T]\times\R $ and $ h\in\Sp\cap C([0,T],\Splip) $,
we define the localization $ \hll{\speed}{f}{\calA} $ of~\eqref{eq:hl:frml} onto $ \calA $ as follows:
\begin{align}
	\label{eq:hlfrml:loc}
	\hll{\speed}{h}{\calA}: \calA\to\R,
	\quad
	\hll{\speed}{h}{\calA}(t,\xi)
	:=
	\inf_{ w\in W_\calA(t,\xi) } \big\{ \HLf_{t_w,t}(w;\speed) + h(t_w,w(t_w)) \big\},
\end{align}
where $ W_\calA(t,\xi) $ denotes the set of piecewise $ C^1 $ paths 
that lie within $ \calA^\circ $
and connect $ (t,\xi) $ to the boundary $ \partial\calA := \bar{\calA}\setminus\calA^\circ $ of $ \calA $, i.e.,
\begin{align}
	\label{eq:W:local}
	W_\calA(t,\xi) := \{ w:[t_w,t]\to\R : \, & w \text{ piecewise } C^1, \, (s,w(s))|_{s\in(t_w,t)}\in\calA^\circ,
	\\	
	&
	\notag
	w(t)=\xi, \, (t_w,w(t_w))\in \partial\calA \}.
\end{align}
The expression~\eqref{eq:hlfrml:loc} depends on $ (\speed,h) $ 
only through $ (\speed|_{\calA^\circ},h|_{\partial\calA}) $,
and is hence referred to as the localization onto $ \calA $.
For the special case $ \calA := [s_0,T]\times\R $, $ s_0\in[0,T] $, 
sightly abusing notations, we write
\begin{align}
	\label{eq:hlfrmlt}
	\hll{\speed}{f}{s_0}: [t_0,T]\times\R\to\R,
	\quad
	\hll{\speed}{f}{s_0}(t,\xi)
	:=
	\hspace{-5pt}
	\inf_{ w\in W_{s_0}(t,\xi) } \big\{ \HLf_{s_0,t}(w;\speed) + f(w(s_0)) \big\},
\end{align}
where $ f\in\Splip $ and $ W_{s_0}(t,\xi):=\{ w:[s_0,T]\to\R : w $ piecewise $ C^1, w(t)=\xi \} $.

Recall that, by definition, each speed function $ \speed $ is bounded by $ \maxlam $.
We hence refer to $ \maxlam $ as the light speed, and let
\begin{align}
	\label{eq:cone}
	\calC(t_0,\xi_0) := \big\{ (t,\xi): t\in[0,t_0], |\xi-\xi_0| \leq \maxlam(t_0-t) \big\}
\end{align}
denote the light cone going backward in time from $ (t_0,\xi_0) $.

The following lemma contains the elementary properties of the Hopf--Lax function that will be used in the sequel.
\begin{lemma}
\label{lem:hl}
Let $ \speed,\speed_1,\speed_2 $ be speed functions, $ f\in\Sp $.
\begin{enumerate}[label=(\alph*),leftmargin=5ex]
\item \label{enu:hl:loc}
Let $ \calA\subset[0,T]\times\R $.
The Hopf--Lax function~\eqref{eq:hl:frml} localizes onto $ \calA $ as
\begin{align*}
	\hl{\speed}{f}\big|_\calA = \hll{\speed}{\hl{\speed}{f}}{\calA}.
\end{align*}
Similarly, let $ s_0\leq s_1\in[0,T] $. We have
\begin{align*}
	&
	\hl{\speed}{f}|_{[s_0,T]\times\R} = \hll{\speed}{f_0}{s_0},
	\quad
	\text{where }
	f_0(\Cdot) := \hl{\speed}{f}(s_0,\Cdot),
\\
	&
	\hll{\speed}{f_0}{s_0}|_{[s_1,T]\times\R} = \hll{\speed}{f_1}{s_1},
	\quad
	\text{where }
	f_1(\Cdot) := \hl{\speed}{f_0}(s_1,\Cdot).
\end{align*}

\item \label{enu:hl:sp}
We have 
\begin{align}
	\label{eq:hl:time}
	&
	0\leq \hl{\speed}{f}(t'_0,\xi_0) - \hl{\speed}{f}(t_0,\xi_0) \leq \tfrac{\maxlam}{4}(t'_0-t_0),&
	&
	\forall t_0\leq t'_0\in[0,T], \ \xi_0\in\R,
\\
	\label{eq:hl:lip}
	&
	0\leq \hl{\speed}{f}(t_0,\xi'_0) - \hl{\speed}{f}(t_0,\xi_0) \leq \xi'_0-\xi_0,&
	&
	\forall t_0\in[0,T], \ \xi_0\leq \xi'_0\in\R.
\\	
	\label{eq:hls:time}
	&
	0\leq \hll{\speed}{f}{s_0}(t'_0,\xi_0) - \hll{\speed}{f}{s_0}(t_0,\xi_0) 
	\leq 
	\tfrac{\maxlam}{4}(t'_0-t_0),&
	&
	\forall t_0\leq t'_0\in[{s_0},T], \ \xi_0\in\R,
\\
	\label{eq:hls:lip}
	&
	0\leq \hll{\speed}{f}{s_0}(t'_0,\xi_0) - \hll{\speed}{f}{s_0}(t_0,\xi_0) \leq \xi'_0-\xi_0,&
	&
	\forall t_0\in[{s_0},T], \ \xi_0\leq \xi'_0\in\R.
\end{align}
In particular $ \hl{\speed}{f} \in \Sp\cap C([0,T],\Splip) $.
\item\label{enu:hl:chaV}
Given a piecewise $ C^1 $ path $ w:[s_0,t_0]\to\R $ and any $ t'_0\in [s_0,t_0] $,
there exists a piecewise $ C^1 $ path $ v:[s_0,t_0]\to\R $ such that
\begin{align}
	&
	\label{eq:charV:goal:}
	v|_{[t'_0,t_0]}=w|_{[t'_0,t_0]},
\\
	&
	\label{eq:charV:goal::}
	(t,v(t))|_{t\in[s_0,t'_0]}\in \calC(t'_0,w(t'_0))
\\
	&
	\label{eq:charV:goal}
	\HLf_{s_0,t_0}(v;\speed) + f(t_0,v(t_0))
	\leq
	\HLf_{s_0,t_0}(w;\speed) + f(t_0,w(t_0)).
\end{align}
Namely, without making the functional $ \HLf_{s_0,t_0}(w;\speed) + f(t_0,w(t_0)) $ larger,
we can replace $ w $ with a path that:
agrees with $ w $ on $ [t'_0,t_0] $; 
and lies within a the light cone $ \calC(t'_0,w(t'_0)) $ for $ t\in[s_0,t'_0] $.
\item\label{enu:hl:match}
Let $ f_1,f_2\in\Splip $, and $ s_0\in[0,T] $.
For any given $ (t_0,\xi_0)\in[s_0,T]\times\R $, 
let
\begin{align}
	\label{eq:cone:}
	\calC'(s_0,t_0,\xi_0) := \calC(t_0,\xi_0) \cap \big((s_0,t_0)\times\R\big)
\end{align}
denote the restriction of $ \calC(t_0,\xi_0) $ onto $ t\in(s_0,t_0) $.
If $ \speed_1|_{\calC'(s_0,t_0,\xi_0)}=\speed_2|_{\calC'(s_0,t_0,\xi_0)} $
and $ f_1(\xi)|_{(s_0,\xi)\in\calC(t_0,\xi_0)} = f_2(\xi)|_{(s_0,\xi)\in\calC(t_0,\xi_0)} $,
then
\begin{align*}
	\hll{\speed_1}{f_1}{s_0}(t_0,\xi_0)=\hll{\speed_2}{f_2}{s_0}(t_0,\xi_0).
\end{align*}
\item\label{enu:hl:Linf}
Let $ f_1,f_2\in\Splip $, and $ s_0\in[0,T] $. We have
\begin{align*}
	&\big| \hl{\speed}{f_1}(t_0,\xi_0)-\hl{\speed}{f_2}(t_0,\xi_0) \big|
	\leq
	\sup_{|\xi-\xi_0| \leq t_0\maxlam} |f_1(\xi)-f_2(\xi)|,&
	&
	\forall (t_0,\xi_0)\in[0,T]\times\R.
\\
	&\big| \hll{\speed}{f_1}{s_0}(t_0,\xi_0)-\hll{\speed}{f_2}{s_0}(t_0,\xi_0) \big|
	\leq
	\sup_{|\xi-\xi_0| \leq (t_0-s_0)\maxlam} |f_1(\xi)-f_2(\xi)|,&
	&
	\forall (t_0,\xi_0)\in[s_0,T]\times\R.
\end{align*}
\end{enumerate}
\end{lemma}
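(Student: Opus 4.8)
The plan is to prove the six parts essentially in the order stated, since each part is either a direct unwinding of the variational formula~\eqref{eq:hl:frml}--\eqref{eq:hl} or uses an earlier part. For part~\ref{enu:hl:loc} (localization), I would argue by splitting the infimum over $W(t,\xi)$ according to the first time (going backward from $t$) that a path $w$ exits $\calA^\circ$; concatenating the ``outside'' portion of $w$ into the definition of $\hl{\speed}{f}$ at the exit point and the ``inside'' portion into $\HLf$ shows $\hl{\speed}{f}|_\calA \geq \hll{\speed}{\hl{\speed}{f}}{\calA}$, and conversely every competitor for $\hll{\speed}{\hl{\speed}{f}}{\calA}$ extends to a competitor for $\hl{\speed}{f}$ by gluing on a near-optimal path for $\hl{\speed}{f}$ at the boundary point; additivity of $\HLf_{t_1,t_2}$ under concatenation (immediate from~\eqref{eq:HL} being an integral) closes the loop. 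The two displayed special cases with $s_0 \leq s_1$ are the same argument with $\calA = [s_0,T]\times\R$ and then $[s_1,T]\times\R$.

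For part~\ref{enu:hl:sp}, monotonicity in $t$ and $\xi$ (the left inequalities) follows because $f$ is nondecreasing and $\hlf \geq 0$: restricting an optimal path and padding it appropriately can only decrease the cost. For the upper bound in~\eqref{eq:hl:time}, given an optimizer $w$ at $(t_0,\xi_0)$, extend it to $[t_0,t_0']$ by the constant path at value $\xi_0$; the added cost is $\int_{t_0}^{t_0'}\speed(s,\xi_0)\hlf(0)\,ds = \frac14\int_{t_0}^{t_0'}\speed(s,\xi_0)\,ds \leq \frac{\maxlam}{4}(t_0'-t_0)$ since $\hlf(0)=\frac14$ and $\speed \leq \maxlam$. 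For~\eqref{eq:hl:lip}, given an optimizer $w$ at $(t_0,\xi_0)$, translate it by $\xi_0'-\xi_0$; this changes neither $\hlf$ nor the endpoint density structure, but moves the starting point by $\xi_0'-\xi_0$, so by monotonicity of $f$ the value increases by at most $f(w(0)+\xi_0'-\xi_0)-f(w(0))\leq \xi_0'-\xi_0$. The analogous statements~\eqref{eq:hls:time}--\eqref{eq:hls:lip} for the localized function $\hll{\speed}{f}{s_0}$ are proved verbatim, replacing $0$ by $s_0$ as the lower endpoint. The continuity/membership claim $\hl{\speed}{f}\in\Sp\cap C([0,T],\Splip)$ then follows from these four inequalities: $\xi$-Lipschitz with slopes in $[0,1]$ from~\eqref{eq:hl:lip} puts each time-slice in $\Splip$, and joint continuity follows from combining~\eqref{eq:hl:time} and~\eqref{eq:hl:lip} via a triangle inequality.

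For part~\ref{enu:hl:chaV} (cone truncation), I would take the given path $w$ on $[s_0,t_0]$ and leave it untouched on $[t_0',t_0]$; on $[s_0,t_0']$ I replace the portion of $w$ by the path $v$ that moves from $w(s_0)$ toward $w(t_0')$ at the maximal admissible speed $\maxlam$ (reaching $w(t_0')$ at time $t_0'$ if possible, else moving at speed $\pm\maxlam$ throughout). The key computation is that along this replacement the local cost density $\speed(s,v(s))\hlf(v'(s)/\speed(s,v(s)))$ is no larger than that of $w$: since $\hlf$ is nondecreasing and $v'/\speed \in [-1,1]$ by construction (as $|v'|\leq\maxlam$ and $\speed\leq\maxlam$... more precisely $|v'(s)|\le \maxlam$ forces $v'/\speed \ge -1$ when $\speed$ could be small — here I use that $v$ only needs $v'/\speed \le 1$, which holds whenever $|v'|\le\speed$; the honest statement is that moving at the ``light speed'' $\maxlam$ still has cost density at most $\speed\hlf(\maxlam/\speed)\le\maxlam\hlf(1)=\maxlam$, which is compared against... ) — the cleanest route is: any path can be reparametrized so that on the cone-replacement segment $v'/\speed \le 1$ region gives $\hlf(v'/\speed) = v'/\speed$, and an explicit comparison shows the straight-line cone path is optimal among paths with the same endpoints lying in the cone. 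I expect this to be the main technical obstacle, because one must be careful that the truncation does not increase cost even when $\speed < \maxlam$ somewhere in the cone; the resolution is that the cone is defined by the global bound $\maxlam$, so moving at speed $\le\maxlam$ keeps $\hlf$ in its linear-or-quadratic regime where the minimum over endpoints-fixed paths is attained by the affine path, and concatenation additivity then gives~\eqref{eq:charV:goal}.

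For part~\ref{enu:hl:match} (finite speed of propagation / domain of dependence): by part~\ref{enu:hl:chaV} applied with $t_0' = t_0$ and then iterated — or more directly, by~\ref{enu:hl:chaV} with the roles arranged so that without loss of cost one may restrict the infimum defining $\hll{\speed_i}{f_i}{s_0}(t_0,\xi_0)$ to paths lying in the light cone $\calC(t_0,\xi_0)$ — the value depends only on $\speed_i$ restricted to $\calC'(s_0,t_0,\xi_0)$ and on $f_i$ restricted to the base $\{(s_0,\xi):(s_0,\xi)\in\calC(t_0,\xi_0)\}$; since these data agree for $i=1,2$ by hypothesis, the two infima coincide. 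Finally, part~\ref{enu:hl:Linf} follows immediately from the definition: for any competitor $w$, $\HLf(w;\speed)+f_1(w(s_0)) \le \HLf(w;\speed)+f_2(w(s_0)) + |f_1(w(s_0))-f_2(w(s_0))|$, and by part~\ref{enu:hl:chaV} we may assume $w(s_0)$ lies in the cone base, so $|f_1(w(s_0))-f_2(w(s_0))| \le \sup_{|\xi-\xi_0|\le t_0\maxlam}|f_1(\xi)-f_2(\xi)|$; taking infima over $w$ and swapping $f_1,f_2$ gives the stated bound, and the localized version is identical with $t_0\maxlam$ replaced by $(t_0-s_0)\maxlam$.
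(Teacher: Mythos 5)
Your proposed proof of the Lipschitz bound \eqref{eq:hl:lip} has a genuine gap: you translate a (near-)optimal path $w$ for $(t_0,\xi_0)$ by $\xi_0'-\xi_0$ and claim the cost is unchanged, but the functional $\HLf_{0,t_0}(w;\speed)=\int_0^{t_0}\speed(s,w(s))\hlf\big(w'(s)/\speed(s,w(s))\big)\,ds$ depends on the path's \emph{location} through $\speed(s,w(s))$. Since $u\mapsto u\hlf(\alpha/u)$ is nondecreasing and the speed functions here are genuinely inhomogeneous in $\xi$ (that is the whole point of the construction of $\Speedd_{\scll,\sclll}$), the translated path can see a strictly larger speed along its entire length; e.g.\ for a path with $w'\equiv 0$ the cost density is $\speed/4$, which can jump from $\minlam/4$ to $\maxlam/4$ under translation, producing an error of order $t_0$ rather than $\xi_0'-\xi_0$. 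The paper avoids this entirely: it keeps $w$ unchanged on $[0,t_0-\d]$ and only reroutes the last $\d$ of time linearly into $(t_0,\xi_0')$; the identity $u\hlf(\beta/u)=\beta_+$ for $|\beta|\geq u$ then shows the added cost tends to $(\xi_0'-w(t_0))_+=\xi_0'-\xi_0$ as $\d\downarrow 0$. A similar remark applies to the left inequality of \eqref{eq:hl:time}: restricting an optimizer for $(t_0',\xi_0)$ to $[0,t_0]$ lands at $w(t_0)\neq\xi_0$, so one needs \eqref{eq:hl:lip} together with the $u\downarrow 0$ degeneration of $u\hlf(\beta/u)$ to $\beta_+$ to close the argument; ``padding appropriately'' does not resolve this.

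Part~\ref{enu:hl:chaV} as you present it is also not yet a proof, as you partly concede. The replacement path does not have the same starting point as $w$ when $w(s_0)$ lies outside the cone, so the claim that ``the straight-line cone path is optimal among paths with the same endpoints'' does not apply; one must compare $f$ at the two distinct starting points, which is where the Lipschitz structure of $f\in\Splip$ (namely $f(\xi^+_0)\leq f(w(s_0))$ on the right and $f(w(s_0))\geq f(\xi^-_0)-(\xi^-_0-w(s_0))$ on the left) enters. The two inequalities that actually close the argument are: (i) the discarded segment of $w$ costs at least $(w(t_0')-w(s_\star))_+$, by Jensen applied to \eqref{eq:HL} followed by $u\hlf(\beta/u)\geq\beta_+$; and (ii) the light-speed segment of $v$ costs exactly $(v(t_0')-v(s_\star))_+$, by $u\hlf(\beta/u)=\beta_+$ when $|\beta|\geq u$. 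Neither appears in your sketch. The remaining parts \ref{enu:hl:loc}, \ref{enu:hl:match} and \ref{enu:hl:Linf} follow the paper's route and are fine modulo their dependence on \ref{enu:hl:chaV}.
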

\begin{proof}
\ref{enu:hl:loc}
We prove only the statement for $ \hll{\speed}{f}{\calA} $, as the other statements follow similarly.
Fix $ (t_0,\xi_0)\in\calA $.
Under the convention, for any given $ w\in W(t_0,\xi_0) $, 
consider its first exist time $ s_\star := \inf\{s\in[t_w,t_0]: (s,w(s))\in\calA^\circ\}\wedge t_0 $ from $ \calA $,
and cut $ w $ into two pieces accordingly as: $ w_1:[0,s_\star]\to \R $ and $ w_2:[s_\star,t_0]\to \R $.
Under this set up we have
\begin{align}
	\label{eq:hl:local:cut}
	\HLf_{0,t_0}(w;\speed) + g(0,w(0))
	=
	\HLf_{s_\star,t_0}(w_2;\speed) + \HLf_{0,s_\star}(w_1;\speed) + g(0,w_1(0)).
\end{align}
For such $ w $, the resulting paths $ w_2 $ and $ w_1 $ 
are $ W_\calA(t_0,\xi_0) $-valued and $ W(s_\star,w(s_\star)) $-valued, respectively.
Conversely, given $ w_2\in W_\calA(t_0,\xi_0) $ and $ w_1 \in W(t_{w_2},w_2(t_{w_2})) $,
the joint path $ w(t) := w_1(t)\ind_{[0,t_{w_2})}(t) + w_2(t)\ind_{[t_{w_2},t_0]}(t) $ is $ W(t_0,\xi_0) $-valued.
Hence, in~\eqref{eq:hl:local:cut}, 
taking infimum over $ w\in W(t_0,\xi_0) $ is equivalent to taking 
infimum over $ w_2\in W_\calA(t_0,\xi_0) $ and $ w_1 \in W(t_{w_2},w_2(t_{w_2})) $.
This concludes the desired result $ \hl{\speed}{f}(t_0,\xi_0)=\hll{\speed}{\hl{\speed}{f}}{\calA} $.

\medskip
\noindent\ref{enu:hl:sp}
We prove only \eqref{eq:hl:time}--\eqref{eq:hl:lip}, 
as \eqref{eq:hls:time}--\eqref{eq:hls:lip} follow similarly.
To this end, we note the following useful properties of $ \hlf(\Cdot) $,
which are readily verified from the definition~\eqref{eq:hl}:
\begin{align}
	\label{eq:hl:id}
	&
	u \hlf\big(\tfrac{\beta}{u}\big) = \beta_+,
	\quad
	\forall \alpha\in\R, u\in(0,\infty) \text{ with } |\tfrac{\alpha}{u}| \geq 1,
\\
	\label{eq:hl:ndecr}
	&
	u\mapsto u\hlf(\tfrac{\alpha}{u}) \text{ is nondecreasing in } u\in[0,\infty),
	\
	\forall\alpha\in\R.
\end{align}
Fixing $ t_0\leq t'_0\in[0,T] $ and $ \xi_0\leq\xi'_0\in\R $,
we consider a generic path $ w\in W(t_0,\xi_0) $.
For small $ \d>0 $ we perform a surgery on $ w $ to obtain $ w_\d\in W(t_0,\xi'_0) $:
\begin{align*}
	w_\d(t) := w(t)\ind_{[0,t_0-\d]}(t) + \tfrac{\xi'_0-w(t_0-\d)}{\d}(t-t_0+\d) \ind_{[t_0-\d,t_0]}(t).
\end{align*}
That is, we let $ w_\d $ follow $ w $ for $ t\in[0,t_0-\d] $ and then linearly connect $ w_\d(t_0-\d) $ to $ (t_0,\xi'_0) $.
Recall that speed functions are bounded by $ \maxlam $.
Under this assumption, using \eqref{eq:hl:ndecr} gives
\begin{align*}
	\HLf_{0,t_0}(w_\d;\speed)
	=
	\HLf_{0,t_0-\d}(w;\speed) + \HLf_{t_0-\d,t_0}(w;\speed)
	\leq
	\HLf_{0,t_0-\d}(w;\speed) + \d\maxlam \hlf(\tfrac{\xi'_0-w(t_0-\d)}{\d\maxlam}).
\end{align*}
Letting $ \d\downarrow 0 $ using~\eqref{eq:hl:id} for $ \beta=\xi'_0-w(t_0-\d) $,
we obtain
\begin{align*}
	\limsup_{\d\downarrow 0}
	\HLf_{0,t_0}(w_\d;\speed)
	\leq
	\HLf_{0,t_0}(w;\speed) + (\xi'_0-w(t_0))_+ = \HLf_{0,t_0}(w;\speed) + \xi'_0-\xi_0.
\end{align*}
Adding $ f(w(0))=f(w_\d(0)) $ to both sides gives
\begin{align*}
	\hl{\speed}{f}(t_0,\xi'_0)
	\leq
	\limsup_{\d\downarrow 0} \big\{ \HLf_{0,t_0}(w_\d;\speed) + f(w_\d(0)) \big\}
	\leq
	\HLf_{0,t_0}(w;\speed) + f(w(0)) + \xi'_0-\xi_0.
\end{align*}
Since $ w\in W(t_0,\xi_0) $, further taking infimum over $ w $ gives 
$ \hl{\speed}{f}(t_0,\xi'_0)-\hl{\speed}{f}(t_0,\xi_0) \leq\xi'_0-\xi_0 $.
This proves one half of~\eqref{eq:hl:lip}.
The other half, $ \hl{\speed}{f}(t_0,\xi_0)-\hl{\speed}{f}(t_0,\xi'_0) \geq-(\xi'_0-\xi_0) $,
is proven similarly, by performing the same type of surgery on any given $ w\in W(t_0,\xi'_0) $.
We omit repeating the argument here.

We now turn to showing~\eqref{eq:hl:time}.
First, for any given $ w\in W(t_0,\xi_0) $, continuing the path vertically gives
$ v(t) := w(t)\ind_{t\in[0,t_0]} + \xi_0 \ind_{t\in(t_0,t'_0]}\in W(t'_0,\xi_0) $.
Referring to the definition~\eqref{eq:hl} of $ \HLf_{t_1,t_2}(w;\speed) $,
we have that $ \HLf_{t_0,t'_0}(v;\speed) \leq \maxlam (t'_0-t_0) \hlf(0)= \maxlam (t'_0-t_0)\frac14  $.
This gives $ \hl{\speed}{f}(t'_0,\xi_0)-\hl{\speed}{f}(t_0,\xi_0) \leq \frac14 \maxlam (t'_0-t_0) $.
This settles one half of~\eqref{eq:hl:time}.
To show the other half, we apply Part\ref{enu:hl:loc} with $ s_0=t_0 $
to localize the expression $ \hl{\speed}{f} $ onto $ t\in[t_0,T] $ as
\begin{align}
	\label{eq:hl:lip:1}
	\hl{\speed}{f}(t'_0,\xi_0)
	=
	\hll{\speed}{f_0}{t_0}(t'_0,\xi_0)
	=
	\inf_{w\in W_{t_0}(t'_0,\xi_0)} \big\{ \HLf_{t_0,t_0'}(w;\speed) + \hl{\speed}{f}(t_0,w(t_0)) \big\}.
\end{align}
The convexity of $ \xi\mapsto \hlf(\xi) $ gives
\begin{align}
	\notag
	\HLf_{t_1,t_2}(w;\speed)
	\geq&
	(t_2-t_1)u \hlf\Big( \frac{1}{u} \int_{t_1}^{t_2} w'(t) dt \Big)\Big|_{u=\fint_{t_1}^{t_2} \speed(t,w(t)) dt}
\\
	\label{eq:hl:cnvx}
	&=
	(t_2-t_1)u \hlf\Big( \frac{w(t_2)-w(t_1)}{(t_2-t_1)u} \Big)\Big|_{u=\fint_{t_1}^{t_2} \speed(t,w(t)) dt}.
\end{align}
Also, by \eqref{eq:hl:lip} we have 
$ 
	\hl{\speed}{f}(t_0,w(t_0)) \geq \hl{\speed}{f}(t_0,\xi_0) -(w(t_0)-\xi_0)_- 
$.
Using this and \eqref{eq:hl:cnvx} for $ (t_1,t_2)=(t_0,t'_0) $ in~\eqref{eq:hl:lip:1} gives
\begin{align}
	\label{eq:hl:lip:1.5}
	\hl{\speed}{f}(t'_0,\xi_0)
	\geq
	\inf_{w(t_0)\in\R}
	\inf_{u\in(0,\maxlam]} 
	\Big\{ (t'_0-t_0)u \hlf\Big( \frac{w(t'_0)-w(t_0)}{(t'_0-t_0)u} \Big)  + \hl{\speed}{f}(t_0,\xi_0) -(w(t_0)-\xi_0)_-  \Big\}.
\end{align}
By~\eqref{eq:hl:ndecr}, the infimum over $ u\in(0,\maxlam] $ in \eqref{eq:hl:lip:1.5} occurs at $ u\downarrow 0 $.
Taking such a limit $ u\downarrow 0 $ using~\eqref{eq:hl:id} for $ \beta=w(t'_0)-w(t_0) $ gives 
\begin{align*}
	\lim_{u\downarrow 0} (t'_0-t_0)u 
	\hlf\Big(\frac{w(t'_0)-w(t_0)}{(t'_0-t_0)u}\Big) = (w(t'_0)-w(t_0))_+ = (\xi_0-w(t_0))_+.
\end{align*}
From this we then obtain
\begin{align*}
	\hl{\speed}{f}(t'_0,\xi_0)
	\geq
	\inf_{w(t_0)\in\R} 
	\big\{ (\xi_0-w(t_0))_+  + \hl{\speed}{f}(t_0,\xi_0) -(w(t_0)-\xi_0)_- \big\}
	\geq
	\hl{\speed}{f}(t_0,\xi_0).
\end{align*}
This completes the proof of~\eqref{eq:hl:time}.

\medskip
\noindent\ref{enu:hl:chaV} 
Fixing $ s_0\leq t'_0\leq t_0 \in [0,T] $, a piecewise $ C^1 $ path $ w:[s_0,t_0]\to \R $,
we write $ \calC:=\calC(t_0,w(t_0)) $ to simplify notations.
Our goal is to construction $ v\in W(t_0,w(t_0)) $
that satisfies~\eqref{eq:charV:goal:}--\eqref{eq:charV:goal}.
To this end, without lost of generality assume $ (t,w(t))\notin \calC $, for some $ t\in[s_0,t'_0) $,
otherwise simply take $ v=w $.
For such a path $ w $ let $ s_\star:= \inf\{ t\in[s_0,t'_0]: (t,w(t))\notin \calC \} $
denote the first exists time from $ \calC $.
Let $ (s_0,\xi^\pm_0) $ denote the intersection of $ \partial\calC $ and $ \set{s_0}\times\R $,
i.e., $ \xi^\pm_0:=w(t'_0) \pm \maxlam (t'_0-s_0) $.
We set
\begin{align*}
	v(t) 
	&:= 
	w(t)\ind_{[s_0,s_\star)\cup(t'_0,t_0]}(t) 
	+ 
	\Big( (t-s_\star)\frac{w(t'_0)-\alpha}{t'_0-s_\star} + \alpha \Big) \ind_{[s_\star,t'_0]}(t),
\\
	\alpha
	&:=
	\left\{\begin{array}{l@{,}l}
		w(s_\star)	&	\text{ if } s_\star >s_0,
	\\
		\xi^+_0		&	\text{ if } s_\star =s_0, \text{ and } w(s_0)\in(\xi^+_0,\infty),
	\\
		\xi^-_0		&	\text{ if } s_\star =s_0, \text{ and } w(s_0)\in(-\infty,\xi^-_0).
	\end{array}\right.
\end{align*}
Such a path $ v $ indeed satisfies~\eqref{eq:charV:goal:}--\eqref{eq:charV:goal::}.
To verify the last condition~\eqref{eq:charV:goal},
using $ w|_{[s_0,s_\star)\cup(t'_0,t_0]}=v|_{[s_0,s_\star)\cup(t'_0,t_0]} $, we write
\begin{align}
	\notag
	\big( &\HLf_{s_0,t_0}(w;\speed) + f(w(s_0) \big)
	- \big( \HLf_{s_0,t_0}(v;\speed) + f(v(s_0)) \big)
\\
	\label{eq:charV:red}
	&=
	\left\{\begin{array}{l@{,}l}
		\HLf_{s_\star,t'_0}(w;\speed)-\HLf_{s_\star,t'_0}(v;\speed)	&\text{ if } s_\star >s_0,
	\\
		\big( \HLf_{s_\star,t'_0}(w;\speed) + f(w(s_0)) \big)
		- \big( \HLf_{s_\star,t'_0}(v;\speed) + f(\alpha) \big)
				&\text{ if } s_\star =s_0.
	\end{array}\right.
\end{align}
Next, apply \eqref{eq:hl:cnvx} with $ (t_1,t_2)=(s_\star,t'_0) $ to get
\begin{align}
	\label{eq:charV::}
	\HLf_{s_\star,t'_0}(w;\speed)
	\geq
	(t'_0-s_\star)u_\star \hlf\Big( \frac{w(t_0)-w(s_\star)}{(t'_0-s_\star)u_\star} \Big)
	\quad
	\text{where }u_\star:=\fint_{s_\star}^{t'_0} \speed(t,w(t)) dt.
\end{align}
Since $ \speed\leq \maxlam $, we have $ u_\star \leq \maxlam $,
so
$
	\frac{w(t_0)-w(s_\star)}{(t'_0-s_\star)u_\star}
	\geq
	|\frac{w(t'_0)-w(s_\star)}{(t_0-s_\star)\maxlam}| =1.
$
With this property,
using~\eqref{eq:hl:id} for $ \beta=\frac{w(t_0)-w(s_\star)}{t_0-s_\star} $ and 
$ u=u_\star $ in~\eqref{eq:charV::} gives
\begin{align}
	\label{eq:charVw}
	\HLf_{s_\star,t'_0}(w;\speed) \geq (w(t'_0)-w(s_\star))_+.
\end{align}
As for $ \HLf_{s_\star,t'_0}(v;\speed) $,
since the path $ v $ has a \emph{constant} derivative $ v'(t) = \pm \maxlam $ for $ t\in(s_\star,t'_0) $,
using~\eqref{eq:hl:id} for $ \beta=v'(t) $ and $ u=\speed(t,v(t)) $, 
and integrating the result over $ t\in(s_\star,t'_0) $, we obtain 
\begin{align}
	\label{eq:charV}
	\HLf_{s_\star,t'_0}(v;\speed) 
	=
	\int_{s_\star}^{t'_0} (v'(t))_+ dt
	=
	(v(t'_0)-v(s_\star))_+ = (w(t'_0)-v(s_\star))_+.
\end{align}

Given~\eqref{eq:charV:red},
Our goal of showing~\eqref{eq:charV:goal} amounts to showing the r.h.s.\ of~\eqref{eq:charV:red} is nonnegative.
For the case $ s_\star>s_0 $, 
we have $ v(s_\star)=w(s_\star) $, so combining \eqref{eq:charVw}--\eqref{eq:charV}
gives $ \HLf_{s_\star,t'_0}(w;\speed) -\HLf_{s_\star,t'_0}(v;\speed) \geq 0 $.
Inserting this into~\eqref{eq:charV:red} gives the desired result.
For the case $ s_\star=s_0 $, 
we consider further the sub-cases $ w(s_0)\in(-\infty,\xi^-_0) $ and $ w(s_0)\in(\xi^+_0,\infty) $, as follows:
\begin{itemize}[leftmargin=5ex]
	\item
	if $ w(s_0)\in (\xi^+_0,\infty) $, 
	we have $ f(w(s_0)) \geq f(\xi^+_0)=f(\alpha) $.
	Using this and \eqref{eq:charVw}--\eqref{eq:charV} gives
	\begin{align*}
		\big( \HLf_{s_\star,t'_0}(w;\speed) + f(w(s_0)) \big)
		- \big( \HLf_{s_\star,t'_0}(v;\speed) + f(\alpha) \big)
		\geq
		(w(t'_0)-w(s_0))_+ - (w(t'_0)-\xi^+_0)_+ = 0.
	\end{align*}	
	\item
	if $ w(s_0)\in (-\infty,\xi^-_0) $, 
	$ f(w(s_0)) \geq f(\xi^-_0) - (\xi^-_0-w(s_0)) = f(\alpha) - (\xi^-_0-w(s_0)) $.
	Using this and \eqref{eq:charVw}--\eqref{eq:charV}
	gives
	\begin{align*}
		\big( \HLf_{s_\star,t'_0}(w;\speed) &+ f(w(s_0)) \big)
		- \big( \HLf_{s_\star,t'_0}(v;\speed) + f(\alpha) \big)
	\\
		&\geq 
		(w(t'_0)-w(s_0))_+ - (w(t'_0)-\xi^-_0)_+- (\xi^-_0-w(s_0)) = 0.
	\end{align*}
\end{itemize}
The preceding discussions verify that \eqref{eq:charV:red} is nonnegative.

\medskip
\noindent\ref{enu:hl:match}
Fix $ (t_0,\xi_0)\in[s_0,T]\times\R $.
Applying Part\ref{enu:hl:chaV} with $ (s_0,t'_0,t_0)=(s_0,t_0,t_0) $,
have that
\begin{align}
	\label{eq:hl:match:}
	\hll{\speed}{f}{s_0}(t_0,\xi_0)
	&:=
	\big\{ \HLf_{s_0,t_0}(w;\speed)+f(w(s_0)):
		w\in W_{s_0}(t_0,\xi_0), \, (t,w(t))|_{t\in[s_0,t_0]}\in\calC(t_0,\xi_0)
	\big\}
\\
	\notag
	&=
	\big\{ \HLf_{s_0,t_0}(w;\speed)+f(w(s_0)):
		w\in W_{s_0}(t_0,\xi_0), \, (t,w(t))|_{t\in(s_0,t_0)}\in\calC'(s_0,t_0,\xi_0)
	\big\}.
\end{align}
The last expression depends on $ \speed $ and $ f $
only through $ \speed|_{\calC'(s_0,t_0,\xi_0)} $ and $ f(\xi)|_{\xi: (s_0,\xi)\in\calC(t_0,\xi_0)} $.
From this we conclude the desired result.

\medskip
\noindent\ref{enu:hl:Linf} 
Similarly to~\eqref{eq:hl:match:},
applying Part\ref{enu:hl:chaV} with $ (s_0,t'_0,t_0)=(0,t_0,t_0) $ gives
\begin{align}
	\label{eq:hl:match::}
	\hl{\speed}{f}(t_0,\xi_0)
	:=
	\big\{ \HLf_{0,t_0}(w;\speed)+f(w(s_0)):
		w\in W(t_0,\xi_0), \, (t,w(t))|_{t\in[0,t_0]}\in\calC(t_0,\xi_0)
	\big\}.
\end{align}
The desired results follow immediately
by comparing the expressions~\eqref{eq:hl:match:}--\eqref{eq:hl:match::} for $ f=f_1 $ and for $ f=f_2 $.
\end{proof}

In view of the overview given in Section~\ref{sect:speedd:ov},
to prepare for the construction of $ \Speedd_{\scll,\sclll} $,
here we solve explicitly the variational formula~\eqref{eq:hl:frml} of Hopf and Lax,
for a few piecewise constant speed functions $ \speed $ and piecewise linear initial conditions $ f $.
To setup notations,
fix $ \kappa,\kappa^-,\kappa^+\in(0,\infty) $, $ \rho,\rho^-,\rho^+\in(0,1) $,
and set $ \lambda:=\frac{\kappa}{\rho(1-\rho)} $, $ \lambda^\pm := \frac{\kappa^\pm}{\rho^\pm(1-\rho^\pm)} $.
We assume $ \lambda,\lambda^\pm \in (0,\maxlam] $.
Fix further $ \zeta_0\in\R $ and $ s_0\in[0,T] $,
we divide the region $ [s_0,T)\times\R $ into two parts:
through a vertical cut into 
\begin{align}
	\label{eq:vrtlcut}
	\calA^- := [s_0,T)\times(-\infty,\zeta_0),
	\quad 
	\calA^+:= [s_0,T)\times(\zeta_0,\infty); 
\end{align}
or through a diagonal cut into
\begin{align}
	\label{eq:diagcut}
	\calB^- := \{ (t,\xi) : \xi< \zeta_0+\tfrac{b}{\tau}(t-s_0), \ t\in[s_0,T) \},
	\quad 
	\calB^+ := \{ (t,\xi) : \xi> \zeta_0+\tfrac{b}{\tau}(t-s_0) \ t\in[s_0,T) \}.
\end{align}
Under these notations,
consider a pair $ (\speed,f) $ of speed function and $ \Splip $-valued profile, 
of the following form:
\begin{enumerate}[label=(\alph*),leftmargin=5ex]
\item \label{enu:explicit:homo} 
	constant $ \speed := \lambda $, and linear $ f\in\Splip $ with $ f':=\rho $;
\item \label{enu:explicit:ver} 
	piecewise constant $ \speed $
	with $ \speed|_{{\calA^\pm}} := \lambda^\pm $
	and unique extension onto $ [s_0,T)\times\R $ by lower semi-continuity,
	and a piecewise linear $ f\in\Splip $ with $ f'|_{(-\infty,\zeta_0)}=\rho^- $ and $ f'|_{(\zeta_0,\infty)}=\rho^+ $;
\item \label{enu:explicit:dia} 
	piecewise constant $ \speed $
	with $ \speed|_{{\calB^\pm}} := \lambda^\pm $
	and unique extension onto $ [s_0,T)\times\R $ by lower semi-continuity,
	and a piecewise linear $ f\in\Splip $ with $ f'|_{(-\infty,\zeta_0)}=\rho^- $ and $ f'|_{(\zeta_0,\infty)}=\rho^+ $;
\item \label{enu:explicit:shk} 
	constant speed function $ \speed :=1 $,
	and a piecewise linear $ f\in\Splip $ with $ f'|_{(-\infty,\zeta_0)}=\rho^- $ and $ f'|_{(\zeta_0,\infty)}=\rho^+ $;
\end{enumerate}
See Figure~\ref{fig:4types} for an illustration.
\begin{figure}[h]
\psfrag{A}[c][c]{\ref{enu:explicit:homo}}
\psfrag{B}[c][c]{\ref{enu:explicit:ver}}
\psfrag{C}[c][c]{\ref{enu:explicit:dia}}
\psfrag{D}[c][c]{\ref{enu:explicit:shk}}
\psfrag{U}[c][c]{$ s_0 $}
\psfrag{Z}[c][c]{$ \zeta_0 $}
\psfrag{L}[c][c]{$ \speed=\lambda $}
\psfrag{M}[c][c]{$ \speed=\lambda^- $}
\psfrag{N}[c][c]{$ \speed=\lambda^+ $}
\psfrag{R}[c][c]{$ f'=\rho $}
\psfrag{S}[c][c]{$ f'=\rho^- $}
\psfrag{T}[c][c]{$ f'=\rho^+ $}
\includegraphics[width=\textwidth]{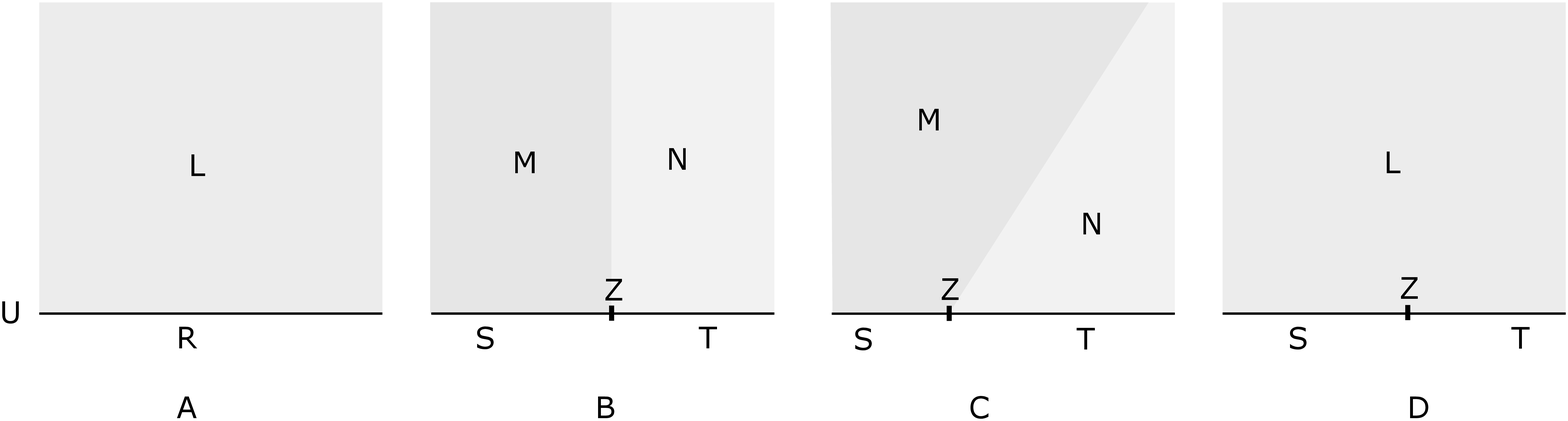}
\caption{Four types of $ (\speed,f) $}
\label{fig:4types}
\end{figure}

For each of the case~\ref{enu:explicit:ver}--\ref{enu:explicit:shk} in the preceding,
we assume the following condition:
\begin{align}
	\label{eq:RH:ver}
	&\text{\ref{enu:explicit:ver}}\quad \kappa^-=\kappa^+;
\\
	\label{eq:RH:diag}
	&\text{\ref{enu:explicit:dia}}\quad 
          \kappa^- + \tfrac{b}{\tau}\rho^- = \kappa^+ + \tfrac{b}{\tau}\rho^+,
\\
	\label{eq:RH:shk}
	&\text{\ref{enu:explicit:shk}}\quad \kappa^- + \kappa^-=\kappa^+.
\end{align}
Under these assumptions,
for each of the case~\ref{enu:explicit:homo}--\ref{enu:explicit:shk} in the preceding, 
we consider a piecewise linear function $ \Gamma $, specified by its derivatives
and value at $ (s_0,0) $, as follows:
\begin{align}
	\notag
	\Gamma \in & C([s_0,T]\times\R),
	\quad
	\Gamma(s_0,\R) := f(0),
\\
	\notag
	&\text{\ref{enu:explicit:homo}}\quad 
	\nabla\Gamma(t,\xi) := (\kappa,\rho), \ \forall (t,\xi) \in [s_0,T]\times\R,
\\
	\label{eq:explicit:Gamma}
	&\text{\ref{enu:explicit:ver}}\quad 
	\nabla\Gamma(t,\xi) := (\kappa^\pm,\rho^\pm), \ \forall (t,\xi) \in \calA^\pm;
\\
	\notag
	&\text{\ref{enu:explicit:dia}}\quad 
	\nabla\Gamma(t,\xi) := (\kappa^\pm,\rho^\pm), \ \forall (t,\xi) \in \calB^\pm;
\\
	\notag
	&\text{\ref{enu:explicit:shk}}\quad 
	\nabla\Gamma(t,\xi) := (\kappa^\pm,\rho^\pm), \ \forall (t,\xi) \in \calA^\pm.
\end{align}
Indeed, given $ f(0) $, \eqref{eq:explicit:Gamma} admits \emph{at most} one such function $ \Gamma $.
The conditions~\eqref{eq:RH:ver}--\eqref{eq:RH:shk} ensures the existence of such $ \Gamma $.
The following Lemma shows that, under suitable conditions,
the Hopf--Lax function $ \hl{\speed}{f} $ is given by the piecewise linear $ \Gamma $
for each of the case~\ref{enu:explicit:homo}--\ref{enu:explicit:shk}.
\begin{lemma}
\label{lem:explicit}
Let $ \kappa,\kappa^\pm,\rho,\rho^\pm, \lambda,\lambda^\pm $
be as in the preceding, and assume $ \lambda,\lambda^\pm \in (0,\maxlam] $.
Consider $ (\speed,f) $ of the form~\ref{enu:explicit:homo}--\ref{enu:explicit:shk} as in the preceding.
For each of the cases, we assume \eqref{eq:RH:ver}--\eqref{eq:RH:shk} and, additionally:
\begin{align}
	\label{eq:char:ver}
	&\text{\ref{enu:explicit:ver}}\quad 2\rho^--1 \geq 0, \text{ or } 2\rho^+-1 \leq 0;
\\
	\label{eq:char:diag}
	&\text{\ref{enu:explicit:dia}}\quad 
	\lambda^-(2\rho^--1) \geq \tfrac{b}{\tau}, \text{ or } \lambda^+(2\rho^+-1) \leq \tfrac{b}{\tau};
\\
	\label{eq:char:shk}
	&\text{\ref{enu:explicit:shk}}\quad \rho^-+\rho^+=1, \quad \rho^- \geq \rho^+.
\end{align}
Then, the Hopf--Lax function $ \hl{\speed}{f} $ matches the piecewise linear $ \Gamma $ as in~\eqref{eq:explicit:Gamma}:
\begin{align*}
	\hl{\speed}{f} = \Gamma.
\end{align*}
\end{lemma}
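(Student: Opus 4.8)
I would prove the two inequalities $\hl{\speed}{f}\le\Gamma$ and $\hl{\speed}{f}\ge\Gamma$ separately. After the harmless reduction to $s_0=0$ (by time-translation invariance of $\HLf_{\Cdot,\Cdot}$, so that $\hl{\speed}{f}$ is literally~\eqref{eq:hl:frml} and $\Gamma(0,\Cdot)=f$), the whole argument rests on one elementary fact about $\hlf$, a ``Young inequality'': for any $\lambda\in(0,\infty)$, $\rho\in(0,1)$, setting $\kappa:=\lambda\rho(1-\rho)$ and the \emph{characteristic velocity} $v_\star:=\lambda(2\rho-1)$,
\begin{align}
\label{eq:expl:young}
	\lambda\,\hlf(v/\lambda)\ \ge\ \kappa+\rho v,\qquad\forall v\in\R,
\end{align}
with equality exactly at $v=v_\star$; indeed $\tfrac{d}{dv}\big(\lambda\hlf(v/\lambda)-\rho v\big)=\hlf'(v/\lambda)-\rho$ vanishes at $v=v_\star\in(-\lambda,\lambda)$, is negative before and positive after, and the value there equals $\kappa$. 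Two further trivialities are used: $\kappa=\lambda\rho(1-\rho)\le\lambda/4$, and $u\mapsto u\hlf(\beta/u)$ is nondecreasing (see~\eqref{eq:hl:ndecr}).

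\textbf{Lower bound $\hl{\speed}{f}\ge\Gamma$ (all four cases at once).} The function $\Gamma$ is continuous and globally Lipschitz — the gradients $(\linkap,\linrho)$ are bounded, and the Rankine--Hugoniot identities~\eqref{eq:RH:ver}--\eqref{eq:RH:shk} are precisely what makes $\Gamma$ well defined across the interface. Hence for any $w\in W(t_0,\xi_0)$ the map $s\mapsto\Gamma(s,w(s))$ is absolutely continuous, and it suffices to show that for a.e.\ $s$ one has $\tfrac{d}{ds}\Gamma(s,w(s))\le\speed(s,w(s))\hlf\big(w'(s)/\speed(s,w(s))\big)$: integrating over $[0,t_0]$ then gives $\Gamma(t_0,\xi_0)-f(w(0))=\Gamma(t_0,\xi_0)-\Gamma(0,w(0))\le\HLf_{0,t_0}(w;\speed)$, and taking $\inf$ over $w$ finishes. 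When $(s,w(s))$ lies in the interior of a region of constancy of $(\speed,\nabla\Gamma)$ this is exactly~\eqref{eq:expl:young} with the local data $(\lambda,\kappa,\rho)$. When $w$ runs \emph{along} an interface on a set of positive measure — forcing $w'(s)=0$ in cases~\ref{enu:explicit:ver},\ref{enu:explicit:shk} and $w'(s)=\tfrac b\tau$ in case~\ref{enu:explicit:dia} — one uses $\speed=\min(\lambda^-,\lambda^+)$ there (the l.s.c.\ value), together with $\kappa\le\tfrac14\min(\lambda^-,\lambda^+)$ for the vertical interface, and for the diagonal one the two instances of~\eqref{eq:expl:young} at $v=\tfrac b\tau$ combined with monotonicity of $u\mapsto u\hlf(\tfrac{b/\tau}{u})$; in both situations $\tfrac{d}{ds}\Gamma(s,w(s))$ is the common one-sided directional derivative, well defined by~\eqref{eq:RH:ver}--\eqref{eq:RH:shk}. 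Note this direction of the lemma does \emph{not} use~\eqref{eq:char:ver}--\eqref{eq:char:shk}.

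\textbf{Upper bound $\hl{\speed}{f}\le\Gamma$ (case by case).} Here I would exhibit, for each target $(t_0,\xi_0)$, a finite broken line $w\in W(t_0,\xi_0)$ along which the previous inequality is an \emph{equality} a.e., so that additivity of $\HLf_{\Cdot,\Cdot}$ and $\Gamma(0,\Cdot)=f$ give $\HLf_{0,t_0}(w;\speed)+f(w(0))=\Gamma(t_0,\xi_0)$, hence $\hl{\speed}{f}(t_0,\xi_0)\le\Gamma(t_0,\xi_0)$. The segments of $w$ are \emph{characteristic} segments, each with constant velocity $v_\star=\linlam(2\linrho-1)$ of the region traversed (equality in~\eqref{eq:expl:young}, and automatically $|v_\star|\le\linlam\le\maxlam$, so within the light cone), and possibly one \emph{interface} segment (velocity $0$ in cases~\ref{enu:explicit:ver},\ref{enu:explicit:shk}, velocity $\tfrac b\tau$ in case~\ref{enu:explicit:dia}), which also attains equality by~\eqref{eq:RH:ver}--\eqref{eq:RH:shk} and the equality case of~\eqref{eq:expl:young}. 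In case~\ref{enu:explicit:homo} simply take the straight characteristic backward from $(t_0,\xi_0)$; in cases~\ref{enu:explicit:ver}--\ref{enu:explicit:dia} start the characteristic in the region containing $(t_0,\xi_0)$ and trace it backward, and if it meets the interface at some $s_1>0$ then~\eqref{eq:char:ver}/\eqref{eq:char:diag} guarantee that the characteristic of the \emph{other} region emanating backward from the interface point at time $s_1$ moves away from (or parallel to) the interface and hence reaches $t=0$ without re-crossing — these hypotheses are exactly the statement that the two characteristic families are not both diverging away from the interface (Figure~\ref{fig:div} is excluded), so a refracting or semi-refracting concatenation exists. Case~\ref{enu:explicit:shk} is the classical homogeneous Hopf--Lax computation for a piecewise-linear $f$ with a stable kink at $\zeta_0$ ensured by~\eqref{eq:char:shk}: $\rho^-+\rho^+=1$ and $\rho^-\ge\rho^+$ preclude a rarefaction, and the optimal path is the characteristic of whichever region the backward characteristic from $(t_0,\xi_0)$ remains in. The degenerate configurations ($\xi_0$ exactly on the interface, or the characteristic tangent to it) are dealt with by passing to the limit, using continuity of $(t,\xi)\mapsto\hl{\speed}{f}(t,\xi)$ from Lemma~\ref{lem:hl}\ref{enu:hl:sp} (and, if convenient, Lemma~\ref{lem:hl}\ref{enu:hl:chaV} to restrict attention to paths in a light cone throughout).

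\textbf{Main obstacle.} The lower bound is essentially automatic once the Young inequality and the interface bookkeeping are in place. The real work is the upper-bound path construction: showing that~\eqref{eq:char:ver}--\eqref{eq:char:shk} precisely rule out the ``diverging characteristics'' scenario and therefore that a value-preserving backward broken line always exists, plus the careful case analysis at the interface (which of the two alternatives in each hypothesis applies, refraction versus semi-refraction, and the l.s.c.\ value of $\speed$ on the interface). Everything else reduces to Lemma~\ref{lem:hl} and the convexity estimate~\eqref{eq:hl:cnvx}.
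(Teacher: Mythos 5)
Your proposal is correct, and it reaches the lemma by a route organized quite differently from the paper's. For the lower bound $\hl{\speed}{f}\ge\Gamma$ you integrate a pointwise Legendre-type inequality $\speed\,\hlf(w'/\speed)\ge \tfrac{d}{ds}\Gamma(s,w(s))$ along an arbitrary path (a calibration/verification argument), which treats all four cases and all base points uniformly; the paper instead works with time-integrated quantities, applying the Jensen bound \eqref{eq:hl:cnvx} to the portions of the path before and after its entrance time into the slower region, and it does this only at points \emph{on} the interface. For points off the interface the paper constructs no paths at all: it invokes the localization and matching statements, Lemma~\ref{lem:hl}\ref{enu:hl:loc} and \ref{enu:hl:match}, to reduce to the homogeneous computation \eqref{eq:var:homo}, using the already-established interface values as boundary data. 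Your upper bound likewise builds the refracting broken characteristic explicitly for every $(t_0,\xi_0)$, with \eqref{eq:char:ver}--\eqref{eq:char:shk} guaranteeing that the backward characteristic can be continued across (or along) the interface without re-crossing, whereas the paper exhibits only the two candidate characteristics $\til w^-,\til w^+_\delta$ at interface points and again delegates the rest to localization/matching. What your approach buys is economy and transparency: a single elementary inequality (your Young inequality, which is the convex duality underlying \eqref{eq:hl:cnvx}) drives the entire lower bound, and it makes visible that \eqref{eq:char:ver}--\eqref{eq:char:shk} enter only through the upper bound. What the paper's route buys is that it never has to differentiate $\Gamma\circ w$ on the set of times where the path runs along the interface, nor to evaluate $\speed$ there; those are exactly the points where your sketch is thinnest and where the care you allude to is genuinely needed (a.e.\ on $\{s:w(s)\in\text{interface}\}$ one has $w'(s)$ equal to the interface velocity, the one-sided directional derivatives of $\Gamma$ agree by \eqref{eq:RH:ver}--\eqref{eq:RH:shk}, and the equality case of the optimal path lying on the interface, where $\speed=\lambda^-\wedge\lambda^+$, must be recovered by the same $\delta$-perturbation the paper uses). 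None of this is a gap, only detail to be written out.
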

\begin{remark}
In the language of Figure~\ref{fig:char:},
the conditions~\eqref{eq:char:ver}--\eqref{eq:char:diag}
amount to saying that characteristics
must not diverge along the discontinuity of $ \speed $.
As for~\eqref{eq:char:shk}, under the assumption~\eqref{eq:RH:shk} 
the first condition $ \rho^-+\rho^+=1 $ together with \eqref{eq:RH:shk} ensures $ \lambda^\pm = 1 $,
which is consistent with the form of $ \speed $ as in \ref{enu:explicit:shk}.
This being the case, we must have $ \rho^- \geq \rho^+ $ to avoid diverging characteristics.
\end{remark}
\begin{proof}
Assume without lost of generality $ s_0,\zeta_0=0 $ and $ f(0)=0 $.
Fixing arbitrary $ (t_0,\xi_0)\in[0,T]\times\R $,
we proceed by solving the variational problem:
\begin{align*}
	\inf_{ w\in W(t_0,\xi_0) }
	\big\{ \HLf_{0,t_0}(w;\speed) +f(w(0)) \big\}
\end{align*}
for each of the cases \ref{enu:explicit:homo}--\ref{enu:explicit:shk}.

\ref{enu:explicit:homo}
Applying~\eqref{eq:hl:cnvx} with $ (t_1,t_2)=(0,t_0) $ gives
$ \HLf_{0,t_0}(w;\lambda) \geq t_0\lambda \hlf(\frac{\xi-w(0)}{t_0\lambda}) $.
Add $ \rho w(0) $ to both sides of the inequality,
and further optimize over $ w(0) $. 
We obtain
\begin{align*}
	\hl{\lambda}{f}(t_0,\xi_0)
	&\geq 
	\big(  
		t_0\lambda \hlf(\tfrac{\xi-w(0)}{t_0\lambda}) +\rho w(0)
	\big)
	\big|_{w(0)=\xi-(2\rho-1)\lambda t_0}
	=
	\kappa t_0+\rho\xi_0.
\end{align*}
Conversely, the linear path $ \til w(t):= \xi_0-(2\rho-1)\lambda (t_0-t) $, $ \til w\in W(t_0,\xi_0) $,
does yield the desired value, i.e., $ \HLf_{0,t_0}(w_0;\lambda) + \rho \til w(0) =  \kappa t_0+\rho\xi_0 $.
This concludes the desired result:
\begin{align}
	\label{eq:var:homo}
	\hl{\lambda}{\rho\xi}(t_0,\xi_0)
	=
	\inf_{w\in W(t_0,\xi_0)} 
	\big\{ \HLf_{0,t_0}(w;\lambda) + \rho w(0) \big\}
	=
	\kappa t_0 + \rho \xi_0,
	\quad
	\text{for } \lambda := \frac{\kappa}{\rho(1-\rho)}.
\end{align}

\ref{enu:explicit:ver}
Assume $ \lambda^-\leq\lambda^+ $ for simplicity of notations.
The proof of the other scenario $ \lambda^->\lambda^+ $ is similar.
We consider first the case when $ (t_0,\xi_0) $ sits on where $ \speed $ is discontinuous, i.e., $ \xi_0=\zeta_0:=0 $,
and prove $ \hl{\speed}{f}(t_0,0) = \kappa^- t_0 $.
To this end,
given any $ w\in W(t_0,0) $, with $ \speed(t_0,w(t_0))  = \lambda^- $,
let $ s_0 := \inf\{ [0,t_0] : \speed(s,w(s))=\lambda^- \} $ 
be the entrance time of $ w $ into the region $ [0,T]\times(-\infty,0] $.

\begin{itemize}[leftmargin=5ex]
\item 
If $ s_0 =0 $, we have $ w(s_0)\leq 0 $ and $ \speed(s,w(s))|_{s\in[0,t_0]}=\lambda^- $.
The last two conditions give 
$
	\HLf_{0,t_0}(w;\speed) + f(w(0)) 
	=
	\HLf_{0,t_0}(w;\lambda^-) + \rho^-w(0).
$
Combining this with \eqref{eq:var:homo} for $ \xi_0=0 $ and $ (\kappa,\rho)=(\kappa^-,\rho^-) $ gives
\begin{align*}
	\HLf_{0,t_0}(w;\speed) + f(w(0)) 
	=
	\HLf_{0,t_0}(w;\lambda^-) + \rho^-w(0)	
	\geq 
	\kappa^- t_0.
\end{align*}
\item
If $ s_0>0 $,
decompose $ \HLf_{0,t_0}(w;\speed)+f(w(0)) $ 
as $ \HLf_{s_0,t_0}(w;\speed)+(\HLf_{0,s_0}(w;\speed)+\rho^+w(0)) $.
For the first term apply~\eqref{eq:hl:cnvx} with $ (t_1,t_2)=(s_0,t_0) $ to get
\begin{align}
	\notag
	\HLf_{s_0,t_0}(w;\speed) 
	&\geq (t_0-s_0)\lambda^- \hlf(0) 
	= (t_0-s_0)\tfrac{1}{4}\lambda^-	
\\
	\label{eq:explicit:stat:1:}
	&\geq 
	(t_0-s_0)\rho^-(1-\rho^-)\lambda^- 
	=
	(t_0-s_0)\kappa^-. 	
\end{align}
For the second term, with $ \speed(0,w(s))|_{s\in[0,s_0)} =\lambda^+ $,
applying~\eqref{eq:hl:cnvx} with $ (t_1,t_2)=(0,s_0) $ gives
\begin{align}
	\label{eq:explicit:stat:1}
	\HLf_{0,s_0}(w;\speed) + \rho^+w(0) 
	\geq 
	s_0 \lambda^+\hlf(\tfrac{-w(0)}{s_0\lambda^+}) + \rho^+ w(0).
\end{align}
Let $ w_\star(t):= w(0)-t\frac{w(0)}{t_0} $ denote the linear path that joins
$ (0,w(0)) $ and $ (t_0,0) $.
Applying~\eqref{eq:var:homo} with $ (t_0,\xi_0;\kappa,\rho)=(s_0,0;\kappa^+,\rho^+) $
gives
\begin{align*}
	s_0 \lambda^+\hlf(\tfrac{-w(0)}{s_0\lambda^+}) + \rho^+ w(0) 
	=
	\HLf_{0,s_0}(w_\star;\lambda^+) + \rho^+ w_\star(0)
	\geq 
	\kappa^+t_0.
\end{align*}
Inserting this into~\eqref{eq:explicit:stat:1}, 
and combining the result with~\eqref{eq:explicit:stat:1:},
we obtain
\begin{align*}
	\HLf_{0,t_0}(w;\speed) + f(w(0)) \geq \kappa^-(t_0-s_0) + \kappa^+ s_0  =\kappa^- t_0.
\end{align*}
\end{itemize}
The preceding argument gives $ \hl{\speed}{f}(t_0,0) \geq \kappa^-t_0 $.
Conversely, for the linear paths 
\begin{align*}
	\til w^-(t) &:= \lambda^-(2\rho^--1)(t-t_0),
\\
	\til w_\d^+(t) &:= \lambda^+\big((2\rho^+_\d-1)(t-t_0),
	\quad
	\rho^+_\d := \rho^+ \wedge (1-\d),
\end{align*}
it is straightforward to verify that
\begin{align*}
	& \HLf_{0,t_0}(\til w^-;\speed) + f(\til w^-(0)) 
	= (\rho^-)^2 t_0\lambda^- - \rho^-\Big| \lambda^-(2\rho^--1)(-t_0) \Big|
	= \kappa^-t_0,&
	&
	\text{if } 2\rho^--1  \geq 0,
\\
	&\HLf_{0,t_0}(\til w_\d^+;\speed) + f(\til w^\d_2(0))
	= (\rho^+_\d)^2 t_0\lambda^+ + \rho^+\Big|\lambda^+(2\rho^+_\d-1)(-t_0)\Big|
	\xrightarrow[]{\d\downarrow 0} \kappa^+t_0,&
	&
	\text{if } 2\rho^+-1  \leq 0.
\end{align*}
That is, under the assumption~\eqref{eq:char:ver},
one of the linear path $ w_1 $ or $ w^\d_2 $ (under a limiting procedure $ \d\downarrow 0 $) 
does yield the value $ \kappa^-t_0=\kappa^+t_0 $.

So far we have shown $ \hl{\speed}{f}(t_0,0)=\kappa^-t_0 $, $ \forall t_0\in[0,T] $.
Next, fix $ \xi_0<0 $.
Applying Lemma~\ref{lem:hl}\ref{enu:hl:loc} with $ \calA=[0,T]\times(-\infty,0) $ and $ h=\hl{\speed}{f} $, 
we localize the expression $ \hl{\speed}{f}(t_0,\xi_0) $ onto $ [0,T]\times(-\infty,0) $ as
\begin{align}
	\label{eq:explicit:stat:2}
	\hl{\speed}{f}(t_0,\xi_0)
	&=
	\inf_{ w\in W_\calA(t_0,\xi_0) } \big\{ \HLf_{t_w,t}(w;\speed) + \hl{\speed}{f}(t_w,w(t_w)) \big\}.
\end{align}
Similarly,
applying Lemma~\ref{lem:hl}\ref{enu:hl:loc} with $ \calA=[0,T]\times(-\infty,0) $ and $ h=\hl{\lambda^-}{\rho^-\xi} $ gives
\begin{align}
	\label{eq:explicit:stat:3}
	\hl{\lambda^-}{\rho^-\xi}(t_0,\xi_0)
	&=
	\inf_{ w\in W_\calA(t_0,\xi_0) } \big\{ \HLf_{t_w,t}(w;\lambda^-) + \hl{\lambda^-}{\rho^-\xi}(t_w,w(t_w)) \big\}.
\end{align}
In~\eqref{eq:explicit:stat:3},
further using~\eqref{eq:var:homo} for $ (\kappa,\lambda)=(\kappa^-,\lambda^-) $ 
to replace $ \hl{\lambda^-}{\rho^-\xi}(t,\xi) $ with $ \kappa^-t+\rho^-\xi $,
we rewrite~\eqref{eq:explicit:stat:3} as
\begin{align}
	\label{eq:explicit:stat:3:}
	\kappa^-t_0+\rho^-\xi_0
	=
	\inf_{ w\in W_\calA(t_0,\xi_0) } 
	\big\{ \HLf_{t_w,t}(w;\lambda^-) + \kappa^-t_w+\rho^-w(t_w) \big\}.
\end{align}
The r.h.s.\ of \eqref{eq:explicit:stat:2}
depends on $ \speed $ and $ \hl{\speed}{f} $ only through
$ \speed|_{[0,T]\times(-\infty,0)} $,
$ \hl{\speed}{f}(\Cdot,0) $ and $ \hl{\speed}{f}(0,\xi)|_{\xi\leq 0} $.
Since $ \speed|_{[0,T]\times(-\infty,0)}=\lambda^- $,
$ \hl{\speed}{f}(t,0)=\kappa^-t $, $ \hl{\speed}{f}(0,\xi)|_{\xi\leq 0} = \rho^-\xi $,
we conclude that the r.h.s.\ of 
\eqref{eq:explicit:stat:2} and \eqref{eq:explicit:stat:3:} must be the same.
This gives $ \hl{\speed}{f}(t_0,\xi_0)= \kappa^-t_0+\rho^-\xi_0 = \kappa^-t_0+f(\xi_0) $ for $ \xi_0<0 $.
The case $ \xi_0>0 $ follows by the same localization and matching procedures.

\ref{enu:explicit:dia}
Assume $ \lambda^-\leq\lambda^+ $ for simplicity of notations.
The proof of the other scenario $ \lambda^->\lambda^+ $ is similar.
We consider first the case when $ (t_0,\xi_0) $ sits on where $ \speed $ is discontinuous, i.e., $ \xi_0=\frac{b}{\tau}t_0 $,
and prove $ \hl{\speed}{f}(t_0,\xi_0) = (\kappa^-+\frac{b}{\tau}\rho^-) t_0 $.
To this end, for any given $ w\in W(t_0,\xi_0) $, with $ \speed(t_0,w(t_0))  = \lambda^- $,
we let $ s_0 := \inf\{ [0,t_0] : \speed(s,w(s))=\lambda^- \} $ 
be the entrance time of $ w $ into the region $ \{(t,\xi): \xi \leq t\frac{b}{\tau}\} $.
\begin{itemize}[leftmargin=5ex]
\item 
If $ s_0 =0 $, namely $ w(s_0)\leq 0 $ and $ \speed(s,w(s))|_{s\in[0,t_0]}=\lambda^- $,
we have 
$
	\HLf_{0,t_0}(w;\speed) + f(w(0)) 
	=
	\HLf_{0,t_0}(w;\lambda^-) + \rho^-w(0).
$
Combining this with \eqref{eq:var:homo} for $ (\kappa,\rho)=(\kappa^-,\rho^-) $ gives
\begin{align*}
	\HLf_{0,t_0}(w;\speed) + f(w(0)) 
	=
	\HLf_{0,t_0}(w;\lambda^-) + \rho^-w(0)
	\geq 
	\kappa^- t_0 + \rho^-\xi_0
	=
	(\kappa^-+\tfrac{b}{\tau}\rho^-)t_0.
\end{align*}
\item
If $ s_0>0 $,
decompose $ \HLf_{0,t_0}(w;\speed) + f(w(0)) $ 
as $ \HLf_{s_0,t_0}(w;\speed)+(\HLf_{0,s_0}(w;\speed)+\rho^+w(0)) $.
For the first term applying~\eqref{eq:hl:cnvx} with $ (t_1,t_2)=(s_0,t_0) $ 
gives
\begin{align}
	\label{eq:explicit:move:1}
	\HLf_{s_0,t_0}(w;\speed) 
	\geq (t_0-s_0)\lambda^- \hlf(\tfrac{b}{\tau\lambda^-}).
\end{align}
For the second term, with $ \speed(0,w(s))|_{s\in[0,s_0)} =\lambda^+ $,
applying~\eqref{eq:hl:cnvx} with $ (t_1,t_2)=(0,s_0) $ gives
\begin{align}
	\label{eq:explicit:move:2}
	\HLf_{0,s_0}(w;\speed)+\rho^+w(0) 
	\geq 
	s_0 \lambda^+\hlf(\tfrac{w(s_0)-w(0)}{s_0\lambda^+})+\rho^+w(0). 
\end{align}
Further, letting
$ w_\star(t):= w(0)+t\frac{w(s_0)-w(0)}{s_0} $ denote the linear path
that joins $ (0,w(0)) $ and $ (s_0,w(s_0)) $,
applying~\eqref{eq:var:homo} with $ (t_0,\xi_0;\kappa,\rho)=(s_0,w(s_0);\kappa^+,\rho^+) $,
we obtain
\begin{align}
	\label{eq:explicit:move:2::}
	s_0 \lambda^+\hlf(\tfrac{w(s_0)-w(0)}{s_0\lambda^+})+\rho^+w(0) 
	=
	\HLf_{0,s_0}(w_\star;\lambda^+) + \rho^+w_\star(0)
	\geq
	\kappa^+ s_0 + w(s_0)\rho^+.
\end{align}
Use $ w(s_0)=\frac{b}{\tau}s_0 $ and $ \kappa^++\frac{b}{\tau}\rho^+=\kappa^-+\frac{b}{\tau}\rho^- $
in the last expression in~\eqref{eq:explicit:move:2::},
and then combine the result with \eqref{eq:explicit:move:2}. 
We have
\begin{align}
	\label{eq:explicit:move:2:}
	\HLf_{0,s_0}(w;\speed)+\rho^+w(0) 
	\geq
	(\kappa^-+\tfrac{b}{\tau}\rho^-) s_0.
\end{align}
Combining~\eqref{eq:explicit:move:1} and \eqref{eq:explicit:move:2:} gives
\begin{align}
	\notag
	\HLf_{0,t_0}(w;\speed)+\rho^+w(0) 
	&\geq
	(t_0-s_0)\lambda^- \hlf(\tfrac{b}{\tau\lambda^-}) + (\kappa^-+\tfrac{b}{\tau}\rho^-) s_0
\\
	\label{eq:explicit:move:3}
	&\geq
	t_0 \min\big\{ \lambda^- \hlf(\tfrac{b}{\tau\lambda^-}), \kappa^-+\tfrac{b}{\tau}\rho^- \big\}.
\end{align}
Let $ w_{\star\star}(t)=\frac{b}{\tau}t $ denote the linear path that goes along the discontinuity of $ \speed $.
Using~\eqref{eq:var:homo} for $ (t_0,\xi_0;\kappa,\rho)=(t_0,w_{\star\star}(t_0);\kappa^-,\rho^-) $ we have
\begin{align*}
	t_0\lambda^- \hlf(\tfrac{b}{\tau\lambda^-}) 
	=
	\HLf_{0,t_0}(w_{\star\star};\lambda^-)+\rho^-w_{\star\star}(0) \geq \kappa^-t_0 + \rho^-\xi_0 
	=
	t_0 (\kappa^-+\tfrac{b}{\tau}\rho^-).
\end{align*}
Inserting this into~\eqref{eq:explicit:move:3} gives
\begin{align*}
	\HLf_{0,t_0}(w;\speed)+\rho^+w(0) 
	\geq
	t_0 (\kappa^-+\tfrac{b}{\tau}\rho^-).
\end{align*}
\end{itemize}
The preceding argument gives $ \hl{\speed}{f}(t_0,\xi_0) \geq (\kappa^-+\frac{b}{\tau}\rho^-)t_0 $.
Conversely, for the linear paths 
\begin{align*}
	w^-(t) &:= \lambda^-(2\rho^--1)(t-t_0) + \tfrac{b}{\tau} t_0,
\\
	w^+_\d(t) &:= \lambda^+(2\rho^+_\d-1)(t-t_0)+ \tfrac{b}{\tau} t_0,
	\quad
	\rho^+_\d := \rho^+ \wedge \big(\tfrac12\big(\tfrac{b}{\tau\lambda^+}+1-\d\big)\big),
\end{align*}
it is straightforward to verify that
\begin{align*}
	&\HLf_{0,t_0}(\til w^-;\speed)+ f(w^-(0))  = (\kappa^-+\tfrac{b}{\tau}\rho^-)t_0,&
	&
	\text{if } \lambda^-(2\rho^--1)  \geq \tfrac{b}{\tau},
\\
	&\lim_{\d\downarrow 0} \HLf_{0,t_0}(\til w^+_\d;\speed)+ f(w^+_\d(0))  
	= (\kappa^++\tfrac{b}{\tau}\rho^+)t_0 
	= (\kappa^-+\tfrac{b}{\tau}\rho^-)t_0,&
	&
	\text{if } \lambda^+(2\rho^+-1)  \leq \tfrac{b}{\tau}.
\end{align*}
That is, under the assumption~\eqref{eq:char:diag},
one of the linear paths $ w^\d_i $ (under a limiting procedure) does yield the value 
$ t_0 (\kappa^-+\tfrac{b}{\tau}\rho^-) $.

So far we have shown $ \hl{\speed}{f}(t,t\frac{b}{\tau})=(\kappa^-+\frac{b}{\tau})t $.
The desired result $ \hl{\speed}{f}(t,\xi)= f(\xi-\frac{b}{\tau}t) + (\kappa^-+\frac{b}{\tau})t $
follows by the same localization and matching procedures as in Part\ref{enu:explicit:ver}. 

\ref{enu:explicit:shk}
We consider first the case $ (t_0,\xi_0) $ sits on where $ \speed $ is discontinuous, i.e., $ \xi_0=0 $,
and prove $ \hl{\speed}{f}(t_0,0) = \kappa t_0 $.
Fix a generic $ w\in W(t_0,0) $.
Since $ f(\xi) = \rho^-\xi\ind_{\xi<0}+ \rho^+\xi\ind_{\xi\geq 0} $, depending on where $ w(0) $ sits,
we have
\begin{align}
	\label{eq:explicit:shock1}
	\HLf_{0,t_0}(1;w) + f(w(0)) 
	=
	\left\{\begin{array}{l@{,}l}
		\HLf_{0,t_0}(1;w) + \rho^+ w(0)	& \text{ if } w(0) \geq 0,
		\\
		\HLf_{0,t_0}(1;w) + \rho^- w(0)	& \text{ if } w(0) \leq 0,
	\end{array}
	\right.
\end{align}
By~\eqref{eq:var:homo} for $ \xi_0=0 $ and $ (\kappa,\rho)=(\rho^\pm(1-\rho^\pm),\rho^\pm) $
(where $ \kappa:=\rho^\pm(1-\rho^\pm) $ so that $ \lambda:= \frac{\kappa}{\rho^\pm(1-\rho^\pm)}=1 $),
the r.h.s.\ of~\eqref{eq:explicit:shock1} is bounded blew by
\begin{align*}
	\left\{\begin{array}{l@{,}l}
		\rho^-(1-\rho^-)	& \text{ if } w(0) \geq 0,
		\\
		\rho^+(1-\rho^+)	& \text{ if } w(0) < 0.
	\end{array}
	\right.
\end{align*}
Under the assumption $ \rho^-+\rho^+=1 $ from~\eqref{eq:char:shk}, we have $ \rho^-(1-\rho^-)=\rho^+(1-\rho^+) $.
This being the case, taking the infimum over $ w\in W(t_0,0) $ gives
$ \hl{1}{f}(t_0,0)\geq \rho^-(1-\rho^-)t_0 $.
Conversely,
under the assumption $ \rho^-\geq \rho^+ $ from~\eqref{eq:char:shk},
the linear paths $ \til w^\pm(t) := (2\rho^\pm-1)(t-t_0) $ both give the optimal value $ \rho^\pm(1-\rho^\pm) t_0 $.
That is,
\begin{align*}
	\HLf_{0,t_0}(1;\til w^-) + f(\til w^-(0))
	=
	\rho^-(1-\rho^-) t_0
	=
	\rho^+(1-\rho^+) t_0
	=
	\HLf_{0,t_0}(1;\til w^+) + f(\til w^+(0)).
\end{align*}

So far we have shown $ \hl{1}{f}(t,0)=\rho^-(1-\rho^-)t $.
The desired result $ \hl{1}{f}(t,\xi)= f(\xi) + \rho^-(1-\rho^-)t $
follows by the same localization and matching procedures as in Part\ref{enu:explicit:ver}. 
\end{proof}

\subsection{Constructing $ \Speedd_{\scll,\sclll} $}
\label{sect:speedd:cnst}

First, we set $ \Speedd_{\scll,\sclll} $ to unity out side of $ [0,T]\times[-r^*,r^*] $, i.e.,
\begin{align}
	\label{eq:speedd:>r*}
	\Speedd_{\scll,\sclll}(t,\xi)|_{|\xi|>r^*} :=1.
\end{align}
Recall that each $ \triangle\in\Sigma $ has height $ \tau $ and width $ b $,
such that $ \frac{T}{\tau}, \frac{r_*}{b}\in \N $ (the latter implies $ \frac{r^*}{b}\in\N $).
We write $ \scl_* := \frac{T}{\tau}\in\N $.
Given the auxiliary parameter $ \scll\in\N $, we divide $ \tau,b $ into $ \scll $ parts,
and introduce the scales:
\begin{align}
	\label{eq:scll}
	\tau'_{\scll}:=\tfrac{\tau}{\scll}, \quad b'_{\scll} := \tfrac{b}{\scll}.
\end{align}
Under these notations,
we divide the region $ [0,T]\times[-r^*,r^*] $ into $ \scl_* $ horizontal slabs,
each has height $ \tau - 6\tau'_\scll $:
\begin{align}
	\label{eq:slab}
	&
	\slab_i := [\undt_i, \bart_i]\times[-r^*,r^*],
	\quad
	i=1,\ldots,\scl_*,
\\
	\label{eq:undbart}
	&
	\undt_i := (i-1)\tau + 3\tau'_\scll,
	\quad
	\bart_i := i\tau - 3\tau'_\scll.
\end{align}
We omit the dependence of $ \slab_i $, $ \undt_i $ and $ \bart_i $ on $ \scll $ to simplify notations.
Such a convention is frequently practiced in the sequel.
In between the slabs $ \slab_i $ are thin, horizontal stripes of height $ 6\tau'_\scll $ or $ 3\tau'_\scll $:
\begin{align}
	\label{eq:tz}
	\tz_i := \big( [\bart_{i},\undt_{i+1}]\cap[0,T] \big) \times [-r^*,r^*],
	\quad
	i=0,\ldots, \scl_*.
\end{align}
We refer to these regions $ \tz_i $ as the \textbf{transition zones},
transitioning from one slab to another.
See Figure~\ref{fig:zone_transi}.
We set $ \Speedd_{\scll,\sclll} $ to unity within the interior $ \tz_i^\circ $ of each transition zone:
\begin{align}
	\label{eq:speedd:tz}
	\Speedd_{\scll,\sclll}|_{\tz_i^\circ} :=1,
	\quad
	i=0,\ldots,\scl_*.
\end{align}
\begin{figure}
\psfrag{A}{$ t $}
\psfrag{B}{$ T $}
\psfrag{D}[c][c]{$ \vdots $}
\psfrag{E}{$ r^* $}
\psfrag{F}{$ -r^* $}
\psfrag{S}{$ \slab_1 $}
\psfrag{R}{$ \slab_2 $}
\psfrag{Q}{$ \slab_{\scl_*} $}
\psfrag{T}{$ \tz_0 $}
\psfrag{U}{$ \tz_1 $}
\psfrag{V}{$ \tz_{2} $}
\psfrag{W}{$ \tz_{\scl_*} $}
\psfrag{I}{$ 3\tau'_\scll $}
\psfrag{H}{$ 6\tau'_\scll $}
\psfrag{X}{$ \xi $}
\includegraphics[width=.8\textwidth]{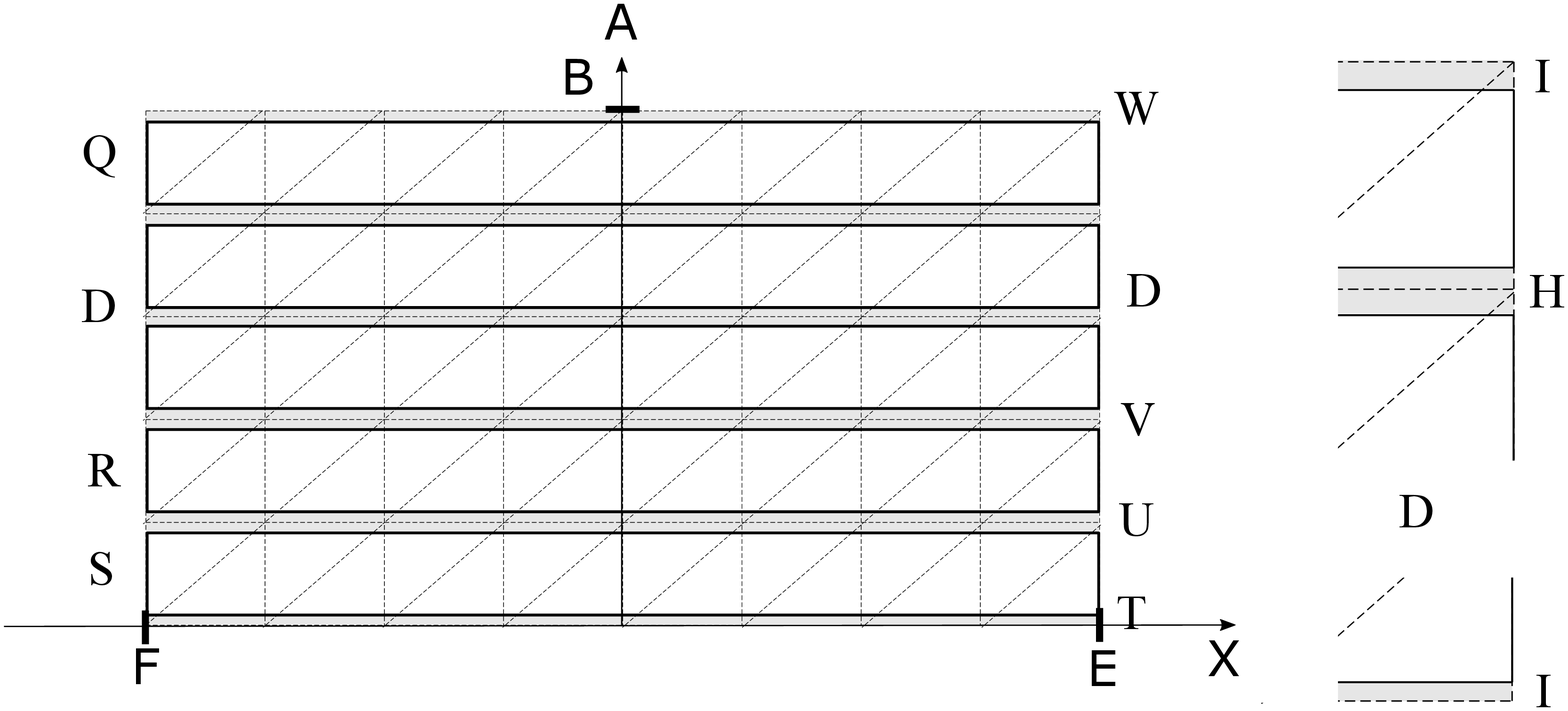}
\caption{The Slabs $ \slab_i $ (white boxes) and transition zones $ \tz_i $ (gray)}
\label{fig:zone_transi}
\end{figure}

Fixing $ i\in\{1,\ldots,\scl_*\} $, 
we now focus on constructing $ \Speedd_{\scll,\sclll} $ within the slab $ \slab_i $.
To this end, we will first construct a partition $ \prtn_i $ of $ \slab_i $,
and then, build $ \Speedd_{\scll,\sclll} $ as a piecewise constant function on $ \slab_i $
according to this partition $ \prtn_i $.

\noindent\textbf{Constructing the partition $ \prtn_i $.}
To setup notations, we write
\begin{align*}
	(t_1,\xi_1)\edge(t_2,\xi_2) := \{ (t,\tfrac{\xi_2-\xi_1}{t_2-t_1}(t-t_1) \}_{t\in[t_1,t_2]}
\end{align*}
for the line segment joining $ (t_1,\xi_1) $ and $ (t_2,\xi_2) $,
and  consider the sets of vertical and diagonal edges from $ \Sigma $ that intersect $ \slab_i $:
\begin{align}
	\notag
	\VE_i
	&:= 
	\Big\{ e = ((i-1)\tau,jb)\edge(i\tau,jb) :  
	j=-\tfrac{r^*}{b},\ldots,\tfrac{r^*}{b} \Big\},
\\
	\label{eq:DE}
	\DE_i
	&:= 
	\Big\{ e = ((i-1)\tau,(j-1)b)\edge(i\tau,jb)
	: 
	j=-\tfrac{r^*}{b}+1,\ldots,\tfrac{r^*}{b} \Big\}.
\end{align}
Around each vertical or diagonal edge $ e\in\VE_j\cup\VE_j $,
we introduce a \textbf{buffer zone} of width $ 2b'_\scll $ or $ b'_\scll $, as depicted in Figure~\ref{fig:buff_tri}.
More explicitly, for $ \ve = ((i-1)\tau,jb)\edge(i\tau,jb) \in \VE_i $,
\begin{align*}
	\buffer_{e} := [\undt_i,\bart_i]\times
		\Big( [jb-b'_{\scll},jb+b'_{\scll}] \cap[-r^*,r^*] \Big),
\end{align*}
and for $ \de = ((i-1)\tau,(j-1)b)\edge(i\tau,jb) \in \DE_i $
\begin{align}
	\label{eq:bufferDE}
	\buffer_{e} := \{ (t,\xi): |(\xi-(j-1)b)-(t-(i-1)\tau)|\leq b'_{\scll}, \  t\in [\undt_i,\bart_i] \}.
\end{align}
We call $ \buffer_e $ a \textbf{vertical buffer zone} if $ e\in\VE_i $,
and likewise call $ \buffer_e $ a \textbf{diagonal buffer zone} if $ e\in\DE_i $.
Referring to Figure~\ref{fig:buff_tri}, 
the buffer zones $ \buffer_e $ and the transition zones $ \tz_i $ 
shrink the triangle $ \triangle\in\Sigma $, resulting in trapezoidal regions.
Despite the trapezoidal shapes, we refer to these regions as \textbf{reduced triangles},
use the symbol $ \rTri $ to denote them, 
and let $ \RTri_i $ be the collection of all reduced triangles within the slab $ \slab_i $.
Each reduced triangle $ \rTri $ is uniquely contained in triangle $ \triangle\in\Sigma $.
Under such a correspondence, we set $ (\kappa_\rtri,\rho_\rtri,\lambda_\rtri) := (\linkap,\linrho,\linlam) $.
\begin{figure}[h]
\psfrag{W}{$ \scriptscriptstyle 2b'_\scll $}
\psfrag{X}{$ \scriptscriptstyle b'_\scll $}
\psfrag{D}[c][c]{$ \cdots $}
\includegraphics[width=\textwidth]{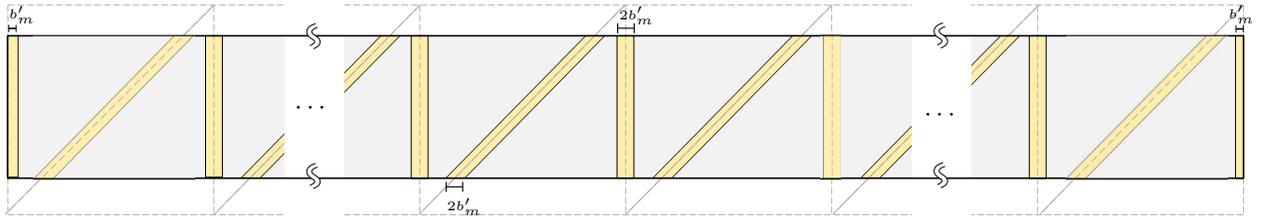}
\caption{Buffer zones (yellow) and reduced triangles (gray)}
\label{fig:buff_tri}
\end{figure}

As mentioned in Section~\ref{sect:speedd:ov},
those $ \rTri\in\RTri_i $ with $ \lambda_{\rtri} < 1 $ need an intermittent construction.
To this end, we divide the slab $ \slab_i $ into thinner slabs, each of height $ \tau'_\scll $, as
\begin{align}
	\label{eq:slab:}
	&
	\slab_{i,i'} := [\undt_{i,i'},(i-1)\tau+i'\bart_{i,i'}]\times[-r^*,r^*],
	\quad
	i'=4,\ldots,\scll-3,
\\
	\label{eq:undbart:}
	&
	\undt_{i,i'} := (i-1)\tau + (i'-1)\tau'_{\scll} = \undt_i +(i-4)\tau'_{\scll},
	\quad
	\bart_{i,i'} := (i-1)\tau + i'\tau_{\scll} = \undt_i +(i-3)\tau'_{\scll}.
\end{align}
With $ \sclll\in\N $ being an auxiliary parameter,
we divide the scales $ \tau'_{\scll}, b'_{\scll} $ (as in~\eqref{eq:scll}) into $ \scll^2 $ parts,
and introduce the finer scales 
\begin{align}
	\label{eq:sclll}
	\tau''_{\sclll} := \tfrac{\tau'_{\scll}}{\sclll^2},
	\quad
	b''_{\scll,\sclll} := \tfrac{b'_{\scll}}{\sclll^2}.
\end{align}
Now, fix $ \rTri\in\RTri_i $ with $ \lambda_{\rtri} < 1 $, and fix $ i'\in\{4,\ldots,\scll-3\} $.
Referring to Figure~\ref{fig:zone_iterm}, on $ \rTri\cap\slab_{i,i'} $, 
we place a vertical stripe $ \intm_{i',j''}(\rTri) $ of width $ b''_{\scll,\sclll} $, 
every distance $ (m-1)b''_{\scll,\sclll} $ apart.
These stripes start from the vertical edge of $ \rTri $,
and continue until reaching distance $ b'_\scll $ from the hypotenuse.
Making a vertical cut at distance $ b'_\scll $ from the hypotenuse,
we denote the region beyond by $ \intm_{i',\star}(\rTri) $; see Figure~\ref{fig:zone_iterm}
We refer to $ \intm_{i',j''}(\rTri) $ and $ \intm_{i',\star}(\rTri) $ as the \textbf{intermittent zones}.
\begin{figure}[h]
\psfrag{T}[c][c]{$ \intm_{i',\star}(\rTri) $}
\psfrag{I}[c][c]{$ \intm_{i',j''}(\rTri) $}
\psfrag{J}[c][c]{$ \intm_{i',j''}(\rTri) $}
\psfrag{Z}[r][c]{$ \scriptscriptstyle j''=1 $}
\psfrag{A}[l][c]{$ \scriptscriptstyle 1=j'' $}
\psfrag{B}[c][c]{$ \scriptscriptstyle 2 $}
\psfrag{C}[c][c]{$ \scriptscriptstyle 3 $}
\psfrag{D}[c][c]{$ \scriptscriptstyle \cdots $}
\psfrag{E}[c][c]{$ \cdots $}
\psfrag{S}[l][c]{$ 4=j'' $}
\psfrag{R}[l][c]{$ 5 $}
\psfrag{Q}[l][c]{$ \rTri\cap\slab_{i,i'} $}
\psfrag{V}[c][c]{$ \vdots $}
\psfrag{P}[l][c]{$ \scll-3 $}
\psfrag{G}[r][c]{$ j''=4 $}
\psfrag{H}[r][c]{$ 5 $}
\psfrag{K}[r][c]{$ \rTri\cap\slab_{i,i'} $}
\psfrag{L}[r][c]{$ \scll-3 $}
\psfrag{W}{$ \scriptstyle b'_\scll $}
\psfrag{X}{$ \scriptscriptstyle b''_\sclll $}
\psfrag{Y}[r][c]{$ \scriptscriptstyle (\sclll-1)b''_\sclll $}
\includegraphics[width=\textwidth]{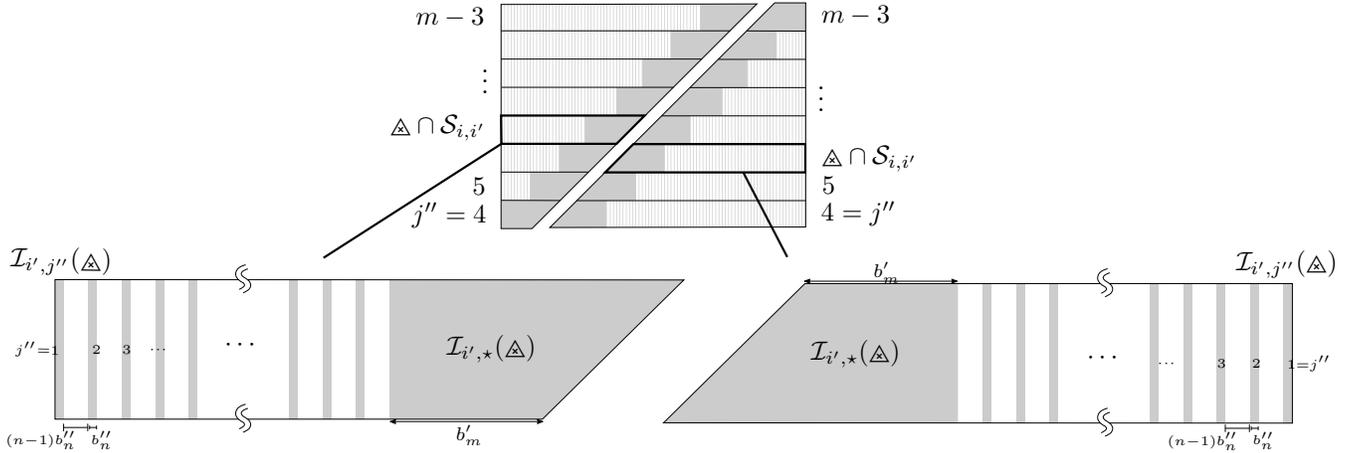}
\caption{Intermittent zones (gray) on a given $ \rTri $ with $ \lambda_{\rtri}<1 $}
\label{fig:zone_iterm}
\end{figure}

Outside of the intermittent zones on $ \rTri\cap\slab_{i,i'} $
are stripes of width $ (m-1)b''_{\scll,\sclll} $.
We enumerate these regions as $ \resd_{i',j''}(\rTri) $, as depicted in Figure~\ref{fig:zone_resd:}.
We further divide each of these regions $ \resd_{i',j''}(\rTri) $ into two parts,
$ \resd^1_{i',j''}(\rTri) $ on the left and $ \resd^2_{i',j''}(\rTri) $ on the right, 
one of width $ r^1_{\scll,\sclll}(\rTri) $ and width $ r^2_{\scll,\sclll}(\rTri) $, respectively, 
as depicted in Figure~\ref{fig:zone_resd12}.
The values of $ r^1_{\scll,\sclll}(\rTri) $ and $ r^2_{\scll,\sclll}(\rTri) $ 
are given in~\eqref{eq:r1:resd}--\eqref{eq:r2:resd} in the following.
The regions $ \resd_{i',j''}(\rTri) $, $ \resd^1_{i',j''}(\rTri) $ and $ \resd^2_{i',j''}(\rTri) $
are referred to as \textbf{residual regions}.
For convenient of notations, in the following we do not explicitly specify the range of the indice $ i',j'' $
in $ \intm_{i',j''}(\rTri) $, $ \resd^1_{i',j''}(\rTri) $, etc.,
under the conscientious that it alway runs through admissible values as described in the preceding.
\begin{figure}[h]
\psfrag{I}[c][c]{$ \resd_{i',j''}(\rTri) $}
\psfrag{J}[c][c]{$ \resd_{i',j''}(\rTri) $}
\psfrag{Z}[r][c]{$ \scriptscriptstyle j''=1 $}
\psfrag{A}[l][c]{$ \scriptscriptstyle 1=j'' $}
\psfrag{B}[c][c]{$ \scriptscriptstyle 2 $}
\psfrag{C}[c][c]{$ \scriptscriptstyle 3 $}
\psfrag{D}[c][c]{$ \scriptscriptstyle \cdots $}
\psfrag{E}[c][c]{$ \cdots $}
\psfrag{M}[c][c]{$ \resd^1 $}
\psfrag{N}[c][c]{$ \resd^2 $}
\psfrag{W}[c][c]{$ \scriptscriptstyle(\sclll-1)b''_{\scll,\sclll} $}
\psfrag{R}[c][c]{$ \scriptscriptstyle r^1_{\scll,\sclll}(\rTri) $}
\psfrag{S}[c][c]{$ \scriptscriptstyle r^2_{\scll,\sclll}(\rTri) $}
\begin{subfigure}{0.65\textwidth}
\includegraphics[width=\textwidth]{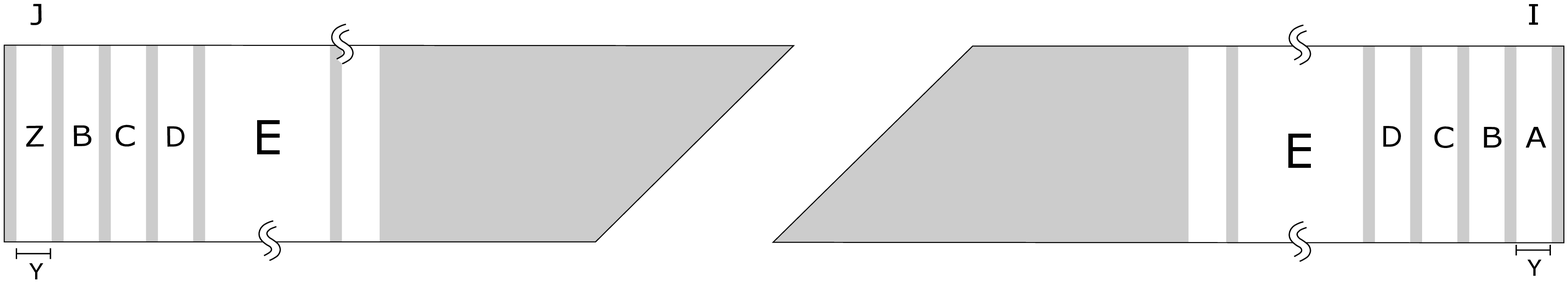}
\caption{Residual regions $ \resd_{i',j''}(\rTri) $ (white areas) on $ \rTri\cap\slab_{i,i'} $}
\label{fig:zone_resd:}
\end{subfigure}
\hfill
\begin{subfigure}{0.3\textwidth}
\includegraphics[width=\textwidth]{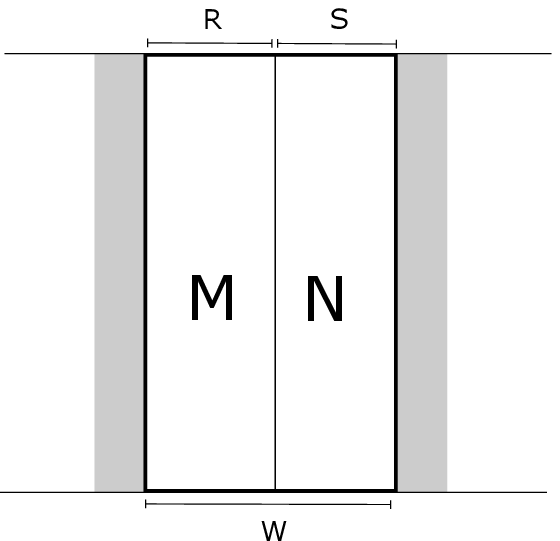}
\caption{\scriptsize{}The region $ \resd=\resd_{i',j''}(\rTri) $, 
is further divided into $ \resd^1=\resd_{i',j''}^1(\rTri) $ and $ \resd^2=\resd_{i',j''}^1(\rTri) $.}
\label{fig:zone_resd12}
\end{subfigure}
\caption{Residual regions}
\label{fig:zone_resd}
\end{figure}

Collecting the regions introduced in the preceding,
we define the partition $ \prtn_i $ of the slab $ \slab_i $ as
\begin{align*}
	\prtn_i
	&:=
	 \big\{ \buffer_e: e\in\VE_i\cup\DE_i \big\} \cup \big\{ \rTri\in\RTri_i : \lambda_{\rtri} \geq 1 \big\}
\\
	&\cup
	\big\{ \intm_{i',\star}(\rTri), \intm_{i',j''}(\rTri), \resd^1_{i',j''}(\rTri), \stripp^2_{i',j''}(\rTri) 
	:\rTri\in\RTri_i, \lambda_{\rtri} < 1 \big\}.
\end{align*}
Further, collecting these partitions $ \prtn_i $, $ i=1,\ldots,\scl_* $,
the transition zones $ \tz_i $ (as in~\eqref{eq:tz}),
and the `outer regions' $ [0,T]\times[r^*,\infty) $ and $ [0,T]\times(-\infty,r^*] $,
we obtain a partition $ \prtnU $  of the entire domain $ [0,T]\times\R $:
\begin{align}
	\label{eq:prtnU}
	\prtnU := 
	\big\{ [0,T]\times[r^*,\infty),[0,T]\times(-\infty,-r^*] \big\}	
	\cup
	\bigcup_{i=0}^{\scl_*}\set{\tz_i}
	\cup
	\bigcup_{i=1}^{\scl_*}\prtn_i.
\end{align}
The edges of $ \region\in\prtnU $ collectively gives rise to a graph,
and we call the collection of these edges the \textbf{skeleton} $ \ske $.
More precisely,
\begin{align}
	\label{eq:ske}
	\ske := 
	\Big\{ 
		\skeE=\region_1 \cap \region_2 : 
		\region_1 \neq \region_2\in\prtnU, 
		\ 
		\skeE \text{ is not a point} \Big\}.
\end{align}

In the following we will also consider the coarser version $ \prtnC $ of $ \prtn $:
\begin{align}
	\label{eq:prtnC}
	\prtnC_i
	:=
	\big\{ \buffer_e: e\in\VE_i\cup\DE_i \big\} \cup \RTri_i.
\end{align}
That is, we dismiss the intermittent construction on those $ \rTri\in\RTri $, $ \lambda_\rtri <1 $,
and replace the regions $ \{\intm_{i',\star}(\rTri), \intm_{i',j''}(\rTri), \resd^1_{i',j''}(\rTri), \stripp^2_{i',j''}(\rTri) \} $
simply by $ \{\rTri\} $ itself.

\medskip

Having constructed the partition $ \prtn_i $,
we proceed to define $ \Speedd_{\scll,\sclll} $ on each region $ \region\in\prtn_i $ of $ \prtn_i $.
To do this in a streamline fashion, 
in the following we assign a triplet $ (\kappa_{\region},\rho_\region, \lambda_\region) $ to each $ \region\in\prtn_i $.
To this end, let us first prepare a simple result regarding $ (\kappa_{\triangle},\rho_\triangle, \lambda_\triangle)_{\triangle\in\Sigma} $.
\begin{lemma}
\label{lem:ranHug}.
Let $ e\in\VE_i\cup\DE_i $, $ i=1,\ldots,\scl_* $, be a vertical or diagonal edge, 
and $ \triangle^-,\triangle^+\in\Sigma $ be the neighboring triangles of $ e $.
If $ e $ is vertical, we have $ \kappa_{\triangle^-} = \kappa_{\triangle^+} $;
if $ e $ is diagonal, we have
$ 	
	\kappa_{\triangle^-}+\tfrac{b}{\tau}\rho_{\triangle^-}
	=
	\kappa_{\triangle^+}+\tfrac{b}{\tau}\rho_{\triangle^+}
$
.
\end{lemma}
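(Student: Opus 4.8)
The plan is to deduce both identities from a single fact: the directional derivative of a continuous, $\Sigma$-piecewise linear function along an edge of the triangulation is independent of which of the two adjacent triangles it is computed from. First I would record that, since $g$ is $\Sigma$-piecewise linear with $\nabla g|_{\triangle^{\pm,\circ}} \equiv (\kappa_{\triangle^\pm},\rho_{\triangle^\pm})$ and $g\in\Sp\cap C([0,T]\times\R)$, one may write $g(t,\xi) = c^\pm + \kappa_{\triangle^\pm}\,t + \rho_{\triangle^\pm}\,\xi$ on each $\triangle^\pm$ for suitable constants $c^\pm$. Because $\triangle^-$ and $\triangle^+$ share the full edge $e$ (a nondegenerate segment, by the structure of $\Sigma(\tau,b)$ in Figure~\ref{fig:Sigma}) and $g$ is continuous, the two affine expressions must coincide on $e$, hence in particular their derivatives along $e$ agree.

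Next I would carry this out by parametrizing $e$ by $t$ and differentiating. If $e\in\VE_i$ is vertical, then $e=\{(t,jb): t\in[(i-1)\tau,i\tau]\}$; evaluating both affine forms along $t\mapsto(t,jb)$ and differentiating in $t$ gives $\kappa_{\triangle^-}=\tfrac{d}{dt}g(t,jb)=\kappa_{\triangle^+}$. If $e\in\DE_i$ is diagonal, running from $((i-1)\tau,(j-1)b)$ to $(i\tau,jb)$, it is parametrized by $t\mapsto\bigl(t,\,(j-1)b+\tfrac{b}{\tau}(t-(i-1)\tau)\bigr)$; evaluating both affine forms along this path and differentiating in $t$ via the chain rule yields $\kappa_{\triangle^-}+\tfrac{b}{\tau}\rho_{\triangle^-}=\kappa_{\triangle^+}+\tfrac{b}{\tau}\rho_{\triangle^+}$, which is the claimed relation.

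I do not expect any genuine obstacle here; this is a short elementary lemma. The only point requiring a line of care is to confirm that the two neighboring triangles meet along the \emph{entire} edge $e$ (not merely at a vertex), so that the agreement of the two affine functions there holds on a set of positive length and therefore forces equality of the directional derivatives — this is immediate from the description of $\Sigma(\tau,b)$. Equivalently, one could phrase the whole argument intrinsically: at an interior point of $e$, the one-sided derivative of the continuous function $g$ in the direction of $e$ exists and equals both $\kappa_{\triangle^-}$ (resp. $\kappa_{\triangle^-}+\tfrac{b}{\tau}\rho_{\triangle^-}$) computed from the left and $\kappa_{\triangle^+}$ (resp. $\kappa_{\triangle^+}+\tfrac{b}{\tau}\rho_{\triangle^+}$) from the right, which gives the identities at once.
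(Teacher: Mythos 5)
Your proposal is correct and is essentially the paper's own argument: the paper likewise exploits continuity of the $\Sigma$-piecewise linear $g$ along the shared edge, computing the difference quotient of $g$ between the two endpoints of $e$ from either neighboring triangle and equating the results, which is exactly your directional-derivative computation in difference-quotient form. No gaps.
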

\begin{proof}
Parametrize $ e $ as $ e=(\und t,\und \xi)\edge(\bar t,\bar\xi) $
and consider the difference of $ \ling $ across the two ends of $ e $.
With $ \ling $ being piecewise linear on $ \triangle^- $ and on $ \triangle^+ $, we have
\begin{align*}
	\left.\begin{array}{l@{,}l}
		\kappa_{\triangle^-}	& \text{ if } e\in\VE_i
		\\
		\kappa_{\triangle^-}+ \frac{b}{\tau}\rho_{\triangle^-} &\text{ if } e\in\DE_i
	\end{array}\right\}
	=
	\frac{ \ling(\bar t,\und \xi)-\ling(\bar t,\bar\xi) }{ \bar t- \und t }
	=
	\left\{\begin{array}{l@{,}l}
		\kappa_{\triangle^+}	& \text{ if } e\in\VE_i
		\\
		\kappa_{\triangle^+}+ \tfrac{b}{\tau}\rho_{\triangle^+} &\text{ if } e\in\DE_i
	\end{array}\right.
\end{align*}
\end{proof}

\noindent
Previously, we have already associated the triplet 
$ (\kappa_{\rtri},\rho_\rtri, \lambda_\rtri):=(\kappa_{\triangle},\rho_\triangle, \lambda_\triangle) $
to each $ \rTri\in\RTri $,
where $ \triangle\supset\rTri $ is the unique triangle that contains $ \rTri $.
We now proceed to do this for each other region $ \region\in\prtn_i $.
\medskip

\noindent\textbf{Defining the triplet $ (\kappa_{\region},\rho_\region, \lambda_\region) $, for $ \region\in\prtn_i $.}
\begin{itemize}[leftmargin=3ex, itemsep=3pt]
\item For a vertical buffer zone $ \buffer_{e} $:
	\begin{itemize}[leftmargin=2ex]
	\item[]
	We let $ \triangle^- $ and $ \triangle^+ $ be the left and right neighboring triangles of $ e $,
	set $ \kappa_e = \kappa_{\triangle^-}=\kappa_{\triangle^+} $,
	($ \kappa_{\triangle^-}=\kappa_{\triangle^+} $ by Lemma~\ref{lem:ranHug}),
	and set $ (\kappa_e,\rho_e,\lambda_e) := (\kappa_e ,\frac12, 4\kappa_e ) $.
	\end{itemize}
\item
For a diagonal buffer zone $ \buffer_{e} $:
	\begin{itemize}[leftmargin=2ex]
	\item[]
	We let $ \triangle^- $ and $ \triangle^+ $ be the left and right neighboring triangles of $ e $.
	To define $ (\kappa_e,\rho_e,\lambda_e) $,
	we consider the two cases separately, as follows.

\begin{itemize}[leftmargin=3ex] 
\item If the condition holds:
	\begin{align}
		\label{eq:de:case}
		(2\rho_{\triangle^-}-1)\lambda_{\triangle^-} < \tfrac{b}{\tau} < (2\rho_{\triangle^+}-1)\lambda_{\triangle^+}.
	\end{align}
	By Lemma~\ref{lem:ranHug},
	$ 
		\kappa_{\triangle^-}+\frac{b}{\tau}\rho_{\triangle^-}
		  = \kappa_{\triangle^+}+\frac{b}{\tau}\rho_{\triangle^+} 
	$. 
	We let
	$ 
		\alpha := \kappa_{\triangle^-}+\frac{b}{\tau}\rho_{\triangle^-}
	$
	denote this quantity,
	and let $ F(\rho) := (2\rho - 1) \frac{\alpha-\frac{b}{\tau}\rho}{\rho(\rho-1)} $.
	Under the condition~\eqref{eq:de:case}, we necessarily have that $ \rho^+ > \frac12 $,
	and therefore $ \alpha > \frac{b}{\tau}\rho^+ > \frac{b}{2\tau} $.
	It is then straightforward to verify that $ F $ is increasing on $ \rho\in [\frac12,1) $
	and that $ F([\frac12,1))=[0,\infty) $.
	Further, using
	$ \lambda_{\triangle^\pm}=\frac{\kappa_{\triangle^\pm}}{\rho_{\triangle^\pm}(1-\rho_{\triangle^\pm})} $
	and
	$
		\kappa_{\triangle^\pm} =\alpha - \frac{b}{\tau}\rho_{\triangle^\pm} 
	$
	in~\eqref{eq:de:case},
	we have that $ F(\rho_{\triangle^-}) < \frac{\tau}{b} < F(\rho_{\triangle^+}) $.
	From these properties we see that $ F(\rho)=\frac{\tau}{b} $ has a unique solution in $ (\frac12,1) $.
	We let $ \rho_e $ be this solution,
	and set $ \lambda_e := \frac{b/\tau}{(2\rho_e-1)} $ and $ \kappa_e := \lambda_e\rho_\de(1-\rho_e) $.
	To summarize, $ (\kappa_e,\rho_e,\lambda_e)\in(0,\infty)\times(\frac12,1)\times(0,\infty) $
	is the unique solution of the following equations
	\begin{align*}
		& \kappa_e = \lambda_e \rho_e(1-\rho_e), 
	\\
		&(2\rho_e-1)\lambda_e = \tfrac{b}{\tau},
	\\
		&\kappa_e + \tfrac{\tau}{b} \rho_e 
		= \alpha 
		:=\kappa_{\triangle^-}+\tfrac{\scll}{\tau}\rho_{\triangle^-}
		= \kappa_{\triangle^+}+\tfrac{\scll}{\tau}\rho_{\triangle^+}.
	\end{align*}
\item 
	 If the condition holds:
	\begin{align*}
		(2\rho_{\triangle^-}-1)\lambda_{\triangle^-} \geq \tfrac{b}{\tau},
		\text{ or } 
		(2\rho_{\triangle^+}-1)\lambda_{\triangle^+} \leq \tfrac{b}{\tau}.
	\end{align*}
	In this case we set $ (\kappa_e,\rho_e,\lambda_e)=(\kappa_{\triangle^-},\rho_{\triangle^-},\lambda_{\triangle^-}) $.
\end{itemize}
\end{itemize}

\item For the intermittent zones $ \intm=\intm_{i',\star}(\rTri),\intm_{i',\star}(\rTri) $, with $ \lambda_\rtri <1 $:
	\begin{itemize}[leftmargin=2ex]
	\item []
	We let 
	$ (\kappa_{\intm},\rho_{\intm},\lambda_{\intm}):=(\kappa_{\rTri},\rho_\rTri, \lambda_\rTri) $.
	\end{itemize}
	
\item For the residual regions $ \resd^1 = \resd^1_{i',j''}((\rTri)) $ 
	and $ \resd^2=\resd^2_{i',j''}(\rTri) $, with $ \lambda_\rtri <1 $:
	\begin{itemize}[leftmargin=2ex]
	\item []
	Since $ \lambda_\rtri <1 $
	and $ \kappa_\rtri = \lambda_{\rtri}\rho_\rtri(1-\rho_\rtri) $,
	there are two solutions $ \rho_1,\rho_2 $ of the equation $ \kappa_\rtri = \rho(1-\rho) $.
	We order them as $ \rho_1 > \rho_2 \in[0,1] $.
	Under these notations,
	we set
	\begin{align*}
		(\kappa_{\resd^1},\rho_{\resd^1},\lambda_{\resd^1})
		:=(\kappa_{\rtri},\rho_1, 1),
	\quad
		(\kappa_{\resd^2},\rho_{\resd^2},\lambda_{\resd^2})
		:=(\kappa_{\rtri},\rho_2, 1).
	\end{align*}
	Note that, with $ \rho_1>\rho_2 $ solving the equation $ \kappa_\rtri = \rho_i(1-\rho_i) $,
	we necessarily have $ \rho_1+\rho_2=1 $.
	\end{itemize}
		
	Recall that $ r^1_{\scll,\sclll}(\rTri) $ and $ r^2_{\scll,\sclll}(\rTri) $
	denote the (yet to be specified) widths of the residual regions
	$ \resd^1=\resd^1_{i',j''}(\rTri) $ and $ \resd^2=\resd^2_{i',j''}(\rTri) $.
	We now define
	\begin{align}
	\label{eq:r1:resd}
		r^1_{\scll,\sclll}(\rTri) 
		&:= 
		({\sclll}b''_{\scll,\sclll}-1) 
		\frac{ \rho_\rtri-\rho_1 }{ \rho_{1}-\rho_{2} }
		=
		({\sclll}b''_{\scll,\sclll}-1) 
		\frac{ \rho_\rtri-\rho_{\resd^1_{i',j''}(\rTri)} }{ \rho_{\resd^1_{i',j''}(\rTri)}-\rho_{\resd^2_{i',j''}(\rTri)} },
	\\
	\label{eq:r2:resd}
		r^2_{\scll,\sclll}(\rTri) 
		&:= 
		({\sclll}b''_{\scll,\sclll}-1) 
		\frac{ \rho_2-\rho_\rtri }{ \rho_{1}-\rho_{2} }
		=
		({\sclll}b''_{\scll,\sclll}-1) 
		\frac{ \rho_{\resd^2_{i',j''}(\rTri)}-\rho_\rtri }{ \rho_{\resd^1_{i',j''}(\rTri)}-\rho_{\resd^2_{i',j''}(\rTri)} }.
	\end{align}
\end{itemize}

We next list a few important properties of $ (\kappa_\region,\rho_\region,\lambda_\region)_{\region\in\prtn} $.
These properties are readily verified from the preceding construction.
First,
\begin{align}
	\label{eq:region:lkr}
	\kappa_\region = \lambda_\region\rho_\region(1-\rho_\region),
	\quad
	\forall\region\in\prtn_i.
\end{align}
Next, recall from~\eqref{eq:slab} that $ \slab_i $ denotes a slab.
Consider vertical or diagonal edges $ \skeE\in\ske $ in the skeleton 
that is not on the boundary of the slabs $ \slab_i $, $ i=1,\ldots,\scl_* $:
\begin{align*}
	\ske'_\text{v} := 
	\Big\{ \skeE\in\ske : \skeE \not\subset\bigcup\nolimits_{i=1}^{\scl_*}\partial\slab_i, \ \skeE\text{ vertical} \Big\},
	\quad
	\ske'_\text{d} := 
	\Big\{ \skeE\in\ske : \skeE \not\subset\bigcup\nolimits_{i=1}^{\scl_*}\partial\slab_i, \ \skeE\text{ diagonal} \Big\}.
\end{align*}
Given $ \skeE\in\ske'_\text{v}\cup\ske'_\text{d} $,
letting $ \region^+,\region^-\in\prtn $ denote, respectively, the right and left neighboring regions of $ \skeE $,
we have
\begin{align}
	\label{eq:RH:ver:}
	&
	\kappa_{\region^-}=\kappa_{\region^+},&
	&
	\text{if } \skeE\in\ske'_\text{v},
\\
	\label{eq:RH:diag:}
	&
	(\kappa_{\region^-}+\tfrac{b}{\tau}\rho_{\region^-})
	=
	(\kappa_{\region^+}+\tfrac{b}{\tau}\rho_{\region^+}),&
	&
	\text{if } \skeE\in\ske'_\text{d},
\\
	\label{eq:char:ver:}
	&
	(1-2\rho_{\region^-}) \geq 0 \text{ or } (1-2\rho_{\region^+}) \leq 0,&
	&
	\text{if } \skeE\in\ske'_\text{v},
\\
	\label{eq:char:diag:}
	&
	(2\rho_{\region^-}-1)\lambda_{\region^-} \geq \tfrac{b}{\tau},
	\text{ or } 
	(2\rho_{\region^+}-1)\lambda_{\region^+} \leq \tfrac{b}{\tau},&
	&
	\text{if } \skeE\in\ske'_\text{d}.
\end{align}
Also, for a given pair of residual regions $ \resd_{i',j''}^1(\rTri) $ and $ \resd_{i',j''}^2(\rTri) $,
we have
\begin{align}
	\label{eq:resd:r1>2}
	\rho_{\resd_{i',j''}^1(\rTri)}+\rho_{\resd_{i',j''}^2(\rTri)}=1.
	\quad
	\rho_{\resd_{i',j''}^1(\rTri)}>\rho_{\resd_{i',j''}^2(\rTri)},
\end{align}
and, with $ r^1_{\scll,\sclll}(\rTri) $ and $ r^2_{\scll,\sclll}(\rTri) $
defined as in~\eqref{eq:r1:resd}--\eqref{eq:r2:resd},
\begin{align}
\label{eq:resd:r12}
	r^1_{\scll,\sclll}(\rTri) \rho_1
	+
	r^2_{\scll,\sclll}(\rTri) \rho_2
	=
	({\sclll}b''_{\scll,\sclll}-1) \rho_\rTri.
\end{align}	
%

\medskip

Now, for the $ \{\lambda_\region\}_{\region\in\prtn_i} $ defined in the preceding,
we set
\begin{align}
	\label{eq:speedd:<prtn}
	\Speedd_{\scll,\sclll}|_{\region^\circ} := \lambda_\region, \quad \region\in\prtn_i,
	\quad
	i=1,\ldots,\scl_*.
\end{align}
This together with \eqref{eq:speedd:>r*} and \eqref{eq:speedd:tz}, defines $ \Speedd_{\scll,\sclll} $
$
	([0,T)\times\R) \setminus ( \bigcup_{ \skeE\in\ske} \skeE),
$
i.e., everywhere expect along edges of the skeletons.
To complete the construction, for any given $ (t,\xi)\in \bigcup_{ \skeE\in\ske} \skeE $,
we define
\begin{align}
	\notag
	\Speedd_{\scll,\sclll}(t,\xi) 
	:= 
	\lim_{\d\downarrow 0}
	\ 
	\inf\Big\{ 
		\Speedd_{\scll,\sclll}(s,\zeta) \ : \
		(s,\zeta)\in ([0,T)\times\R) \setminus &\big( \bigcup\nolimits_{ \skeE\in\ske} \skeE\big),
\\
	\label{eq:ext}
		&
		s \geq t, 
		\,
		|s-t|+|\xi-\zeta| <\d
	\Big\}.
\end{align}
That is, we extend the value of $ \Speedd_{\scll,\sclll} $ 
onto the edges of the skeletons in such way that
$ \xi\mapsto \Speedd_{\scll,\sclll}(t,\xi) $ is lower-semicontinuous for each $ t\in[0,T) $,
and $ t\mapsto \Speedd_{\scll,\sclll}(t,\xi) $ is right-continuous for each $ \xi\in\R $.

This completes the construction of the speed function $ \Speedd_{\scll,\sclll} $.
We summarizes a few properties of $ \Speed_{\scll,\sclll} $ that will be useful in the sequel.
These properties are readily verified from the preceding construciton.
\begin{align}
	&
	\label{eq:speedd:rg}
	0< \minlam=:\inf_{\triangle\in\Sigma} \linlam \leq 
	\Speedd_{\scll,\sclll}(t,\xi) \leq \sup_{\triangle\in\Sigma} \linlam \leq \maxlam, 
	\quad \forall (t,\xi)\in [0,T)\times\R,
\\
	\label{eq:speedd:limit}
	&
	\lim_{\scll\to\infty}
	\lim_{\sclll\to\infty}
	\sum_{\triangle\in\Sigma^*}
	\int_{\triangle}
	\big|\Speedd_{\scll,\sclll}- (\linlam\vee 1) \big| dt d\xi
	=0.
\end{align}

\subsection{Estimating $ \hl{\Speedd_{\scll,\sclll}}{\gIC} $}
\label{sect:speedd:hl}
Having constructed $ \Speedd_{\scll,\sclll} $,
in this subsection, 
we verify that the resulting Hopf--Lax function $ \hl{\Speedd_{\scll,\sclll}}{\gIC} $ 
does approximate the piecewise linear function $ \ling $.
More precisely, we show in Proposition~\ref{prop:speeddhl} in the following that,
under the iterated limit $ \sclll\to\infty $, $ \scll\to\infty $,
$ \hl{\Speedd_{\scll,\sclll}}{\gIC} $ converges to $ g $.

Recall from~\eqref{eq:slab} and \eqref{eq:slab:} 
the definitions of the slabs $ \slab_{i} $ and $ \slab_{i,i'} $, 
together with the corresponding $ \undt_i $, $ \bart_i $, $ \undt_{i,i'} $, $ \bart_{i,i'} $
from~\eqref{eq:undbart} and \eqref{eq:undbart:}.
Closely related to $ \Speedd_{\scll,\sclll} $ 
is the piecewise linear function $ \lingi^{i,i'}_{\scll,\sclll}:\slab_{i,i'}\to\R $,
defined by
\begin{subequations}
\label{eq:lingi}
\begin{align}
	\label{eq:lingi:}
	\lingi^{i,i'}_{\scll,\sclll}& \in C(\slab_{i,i'},\R),
\\
	&
	\label{eq:lingi::}
	\nabla\lingi^{i,i'}_{\scll,\sclll}
	\big|_{\region^\circ\cap\slab_{i,i'}}
	=
	(\kappa_\region,\rho_\region),
	\quad
	\forall \region\in \prtn_i,
\\
	\label{eq:lingi:::}
	&\lingi^{i,i'}_{\scll,\sclll}(\undt_{i},0)= \ling(\undt_{i},0).
\end{align}
\end{subequations}
Indeed, \eqref{eq:lingi} admits \emph{at most one} such $ \lingi^{i,i'}_{\scll,\sclll} $ .
On the other hands,
The identities~\eqref{eq:RH:ver:}--\eqref{eq:RH:diag:} 
guarantee the existence of $ \lingi^{i,i'}_{\scll,\sclll} $ that satisfies \eqref{eq:lingi}.

Recall from~\eqref{eq:cone} that $ \calC(t,\xi) $ denote the light cone going back from $ (t,\xi) $.
In the following we will often work with on domain
\begin{align}
	\label{eq:trapD}
	\trapD := \{ (t,\xi) : t\in[0,T], \xi \in [-(T-t)\maxlam-r_*,r_*+(T-t)\maxlam] \}.
\end{align}
This is the smallest region in $ [0,T]\times\R $ that contains $ [0,T]\times[-r_*,r_*] $
and enjoys:
\begin{align}
	\label{eq:trapD:cone}
	&
	\calC(t,\xi) \subset \trapD,	\quad \forall (t,\xi)\in\trapD,
\end{align}
Note also that $ [0,T]\times [-r_*,r_*]\subset\trapD\subset [0,T]\times[-r^*,r^*] $.

The following result shows that, the Hopf--Lax function $ \hll{\Speedd_{\scll,\sclll}}{f}{\undt_{i,i'}} $
actually coincides with the piecewise linear function $ \lingi^{i,i'}_{\scll,\sclll} $,
provided that the initial condition $ f $ agrees with $ \lingi^{i,i'}_{\scll,\sclll} $.
\begin{lemma}
\label{lem:lingiSpeedd}
	Fix $ i\in\{ 1,\ldots,{\scl}_*\} $, $ i'\in\{4,\ldots,{\scll}-3\} $ and $ f\in\Splip $.
	If
	$
		f(\xi) = \lingi^{i,i'}(\undt_{i,i'},\xi),
	$
		$\forall (\undt_{i,i'},\xi) \in \trapD$,	
	then
	\begin{align*}
		\hll{\Speedd_{\scll,\sclll}}{f}{\undt_{i,i'}} \big|_{\slab_{i,i'}\cap\trapD}
		=
		\lingi^{i,i'}_{\scll,\sclll}\big|_{\slab_{i,i'}\cap\trapD}.
	\end{align*}
\end{lemma}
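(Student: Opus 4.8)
The plan is to prove the identity pointwise and to bound $\hll{\Speedd_{\scll,\sclll}}{f}{\undt_{i,i'}}$ above and below by $\lingi^{i,i'}_{\scll,\sclll}$ on $\slab_{i,i'}\cap\trapD$. First I would fix $(t_0,\xi_0)\in\slab_{i,i'}\cap\trapD$ and reduce the relevant data to a light cone. Since $\trapD$ is stable under backward light cones by~\eqref{eq:trapD:cone}, and since $\slab_{i,i'}$ is a single thin slab with $\undt_{i,i'}\ge\undt_i$, the truncated cone $\calC(t_0,\xi_0)\cap\{t\ge\undt_{i,i'}\}$ lies inside $\trapD\cap\slab_{i,i'}\subset\slab_i$, where $\Speedd_{\scll,\sclll}$ is the piecewise-constant function prescribed by the partition $\prtn_i$; moreover the base of this cone lies in $\trapD$, on which $f$ coincides with $\lingi^{i,i'}_{\scll,\sclll}(\undt_{i,i'},\Cdot)$ by hypothesis. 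Hence, by Lemma~\ref{lem:hl}\ref{enu:hl:match} (and Lemma~\ref{lem:hl}\ref{enu:hl:chaV}, which lets me restrict to paths contained in $\calC(t_0,\xi_0)$, and in particular in $\slab_{i,i'}$), it suffices to evaluate $\hll{\Speedd_{\scll,\sclll}}{f}{\undt_{i,i'}}(t_0,\xi_0)$ with $f$ replaced by $\lingi^{i,i'}_{\scll,\sclll}(\undt_{i,i'},\Cdot)$.

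The engine is the following algebraic fact. On the interior of each region $\region\in\prtn_i$ one has $\Speedd_{\scll,\sclll}\equiv\lambda_\region$ and $\nabla\lingi^{i,i'}_{\scll,\sclll}\equiv(\kappa_\region,\rho_\region)$ with $\kappa_\region=\lambda_\region\rho_\region(1-\rho_\region)$ by~\eqref{eq:region:lkr}; a direct check from the definition~\eqref{eq:hl} of $\hlf$ gives, for all $v\in\R$,
\[
	\lambda_\region\,\hlf\big(\tfrac{v}{\lambda_\region}\big) \;\ge\; \kappa_\region+\rho_\region v,
\]
with equality precisely at $v=\lambda_\region(2\rho_\region-1)$ (the three ranges $|v/\lambda_\region|<1$, $v/\lambda_\region\ge 1$, $v/\lambda_\region\le -1$ reduce to $(v+\lambda_\region(1-2\rho_\region))^2\ge 0$, $v\ge\lambda_\region\rho_\region$, and $v\le-\lambda_\region(1-\rho_\region)$ respectively, each holding since $\lambda_\region,\rho_\region,1-\rho_\region>0$). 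Consequently, along any piecewise $C^1$ path $w$ whose graph lies in a region interior, the $\HLf$-density $\Speedd_{\scll,\sclll}(t,w(t))\hlf(w'(t)/\Speedd_{\scll,\sclll}(t,w(t)))$ dominates $\tfrac{d}{dt}\lingi^{i,i'}_{\scll,\sclll}(t,w(t))$. For the lower bound I would take any admissible $w\in W_{\undt_{i,i'}}(t_0,\xi_0)$ contained in $\slab_{i,i'}$, split $[\undt_{i,i'},t_0]$ into the finitely many time intervals on which $(t,w(t))$ sits in a fixed region interior and those on which it travels along a skeleton edge, integrate the displayed inequality on the former, and use~\eqref{eq:char:ver:}--\eqref{eq:char:diag:} together with the l.s.c.\ convention~\eqref{eq:ext} (exactly as in the corresponding cases of Lemma~\ref{lem:explicit}, or else perturb $w$ slightly off the edge and pass to the limit) to get the same domination on the latter; telescoping yields $\HLf_{\undt_{i,i'},t_0}(w;\Speedd_{\scll,\sclll})\ge\lingi^{i,i'}_{\scll,\sclll}(t_0,\xi_0)-\lingi^{i,i'}_{\scll,\sclll}(\undt_{i,i'},w(\undt_{i,i'}))$, and adding $f(w(\undt_{i,i'}))=\lingi^{i,i'}_{\scll,\sclll}(\undt_{i,i'},w(\undt_{i,i'}))$ and taking the infimum over $w$ gives ``$\ge$''.

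For the matching upper bound I would construct, going backward from $(t_0,\xi_0)$, a generalized backward characteristic $\til w$: inside each region it moves with velocity $\lambda_\region(2\rho_\region-1)$, and when it meets a vertical or diagonal skeleton edge it either crosses it or follows it. That such a continuation exists all the way down to $t=\undt_{i,i'}$ is exactly what the non-divergence conditions~\eqref{eq:char:ver:}--\eqref{eq:char:diag:} buy us, together with the defining relation $(2\rho_e-1)\lambda_e=\tfrac{b}{\tau}$ of a diagonal buffer zone in the divergent case~\eqref{eq:de:case} (so that a characteristic runs parallel to the edge inside the buffer). Along $\til w$ the $\HLf$-density equals $\tfrac{d}{dt}\lingi^{i,i'}_{\scll,\sclll}(t,\til w(t))$ at every time — the equality case above on region interiors, and~\eqref{eq:hl:id} on edges — so $\HLf_{\undt_{i,i'},t_0}(\til w;\Speedd_{\scll,\sclll})+f(\til w(\undt_{i,i'}))=\lingi^{i,i'}_{\scll,\sclll}(t_0,\xi_0)$, giving ``$\le$''. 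Combining the two bounds proves the lemma.

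The main obstacle is the bookkeeping around the skeleton. Because the intermittent construction places arbitrarily many thin regions $\intm$, $\resd^1$, $\resd^2$ (of width $\sim b''_{\scll,\sclll}$) inside a single thin slab, one cannot invoke the ``one interface at a time'' matching of Lemma~\ref{lem:explicit} verbatim on a whole cone; the argument must either be run uniformly over all edge crossings a path makes or trace the generalized characteristic $\til w$ cleanly through the entire pattern. This is delicate precisely where $\Speedd_{\scll,\sclll}$ is only lower semicontinuous (with the right-continuous-in-$t$, l.s.c.-in-$\xi$ prescription~\eqref{eq:ext}) and a path spends positive time on an edge, which is the situation in which conditions~\eqref{eq:char:ver:}--\eqref{eq:char:diag:} do the essential work.
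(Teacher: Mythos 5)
Your argument is essentially correct, but it takes a genuinely different route from the paper. You prove the identity in one shot on the whole slab by a two-sided comparison: a lower bound from the pointwise convexity inequality $\lambda_\region\hlf(v/\lambda_\region)\ge\kappa_\region+\rho_\region v$ (valid on every region interior by \eqref{eq:region:lkr}, with the Rankine--Hugoniot identities \eqref{eq:RH:ver:}--\eqref{eq:RH:diag:} handling times spent on edges), and an upper bound from a global backward characteristic threaded through the entire pattern, whose continuation past each interface is exactly what \eqref{eq:char:ver:}--\eqref{eq:char:diag:} and the buffer relation $(2\rho_e-1)\lambda_e=\tfrac{b}{\tau}$ guarantee. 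The paper instead runs a bootstrap in time: it defines $t_\star$ as the last time the identity holds, advances it by $\sigma_\star=b''_{\scll,\sclll}/(\maxlam+\tfrac{b}{\tau})$, and observes that over such a short window the backward light cone meets \emph{at most one} skeleton edge, so that by the localization and matching statements of Lemma~\ref{lem:hl} the Hopf--Lax value reduces to one of the four precomputed model cases of Lemma~\ref{lem:explicit}. What the paper's device buys is precisely the avoidance of your main obstacle: it never has to trace a minimizer (or near-minimizer) through many interfaces, and all the delicate edge behaviour -- sliding along a discontinuity, the degenerate equality cases treated there via the $\d$-regularized paths $\til w^+_\d$, the lower semicontinuous extension \eqref{eq:ext} -- is dealt with once and for all inside Lemma~\ref{lem:explicit}. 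What your route buys is a single global argument with no induction on $t_\star$ and no smallness of the time step; but to make it airtight you must (i) replace the ``finitely many intervals'' decomposition of a path by an a.e.\ differentiation argument, since a piecewise $C^1$ path can meet the skeleton in a complicated time set, and (ii) in the upper bound, handle the degenerate configurations (e.g.\ $\rho_\region=\tfrac12$ at a vertical edge, or a characteristic tangent to a diagonal edge) by near-optimal perturbed paths and a limit, exactly as the paper does inside Lemma~\ref{lem:explicit}; as written, your equality claim ``the $\HLf$-density equals $\tfrac{d}{dt}\lingi^{i,i'}_{\scll,\sclll}$ at every time'' along a path that slides on an edge uses the l.s.c.\ value $\lambda_{\region^-}\wedge\lambda_{\region^+}$ and is only an inequality in general, so the upper bound there must be obtained by approximation rather than by an exact optimal path. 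These are repairable technical points that you flag yourself, not errors of conception.
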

\begin{proof}
To simplify notations, throughout this proof we write $ \lingi^{i,i'}_{\scll,\sclll}=\lingi $.
Let
\begin{align}
	\label{eq:s*}
	t_\star := \sup\big\{ 
		s\in[\undt_{i,i'},\bart_{i,i'}] : 
		\hll{\Speedd_{\scll,\sclll}}{f}{\undt_{i,i'}}(t,\xi)=\lingi(t,\xi), 
		\,
		\forall (t,\xi)
		\in
		\slab_{i,i'}\cap\trapD
	\big\}
\end{align}
denote the first time when the desired property fails. 
Our goal is to show $ t_\star=\bart_{i,i'} $.
To this end, we advance $ t_\star $ by the small amount
$
	\sigma_\star := b''_{\scll}/(\maxlam+\frac{b}{\tau})
$
and consider a generic point $ (t_0,\xi_0) \in \trapD\cap([t_\star,(t_\star+\sigma_\star)\wedge \bart_{i,i'}]\times\R) $.
Let $ f_\star(\xi) := \hll{\Speedd_{\scll,\sclll}}{f}{s_{i,i'}}(t_\star,\xi) $
denote the profile at time $ t_\star $.
Apply Lemma~\ref{lem:hl}\ref{enu:hl:loc} for $ (s_0,s_1)=(\undt_{i,i'},t_\star) $, we write
\begin{align}
	\label{eq:hlmatching} 
	\hll{\Speedd_{\scll,\sclll}}{f}{\undt_{i,i'}}(t_0,\xi_0) 
	= 
	\hll{\Speedd_{\scll,\sclll}}{f_\star}{t_\star}(t_0,\xi_0).
\end{align}
Recall the notation $ \calC'(s_0,t_0,\xi_0) $ from~\eqref{eq:cone:},
and write $ \calC' := \calC(t_\star,t_0,\xi_0) $ to simplify notations.
Let $ \calX := \{\xi: (\undt_{i,i'},\xi)\in\calC(t_0,\xi_0)\} $ denote the intersection
of the light cone with the lower boundary of $ \slab_{i,i'} $.
As shown in Lemma~\ref{lem:hl}\ref{enu:hl:match},
the r.h.s.\ of~\eqref{eq:hlmatching} depends on $ (\Speedd_{\scll,\sclll},f_\star) $
only through $ (\Speed_{\scll,\sclll}|_{\calC'},f_\star|_\calX) $.
Our next step is to utilize this localization of dependence to evaluate the expression~\eqref{eq:hlmatching}.
First, With $ t_\star $ defined in~\eqref{eq:s*}, we necessarily have
$ f_\star(\xi) = \lingi(t_\star,\xi) $, $ \forall (t_\star,\xi)\in \trapD $.
Also, by~\eqref{eq:trapD:cone}, $ \set{\undt_{i,i'}}\times\calX\subset\trapD $.
Consequently,
\begin{align*}
	f_\star|_{\calX} = \lingi(t_\star,\Cdot)|_{\calX}.
\end{align*}
Next, recall the definition of the skeleton $ \ske $ from~\eqref{eq:ske}.
We claim that $ \calC' $ intersects with at most one edge of $ \ske $, i.e.,
\begin{align}
	\label{eq:intersect}
	\# \big\{ \calC'\cap\skeE \neq\emptyset : \skeE\in\ske \big\} \leq 1. 
\end{align}
To see why, first note that, since $ \calC' \subset (\undt_{i,i'},\bart_{i,i'})\times\R $,
the restricted cone $ \calC' $ does not intersect with horizontal edges of $ \ske $,
and it suffices to consider vertical and diagonal edges of $ \ske $ within the slab $ \slab_{i,i'} $.
From the preceding construction of $ \region\in\prtn_i $,
we see that vertical and diagonal edges in $ \ske $ are at least horizontally distance $ b''_{\scll,\sclll} $ apart. 
Viewed as spacetime trajectories, vertical edges travel at zero velocity,
and diagonal edges travel at velocity $ \frac{b}{\tau} $.
Since the cone $ \calC' $ goes backward in time at a speed of at most $ \maxlam $,
the time span of $ \calC' $ has to be more than $ \frac{b''_{\scll,\sclll}}{\maxlam+\frac{b}{\tau}}=:\sigma_\star $
for $ \calC' $ to intersect with two vertical or diagonal edges in $ \ske $.
This, with $ \calC'\subset[\undt_{i,i'},\undt_{i,i'}+\sigma_\star] $, does not happen,
so \eqref{eq:intersect} follows.

Recall the four special types \ref{enu:explicit:homo}--\ref{enu:explicit:shk} of $ (\speed,f) $
from before Lemma~\ref{lem:explicit}, in Section~\ref{sect:speedd:hlprpty}.
With \eqref{eq:intersect} being the case,
the pair $ (\Speedd_{\scll,\sclll},\lingi(t_\star)) $,
when restricted to $ \calC'\times\calX $, coincides with $ (\speed,f) $ of the form considered in 
Section~\ref{sect:speedd:hlprpty} for $ s_0=t_\star $, i.e.,
\begin{align}
	\label{eq:matching:pair}
	\big( \Speedd_{\scll,\sclll}|_{\calC'},\lingi(t_\star)|_{\calX}\big)
	= 
	\big(\speed|_{\calC'}, f|_{\calX}\big).
\end{align}
The condition~\eqref{eq:RH:ver}--\eqref{eq:RH:shk} holds thanks to~\eqref{eq:RH:ver:}--\eqref{eq:RH:diag:}.
Given~\eqref{eq:matching:pair},
we apply Lemma~\ref{lem:hl}\ref{enu:hl:match} with 
$ (\speed_1,f_1;\speed_2,f_2)=(\Speedd_{\scll,\sclll},f_\star;\speed_0,f_0) $,
to replace $ (\Speedd_{\scll,\sclll},f_\star) $ with $ (\speed,f) $ in~\eqref{eq:hlmatching} .
This yields
\begin{align}
	\hll{\Speedd_{\scll,\sclll}}{f}{\undt_{i,i'}}(t_0,\xi_0) 
	=
	\label{eq:hl:matching}
	\hll{S}{f}{\undt_{i,i'}}(t_0,\xi_0).
\end{align}
Further, thanks to~\eqref{eq:region:lkr}, \eqref{eq:char:ver:}--\eqref{eq:resd:r12},
the conditions~\eqref{eq:char:ver}--\eqref{eq:char:shk} hold.
This being the case, we apply Lemma~\ref{lem:explicit} for $ s_0=t_\star $ 
to conclude $ \hll{S}{f}{\undt_{i,i'}}(t_0,\xi_0)= \lingi(t_0,\xi_0) $.
This together with~\eqref{eq:hl:matching} gives
\begin{align*}
	\hll{\Speedd_{\scll,\sclll}}{f}{\undt_{i,i'}}(t_0,\xi_0) 
	=
	\lingi(t_0,\xi_0).
\end{align*}
As this holds for all $ (t_0,\xi_0)\in ([t_\star,(t_\star+\sigma_\star)\wedge \bart_{i,i'}]\times\R)\cap\trapD $,
we must have that $ t_\star \geq (t_\star +\sigma_\star)\wedge \bart_{i,i'} $.
This forces the desired result $ t_\star =\bart_{i,i'} $ to be true.
\end{proof}

Given Lemma~\ref{lem:lingiSpeedd},
our next step is to show that $ \lingi_{\scll,\sclll}^{i,i'} $ approximates $ \ling $.
The this end, it is convenient to consider an analog $ \lingiC^i_\scll $ of $ \lingi_{\scll,\sclll}^{i,i'} $,
defined as follows.
Recall from~\eqref{eq:prtnC} that $ \prtnC_i $ denotes the coarser version of the partition $ \prtn_i $.
We consider unique the piecewise linear function $ \lingiC^i_\scll:\slab_i\to\R $
with gradient given by $ (\kappa_{\region},\rho_{\region}) $ on each $ \region\in\prtnC_i $, i.e.,
\begin{subequations}
\label{eq:lingiC}
\begin{align}
	\label{eq:lingiC:}
	\lingiC^{i}_{\scll}:& \in C(\slab_i,\R),
\\
	&
	\label{eq:lingiC::}
	\nabla\lingiC^{i}_{\scll}\big|_{\region^\circ}
	=
	(\kappa_\region,\rho_\region),
	\quad
	\forall \region\in \prtnC_i,
\\
	\label{eq:lingiC:::}
	&\lingiC^{i}_{\scll}(\undt_{i},0)= \ling(\undt_{i},0),
\end{align}
\end{subequations}

\begin{lemma}
\label{lem:lingi}
For fixed $ i\in\{ 1,\ldots,{\scl}_*\} $,
\begin{align}
	\label{eq:gi:gii}
	&
	\lim_{\sclll\to\infty} \Big(\sup_{\slab_{i,i'}} |\lingi^{i,i'}_{\scll,\sclll}-\lingiC^{i}_\scll| \Big) = 0,
	\quad
	i'=4,\ldots,\scll-3,
	\quad
	\text{for each fixed } \scll<\infty,
\\	
	\label{eq:g:gi}
	&
	\lim_{\scll\to\infty} \Big(\sup_{\slab_{i}} |\lingiC^{i}_{\scll}-\ling| \Big) = 0.
\end{align}
\end{lemma}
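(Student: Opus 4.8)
The plan is to handle the two limits separately. In each case I am comparing two continuous, piecewise-linear functions that agree at $(\undt_i,0)$, so I would write their difference as a path integral of the difference of their gradients, and exploit that this gradient discrepancy is concentrated on thin sets --- the buffer zones, or the intermittent/residual strips --- on which, moreover, one of the two functions shares a distinguished directional derivative with the other, so that the path can be routed to pay essentially nothing there.

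For \eqref{eq:gi:gii} (the limit $\sclll\to\infty$ with $\scll$ fixed) I would first note that passing from $\prtnC_i$ to $\prtn_i$ never alters the \emph{first} coordinate of the assigned gradient: each of $\intm_{i',j''}(\rTri)$, $\intm_{i',\star}(\rTri)$, $\resd^1_{i',j''}(\rTri)$, $\resd^2_{i',j''}(\rTri)$ was given $\kappa_\rtri$, and on buffer zones and on reduced triangles with $\lambda_\rtri\ge1$ the two partitions coincide. Hence $\partial_t(\lingi^{i,i'}_{\scll,\sclll}-\lingiC^i_\scll)=0$ a.e.\ on $\slab_{i,i'}$, so this Lipschitz difference depends on $\xi$ alone; call it $\phi$, with $\phi(0)=0$ by \eqref{eq:lingi:::}, \eqref{eq:lingiC:::}. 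Its derivative $\phi'$ is the difference of the $\xi$-coordinates of the fine and coarse gradients, which vanishes off the reduced triangles with $\lambda_\rtri<1$ and, on such a triangle, equals $0$ on the intermittent and star zones, $\rho_1-\rho_\rtri$ on $\resd^1_{i',j''}(\rTri)$, and $\rho_2-\rho_\rtri$ on $\resd^2_{i',j''}(\rTri)$. The identities \eqref{eq:r1:resd}--\eqref{eq:r2:resd} and \eqref{eq:resd:r1>2}--\eqref{eq:resd:r12} say exactly that $\phi'$ has zero mean over each period of the intermittent pattern, and each period has length of order $\sclll\,b''_{\scll,\sclll}=b'_\scll/\sclll$. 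Since $|\phi'|\le1$ and a slab meets at most $2r^*/b$ triangles, the running integral of $\phi'$ stays $O(b'_\scll/\sclll)$ inside each triangle and shifts by $O(b'_\scll/\sclll)$ across it, so it accumulates over finitely many triangles to a bound of order $\sclll^{-1}$, which tends to $0$; combined with $\phi(0)=0$ this gives \eqref{eq:gi:gii}.

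For \eqref{eq:g:gi} (the limit $\scll\to\infty$) I would set $\psi_\scll:=\ling-\lingiC^i_\scll$ on $\slab_i$, so $\psi_\scll(\undt_i,0)=0$ by \eqref{eq:lingiC:::}; on every reduced triangle $\rTri\subset\triangle$ one has $\nabla\lingiC^i_\scll=(\kappa_\rtri,\rho_\rtri)=(\linkap,\linrho)=\nabla\ling$, so $\nabla\psi_\scll$ is supported on the buffer zones. The two structural facts I would read off from Lemma~\ref{lem:ranHug} are: on a vertical buffer $\buffer_e$, $\kappa_{\triangle^-}=\kappa_{\triangle^+}=\kappa_e$, hence $\partial_t\psi_\scll\equiv0$ there; on a diagonal buffer $\buffer_e$, $\kappa_{\triangle^\pm}+\tfrac{b}{\tau}\rho_{\triangle^\pm}=\kappa_e+\tfrac{b}{\tau}\rho_e$, hence $(\partial_t+\tfrac{b}{\tau}\partial_\xi)\psi_\scll\equiv0$ there. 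With $|\partial_\xi\psi_\scll|\le1$ and $|\partial_t\psi_\scll|\le\sup_{\triangle\in\Sigma}\linkap+\tfrac14\maxlam$ (the bound on $\kappa_e$ coming from $\Speedd_{\scll,\sclll}\le\maxlam$ in \eqref{eq:speedd:rg} together with \eqref{eq:nondg:lin}), I would then, for a given $(t_0,\xi_0)\in\slab_i$, integrate $\nabla\psi_\scll$ along the L-shaped path $(\undt_i,0)\to(\undt_i,\xi_0)\to(t_0,\xi_0)$: the horizontal leg sees $\partial_\xi\psi_\scll$ only where $(\undt_i,\cdot)$ lies in a buffer zone, whose total $\xi$-extent at the fixed time $\undt_i$ is $O(r^*/\scll)$, while the vertical leg sees $\partial_t\psi_\scll$ only over the diagonal buffers, and at fixed $\xi_0$ this line meets at most two of them, each in a time interval of length $O(\tau'_\scll)$. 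Both legs therefore contribute $O(\scll^{-1})$, uniformly in $(t_0,\xi_0)$, which gives \eqref{eq:g:gi}.

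The main obstacle is the geometric bookkeeping in \eqref{eq:g:gi}. One must use that the buffer zone straddles the $\Sigma$-edge, so that on a vertical buffer the only possible values of $\partial_t\ling$ are $\kappa_{\triangle^\pm}$, both equal to $\kappa_e$, and $\partial_t\psi_\scll$ vanishes on the whole buffer rather than merely off its central line; and on the diagonal buffers one must route the path so that it is charged only for the direction $(1,\tfrac b\tau)$ along which $\psi_\scll$ is locally constant --- concretely, inside each diagonal buffer I would replace the vertical segment by one parallel to the diagonal edge, at a cost that is still $O(\tau'_\scll)$. A smaller point, needed for \eqref{eq:gi:gii}, is to confirm from the construction that within a thin slab $\slab_{i,i'}$ the intermittent, residual and star regions are genuine vertical strips (constant in $t$), so that $\partial_t\psi\equiv0$ really does hold right up to the hypotenuse.
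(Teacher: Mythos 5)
Your proof is correct and follows essentially the same route as the paper: for \eqref{eq:gi:gii} you use exactly the matching of the $\kappa$'s together with the zero-mean property \eqref{eq:resd:r12} of the $\xi$-derivative discrepancy over each residual period, so the two piecewise linear functions can differ only inside strips of total width $O(b'_{\scll}/\sclll)$; for \eqref{eq:g:gi} you integrate the gradient difference along an L-shaped path and charge only the buffer zones. The one (harmless) departure is that you take the horizontal leg first, so your vertical leg must cross diagonal buffer zones where $\partial_t(\ling-\lingiC^i_\scll)\neq 0$ --- your observation that this crossing has time-measure $O(\tau'_{\scll})$ with a bounded integrand covers it --- whereas the paper runs the vertical leg along $\xi=0$, which lies inside a vertical buffer zone where the $t$-derivatives match identically, so that only the horizontal leg is ever charged.
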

\begin{proof}
We first establish~\eqref{eq:gi:gii}.
Since the partition $ \prtnC_i $ differs from $ \prtn_i $ only on 
those reduced triangles $ \rTri $ with $ \lambda_{\rTri}<1 $, we have
\begin{align}
	\label{eq:drmatch}
	\nabla \lingi^{i,i'}_{\scll,\sclll}\big|_{\region^\circ} = \nabla \lingiC^{i}_\scll\big|_{\region^\circ},
	\quad
	\forall \region\in \prtnC\setminus\{\rTri\in\RTri_i: \lambda_\rTri <1\}.
\end{align}
On each $ \rTri $ with $ \lambda_\rTri<1 $, 
the  $ \prtn $ invokes the intermittent zones
$ \intm_{i',\star}(\rTri) $, $ \intm_{i',j''}(\rTri) $
and residual regions $ \resd_{i',j''}^1(\rTri) $, $ \resd_{i',j''}^2(\rTri) $;
see Figure~\ref{fig:zone_iterm}--\ref{fig:zone_resd}.
Referring to the definition of $ (\kappa_\region,\rho_\region)_{\region\in\prtn_i} $ in the preceding, we have that
\begin{align}
	\label{eq:drmatch:}
	\kappa_{\rTri}
	= \kappa_{\intm_{i',\star}(\rTri)}
	= \kappa_{\intm_{i',j''}(\rTri)}
	= \kappa_{\resd^1_{i',j''}(\rTri)}
	= \kappa_{\resd^2_{i',j''}(\rTri)},
	\quad
	\rho_{\rTri}
	= \rho_{\intm_{i',\star}(\rTri)}
	= \rho_{\intm_{i',j''}(\rTri)},
\end{align}
for all relevant $ i',j'' $.
Also, the identity~\eqref{eq:resd:r12} implies that,
for each $ \resd =\resd_{i',j''}(\rTri) $,
\begin{align}
	\label{eq:drmatch::}
	\int_{\xi^-_{\resd}}^{\xi^+_\resd} (\lingi^{i,i'}_{\scll,\sclll})_{\xi}(t,\xi) d\xi
	=
	\int_{\xi^-_{\resd}}^{\xi^+_\resd} (\lingiC^i_\scll)_{\xi}(t,\xi) d\xi,
	\
	\forall t\in[\und t_{\resd},\bar t_\resd],
	\quad
	\text{where }
	\resd = [\und t_{\resd},\bar t_\resd]\times[\xi^-_\resd,\xi^+_\resd].
\end{align}
That is, the integrals of $ (\lingi_{\scll,\sclll})_\xi^i $ and $ (\lingiC^i_\scll)_{\xi} $
along any horizontal line segment passing through $ \resd $ do match.
To briefly summarize,
\eqref{eq:drmatch}--\eqref{eq:drmatch:} shows that
the derivatives of $ \lingi^{i,i'}_{\scll,\sclll} $ and $ \lingiC^i_{\scll} $ match everywhere they are defined, 
except for the $ \xi $-derivatives in $ \resd_{i',j''}(\rTri) $,
and \eqref{eq:drmatch:} gives a matching of the $ \xi $-derivatives in $ \resd_{i',j''}(\rTri) $
in an integrated sense.
These properties together with~\eqref{eq:lingi:::} and~\eqref{eq:lingiC:::} gives that
\begin{align}
	\label{eq:gi:gii:}
	\lingi^{i,i'}_{\scll,\sclll}(t,\xi) = \lingiC^{i}_{\scll}(t,\xi),
	\quad
	\forall (t,\xi) \in \slab_i \setminus 
	\bigcup\Big\{ (\resd_{i',j''}(\rTri))^\circ : \lambda_\rTri<1, \, \text{ relevant }i',j'' \Big\}.
\end{align}
Since each $ \resd_{i',j''}(\rTri) $
has a width of $ (\sclll-1)b''_{\scll,\sclll} = \frac{\sclll-1}{\sclll^2}b'_\scll $,
and since $ \lingiC^i_\scll $ is continuous,
letting $ \sclll\to\infty $ in~\eqref{eq:gi:gii:} gives~\eqref{eq:gi:gii}.

Next, to prove~\eqref{eq:g:gi}, fix arbitrary $ (t_0,\xi_0)\in\slab_{i} $,
and express $ \lingiC^{i}_{\scll}(t_0,\xi_0) $ and $ \ling(t_0,\xi_0) $
in terms of the integral of their derivatives
along the vertical line segment $ (\undt_i,0)\edge(t_0,0) $
and the horizontal line segment $ (t_0,0)\edge(t_0,\xi_0) $, i.e.,
\begin{align}
	\label{eq:lingic:dr}
	\lingiC^i_\scll(t_0,\xi_0)
	&=
	\lingiC^i_\scll(\undt_i,0) + \int_{\undt_i}^{t_0} (\lingiC^i_\scll)_t(t,0) dt
	+
	\int_0^{\xi_0} (\lingiC^i_\scll)_\xi(t_0,\xi) d\xi,
\\
	\label{eq:ling:dr}
	\ling(t_0,\xi_0)
	&=
	\ling(\undt_i,0) + \int_{\undt_i}^{t_0} \ling_t(t,0) dt
	+
	\int_0^{\xi_0} \ling_\xi(t_0,\xi) d\xi.	
\end{align}
Note that the line segment $ (\undt_i,0)\edge(t_0,0) $ 
sits within a vertical line segment of the triangulation $ \Sigma $; see Figure~\ref{fig:Sigma}.
Even though $ \ling $ is in general not smooth along edges of $ \Sigma $,
$ g_t $ does exist along \emph{vertical} edges of $ \Sigma $.
More explicitly, letting $ \triangle_\star\in\Sigma $ 
be a neighboring triangle of the line segment $ (\undt_i,0)\edge(t_0,0) $,
we have that $ \ling_t(t,0)|_{t\in(\undt_i,t_0)} = \kappa_{\triangle_*} $.
Likewise, letting $ \buffer_\star $ be the (unique) buffer zone that contains 
$ (\undt_i,0)\edge(t_0,0) $, we have $ (\lingiC^i_\scll)_t(t,0)|_{t\in(\undt_i,t_0)} = \kappa_{\buffer_*} $.
For any triangle $ \triangle\in\Sigma $ that intersects with the slab $ \slab_i $, 
let $ e\in\VE_i $ denote its neighboring vertical edge,
and let $ \rTri\subset\triangle $ denote the corresponding reduced triangle.
Referring the definition of $ (\kappa_\region,\rho_\region)_{\region\in\prtnC_i} $ in the preceding,
we have that
\begin{align}
	\label{eq:drmatch:C}
	(\linkap,\linrho)
	=
	(\kappa_\rTri,\rho_\rTri)
	=
	(\kappa_{\buffer_e},\rho_{\buffer_{e}}).
\end{align}
In \eqref{eq:lingic:dr}--\eqref{eq:ling:dr}, 
use~\eqref{eq:drmatch:C} for $ (\triangle,\buffer_e)=(\triangle_\star,\buffer_\star) $ 
to match the $ t $-derivatives $ (\lingiC^i_\scll)_t $ and $ \ling_t $,
use~\eqref{eq:drmatch:C} to match the $ \xi $-derivatives 
$ (\lingiC_\scll)_\xi $ and $ \ling_\xi $ on those reduced triangles 
$ \rTri $ along the line segment $ (t_0,0)\edge(t_0,\xi_0) $
(recall that $ \ling_\xi|_{\triangle^\circ}=\linrho $, $ \forall \triangle\in\Sigma $),
and take the difference of the result, using~\eqref{eq:lingiC:::}.
We arrive at
\begin{align*}
	|\lingiC^i_\scll(t_0,\xi_0)-\ling(t_0,\xi_0)|
	=	
	\Big|
		\int_0^{\xi_0} 
		\big( (\lingiC^i_\scll)_\xi- \ling_\xi\big)(t_0,\xi) \ind_\set{(t_0,\xi)\notin\rTri,\forall\rTri\in\RTri_i} d\xi
	\Big|
	\leq
	(\scl*+1)2b'_\scll
	\Vert (\lingiC^i_\scll)_\xi- \ling_\xi \Vert_\infty.
\end{align*}
With $ (\lingiC^i_\scll)_\xi, \ling_\xi $ being $ [0,1] $-valued,
and with $ b'_\scll = \frac{b}{\scll} $,
letting $ \scll\to\infty $ gives~\eqref{eq:g:gi}.
\end{proof}

A useful consequence of Lemma~\ref{lem:lingiSpeedd}--\eqref{lem:lingi} is the following result.
It controls the deviation of the Hopf--Lax function 
$ \hll{f_{\scll,\sclll}}{\Speedd_{\scll,\sclll}}{\undt_{i}} $ from $ \ling $
in terms of the deviation of a given initial condition $ f_{\scll,\sclll} $.
\begin{lemma}
\label{lem:speeddhl}
Let $ \{f_{\scll,\sclll}\}_{\scll,\sclll} \subset \Splip $.
For any fixed $ i\in\{ 1,\ldots,{\scl}_*\} $,
\begin{align}
	\label{eq:speeddhl}
	\limsup_{\scll\to\infty} \limsup_{\sclll\to\infty} 
	\Big(
		\sup_{\slab_{i}\cap\trapD} 
		\big| \hll{\Speedd_{\scll,\sclll}}{f_{\scll,\sclll}}{\undt_{i}}-\ling\big|
	\Big)
	\leq
	\limsup_{\scll\to\infty} \limsup_{\sclll\to\infty} 
	\sup_{(\undt_{i},\xi)\in\slab_{i}\cap\trapD} |f_{\scll,\sclll}(\xi)-\ling(\undt_{i},\xi)|.		
\end{align}
\end{lemma}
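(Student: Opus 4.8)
The plan is to reduce \eqref{eq:speeddhl} to the already-established matching result Lemma~\ref{lem:lingiSpeedd} by passing through the auxiliary piecewise linear functions $ \lingi^{i,i'}_{\scll,\sclll} $, then controlling the errors via the $ L^\infty $-stability of the Hopf--Lax operator in the initial data, i.e.\ Lemma~\ref{lem:hl}\ref{enu:hl:Linf}, and finally taking iterated limits using Lemma~\ref{lem:lingi}. The key conceptual point is that $ \hll{\Speedd_{\scll,\sclll}}{f_{\scll,\sclll}}{\undt_{i}} $ evolves through the thin slabs $ \slab_{i,i'} $, $ i'=4,\dots,\scll-3 $, one at a time via the localization and semigroup properties of Lemma~\ref{lem:hl}\ref{enu:hl:loc}, and on each thin slab the Hopf--Lax flow with speed function $ \Speedd_{\scll,\sclll} $ coincides \emph{exactly} with $ \lingi^{i,i'}_{\scll,\sclll} $ whenever the incoming profile matches $ \lingi^{i,i'}_{\scll,\sclll}(\undt_{i,i'},\cdot) $ on $ \trapD $. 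So the error at the top of a thin slab is controlled by the error at its bottom plus nothing; the only error accrual is at the bottom slab $ \slab_{i,4} $, where the incoming data is $ f_{\scll,\sclll} $ rather than $ \lingi^{i,i'}_{\scll,\sclll}(\undt_{i},\cdot) $.

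Concretely, first I would fix $ i $, set $ f^{(4)}:=f_{\scll,\sclll} $, and, for $ i'=4,\dots,\scll-3 $, let $ h^{(i')} := \hll{\Speedd_{\scll,\sclll}}{f_{\scll,\sclll}}{\undt_{i}}|_{\slab_{i,i'}} $ and $ f^{(i'+1)}(\cdot) := h^{(i')}(\undt_{i,i'+1},\cdot) $. By Lemma~\ref{lem:hl}\ref{enu:hl:loc}, $ h^{(i')} = \hll{\Speedd_{\scll,\sclll}}{f^{(i')}}{\undt_{i,i'}}|_{\slab_{i,i'}} $. Now introduce the comparison flow $ \til h^{(i')} := \hll{\Speedd_{\scll,\sclll}}{\lingi^{i,i'}_{\scll,\sclll}(\undt_{i,i'},\cdot)}{\undt_{i,i'}}|_{\slab_{i,i'}} $, which by Lemma~\ref{lem:lingiSpeedd} equals $ \lingi^{i,i'}_{\scll,\sclll} $ on $ \slab_{i,i'}\cap\trapD $; in particular $ \til h^{(i')}(\undt_{i,i'+1},\cdot) = \lingi^{i,i'}_{\scll,\sclll}(\undt_{i,i'+1},\cdot) $, and since $ \lingi^{i,i'}_{\scll,\sclll} $ and $ \lingi^{i,i'+1}_{\scll,\sclll} $ agree on the common edge $ \{\undt_{i,i'+1}\}\times\R $ (both are the unique piecewise linear function with the prescribed gradients fitting the value at $ (\undt_i,0) $), this is $ \lingi^{i,i'+1}_{\scll,\sclll}(\undt_{i,i'+1},\cdot) $ on $ \trapD $. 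Apply Lemma~\ref{lem:hl}\ref{enu:hl:Linf} (the localized version, with $ s_0=\undt_{i,i'} $) on $ \slab_{i,i'}\cap\trapD $: using $ \calC(t,\xi)\subset\trapD $ for $ (t,\xi)\in\trapD $ from \eqref{eq:trapD:cone}, and the bound $ \Speedd_{\scll,\sclll}\le\maxlam $ from \eqref{eq:speedd:rg}, I get
\begin{align*}
	\sup_{\slab_{i,i'}\cap\trapD} |h^{(i')}-\til h^{(i')}|
	\le
	\sup_{(\undt_{i,i'},\xi)\in\trapD} |f^{(i')}(\xi)-\lingi^{i,i'}_{\scll,\sclll}(\undt_{i,i'},\xi)|.
\end{align*}
The right-hand side is $ \sup_{(\undt_{i,i'},\xi)\in\trapD}|f^{(i')}-\lingi^{i,i'}_{\scll,\sclll}(\undt_{i,i'},\cdot)| $, and by the previous step the left-hand side at the top edge controls $ \sup_{(\undt_{i,i'+1},\xi)\in\trapD}|f^{(i'+1)}-\lingi^{i,i'+1}_{\scll,\sclll}(\undt_{i,i'+1},\cdot)| $. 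Iterating down the stack of thin slabs, all intermediate terms telescope and the error never grows, so for every $ i' $,
\begin{align*}
	\sup_{\slab_{i,i'}\cap\trapD} |h^{(i')}-\lingi^{i,i'}_{\scll,\sclll}|
	\le
	\sup_{(\undt_{i},\xi)\in\slab_i\cap\trapD} |f_{\scll,\sclll}(\xi)-\lingi^{i,4}_{\scll,\sclll}(\undt_{i},\xi)|.
\end{align*}
The thin slabs $ \slab_{i,4},\dots,\slab_{i,\scll-3} $ cover $ \slab_i $ minus the negligible transition-zone padding of total height $ 6\tau'_\scll\to0 $; since $ \hll{\Speedd_{\scll,\sclll}}{f_{\scll,\sclll}}{\undt_i} $ and $ \ling $ are both uniformly Lipschitz in $ t $ (Lemma~\ref{lem:hl}\ref{enu:hl:sp}, with constant $ \maxlam/4 $) this padding contributes a vanishing error, and I also need $ \lingi^{i,4}_{\scll,\sclll}(\undt_i,\cdot) = \hll{\Speedd_{\scll,\sclll}}{f_{\scll,\sclll}}{\undt_i}(\undt_i,\cdot)\wedge\ldots $ — more simply, $ \lingi^{i,4}_{\scll,\sclll}(\undt_{i},\cdot) = \ling(\undt_i,\cdot) $ holds at $ \xi=0 $ by \eqref{eq:lingi:::} but not pointwise, so I compare $ \lingi^{i,i'}_{\scll,\sclll} $ to $ \ling $ directly.

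Finally I would take the iterated limit. Writing $ \Delta_{\scll,\sclll} := \sup_{\slab_i\cap\trapD}|\hll{\Speedd_{\scll,\sclll}}{f_{\scll,\sclll}}{\undt_i}-\ling| $, combine the displays above with the triangle inequality: $ \Delta_{\scll,\sclll} $ is bounded by $ \sup_{(\undt_i,\xi)\in\slab_i\cap\trapD}|f_{\scll,\sclll}(\xi)-\lingi^{i,4}_{\scll,\sclll}(\undt_i,\xi)| $ plus $ \sup_{i'}\sup_{\slab_{i,i'}}|\lingi^{i,i'}_{\scll,\sclll}-\lingiC^i_\scll| $ plus $ \sup_{\slab_i}|\lingiC^i_\scll-\ling| $ plus the Lipschitz padding error $ O(\tau'_\scll) $. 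By Lemma~\ref{lem:lingi}, the second term $ \to0 $ as $ \sclll\to\infty $ (for fixed $ \scll $) and the third $ \to0 $ as $ \scll\to\infty $; the padding term $ \to0 $ as $ \scll\to\infty $; and the first term, after sending $ \sclll\to\infty $ then $ \scll\to\infty $, has $ \limsup $ bounded by $ \limsup_{\scll}\limsup_{\sclll}\sup_{(\undt_i,\xi)\in\slab_i\cap\trapD}|f_{\scll,\sclll}(\xi)-\ling(\undt_i,\xi)| $ after absorbing the error $ \sup|\lingi^{i,4}_{\scll,\sclll}(\undt_i,\cdot)-\ling(\undt_i,\cdot)| $ into the convergences just used. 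Taking $ \limsup_{\scll\to\infty}\limsup_{\sclll\to\infty} $ of the whole bound yields \eqref{eq:speeddhl}. I expect the main obstacle to be bookkeeping the boundary/edge identifications cleanly — verifying that $ \lingi^{i,i'}_{\scll,\sclll} $ at the shared edge $ \{\undt_{i,i'+1}\}\times\R $ is consistent with $ \lingi^{i,i'+1}_{\scll,\sclll} $ restricted to $ \trapD $ (so the telescoping genuinely closes), and correctly handling the transition-zone gaps between $ \slab_{i,i'} $'s and between $ \slab_i $'s via the uniform Lipschitz bound rather than via Lemma~\ref{lem:lingiSpeedd}, which applies only inside the thin slabs.
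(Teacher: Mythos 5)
Your proposal is correct and follows essentially the same route as the paper: localize to the thin slabs $\slab_{i,i'}$ via Lemma~\ref{lem:hl}\ref{enu:hl:loc}, combine Lemma~\ref{lem:lingiSpeedd} with the $L^\infty$-stability of Lemma~\ref{lem:hl}\ref{enu:hl:Linf} and the cone containment~\eqref{eq:trapD:cone} to propagate the boundary error across each thin slab without growth, then chain and pass to $\ling$ via Lemma~\ref{lem:lingi}. One caveat: your justification that $\lingi^{i,i'}_{\scll,\sclll}$ and $\lingi^{i,i'+1}_{\scll,\sclll}$ agree exactly on the shared edge $\{\undt_{i,i'+1}\}\times\R$ is shaky, since the intermittent/residual layout (hence the prescribed $\xi$-derivatives) is indexed by $i'$ and differs near the hypotenuse of each reduced triangle; the paper avoids this by first sending $\sclll\to\infty$ in each thin-slab inequality to replace $\lingi^{i,i'}_{\scll,\sclll}$ by the single function $\lingiC^{i}_{\scll}$ (via~\eqref{eq:gi:gii}) and only then chaining, which is exactly what your final paragraph does anyway, so the assembled argument closes. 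Also note the thin slabs $\slab_{i,4},\ldots,\slab_{i,\scll-3}$ tile $\slab_i$ exactly, so no Lipschitz padding term is needed inside a slab (the transition zones sit between the $\slab_i$'s and are handled in Proposition~\ref{prop:speeddhl}, not here).
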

\begin{proof}
Throughout this proof, to simplify notations, 
we write $ G:= \hll{\Speedd_{\scll,\sclll}}{f_{\scll,\sclll}}{\undt_{i}} $.
Let us first setup a few notations.
For $ i'=4,\ldots,\scll-3 $, 
let $ f^{i'} := G(\undt_{i,i'}) $ denote the fixed time profile of the Hopf--Lax function at $ \undt_{i,i'} $.
Consider also the fixed time profile $ \gamma^{i'} := \lingi^{i,i'}_{\scll,\sclll}(\undt_{i,i'}) $ 
of $ \lingi^{i,i'}_{\scll,\sclll} $ at time $ \undt_{i,i'} $.
The function $ \gamma^{i'} $ is defined on
\begin{align*}
	\Xi^{i'}:=\{ \xi: (\undt_{i,i'},\xi)\in\slab_{i,i'}\cap\trapD \},
\end{align*}
and we extend the function beyond $ \Xi^{i'} $ in such away that $ \gamma^{i'}\in\Splip $.
The precise way of extending $ \gamma^{i'} $ does not matter,
as long as the result is $ \Splip $-valued.
We omit the dependence of $ G $, $ f^{i'} $ and $ \gamma^{i'} $ on $ \scll,\sclll $ to simplify notations.

Instead of showing~\eqref{eq:speeddhl}, we show 
\begin{align}
	\label{eq:speeddhl:}
	\limsup_{\sclll\to\infty} 
	\Big(
		\sup_{\slab_{i}\cap\trapD} 
		\big| G-\lingiC^{i}_{\scll}\big|
	\Big)
	\leq
	\limsup_{\sclll\to\infty} 
	\sup_{(\undt_{i},\xi)\in\slab_{i}\cap\trapD} |f_{\scll,\sclll}(\xi)-\lingiC^{i}_{\scll}(\undt_{i},\xi)|.		
\end{align}
By \eqref{eq:g:gi}, 
the function $ \lingiC^{i}_{\scll} $ uniformly approximates $ \ling $ on $ {\slab_{i}\cap\trapD} $ as $ \scll\to\infty $.
This being the case, 
the desired result~\eqref{eq:speeddhl} follows by letting $ \scll\to\infty $ in~\eqref{eq:speeddhl:}.

To prove~\eqref{eq:speeddhl:},
we fix $ i'=4,\ldots,\scll-3 $,
and proceed to bound the difference $ | G-\lingiC^{i}_{\scll}| $
on each $ \slab_{i,i'}\cap\trapD $.
First, by Lemma~\ref{lem:hl}\ref{enu:hl:loc} for $ (s_0,s_1)=(\undt_i,\undt_{i,i'}) $,
the Hopf--Lax function $ G $ localizes onto $ \slab_{i,i'} $ as
$  
	G|_{\slab_{i,i'}} = \hll{\Speedd_{\scll,\sclll}}{f^{i'}}{\undt_{i,i'}}|_{\slab_{i,i'}}.
$
Given this property,
applying Lemma~\ref{lem:hl}\ref{enu:hl:Linf} with $ (f_1,f_2) = (f^{i'},\gamma^{i'}) $ and $ s_0=\undt_{i,i'} $, 
we obtain
\begin{align}
	\notag
	&\big| 
		G(t_0,\xi_0)
		-
		\hll{\Speedd_{\scll,\sclll}}{\gamma^{i'}}{\undt_{i,i'}}(t_0,\xi_0)
	\big|
\\
	=&
	\label{eq:speeddhl:4}
	\big| 
		\hll{\Speedd_{\scll,\sclll}}{f^{i'}}{\undt_{i,i'}}(t_0,\xi_0)
		-
		\hll{\Speedd_{\scll,\sclll}}{\gamma^{i'}}{\undt_{i,i'}}(t_0,\xi_0)
	\big|
	\leq
	\sup_{(\undt_{i,i'},\xi)\in\calC(t_0,\xi_0)} |f^{i'}(\xi)-\gamma^{i'}(\xi)|,
\end{align}
for all $ (t_0,\xi_0)\in\slab_{i,i'} $.
Recall the definition of $ \Xi^{i'} $ from the preceding.
By~\eqref{eq:trapD:cone}, for each $ (t_0,\xi_0)\in\slab_{i,i'}\cap\trapD $,
we have that $ \{\xi:(\undt_{i,i'},\xi)\in\calC(t_0,\xi_0)\}\subset\Xi^{i'} $.
Using this property, 
we take the supremum of~\eqref{eq:speeddhl:4} over $ (t_0,\xi_0)\in\slab_{i,i'}\cap\trapD $ to get
\begin{align*}
	\sup_{\slab_{i,i'}\cap\trapD}
	\big| 
		G
		-
		\hll{\Speedd_{\scll,\sclll}}{\gamma^{i'}}{\undt_{i,i'}}
	\big|
	&\leq
	\sup_{\Xi^{i'}} |f^{i'}-\gamma^{i'}|.
\end{align*}
Next, using Lemma~\ref{lem:lingiSpeedd} for $ f=\gamma^{i'} $,
we replace the expression $ \hll{\Speedd_{\scll,\sclll}}{\gamma^{i'}}{\undt_{i,i'}} $ 
with $ \lingi^{i,i'}_{\scll,\sclll} $, and write
\begin{align}
	\label{eq:speeddhl:4::}
	\sup_{\slab_{i,i'}\cap\trapD}
	\big| 
		G
		-
		\lingi^{i,i'}_{\scll,\sclll}
	\big|
	\leq
	\sup_{\Xi^{i'}} |f^{i'}-\gamma^{i'}|
	=
	\sup_{\Xi^{i'}} 
	\big|
		G(\undt_{i,i'})
		-\lingi^{i,i'}_{\scll,\sclll}(\undt_{i,i'})
	\big|.
\end{align}
Further, by \eqref{eq:gi:gii}, the function $ \lingi^{i,i'}_{\scll,\sclll} $ 
uniformly approximates $ \lingiC^{i}_\scll $ on $ \slab_{i,i'}\cap\trapD $.
This being the case, we let $ \sclll\to\infty $ in~\eqref{eq:speeddhl:4::},
and replace each $ \lingi^{i,i'}_{\scll,\sclll} $ with $ \lingiC^{i}_\scll $ to get
\begin{align}
	\label{eq:speeddhl:4:::}
	\limsup_{\sclll\to\infty}
	\sup_{\slab_{i,i'}\cap\trapD}
	\big| 
		G
		-
		\lingiC^{i}_{\scll}
	\big|
	\leq
	\limsup_{\sclll\to\infty} 
	\sup_{\Xi^{i'}} |G(\undt_{i,i'})-\lingiC^{i}_{\scll}(\undt_{i,i'})|,
	\quad
	i'=4,\ldots,\scll-3.
\end{align}
Indeed, since $ \set{\undt_{i,i'+1}}\times\Xi^{i'+1} \subset \slab_{i,i'}\cap\trapD $, we have
$	
	\sup_{\Xi^{i'+1}} | G(\undt_{i,i'})-\lingiC^{i}_{\scll}(\undt_{i,i'}) |
	\leq
	\sup_{\slab_{i,i'}\cap\trapD} | G-\lingiC^{i}_{\scll} |.
$
Given this property, inductively applying~\eqref{eq:speeddhl:4:::} for $ i'=4,\ldots,\scll-3 $
gives the desired result~\eqref{eq:speeddhl:}.
\end{proof}

We now show that $ \hl{\Speedd_{\scll,\sclll}}{\gIC} $ 
uniformly approximates $ \ling $ over $ [0,T]\times[-r_*,r_*] $. More precisely, 
\begin{proposition}
\label{prop:speeddhl}
We have that
$ \displaystyle \quad
	\limsup_{\scll\to\infty} \limsup_{\sclll\to\infty}
	\Big( \sup_{[0,T]\times[-r_*,r_*]} \big| \hl{\Speedd_{\scll,\sclll}}{\gIC}-\ling \big| \Big) = 0.
$
\end{proposition}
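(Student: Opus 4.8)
The strategy is to iterate Lemma~\ref{lem:speeddhl} over the slabs $\slab_1,\ldots,\slab_{\scl_*}$ in order of increasing time, using the transition zones to patch between consecutive slabs, and to control the (constant) behaviour of the Hopf--Lax function on each transition zone. Throughout, write $H_{\scll,\sclll} := \hl{\Speedd_{\scll,\sclll}}{\gIC}$, and recall from Lemma~\ref{lem:hl}\ref{enu:hl:sp} that $H_{\scll,\sclll}\in\Sp\cap C([0,T],\Splip)$, and from Lemma~\ref{lem:hl}\ref{enu:hl:loc} that $H_{\scll,\sclll}$ localizes in time: on $[s_0,T]\times\R$ it equals $\hll{\Speedd_{\scll,\sclll}}{f_0}{s_0}$ with $f_0 = H_{\scll,\sclll}(s_0)$. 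The quantity we track is
\begin{align}
	\label{eq:plan:dev}
	\e_i(\scll,\sclll) := \sup_{(t,\xi)\in\slab_i\cap\trapD} |H_{\scll,\sclll}(t,\xi)-\ling(t,\xi)|,
	\quad
	\delta_i(\scll,\sclll) := \sup_{(t,\xi)\in\tz_{i}\cap\trapD} |H_{\scll,\sclll}(t,\xi)-\ling(t,\xi)|.
\end{align}
Since $\trapD\subset[0,T]\times[-r^*,r^*]$ and $\trapD\supset[0,T]\times[-r_*,r_*]$, it suffices to show $\limsup_{\scll}\limsup_{\sclll}\max_i(\e_i+\delta_i)=0$.

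First I would treat the base of the induction: on $\tz_0$ (the initial transition zone where $\Speedd_{\scll,\sclll}=1$) the profile at $t=0$ is exactly $\gIC=\ling(0,\cdot)$, and since $\Speedd_{\scll,\sclll}\equiv1$ there, $H_{\scll,\sclll}$ on $\tz_0$ is the ordinary Hopf--Lax evolution of $\ling(0,\cdot)$ at unit speed. One then observes that $\ling$ on $\tz_0$ is itself piecewise linear with the transition-zone gradients (which, by construction, are the shock-type data of case~\ref{enu:explicit:shk} wherever $\linlam<1$ and case~\ref{enu:explicit:homo}/\ref{enu:explicit:ver} otherwise), so Lemma~\ref{lem:explicit} applied on $\tz_0$ — together with the matching/localization argument already used in the proof of Lemma~\ref{lem:lingiSpeedd} — gives $H_{\scll,\sclll}=\ling$ exactly on $\tz_0\cap\trapD$; hence $\delta_0=0$ and in particular the profile $H_{\scll,\sclll}(\undt_1,\cdot)$ agrees with $\ling(\undt_1,\cdot)$ on $\trapD$. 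Alternatively, and more robustly, I would simply bound $\delta_0$ by the light-cone-localized sup-deviation of $\ling(0,\cdot)$ from the relevant initial data via Lemma~\ref{lem:hl}\ref{enu:hl:Linf}, which is zero since the data match. The inductive step is then: given $\e_{i-1}$ and $\delta_{i-1}$ small (in the iterated limit), apply Lemma~\ref{lem:hl}\ref{enu:hl:loc} to restart $H_{\scll,\sclll}$ at time $\undt_i$ from the profile $f_{\scll,\sclll} := H_{\scll,\sclll}(\undt_i)$, and invoke Lemma~\ref{lem:speeddhl} with this $f_{\scll,\sclll}$ to conclude
\begin{align}
	\label{eq:plan:ei}
	\limsup_{\scll\to\infty}\limsup_{\sclll\to\infty}\e_i(\scll,\sclll)
	\le
	\limsup_{\scll\to\infty}\limsup_{\sclll\to\infty}\sup_{(\undt_i,\xi)\in\slab_i\cap\trapD}|f_{\scll,\sclll}(\xi)-\ling(\undt_i,\xi)|
	\le
	\limsup_{\scll\to\infty}\limsup_{\sclll\to\infty}\delta_{i-1}(\scll,\sclll),
\end{align}
the last inequality because $\{\undt_i\}\times(\text{slice of }\trapD)$ sits in $\tz_{i-1}\cap\trapD$. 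Then, to pass from the slab $\slab_i$ to the next transition zone $\tz_i$, I would again localize $H_{\scll,\sclll}$ at time $\bart_i$, where $\Speedd_{\scll,\sclll}=1$ on $\tz_i$; since $\ling$ on $\tz_i$ is piecewise linear of shock/homogeneous type and $\Speedd_{\scll,\sclll}=1$, Lemma~\ref{lem:explicit} (with the matching argument of Lemma~\ref{lem:lingiSpeedd}) shows $H_{\scll,\sclll}$ and $\ling$ evolve identically on $\tz_i$ once their profiles agree at $\bart_i$; quantitatively, by Lemma~\ref{lem:hl}\ref{enu:hl:Linf}, $\delta_i \le \e_i + (\text{light-cone sup of }|\ling(\bart_i,\cdot)-\text{its piecewise-linear ansatz}|) = \e_i$. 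Combining, $\limsup\limsup\delta_i \le \limsup\limsup\e_i \le \limsup\limsup\delta_{i-1}$, and since $\delta_0=0$ (or $\limsup\limsup\delta_0=0$), a finite induction over $i=1,\ldots,\scl_*$ yields $\limsup_{\scll}\limsup_{\sclll}(\e_i+\delta_i)=0$ for every $i$, which is the claim.

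\textbf{Main obstacle.} The delicate point is the handoff between a slab $\slab_i$ and the adjacent transition zones: Lemma~\ref{lem:speeddhl} is stated for the evolution from $\undt_i$ inside $\slab_i\cap\trapD$ with an \emph{arbitrary} initial profile $f_{\scll,\sclll}$, but I must verify that (a) the profile produced at $\bart_i$ by this evolution is close to $\ling(\bart_i,\cdot)$ on the slice of $\trapD$ needed to feed into $\tz_i$, and (b) the evolution on the thin transition zone $\tz_i$ (height $6\tau'_\scll$ or $3\tau'_\scll$, vanishing as $\scll\to\infty$) does not accumulate error — here one must check that the piecewise-linear structure of $\ling$ restricted to $\tz_i$ genuinely falls under one of the four solvable cases of Lemma~\ref{lem:explicit} (this is where the transition-zone gradient data were rigged in the construction, e.g. the shock condition $\rho^-+\rho^+=1$, $\rho^-\ge\rho^+$ of~\eqref{eq:char:shk}), and that the light cone from any point of $\tz_i\cap\trapD$ backward to $\bart_i$ stays within $\trapD$ so that Lemma~\ref{lem:hl}\ref{enu:hl:match} and \ref{enu:hl:Linf} apply with data we control. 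Property (b) is essentially the content of the matching/localization argument already carried out in the proof of Lemma~\ref{lem:lingiSpeedd}, adapted to a transition zone rather than a thin sub-slab, so the real work is bookkeeping: tracking the finitely many ($\scl_*+1$) zones, confirming at each interface that $\ling$ has the exact gradients assigned by the construction and hence that Lemma~\ref{lem:explicit} produces $\ling$ itself, and confirming that the error propagated by Lemma~\ref{lem:speeddhl} is non-amplifying (coefficient $1$) so that the finite iteration does not blow up. No new analytic estimate is needed beyond what is in Lemmas~\ref{lem:hl}, \ref{lem:explicit}, \ref{lem:lingiSpeedd}, \ref{lem:lingi}, and \ref{lem:speeddhl}; it is purely a matter of assembling them along the time direction.
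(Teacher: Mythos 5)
Your skeleton is the same as the paper's: localize the Hopf--Lax function in time via Lemma~\ref{lem:hl}\ref{enu:hl:loc}, apply Lemma~\ref{lem:speeddhl} with $f_{\scll,\sclll}=\hl{\Speedd_{\scll,\sclll}}{\gIC}(\undt_i)$ slab by slab (your \eqref{eq:plan:ei} is exactly the paper's non-amplifying recursion), and close a finite induction over the $\scl_*+1$ zones starting from the exact match at $t=0$. The slab half of your argument is fine.

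The genuine gap is in your treatment of the transition zones. You claim that on $\tz_i$, where $\Speedd_{\scll,\sclll}\equiv1$, the function $\ling$ is itself the unit-speed Hopf--Lax evolution of its own bottom slice, so that Lemma~\ref{lem:explicit} forces $H_{\scll,\sclll}$ and $\ling$ to ``evolve identically'' and hence $\delta_0=0$, $\delta_i\le\e_i$. This is false in general: on $\tz_i$ the gradients of $\ling$ are the $(\linkap,\linrho)$ of the ambient triangles, with $\linkap=\linlam\,\linrho(1-\linrho)$ and $\linlam\ne1$ generically, whereas the unit-speed evolution of linear data of slope $\linrho$ has time-derivative $\linrho(1-\linrho)$ (Lemma~\ref{lem:explicit}\ref{enu:explicit:homo} with $\lambda=1$). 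So none of the cases \ref{enu:explicit:homo}--\ref{enu:explicit:shk} reproduces $\ling$ on $\tz_i$ (the shock data $\rho^-+\rho^+=1$, $\lambda^\pm=1$ of \eqref{eq:RH:shk}--\eqref{eq:char:shk} are only rigged inside the residual regions of the slabs, not on the transition zones), and Lemma~\ref{lem:hl}\ref{enu:hl:Linf} cannot substitute, since it compares two Hopf--Lax evolutions with the same speed, not a Hopf--Lax evolution against an arbitrary piecewise linear function. The repair is the one the paper uses and is cheap: both $\hl{\Speedd_{\scll,\sclll}}{\gIC}$ (by \eqref{eq:hl:time}) and $\ling$ (by \eqref{eq:nondg:lin}) are uniformly Lipschitz in $t$, and $\tz_i$ has height at most $6\tau'_\scll=6\tau/\scll$, so crossing a transition zone costs only $\delta_i\le\e_i+O(\tau'_\scll)$ (and $\delta_0=O(\tau'_\scll)$), an error that vanishes in the iterated limit $\sclll\to\infty$, $\scll\to\infty$ and is accumulated only finitely many times. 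With that substitution your induction closes and gives the proposition.
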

\begin{proof}
Throughout this proof, to simplify notations, 
we write $ G:= \hl{\Speedd_{\scll,\sclll}}{\gIC} $ for the Hopf--Lax function.
Recall from Section~\ref{sect:speedd:cnst} that
$ [0,T]\times[-r^*,r^*] $ is divided into a stalk of slabs $ \slab_i $,
with transition zones $ \tz_i $, $ i=0,\ldots,\scl_* $ in between the slabs;
see Figure~\ref{fig:zone_transi}.
By Lemma~\ref{lem:hl}\ref{enu:hl:loc} for $ s_0=\undt_i $,
the function $ G $ localizes onto $ \slab_i $ as 
$ G|_{\slab_i} = \hll{\Speedd_{\scll,\sclll}}{G(\undt_{i})}{\undt_{i}}|_{\slab_i} $.
With this property, we rewrite Lemma~\ref{lem:speeddhl} for $ f_{\scll,\sclll}=G $ as
\begin{align}
	\label{eq:speeddhl:re}
	\limsup_{\scll\to\infty} \limsup_{\sclll\to\infty} 
	\Big(
		\sup_{\slab_{i}\cap\trapD} 
		\big| G-\ling\big|
	\Big)
	\leq
	\limsup_{\scll\to\infty} \limsup_{\sclll\to\infty} 
	\sup_{(\undt_{i},\xi)\in\slab_{i}\cap\trapD} |G(\undt_i,\xi)-\ling(\undt_{i},\xi)|,
\end{align}
for each $ i=1,\ldots,\scl_* $.
Next, fix a transition zone $ \tz_i $, $ i\in\{0,\ldots,\scl_*\} $ as in~\eqref{eq:tz}, 
and it write as
\begin{align*}
	\tz_i := \big( [\bart_{i},\undt_{i+1}]\cap[0,T] \big) \times [-r^*,r^*]
	= \big[ (\bart_{i})_+,\, \undt_{i+1}\wedge T \big]\times[-r^*,r^*].
\end{align*}
By~\eqref{eq:nondg:lin}, $ \ling $ is uniformly Lipschitz;
and by Lemma~\ref{lem:hl}\ref{enu:hl:sp} the Hopf--Lax function $ G $ is also uniformly Lipschitz.
Consequently, there exists a fixed constant $ c<\infty $, such that
\begin{align*}
	|G(t,\xi) - G(s_1,\xi)| \leq c|t-(\bart_{i})_+| \leq 6c \tau'_\scll, 
	\quad
	|g(t,\xi) - g(s_1,\xi)| \leq c|t-(\bart_{i})_+| \leq 6c \tau'_\scll,
	\quad
	\forall (t,\xi) \in \tz_i.
\end{align*}
This gives
\begin{align*}
	\sup_{\tz_i\cap\trapD} |G-g|
	\leq
	\sup_{((\bart_{i})_+,\xi)\in\tz_i\cap\trapD} \big|G((\bart_{i})_+,\xi)-g((\bart_{i})_+,\xi)\big|	
	+
	12 \tau'_\scll.
\end{align*}
Taking the iterated limit $ \sclll\to\infty $, $ \scll\to\infty $, we obtain
\begin{align}
	\label{eq:speeddhl:re:}
	\limsup_{\scll\to\infty} \limsup_{\sclll\to\infty} 
	\Big(
		\sup_{\tz_{i}\cap\trapD} 
		\big| G-\ling\big|
	\Big)	
	\leq
	\limsup_{\scll\to\infty} \limsup_{\sclll\to\infty} 
	\sup_{((\bart_{i})_+,\xi)\in\tz_i\cap\trapD} \big|G((\bart_{i})_+,\xi)-g((\bart_{i})_+,\xi)\big|,
\end{align}
for each $ i=0,\ldots,\scl_* $.
As mentioned earlier, $ [0,T]\times[-r_*,r_*] $
is decomposed into a stalk of transition zones and the slabs,
from bottom to top as $ \tz_0\cup\slab_1\cup\tz_1\cup\ldots\slab_{\scl_*}\cup\tz_{\scl_*} $.
This being the case, applying~\eqref{eq:speeddhl:re}--\eqref{eq:speeddhl:re:}
inductively, similarly to the proof of Lemma~\ref{lem:speeddhl},
we obtain
\begin{align*}
	\limsup_{\scll\to\infty} \limsup_{\sclll\to\infty} 	
	\sup_{\trapD} \big| G-\ling\big|
	\leq
	\limsup_{\scll\to\infty} \limsup_{\sclll\to\infty} 	
	\sup_{(0,\xi)\in\trapD} \big| G(0,\xi)-\ling(0,\xi)\big|.	
\end{align*}
Indeed, since $ G(0,\xi) = \hl{\Speedd_{\scll,\sclll}}{\gIC}(0,\xi)=\ling(0,\xi) $,
the r.h.s.\ is zero.
This together with $ \trapD\supset [0,T]\times[-r_*,r_*] $ completes the proof.
\end{proof}

\section{Lower Bound: Construction of $ \Speed_{\scll,\sclll} $ and Proof of Proposition~\ref{prop:ent:up}}
\label{sect:speed}

\subsection{Constructing $ \Speed_{\scll} $ and estimating $ \hl{\Speed_{\scll,\sclll}}{\gIC} $}
\label{sect:speed:cnst}

We now construct the simple speed function $ \Speed_{\scll,\sclll} $ 
as an approximation of $ \Speedd_{\scll,\sclll} $.
Recall that $ \tau''_{\scll,\sclll} $ and $ b''_{\scll,\sclll} $ are the scales defined in~\eqref{eq:sclll}.
Consider the following partition of $ [0,T]\times\R $ that consists
of rectangles of height $ \tau''_{\scll,\sclll} $ and base $ b''_{\scll,\sclll} $:
\begin{align}
	\label{eq:Pi''}
	\Pi''_{\scll,\sclll} 
	:=
	\big\{ \square:= [(i''-1)\tau''_{\scll,\sclll},i''\tau''_{\scll,\sclll}]\times
	[(j''-1)b''_{\scll,\sclll},j''b''_{\scll,\sclll}] 
	: i''=1,\ldots,{\scl}_*\scll\sclll^2, j''\in\Z \big\}.
\end{align}
Recall from~\eqref{eq:prtnU} that $ \prtnU $ denotes a partition of $ [0,T]\times\R $,
and recall from~\eqref{eq:ske} the induced skeleton $ \ske $.
One readily check that, each rectangle $ \square\in\Pi''_{\scll,\sclll} $
is either contained in a region $ \region\in\prtnU $,
or intersects with a diagonal edge $ \skeE\in\ske $.
In the latter case, the edge $ \skeE $ goes through the upper-right and lower-left vertices of $ \square $.
Having noted these properties, we now define
\begin{align*}
	\Speed_{\scll,\sclll}\big|_{\square^\circ} := \Speedd_{\scll,\sclll}\big|_{\square^\circ},
	\quad
	\text{ if } \square\subset\region,
	\text{ for some } \region\in\prtnU;
\end{align*}
and if $ \square $ intersects with a diagonal edge $ \skeE\in\ske $,
we let $ \region^\pm\in\prtnU $ denote the neighboring regions of $ \skeE $, and set
\begin{align*}
	\Speed_{\scll,\sclll}\big|_{\square^\circ} := 
	(\lambda_{\region^-} \wedge \lambda_{\region^+})
	=	
	\inf_{\square^\circ}\Speedd_{\scll,\sclll}.
\end{align*} 
So far, we have defined the values of $ \Speed_{\scll,\sclll} $ on $ [0,T)\times\R $
except along edges of the rectangles $ \square\in\Pi''_{\scll,\sclll}  $.
To complete the construction, 
we extend the value of $ \Speed_{\scll,\sclll} $ onto $ [0,T)\times\R $ in the same way as in~\eqref{eq:ext}.
This defines an simple speed function, i.e., a function of the form~\eqref{eq:simple}.
Further, from~\eqref{eq:speedd:>r*}, \eqref{eq:speedd:rg}--\eqref{eq:speedd:limit}, we have
\begin{align}
	&
	\label{eq:speed:tail}
	\Speed_{\scll,\sclll}(t,\xi)|_{|\xi| > r^*} = 1,
\\
	&
	\label{eq:speed:rg}
	\Speed_{\scll,\sclll}\in [\minlam,\maxlam],
\\
	\label{eq:speed:limit}
	&
	\lim_{\scll\to\infty}
	\lim_{\sclll\to\infty}
	\sum_{\triangle\in\Sigma^*}
	\int_{\triangle} 
	\big|\Speed_{\scll,\sclll}-(\linlam\vee 1) \big| dt d\xi
	=0.
\end{align}

The following result gives the necessarily control on the Hopf-Lax function
$ \hl{\Speed_{\scll,\sclll}}{\gIC} $.
\begin{proposition}
\label{prop:speedhl}
For each fixed $ \scll<\infty $,
\begin{align}
	\label{eq:speedhl}
	\lim_{\sclll\to\infty} \sup_{[0,T]\times\R} \Big| \hl{\Speed_{\scll,\sclll}}{\gIC} - \hl{\Speedd_{\scll,\sclll}}{\gIC} \Big|
	=0.
\end{align}
In particular, by Proposition~\ref{prop:speeddhl},
\begin{align*}
	\limsup_{\scll\to\infty} \limsup_{\sclll\to\infty}
	\Big( \sup_{[0,T]\times[-r_*,r_*]} \big| \hl{\Speed_{\scll,\sclll}}{\gIC}-\ling \big| \Big) = 0.
\end{align*}
\end{proposition}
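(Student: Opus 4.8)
The plan is to establish the quantitative bound~\eqref{eq:speedhl}; the displayed consequence then drops out of Proposition~\ref{prop:speeddhl} and the triangle inequality. Fix $\scll<\infty$. First I would record the easy half: by construction $\Speed_{\scll,\sclll}\le\Speedd_{\scll,\sclll}$ everywhere on $[0,T)\times\R$ --- the two functions agree on every rectangle of $\Pi''_{\scll,\sclll}$ (see~\eqref{eq:Pi''}) contained in a region of $\prtnU$, and on a rectangle meeting a diagonal edge of $\ske$ the function $\Speedd_{\scll,\sclll}$ takes the two values $\lambda_{\region^-},\lambda_{\region^+}$ of the flanking regions whereas $\Speed_{\scll,\sclll}$ takes their minimum, and the common extension rule~\eqref{eq:ext} along rectangle edges preserves inequalities. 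Since $u\mapsto u\,\hlf(\alpha/u)$ is nondecreasing~\eqref{eq:hl:ndecr}, this forces $\HLf_{0,t}(w;\Speed_{\scll,\sclll})\le\HLf_{0,t}(w;\Speedd_{\scll,\sclll})$ for every admissible path $w$, hence $\hl{\Speed_{\scll,\sclll}}{\gIC}\le\hl{\Speedd_{\scll,\sclll}}{\gIC}$ pointwise.

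For the reverse estimate, fix $(t_0,\xi_0)$. Using Lemma~\ref{lem:hl}\ref{enu:hl:chaV} to restrict the variational problem to paths inside the light cone $\calC(t_0,\xi_0)$ --- a uniformly Lipschitz, hence compact, family on which $w\mapsto\HLf_{0,t_0}(w;\Speed_{\scll,\sclll})+\gIC(w(0))$ is lower semicontinuous --- one obtains an optimal path $w^*$ for $\hl{\Speed_{\scll,\sclll}}{\gIC}(t_0,\xi_0)$ with $|(w^*)'|\le\maxlam$ a.e. The set $D_{\scll,\sclll}$ on which $\Speed_{\scll,\sclll}\ne\Speedd_{\scll,\sclll}$ is, for our fixed $\scll$ and $\sclll$ large, a disjoint union of finitely many thin ``staircase'' strips, one around each diagonal edge $\skeE_k$ of $\ske$, of horizontal width $b''_{\scll,\sclll}=b/(\scll\sclll^2)$ (cf.~\eqref{eq:scll},~\eqref{eq:sclll}); crucially $\Speed_{\scll,\sclll}$ is \emph{constant} across each strip (equal to the minimum of the two flanking speeds, which by~\eqref{eq:ext} is also the common value of $\Speed_{\scll,\sclll}$ and $\Speedd_{\scll,\sclll}$ on $\skeE_k$ itself), while $\Speedd_{\scll,\sclll}$ coincides with $\Speed_{\scll,\sclll}$ throughout the strip except on the ``outer'' side of $\skeE_k$. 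The plan is to modify $w^*$ to a path $\widehat w\in W(t_0,\xi_0)$ that equals $w^*$ off these outer sides and is clamped onto $\skeE_k$ wherever $w^*$ enters an outer side; then $\widehat w$ never meets $\{\Speedd_{\scll,\sclll}>\Speed_{\scll,\sclll}\}$, so $\HLf_{0,t_0}(\widehat w;\Speedd_{\scll,\sclll})=\HLf_{0,t_0}(\widehat w;\Speed_{\scll,\sclll})$, and because $\Speed_{\scll,\sclll}$ is constant across each strip the clamping inflates the $\Speed_{\scll,\sclll}$-cost of $w^*$ only by a convexity defect controlled, via~\eqref{eq:hl:cnvx} and optimality, by $b''_{\scll,\sclll}$ times the number of such excursions; likewise $|\widehat w(0)-w^*(0)|\le b''_{\scll,\sclll}$ and $\gIC$ is $1$-Lipschitz. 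This gives
\begin{equation*}
	\hl{\Speedd_{\scll,\sclll}}{\gIC}(t_0,\xi_0)
	\;\le\;
	\HLf_{0,t_0}(\widehat w;\Speedd_{\scll,\sclll})+\gIC(\widehat w(0))
	\;\le\;
	\hl{\Speed_{\scll,\sclll}}{\gIC}(t_0,\xi_0)+C_\scll\,b''_{\scll,\sclll},
\end{equation*}
uniformly in $(t_0,\xi_0)$, with $C_\scll$ depending only on $\scll$ and $g$; letting $\sclll\to\infty$ yields~\eqref{eq:speedhl}.

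The delicate point --- and the step I expect to require the most care --- is the cost accounting in the last display: one must show that an optimal light-cone path $w^*$ cannot enter any single thin strip more than a number of times bounded uniformly in $\sclll$, since otherwise the $O(b''_{\scll,\sclll})$ errors incurred per excursion need not sum to something vanishing. This is exactly where the convexity inequality~\eqref{eq:hl:cnvx} enters: it forces $w^*$ to be affine on each maximal region of constant speed, so that re-entering a strip of vanishing width would incur a fixed $\HLf$ penalty that optimality forbids, which pins the number of strip-excursions of $w^*$ by a bound that may grow with $\scll$ but stays bounded as $\sclll\to\infty$. Finally one should verify the routine points that the minimizer $w^*$ exists and that $\widehat w$ is again piecewise $C^1$ and connected to $(t_0,\xi_0)$.
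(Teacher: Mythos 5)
Your overall strategy is the same as the paper's (the discrepancy set $\{\Speed_{\scll,\sclll}\neq\Speedd_{\scll,\sclll}\}$ is an $O(b''_{\scll,\sclll})$-thin neighborhood of the diagonal skeleton edges; one inequality comes from monotonicity $\Speed_{\scll,\sclll}\leq\Speedd_{\scll,\sclll}$ via~\eqref{eq:hl:ndecr}, the other from surgery on near-optimal paths to avoid that set at cost $O(b''_{\scll,\sclll})$). But the step you flag as delicate is a genuine gap, and your proposed resolution does not work. First, a minor but real point: the infimum in~\eqref{eq:hl:frml} over piecewise $C^1$ paths is in general not attained (the paper itself only ever uses near-minimizers and limiting families such as $\til w^+_\d$ in Lemma~\ref{lem:explicit}), so the argument must be run with $\e$-optimal paths, for which there is no structural rigidity whatsoever. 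Second, and decisively: the claim that convexity forces a minimizer to be affine on each maximal region of constant speed is false, because the Lagrangian $p\mapsto u\,\hlf(p/u)$ is \emph{not strictly} convex --- $\hlf$ is affine on $[1,\infty)$ and constant on $(-\infty,-1]$, so minimizers are massively non-unique (e.g.\ in the supersonic regime the cost is just the total rightward displacement and any monotone path is optimal). Hence a (near-)minimizer may enter and leave a single thin strip unboundedly many times as $\sclll\to\infty$; your clamping is free for excursions entering and exiting through the edge itself, but each exit through the staircase side of the strip forces a rightward reconnection costing up to $b''_{\scll,\sclll}$, and with an uncontrolled number of such events the total error need not vanish.

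The paper closes exactly this gap by a different bookkeeping: it cuts the discrepancy set into pieces $\calU_k$ of time-height $\sigma_\scll=b'_\scll/(2(\maxlam+\tfrac{b}{\tau}))$ --- a number $\#\mathscr{U}$ of pieces depending on $\scll$ but \emph{not} on $\sclll$ --- and telescopes the difference of Hopf--Lax functions over these pieces, sandwiching each increment between the envelopes $\speed_\star\leq\speed^{k-1},\speed^k\leq\speed^\star$. For a single piece, the light-cone surgery of Lemma~\ref{lem:hl}\ref{enu:hl:chaV} applied at the relevant hitting time of $\calU_k$ confines the earlier portion of the path to within $b'_\scll/2$ of the edge, after which one \emph{single} leftward shift by $3b''_{\scll,\sclll}$ moves it off the discrepancy set; this costs $3b''_{\scll,\sclll}$ per piece regardless of how often the original path oscillated. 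The total is $3\#\mathscr{U}\,b''_{\scll,\sclll}\to0$ as $\sclll\to\infty$. If you want to salvage your one-shot clamping, you would need an analogous device (a per-piece localization, or an a priori reduction of arbitrary paths to paths with an $\sclll$-independent number of strip crossings) rather than an appeal to strict convexity.
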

\begin{proof}
%
Consider a generic diagonal buffer zone $ \buffer_e $, $ e\in\cup_{i=1}^{\scl_*}\DE_i $,
and parametrize the zone as~\eqref{eq:bufferDE}.
Under the notations of~\eqref{eq:bufferDE},
we let
$
	\partial^\pm\buffer_e := 
	(\undt_i,(j-1)b\pm b'_\scll)\edge(\bart_i,kb\pm b'_\scll).
$
denote the right/left boundary of the buffer zone $ \buffer_e $,
and let
\begin{align*}
	\calU^\pm_e := \{ (t,\xi) : |\xi+(j-1)b\pm b'_\scll-\tfrac{b}{\tau}(t-\undt_i)| \leq b''_{\scll,\sclll}, \, t\in[\undt_i,\bart_i] \} 
\end{align*}
denote the regions of width $ 2b''_{\scll,\sclll} $ around $ \partial^\pm\buffer_e  $.
Referring the preceding definition of $ \Speed_{\scll,\sclll} $,
we see that $ \Speed_{\scll,\sclll}\neq\Speedd_{\scll,\sclll} $ only within the regions $ \calU_{e} $, $ e\in\DE $.
Showing~\eqref{eq:speedhl} hence amounts to showing
that such a discrepancy does not affect the resulting Hopf--Lax function as $ \sclll\to\infty $.

Let $ \sigma_{\scll} := b'_\scll(2(\maxlam+\frac{b}{\tau})) $.
Divide each $ \calU^\pm_e $ into smaller parts $ \calU^\pm_{e,i'} $, 
each of height at most $ \sigma_\scll $:
\begin{align*}
	\calU^\pm_{e,i'} := \calU^\pm_{e} \cap 
	([(i'-1)\sigma_\scll,i'\sigma_\scll)\times\R).
\end{align*}
Let $ \mathscr{U} := \{\calU^\pm_{e,i'}: e\in\cup_{i=1}^{\scl_*}\DE_i,i'=1,2,\ldots\} $
denote the collection of these regions, and enumerate them as $ \mathscr{U} = \{ \calU_1,\ldots,\calU_{\#\mathscr{U}} \} $.
We replace the value of $ \Speed_{\scll,\sclll} $ on each $ \calU_k $ by that of $ \Speedd_{\scll,\sclll} $ sequentially, i.e.,
\begin{align*}
	\speed^0_{\scll,\sclll} := \Speed_{\scll,\sclll},
	\quad
	\speed^{k}_{\scll,\sclll} = \speed^{k-1}_{\scll,\sclll}\ind_{\calU_{k}^c} + \Speedd_{\scll,\sclll}\ind_{\calU_{k'}}.
\end{align*}
Under these notations,
we telescope the difference $ \hl{\Speed_{\scll,\sclll}}{\gIC}-\hl{\Speedd_{\scll,\sclll}}{\gIC} $ accordingly as
\begin{align}
	\label{eq:speedhl:tele}
	\hl{\Speed_{\scll,\sclll}}{\gIC}-\hl{\Speedd_{\scll,\sclll}}{\gIC}
	=
	\sum_{k=1}^{\#\mathscr{U}}
	\big( \hl{\speed^{k}_{\scll,\sclll}}{\gIC}-\hl{\speed^{k-1}_{\scll,\sclll}}{\gIC} \big).
\end{align}

Given the decomposition, we now fix $ k\in\{1,\ldots,\#\mathscr{U}\} $
and $ (t_0,\xi_0)\in[0,T]\times\R $, and proceed to bound the quantity
$
	|\hl{\speed^{k}_{\scll,\sclll}}{\gIC}-\hl{\speed^{k-1}_{\scll,\sclll}}{\gIC}|.
$
Let us prepare a few notations for this.
Parametrize $ \calU_k \in \mathscr{U} $ as
\begin{align}
	\label{eq:calUk}
	\calU_k = 
	\{ (t,\xi) : 
	|\xi+\zeta_0-\tfrac{b}{\tau}(t-\und s)| \leq b''_{\scll,\sclll}, \, t\in[\und s,\bar s) \},
\end{align}
where $ \und s := (i'-1)\sigma_\scll $, $ \bar s := i't\sigma_\scll $, for some $ i'\in \N $,
and $ \zeta_0=(j-1)b\pm b'_\scll $, for some $ j'\in \N $.
In addition to $ \calU_k $, we consider also the region
\begin{align*}
	\calV^+
	&:= 
	\{ (t,\xi) : 
	0\leq \big(\xi+\zeta_0-\tfrac{b}{\tau}(t-\und s)\big) \leq b'_{\scll}, \, t\in[\und s,\bar s) \},
\\
	\calV^-
	&:= 
	\{ (t,\xi) : 
	-b'_{\scll}\leq \big(\xi+\zeta_0-\tfrac{b}{\tau}(t-\und s)\big) \leq 0, \, t\in[\und s,\bar s) \};
\end{align*}
see Figure~\ref{fig:calUV}.
\begin{figure}[h]
\psfrag{S}[c][c]{$ \und s $}
\psfrag{T}[c][c]{$ \bar s $}
\psfrag{K}[c][c]{$ b'_{\scll} $}
\psfrag{X}[c][c]{$ b'_{\scll} $}
\psfrag{Y}[c][b]{$ b''_{\scll,\sclll} $}
\psfrag{U}[c][c]{$ \scriptstyle\calU_k $}
\psfrag{V}[c][c]{$ \calV^- $}
\psfrag{W}[c][c]{$ \calV^+ $}
\psfrag{L}[r][c]{$ \partial^- \buffer_e $}
\psfrag{R}[l][c]{$ \partial^+ \buffer_e $}
\includegraphics[width=\textwidth]{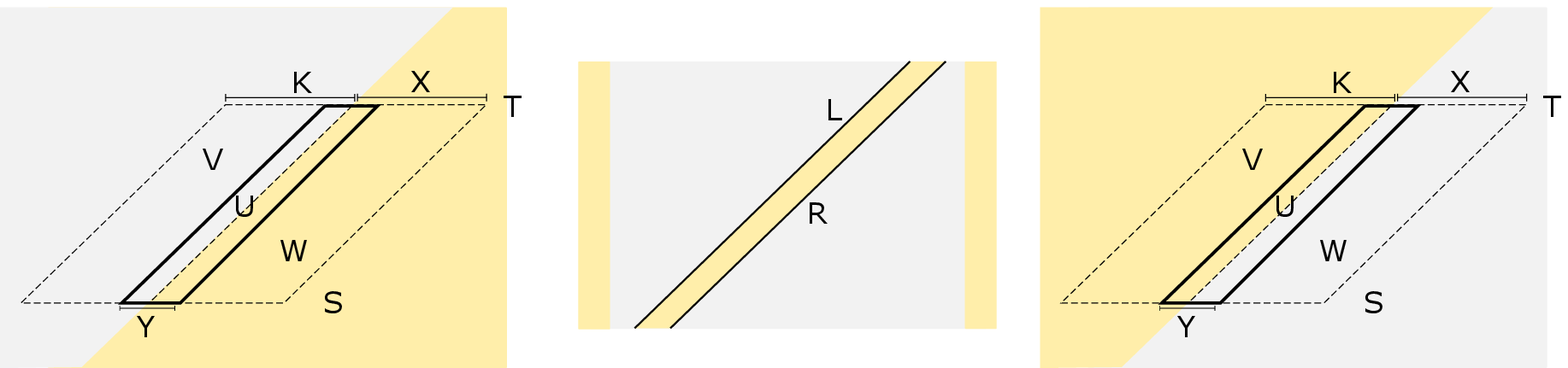}
\caption{The regions $ \calU_k $, $ \calV^+ $ and $ \calV^- $}
\label{fig:calUV}
\end{figure}
Recall from~\eqref{eq:prtnC} that $ \prtnC_i $ denotes the coarser partition,
and let $ \region^-,\region^+\in\cup_{i=1}^{\scl_*}\prtnC_i $ denote the two regions from these partitions
that intersect with $ \calU_k $, (i.e., $ \region^\pm\cap\calU_k\neq\emptyset $),
with $ \region^- $ on the left and $ \region^+ $ on the right.
Referring to Figure~\ref{fig:lamConst}, we see that
\begin{align}
	&
	\label{eq:Speedd:2values}
	\Speedd_{\scll,\sclll}|_{(\calV^\pm)^\circ} = \lambda_{\region^\pm},
\\	
	&
	\label{eq:Speeddk:2values}
	\Speedd^{k}_{\scll,\sclll}|_{(\calV^\pm\setminus\calU_k)^\circ} = \lambda_{\region^\pm},
	\quad
	\Speedd^{k-1}_{\scll,\sclll}|_{(\calV^\pm\setminus\calU_k)^\circ} = \lambda_{\region^\pm}.
\end{align}
\begin{figure}[h]
\psfrag{X}[c][c]{$ 2b'_{\scll} $}
\psfrag{W}[c][c]{$ b'_{\scll} $}
\includegraphics[width=.3\textwidth]{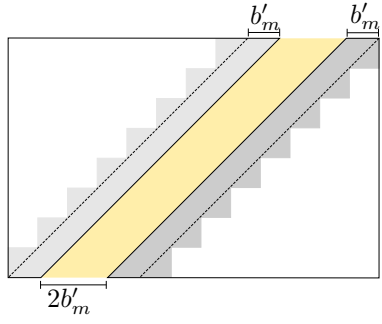}
\caption{%
On each reduced triangle $ \rTri $,
the function $ \Speedd_{\scll,\sclll} $ takes value $ \lambda_{\rTri} $ on the gray regions,
no matter $ \lambda_{\rTri} \geq 1 $ or $ \lambda_{\rTri} < 1 $.
The gray regions stretch a distance $ b'_\scll $ from the buffer zone into the reduced triangles.
}
\label{fig:lamConst}
\end{figure}

We now begin to bound 
$ |\hl{\speed^{k}_{\scll,\sclll}}{\gIC}-\hl{\speed^{k-1}_{\scll,\sclll}}{\gIC}| $.
To this end, we assume $ \lambda_{\region^+} \geq \lambda_{\region^-} $ for simplicity of notations.
The other scenario is proven by the same argument.
The functions $ \speed^{k}_{\scll,\sclll},\speed^{k-1}_{\scll,\sclll} $ differ only on $ \calU_k $,
and by~\eqref{eq:Speedd:2values},
these functions $ \speed^{k}_{\scll,\sclll},\speed^{k-1}_{\scll,\sclll} $ takes
two values $ \{\lambda_{\region^+},\lambda_{\region^-}\} $ on $ \calU_k $.
Under this property, we define the upper and lower envelopes of $ \speed^{k}_{\scll,\sclll},\speed^{k-1}_{\scll,\sclll} $ as
\begin{align*}
	\speed^\star := \speed^{k}_{\scll,\sclll} \ind_{([0,T)\times\R)\setminus \calU_k} + \lambda_{\region^+}\ind_{\calU_k},
	\
	\speed_\star := \speed^{k}_{\scll,\sclll} \ind_{([0,T)\times\R)\setminus \calU_k} + \lambda_{\region^-}\ind_{\calU_k},
\end{align*}
so that $ \speed_\star \leq \speed^{k}_{\scll,\sclll}, \speed^{k-1}_{\scll,\sclll} \leq \speed^\star $.
Combining this with~\eqref{eq:hl:ndecr} gives
\begin{align}
	\label{eq:speedhl:enve}
	\big| \hl{\speed^{k}_{\scll,\sclll}}{\gIC} - \hl{\speed^{k-1}_{\scll,\sclll}}{\gIC} \big|
	\leq
	\hl{\speed^\star}{\gIC} - \hl{\speed_\star}{\gIC}.	
\end{align}
The next step is to bound the r.h.s.\ of \eqref{eq:speedhl:enve}.
We do so by appealing to the variational formulation:
\begin{align}
	\notag
	\hl{\speed^{\star}}{\gIC}(t_0,\xi_0)
	&:=
	\inf_{w\in W(t_0,\xi_0)}
	\big\{ \HLf_{0,t_0}(\speed^{\star};w) + \gIC(w(0)) \big\},
\\
	\label{eq:speed**hl}
	\hl{\speed^{\star}}{\gIC}(t_0,\xi_0)
	&:=
	\inf_{v\in W(t_0,\xi_0)}
	\big\{ 	\HLf_{0,t_0}(\speed^{\star};v) + \gIC(v(0))  \big\}.
\end{align}
\emph{Fix} a generic $ w\in W(t_0,\xi_0) $.
Indeed, because $ \speed^{\star} \geq \speed_{\star} $ and because of~\eqref{eq:hl:ndecr},
\begin{align*}
	\HLf_{0,t_0}(\speed^{\star};w) + \gIC(w(0)) \geq \HLf_{0,t_0}(\speed_{\star};w) + \gIC(w(0)).
\end{align*}
Our goal is to perform surgery on the path $ w $ to obtain a new path $ v $,
so that the reverse inequality holds for $ v $, up to an error of order $ \sclll^{-1} $.
Consider the last time $ t_\star := \inf\{ t\in[\und s,\bar s]: w(t)\in \calU_k \} $ when $ w $ lies within $ \calU_k $.
If $ t_\star = \infty $, i.e., $ w $ never lies within $ \calU_k $,
taking $ v=w $~\eqref{eq:speed**hl} gives
\begin{align*}
	\HLf_{0,t_0}(\speed^{\star};w) + \gIC(w(0)) 
	= 
	\HLf_{0,t_0}(\speed_{\star};w) + \gIC(w(0))
	\geq
	\hl{\speed^{\star}}{\gIC}(t_0,\xi_0).		
\end{align*}
Otherwise, applying Lemma~\ref{lem:hl}\ref{enu:hl:chaV} with $ (s_0,t'_0,t_0)=(0,t_\star,t_0) $
and with $ \speed=\speed_\star $,
we obtained a modified path $ \til w\in W(t_0,\xi_0) $, such that
\begin{align}
	\label{eq:witlw<w}
	\HLf_{0,t_0}(\speed_{\star};\til w) + \gIC(\til w(0)) 
	\leq
	\HLf_{0,t_0}(\speed_{\star};\til w) + \gIC(w(0)),
\end{align}
that $ (t,\til w)|_{[0,t_\star]} \in \calC(t_\star,w(t_\star)) $,
and that $ \til w|_{[t_\star,t_0]}=w_{[t_\star,t_0]} $.
The last property ensures that $ (t,\til w(t))|_{(t_\star,t_0]} \notin \calU_k $,
and we already have $ (t,\til w(t))|_{[0,\und s)} \notin \calU_k $ (see~\eqref{eq:calUk}).
Since $ \calU_k $ is the only region where $ \speed^\star $ and $ \speed_\star $ differ,
our next step is to modify the path $ \til w(t) $ for $ t\in[\und s,t_\star] $.

Fix a small parameter $ \d>0 $.
We set
\begin{align*}
	v_\d(t) &:=
	\left\{\begin{array}{l@{,}l}
		\til w(t) &\text{ for } t\in [0, \und s]\cup [t_\star,t_0],
	\\
		\til w(t) - 3b''_{\scll,\sclll}
			&\text{ for } t\in [\und s+\d,t_\star-\d],
	\\
		\til w(\und s) + 3b''_{\scll,\sclll}\d^{-1}(t-\und s) 
			&\text{ for } t\in (\und s, \und s+\d),
	\\
		\til w(t_\star) - 3b''_{\scll,\sclll}\d^{-1}(t_\star-t) 
			&\text{ for } t\in (t_\star-\d, t_\star).
	\end{array}\right.
\end{align*}
That is, we shift the part of $ \til w $ within $ t\in[\und s+\d, -\d+t_\star] $,
by distance $ 3b''_{\scll,\sclll} $ to the left.
Within the intervals $ (\und s,\und s+\d) $ and $ (-\d+t_\star,t_\star) $,
we linearly joint the path to ensure $ v_\d\in W(t_0,\xi_0) $. 
For such a path $ v_\d $, evaluate the corresponding functional
$
	\HLf_{0,t_0}(\speed^{\star};v_\d) + \gIC(v_\d(0)) 	
$,
and let $ \d\downarrow 0 $ to get
\begin{subequations}
\label{eq:vd}
\begin{align}
	\label{eq:vd1}
	\lim_{\d\downarrow 0}
	\big( \HLf_{0,t_0}(\speed^{\star};v_\d) + \gIC(v_\d(0)) \big)
	=&
	\HLf_{0,\und s}(\speed^{\star};\til w) + \HLf_{t_\star,t_0}(\speed^{\star};\til w)  + \gIC(\til w(0))
\\
	\label{eq:vd2}
	&
	+\HLf_{\und s,t_\star}
	\big( \speed^{\star};\til w-3b''_{\scll,\sclll} \big)
\\	
	\label{eq:vd3}
	&
	+
	\lim_{\d\downarrow 0}\HLf_{\und s,\und s+\d}(\speed^{\star};v_\d)
	+
	\lim_{\d\downarrow 0}\HLf_{-\d+t_\star,t_\star}(\speed^{\star};v_\d).
\end{align}
\end{subequations}
We now analyze the expressions on~\eqref{eq:vd1}--\eqref{eq:vd3} each by each.
\begin{itemize}[leftmargin=3ex]
\item 
As mentioned earlier, for $ t\in [0,\und s) $ and for $ t\in(t_\star,t_0] $,
$ \til w(t) $ sits entirely within the region where $ \speed^\star=\speed_\star $,
so we replace $ \speed^\star $ by $ \speed_\star $ on the r.h.s.\ of~\eqref{eq:vd1}. 

\item
Next, for $ t\in(\und s,t_\star) $,
given the properties
\begin{align*}
	(t_\star,\til w(t_\star))\in \calU_k,
	\quad
	(t,\til w(t))|_{t\in[0,t_\star]} \in \calC(t_\star,w(t_\star)),
	\quad
	t_\star-\und s \leq \sigma_\scll,
\end{align*}
using the same speed-counting argument below~\eqref{eq:intersect}, we have that
\begin{align}
	\label{eq:tilwincalC}
	\speed^\star\big( t,\til w(t)- 3b''_{\scll,\sclll} \big)\big|_{t\in(\und s,t_\star)}
	\leq
	\speed_\star\big( t,\til w(t) \big)\big|_{t\in(\und s,t_\star)},
\end{align}
for all $ \sclll $ large enough such that $ 4b''_{\scll,\sclll} < b'_{\scll} - \sigma_\scll(\maxlam+\frac{b}{\tau}) $.
Further, by~\eqref{eq:Speeddk:2values}, we have
$ \speed^\star|_{(\calV^-\setminus\calU_k)^\circ} = \lambda_{\region^-} $.
This together with~\eqref{eq:tilwincalC} and $ \lambda_{\region^-} \leq \lambda_{\region^+} $ gives
\begin{align*}
	\speed^\star\big( t,\til w(t)- 3b''_{\scll,\sclll} \big)\big|_{t\in(\und s,t_\star)}
	\leq
	\speed_\star\big( t,\til w(t) \big)\big|_{t\in(\und s,t_\star)},
\end{align*}
for all $ \sclll $ large enough, and therefore
$ \HLf_{\und s,t_\star}( \speed^{\star};\til w-3b''_{\scll,\sclll}) \leq \HLf_{\und s,t_\star}( \speed_{\star};\til w) $.

\item
For $ t\in(\und s,t_\star) $ and for $ t\in(-\d+t_\star,t_\star) $,
the path $ v_\d $ has constant velocity $ v_\d = \pm \d^{-1}3b''_{\scll,\sclll} $.
Using this and~\eqref{eq:hl:id} gives
\begin{align*}
	\lim_{\d\downarrow 0}\HLf_{\und s,\und s+\d}(\speed^{\star};v_\d)
	=
	0,
	\quad
	\lim_{\d\downarrow 0}\HLf_{-\d+t_\star,t_\star}(\speed^{\star};v_\d)
	=
	3b''_{\scll,\sclll}.
\end{align*}
\end{itemize}
Combining the preceding discussions with~\eqref{eq:vd} gives
\begin{align*}
	\lim_{\d\downarrow 0}
	\big( \HLf_{0,t_0}(\speed^{\star};v_\d) + \gIC(v_\d(0)) \big)
	\leq
	\HLf_{0,t_0}(\speed^{\star};\til w) + \gIC(\til w(0)) +3b''_{\scll,\sclll}.	
\end{align*}
Further combining this is with~\eqref{eq:witlw<w} and~\eqref{eq:speed**hl},
we arrive at
$
	\hl{\speed_\star}{\gIC}(t_0,\xi_0)
	\geq
	\HLf_{0,t_0}(\speed^{\star};\til w) + \gIC(\til w(0)) +3b''_{\scll,\sclll}.	
$
As this holds for all $ w\in W(t_0,\xi_0) $ and all $ (t_0,\xi_0) $, we conclude
\begin{align*}
	\hl{\speed_\star}{\gIC} \geq \hl{\speed^\star}{\gIC} - 3 b''_{\scll,\sclll}.
\end{align*}
Inserting this into~\eqref{eq:speedhl:enve} thus gives
\begin{align}
	\label{eq:speedhl:enve:}
	\big| \hl{\speed^{k}_{\scll,\sclll}}{\gIC} - \hl{\speed^{k-1}_{\scll,\sclll}}{\gIC} \big|
	\leq
	3 b''_{\scll,\sclll}.	
\end{align}

Applying the bound~\eqref{eq:speedhl:enve:} within the decomposition~\eqref{eq:speedhl:tele} gives
\begin{align}
	\label{eq:speedhl:tele:}
	\big| \hl{\Speed_{\scll,\sclll}}{\gIC}-\hl{\Speedd_{\scll,\sclll}}{\gIC} \big|
	\leq
	3\#\mathscr{U} b''_{\scll,\sclll}.
\end{align}
Referring to the preceding definition of~$ \mathscr{U} $,
we see that $ \#\mathscr{U} $ depends on $ \scl_* $ and $ \scll $ only,
and in particular does not depend on $ \sclll $.
Hence letting $ \sclll\to\infty $ in~\eqref{eq:speedhl:tele:} completes the proof.
\end{proof}

\subsection{Estimating the relative entropy}
\label{sect:speed:ent}
Recall that $ \QN^{\speed} $ denotes the law of the inhomogeneous \ac{TASEP} with a simple speed function $ \speed $.
Having constructed $ \Speed_{\scll,\sclll} $, in this subsection we estimate the relative entropy
$ \frac{1}{N^2} H(\QN^{\Speed_{\scll,\sclll}}|\PN^\g) $.
First, from the explicit formula \eqref{eq:ent} and \eqref{eq:speed:tail}, we have
\begin{align}
	\label{eq:ent:speed}
	\frac{1}{N^2}H(\QN^{\Speed_{\scll,\sclll}}|\PN^\g)	
	=
	\frac{1}{N}
	\sum_{|x|\leq Nr^*}
	\Ex_{\QN^{\Speed_{\scll,\sclll}}}\Big(  \int_0^{T} 
		\mob(\h(Nt),x)\prf\Big(\Speed_{\scll,\sclll}\big(t,\tfrac{x}{N}\big)\Big) dt
	\Big).
\end{align}
Let us divide the r.h.s.\ of~\eqref{eq:ent:speed} into two sums over $ |x|\leq Nr_* $ and over $ Nr_*<|x|\leq Nr^* $,
and write resulting sums as $ H^1_{\scll,\sclll,N} $ and $ H^2_{\scll,\sclll,N} $, respectively.
More explicitly,
\begin{align}
	\label{eq:lwb:H1}
	H^1_{\scll,\sclll,N} 
	&:= 	
	\frac{1}{N}
		\sum_{|x|\leq Nr_*}
		\Ex_{\QN^{\Speed_{\scll,\sclll}}}\Big(  \int_0^{T} 
			\mob(\h(Nt),x)\prf\Big(\Speed_{\scll,\sclll}\big(t,\tfrac{x}{N}\big)\Big) dt
		\Big),
\\	
	\label{eq:lwb:H2}
	H^2_{\scll,\sclll,N} 
	&:= 	
	\frac{1}{N}
		\sum_{Nr_*<|x|\leq Nr^*}
		\Ex_{\QN^{\Speed_{\scll,\sclll}}}\Big(  \int_0^{T} 
			\mob(\h(Nt),x)\prf\Big(\Speed_{\scll,\sclll}\big(t,\tfrac{x}{N}\big)\Big) dt
		\Big).
\end{align}

Recall that $ \Sigma^* $ denotes the restriction of the triangulation $ \Sigma $ onto $ [0,T]\times[-r^*,r^*] $
and that $ \Sigma_* $ denotes the restriction of $ \Sigma $ onto $ [0,T]\times[-r_*,r_*] $.
We begin with a bound on $ H^{2}_{\scll,\sclll,N} $.
\begin{lemma}
\label{lem:lwbH2bd}
We have that
\begin{align*}
	\limsup_{\scll\to\infty} \limsup_{\sclll\to\infty} \limsup_{N\to\infty} H^2_{\scll,\sclll,N} 
	\leq
	\sum_{\triangle\in\Sigma^*\setminus\Sigma_*} \prff(\linlam)|\triangle|
	=
	\int_0^T \int_{r_*<|\xi| < r^*} \prff\Big(\frac{g_t}{g_\xi(1-g_\xi)}\Big) dt d\xi.
\end{align*}
\end{lemma}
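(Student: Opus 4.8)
The statement concerns $H^2_{\scll,\sclll,N}$, the contribution to the relative entropy coming from the annulus $Nr_*<|x|\leq Nr^*$. The plan is to bound the inner expectation in \eqref{eq:lwb:H2} triangle by triangle, exploiting that $\Speed_{\scll,\sclll}$ is piecewise constant with respect to the skeleton $\ske$ and that on each reduced triangle (away from buffer, transition, and intermittent/residual regions) it takes a value close to $\linlam\vee 1$ by \eqref{eq:speed:limit}. First I would drop the mobility factor $\mob(\h(Nt),x)\in\{0,1\}$ using $\prf\le\prff$ together with the trivial bound $\mob\le 1$? — no, that is the wrong direction; instead the right move is the one already used in the proof of Proposition~\ref{prop:ent:lw}: bound $\mob(\h(Nt),x)\prf(\Speed_{\scll,\sclll})$ from above by $\mobbup(\text{density})\,\prff(\Speed_{\scll,\sclll})$ is not quite it either, since we need an \emph{upper} bound on the entropy and here $\Speed_{\scll,\sclll}$ is deterministic. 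The cleanest route: since $\Speed_{\scll,\sclll}(t,\tfrac xN)$ is deterministic, pull $\Ex_{\QN^{\Speed_{\scll,\sclll}}}$ inside and bound $\mob(\h(Nt),x)\le 1$, giving
\[
	H^2_{\scll,\sclll,N} \le \frac1N\sum_{Nr_*<|x|\le Nr^*}\int_0^T \prff\big(\Speed_{\scll,\sclll}(t,\tfrac xN)\big)\,dt.
\]
Wait — that overshoots, because on the bulk of each triangle with $\linlam<1$ we have $\Speed_{\scll,\sclll}=1$ and $\prff(1)=0$, but on the residual/intermittent regions $\Speed_{\scll,\sclll}$ can be as large as $\maxlam$, and these regions have total area $O(1/\sclll)$ per triangle, so their contribution vanishes as $\sclll\to\infty$. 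On triangles with $\linlam\ge 1$, $\Speed_{\scll,\sclll}$ is essentially $\linlam$ except on buffer zones and the $O(b'_\scll)$-neighborhoods, again of vanishing area. So the plan is:

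\textbf{Step 1.} Bound $\mob\le 1$ to replace the expectation by the deterministic Riemann sum above; then pass to the integral $\int_0^T\int_{r_*<|\xi|<r^*}\prff(\Speed_{\scll,\sclll})\,dt\,d\xi$ as $N\to\infty$, using that $\prff$ is bounded on $[\minlam,\maxlam]$ (by \eqref{eq:speed:rg}) and that $\Speed_{\scll,\sclll}$ has finitely many discontinuities, so the Riemann sum converges. This handles the $N\to\infty$ limit with no probabilistic input beyond boundedness of $\Speed_{\scll,\sclll}$.

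\textbf{Step 2.} For fixed $\scll,\sclll$, decompose $\int_0^T\int_{r_*<|\xi|<r^*}\prff(\Speed_{\scll,\sclll})$ over the triangles $\triangle\in\Sigma^*\setminus\Sigma_*$ and the transition zones $\tz_i$. On the transition zones $\Speed_{\scll,\sclll}=1$ so $\prff=0$. On each $\triangle$, write $\int_\triangle \prff(\Speed_{\scll,\sclll}) = \prff(\linlam\vee 1)|\triangle| + \int_\triangle\big(\prff(\Speed_{\scll,\sclll})-\prff(\linlam\vee 1)\big)$, and control the error by $\|\prff'\|_{L^\infty[\minlam,\maxlam]}\int_\triangle|\Speed_{\scll,\sclll}-(\linlam\vee 1)|$; here I use that $\prff'(\lambda)=\log(\lambda\vee 1)$ is bounded on $[\minlam,\maxlam]$. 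Summing over $\triangle\in\Sigma^*$ and invoking \eqref{eq:speed:limit} kills the total error in the iterated limit $\sclll\to\infty$, $\scll\to\infty$. Finally, since $g$ is $\Sigma$-piecewise linear with $\nabla g|_{\triangle^\circ}=(\linkap,\linrho)$ and $\linlam=\linkap/(\linrho(1-\linrho))$, we have $\prff(\linlam\vee 1)|\triangle|=\prff(\linlam)|\triangle|=\int_\triangle\prff\big(g_t/(g_\xi(1-g_\xi))\big)$, which assembles into $\int_0^T\int_{r_*<|\xi|<r^*}\prff(g_t/(g_\xi(1-g_\xi)))\,dt\,d\xi$ after noting $\bigcup_{\triangle\in\Sigma^*\setminus\Sigma_*}\triangle$ covers $[0,T]\times\{r_*<|\xi|<r^*\}$ up to the edges of $\Sigma$, a null set.

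\textbf{Main obstacle.} I expect the only genuine subtlety is Step 1: interchanging the $N\to\infty$ limit with the sum and replacing the Riemann sum $\frac1N\sum_{Nr_*<|x|\le Nr^*}\prff(\Speed_{\scll,\sclll}(t,\tfrac xN))$ by $\int_{r_*<|\xi|<r^*}\prff(\Speed_{\scll,\sclll}(t,\xi))\,d\xi$ uniformly in $t$. Because $\Speed_{\scll,\sclll}(\cdot,\cdot)$ is lower semicontinuous and piecewise constant with discontinuities along finitely many Lipschitz curves (vertical and diagonal edges of the skeleton), the set of $\xi$ where $\xi\mapsto\prff(\Speed_{\scll,\sclll}(t,\xi))$ is discontinuous has measure zero for each $t$, and the integrand is uniformly bounded; dominated convergence then gives the Riemann-sum convergence, and Fatou (or just boundedness and finitely many terms scaled by $1/N$) legitimizes the $\limsup_N$. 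The rest is bookkeeping over the partition and an application of \eqref{eq:speed:limit}. No new estimate is needed — everything rides on $\mob\le 1$, boundedness of $\Speed_{\scll,\sclll}$ and of $\prff,\prff'$ on $[\minlam,\maxlam]$, and the already-established area bound \eqref{eq:speed:limit}.
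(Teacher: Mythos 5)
Your approach is the same as the paper's: bound $\mob\le 1$, pass from the Riemann sum to $\int_0^T\int_{r_*\le|\xi|\le r^*}$ as $N\to\infty$ using boundedness and piecewise constancy of the speed function, then invoke \eqref{eq:speed:limit} to take the iterated limit in $\sclll,\scll$. One slip: your first display claims $H^2_{\scll,\sclll,N}\le \frac1N\sum\int_0^T\prff(\Speed_{\scll,\sclll}(t,\tfrac xN))\,dt$, but the definition \eqref{eq:lwb:H2} involves $\prf$, and since $\prff(\lambda)=\prf(\lambda\vee1)\le\prf(\lambda)$ (with strict inequality where $\Speed_{\scll,\sclll}<1$, e.g.\ on intermittent zones), this inequality points the wrong way. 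The fix is to keep $\prf$ throughout — $\prf(\Speed_{\scll,\sclll})$ is still bounded on $[\minlam,\maxlam]$ and Lipschitz there, so your Step 2 error estimate applies verbatim — and only at the end use $\prf(\linlam\vee1)=\prff(\linlam)$ to land on the stated right-hand side; this is exactly what the paper does.
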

\begin{proof}
Since the mobility function $ \mob(\f,x) $ is bounded by $ 1 $, 
we bound the expression~\eqref{eq:lwb:H2} as
\begin{align}
	\label{eq:lem:lwbH2bd:}
	H^2_{\scll,\sclll,N} 
	\leq 	
	\frac{1}{N}
		\sum_{Nr_*<|x|\leq Nr^*}
		\int_0^{T} 
			\prf\big(\Speed_{\scll,\sclll}(t,\tfrac{x}{N})\big) dt.
\end{align}
Since $ \Speed_{\scll,\sclll} $ is piecewise constant,
and since $ \prf(\Speed_{\scll,\sclll}) $ is bounded (thanks to \eqref{eq:speed:rg}),
letting $ N\to\infty $ in~\eqref{eq:lem:lwbH2bd:},
the discrete sum in~\eqref{eq:lem:lwbH2bd:} converges to an integral, giving
\begin{align}
	\label{eq:lem:lwbH2bd::a}
	\limsup_{N\to\infty} H^2_{\scll,\sclll,N} 
	\leq 	
	\int_{0}^T \int_{r_*\leq |\xi|\leq r^*}
	\prf(\Speed_{\scll,\sclll}(t,\xi)) dtd\xi.
\end{align}
With $ \prf(\Speed_{\scll,\sclll}) $ being bounded,
in~\eqref{eq:lem:lwbH2bd::a}, letting $ \sclll\to\infty $ and $ \scll\to\infty $ in order,
together with \eqref{eq:speed:limit}, we obtain
\begin{align*}
	\limsup_{\scll\to\infty} \limsup_{\sclll\to\infty} \limsup_{N\to\infty} H^2_{\scll,\sclll,N} 
	&\leq
		\lim_{\scll\to\infty}
		\lim_{\sclll\to\infty} \int_{0}^T \int_{r_*\leq |\xi|\leq r^*}
	\prf(\Speed_{\scll,\sclll}(t,\xi)) dtd\xi
	=
	\sum_{\triangle\in\Sigma^*\setminus\Sigma_*} \prff(\linlam)|\triangle|.
\end{align*}
This completes the proof.
\end{proof}

We next establish a bound on $ H^1_{\scll,\sclll,N} $.
\begin{lemma}
\label{lem:lwbH1bd}
We have that
\begin{align}
	\label{eq:lwbH1bd}
	\lim_{\scll\to\infty} \lim_{\sclll\to\infty} \lim_{N\to\infty} 
	H^1_{\scll,\sclll,N} 
	=
	\sum_{\triangle\in\Sigma_*} \linrho(1-\linrho)\prff(\linlam)
	=
	\int_0^T \int_{-r^*}^{r^*} \lrf(g_t,g_\xi) dt d\xi.
\end{align}
\end{lemma}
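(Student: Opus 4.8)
\textbf{Proof proposal for Lemma~\ref{lem:lwbH1bd}.}
The plan is to compute the limit of $H^1_{\scll,\sclll,N}$ by a two-step procedure: first pass $N\to\infty$ using the hydrodynamic limit (Corollary~\ref{cor:inhomo}) together with the uniform bound \eqref{eq:speed:rg} on $\Speed_{\scll,\sclll}$ and the boundedness of the mobility function, thereby turning the expectation into a deterministic spacetime integral against the Hopf--Lax profile $\hl{\Speed_{\scll,\sclll}}{\gIC}$; then pass $\sclll\to\infty$ and $\scll\to\infty$ in order, using Proposition~\ref{prop:speedhl} (so that $\hl{\Speed_{\scll,\sclll}}{\gIC}\to\ling$ uniformly on $[0,T]\times[-r_*,r_*]$) and the $L^1$-convergence \eqref{eq:speed:limit} of $\Speed_{\scll,\sclll}$ to $\linlam\vee 1$ on each triangle. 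First I would recall from \eqref{eq:mob} that $\mob(\h(Nt),x)=\eta(Nt,x+\tfrac12)(1-\eta(Nt,x-\tfrac12))$, and that in macroscopic coordinates $\mob(\h(Nt),x)$ is a local function of the slope of $\hN$ near $\tfrac{x}{N}$. Since $\Speed_{\scll,\sclll}$ is piecewise constant on the rectangles $\square\in\Pi''_{\scll,\sclll}$, on the interior of each such rectangle (away from the finitely many skeleton edges) the factor $\prf(\Speed_{\scll,\sclll}(t,\tfrac{x}{N}))$ is a genuine constant, so the Riemann-sum-to-integral passage is routine once the density of $\mob$ is identified.

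The main point is to show that, under $\QN^{\Speed_{\scll,\sclll}}$, the empirical local mobility $\frac1N\sum_{\frac{x}{N}\in\square}\mob(\h(Nt),x)$ concentrates around $\rho(1-\rho)$ where $\rho = (\hl{\Speed_{\scll,\sclll}}{\gIC})_\xi$ on that rectangle. This is where I would have to be careful: the mobility $\eta(x+\tfrac12)(1-\eta(x-\tfrac12))$ is a \emph{quadratic} (two-point) function of the occupation variables, so its spacetime average is not immediately controlled by the hydrodynamic limit of $\hN$ alone; one needs a local-equilibrium / one-block type statement asserting that, on the scale of a macroscopic rectangle, the two-point correlation factorizes into the product of the local density. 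For the inhomogeneous \ac{TASEP} with a simple speed function this should follow from the results in \cite{georgiou10} invoked through Corollary~\ref{cor:inhomo}, possibly combined with the attractivity/monotonicity of the basic coupling to sandwich the process between equilibrium profiles; alternatively one can avoid correlations by the same trick used in \eqref{eq:nts:0}--\eqref{eq:nts:1}, bounding $\sum\eta(1-\eta)$ above by $\big(\sum\eta\big)\wedge\big(\sum(1-\eta)\big)$ and hence by $|\xi^+-\xi^-|\,\mobbup(\hN(t,\xi^+)-\hN(t,\xi^-))$, and similarly obtaining a matching lower bound. Since the function $\Speed_{\scll,\sclll}$ is constructed precisely so that $\hl{\Speed_{\scll,\sclll}}{\gIC}$ is approximately linear with slope $\linrho$ on each reduced triangle, and since the residual/intermittent zones have vanishing total area as $\sclll\to\infty$, the mobility density integrates against $\prff$ to give $\sum_{\triangle\in\Sigma_*}\linrho(1-\linrho)\prff(\linlam)|\triangle|$.

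Concretely, the chain of estimates I would carry out is: (i) fix $\scll,\sclll$ and bound $|H^1_{\scll,\sclll,N}-\widetilde H^1_{\scll,\sclll}|\to 0$ as $N\to\infty$, where $\widetilde H^1_{\scll,\sclll}:=\int_0^T\int_{-r_*}^{r_*}(\hl{\Speed_{\scll,\sclll}}{\gIC})_\xi\big(1-(\hl{\Speed_{\scll,\sclll}}{\gIC})_\xi\big)\,\prf(\Speed_{\scll,\sclll})\,dtd\xi$, using Corollary~\ref{cor:inhomo}, Lemma~\ref{lem:pt:uniform}, the uniform bound \eqref{eq:speed:rg} on $\prf(\Speed_{\scll,\sclll})$, and dominated convergence; (ii) replace $\prf$ by $\prff$, which is harmless because $\Speed_{\scll,\sclll}\in[\minlam,\maxlam]$ and $\prf=\prff$ whenever the argument is $\ge 1$ while on residual zones where $\Speed_{\scll,\sclll}=1$ the value $\prf(1)=0=\prff(1)$; (iii) let $\sclll\to\infty$: by Proposition~\ref{prop:speedhl} the slope $(\hl{\Speed_{\scll,\sclll}}{\gIC})_\xi$ converges (in $L^1$ on $[0,T]\times[-r_*,r_*]$, using that the Hopf--Lax function is uniformly Lipschitz by Lemma~\ref{lem:hl}\ref{enu:hl:sp} and converges uniformly to the piecewise-linear $\ling$) to $g_\xi$, and by \eqref{eq:speed:limit} $\prff(\Speed_{\scll,\sclll})\to\prff(\linlam\vee 1)=\prff(\linlam)$ in $L^1$ on each triangle; since $\rho(1-\rho)\le\tfrac14$ the product converges and $\widetilde H^1_{\scll,\sclll}\to\sum_{\triangle\in\Sigma_*}\fint_\triangle g_\xi(1-g_\xi)\,\prff(\linlam)|\triangle|$; (iv) let $\scll\to\infty$: since $g_\xi=\linrho$ is already constant on each $\triangle\in\Sigma$ and $\Sigma_*$ exhausts $[0,T]\times[-r_*,r_*]$, one gets $\sum_{\triangle\in\Sigma_*}\linrho(1-\linrho)\prff(\linlam)|\triangle|=\int_0^T\int_{-r^*}^{r^*}\lrf(g_t,g_\xi)\,dtd\xi$ by the definition \eqref{eq:lrf} of $\lrf$ and the fact that $\lrf(g_t,g_\xi)$ vanishes on $[0,T]\times\{r_*<|\xi|<r^*\}$ or, more precisely, is absorbed correctly once one notes $\linrho(1-\linrho)\prff(\linlam)=\lrf(\linkap,\linrho)$. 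The anticipated obstacle is step (i)--(iii), i.e.\ justifying the appearance of the \emph{product} $g_\xi(1-g_\xi)$ as the limiting mobility density rather than some other function of the two-point correlations; I expect this to require either an explicit appeal to local equilibrium for the inhomogeneous model or the sandwiching argument of \eqref{eq:nts:0}--\eqref{eq:nts:1} applied in both directions, which is likely the reason the lemma's proof (omitted here) is nontrivial.
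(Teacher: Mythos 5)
Your outline of the limit procedure (first $N\to\infty$, then $\sclll\to\infty$, then $\scll\to\infty$) and the final bookkeeping in steps (ii)--(iv) match the paper, and you have correctly located the crux: identifying the limit of the expected time-integrated mobility $\Ex_{\QN^{\Speed_{\scll,\sclll}}}\int\mob(\g(Nt),x)\,dt$ as $\rho(1-\rho)$ times the area. But neither of your two proposed resolutions of that crux works. The sandwiching of \eqref{eq:nts:0}--\eqref{eq:nts:1} is intrinsically one-sided: it bounds $\sum_x\eta(1-\eta)$ above by $(\sum\eta)\wedge(\sum(1-\eta))$, i.e.\ by $\mobbup(\rho)=\rho\wedge(1-\rho)$, and there is no matching deterministic lower bound (at density $\tfrac12$ the per-site mobility of a configuration can be anywhere between roughly $0$ for blocked configurations and $\tfrac12$ for alternating ones). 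Indeed, the impossibility of running this argument ``in both directions'' is exactly why the paper's upper and lower rate functions $\lrfup$ and $\lrf$ do not match; if your step (i) could be completed by two-sided sandwiching, the main open problem of the paper would be closed. Likewise, Corollary~\ref{cor:inhomo} only gives the hydrodynamic limit of the height function and carries no local-equilibrium information about the two-point function $\eta(x+\tfrac12)(1-\eta(x-\tfrac12))$.

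The paper's actual resolution bypasses local equilibrium entirely via the Dynkin/martingale identity \eqref{eq:lwbH1bd::}: for the inhomogeneous \ac{TASEP},
\begin{align*}
	\Ex_{\QN^{\Speed_{\scll,\sclll}}}\Big(\int_{t_1}^{t_2}\Speed_{\scll,\sclll}(t,\tfrac{x}{N})\,\mob(\g(Nt),x)\,dt\Big)
	=
	\Ex_{\QN^{\Speed_{\scll,\sclll}}}\big(\gN(t_2,\tfrac{x}{N})-\gN(t_1,\tfrac{x}{N})\big),
\end{align*}
so on each rectangle $\square\in\Pi''_{*,\scll,\sclll}$, where $\Speed_{\scll,\sclll}$ is a constant $\lambda_\square$, the expected time-integrated mobility equals $\lambda_\square^{-1}$ times the expected height increment. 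The hydrodynamic limit (upgraded from convergence in probability to $L^1$ via the uniform $L^2$ bound \eqref{eq:lwbH1bd:L2bd}) identifies that increment as $\int_\square\partial_t G_{\scll,\sclll}\to|\square|\linkap$, and the factor $\rho(1-\rho)$ then emerges purely algebraically from $\linkap/\linlam=\linrho(1-\linrho)$ together with $\prf(\lambda\vee1)/(\lambda\vee1)=\prff(\lambda)/\lambda$. You should replace your step (i) with this identity; as written, the central step of your argument has no valid justification.
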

\begin{proof}
Throughout this proof,
we use $ o_{\scll,\sclll,N}(1) $ and $ u_{\scll,\sclll} $, 
to denote \emph{generic}, deterministic quantities that may change from line to line, 
but satisfy
\begin{align*}
	\lim_{\scll\to\infty} \lim_{\sclll\to\infty} \lim_{N\to\infty} |o_{\scll,\sclll,N}(1)| = 0,
	\quad
	\limsup_{\scll\to\infty} \limsup_{\sclll\to\infty} |u_{\scll,\sclll}| \leq 1.
\end{align*}

Recall from~\eqref{eq:Pi''} that $ \Pi''_{\scll,\sclll} $ denote a partition of $ [0,T]\times\R $ consisting of rectangles,
and that $ \Speed_{\scll,\sclll} $ is constant within the interior $ \square^\circ $ of each rectangle $ \square\in \Pi''_{\scll,\sclll} $.
This being the case,
letting $ \Pi''_{*,\scll,\sclll} $ denote the restriction of $ \Pi''_{\scll,\sclll} $ onto $ [0,T]\times[-r_*,r_*] $,
we parametrize each 
$ \square\in\Pi''_{\scll,\sclll} $ as $ [\und t_\square, \bar t_\square]\times[\xi^-_\square,\xi^+_\square] $,
and express~\eqref{eq:lwb:H1} as 
\begin{align}
	\notag
	H^1_{\scll,\sclll,N} 
	&=
	\sum_{\square\in\Pi''_{*,\scll,\sclll}} \prf\big( \Speed_{\scll,\sclll}|_{\square^\circ} \big)
	\Ex_{\QN^{\Speed_{\scll,\sclll}}} 
		\int_0^T
		\frac{1}{N}\sum_{(t,x)\in\square^\circ} \mob(\g(Nt),x) dt
\\
	\label{eq:lwbH1bd:}
	&=
	\sum_{\square\in\Pi''_{*,\scll,\sclll}} \prf\big( \Speed_{\scll,\sclll}|_{\square^\circ} \big)
	\frac{1}{N}\sum_{\frac{x}{N}\in[\xi^-_\square,\xi^+_\square]}
	\Ex_{\QN^{\Speed_{\scll,\sclll}}} 
	\int_{\und t_\square}^{\bar t_\square}
	\mob(\g(Nt),x) dt.
\end{align}
Here, unlike in Lemma~\ref{lem:lwbH2bd}, using $ \mob(\g(Nt),x) \leq 1 $ does \emph{not} yields 
a good enough bound for our purpose.
Instead, we use the following analog of~\eqref{eq:QN:hN}
for inhomogeneous \acp{TASEP}: 
\begin{align}
	\label{eq:lwbH1bd::}
	\Ex_{\QN^{\Speed_{\scll,\sclll}}} 
	\Big( 
		\int_{t_1}^{t_2}
		\Speed_{\scll,\sclll}(t,\tfrac{x}{N}) \mob(\g(Nt),x) dt
	\Big)
	=
	\Ex_{\QN^{\Speed_{\scll,\sclll}}} \Big( \gN(t_1,\tfrac{x}{N})-\gN(t_2,\tfrac{x}{N}) \Big),
\end{align}
$ \forall t_1\leq t_2\in[0,T]$, $ x\in\Z $.
Applying~\eqref{eq:lwbH1bd::} with $ (t_1,t_2)=(\und t_\square, t_\square) $ in~\eqref{eq:lwbH1bd:},
we obtain the following expression for~$ H^1_{\scll,\sclll,N}  $:
\begin{align}
	\label{eq:lwbH1bd:::}
	H^1_{\scll,\sclll,N} 
	=
	\sum_{\square\in\Pi''_{*,\scll,\sclll}} \prf\big( \Speed_{\scll,\sclll}|_{\square^\circ} \big)
	\frac{1}{\Speed_{\scll,\sclll}|_{\square^\circ}}
	\frac{1}{N}\sum_{\frac{x}{N}\in[\xi^-_\square,\xi^+_\square]}
	\Ex_{\QN^{\Speed_{\scll,\sclll}}} \big(\gN(t,\tfrac{x}{N})\big|_{\und t_\square}^{\bar t_\square}\big).
\end{align}

Write $ G_{\scll,\sclll} :=\hl{\Speed_{\scll,\sclll}}{\gIC} $ for the Hopf--Lax function.
Recall from Corollary~\ref{cor:inhomo} that,
$ \gN $ converges to $ G_{\scll,\sclll} $, $ \QN^{\Speed_{\scll,\sclll}} $-in probability.
In order to approximate the r.h.s.\ of~\eqref{eq:lwbH1bd:::} in term of $ G_{\scll,\sclll} $,
our next step to leverage the convergence in probability into convergence in $ L^1 $.
Recall from~\eqref{eq:speed:rg} that $ \Speed_{\scll,\sclll} \leq \maxlam $.
Consequently,  under the law, $ \QN^{\Speed_{\scll,\sclll}} $,
$ \g(t_2,x)-\g(t_1,x) $ is stochastically dominated by $ \Pois((t_2-t_1)\maxlam) $,
$ \forall t_1\leq t_2\in[0,T] $.
In particular, 
\begin{align}
	\label{eq:lwbH1bd:L2bd}
	\sup
	\Big\{ \Ex_{\QN^{\Speed_{\scll,\sclll}}}
  \big( \gN(t,\tfrac{x}{N})|_{t_1}^{t_2} \big)^2 : N\in\N, \, x\in\Z, \, [t_1,t_2]\subset [0,T] \Big\}
	<
	\infty.
\end{align}
The $ L^2 $ boundedness~\eqref{eq:lwbH1bd:L2bd},
together with the converges in probability, Corollary~\ref{cor:inhomo}, gives
\begin{align}
	\label{eq:cnvg:QNscl}
	\lim_{N\to\infty}
	\frac{1}{N}\sum_{\frac{x}{N}\in[\xi^-_\square,\xi^+_\square]}
	\Ex_{\QN^{\Speed_{\scll,\sclll}}} \big(\gN(t,\tfrac{x}{N})\big|_{\und t_\square}^{\bar t_\square}\big)
	=
	\int_{\xi^-_\square}^{\xi^+_\square} G_{\scll,\sclll}(t,\tfrac{x}{N})\big|_{\und t_\square}^{\bar t_\square} d\xi.
\end{align}
Further, by Lemma~\ref{lem:hl}\ref{enu:hl:sp}, the function $ (t,\xi)\mapsto G_{\scll,\sclll}(t,\xi) $ is uniformly Lipschitz.
This allows us to rewrite the r.h.s.\ of~\eqref{eq:cnvg:QNscl} as $ \int_{\square} \partial_t G_{\scll,\sclll} dtd\xi $.
On this note, combining~\eqref{eq:cnvg:QNscl} and \eqref{eq:lwbH1bd:::} gives 
\begin{align}
	\label{eq:lwbH1bd:N:}
	\lim_{N\to\infty}
	H^1_{\scll,\sclll,N} 
	=
	\int_0^T \int_{-r_*}^{-r^*} 
	\frac{\prf\big( \Speed_{\scll,\sclll} \big)}{\Speed_{\scll,\sclll}} 
	\partial_t G_{\scll,\sclll}
	dtd\xi.
\end{align}

Next, recall from~\eqref{eq:speed:limit} that
$ \Speed_{\scll,\sclll} $ converges in $ L^1 $ to $ (\linlam\vee 1) $ on each $ \triangle\in\Sigma^* $
under the relevant limit.
Further, $ \Speed_{\scll,\sclll} $ is bounded away from zero and infinity (by~\eqref{eq:speed:rg}),
and $ \partial_t G_{\scll,\sclll} $ is uniformly bounded (by Lemma~\ref{lem:hl}\ref{enu:hl:sp}).
Under these properties,
we rewrite the r.h.s.\ of~\eqref{eq:lwbH1bd:N:} as
$
	\sum_{\triangle\in\Sigma_*}
	\int_{\triangle} 
	\partial_t G_{\scll,\sclll}\prf( \Speed_{\scll,\sclll} )/\Speed_{\scll,\sclll} 	
	dtd\xi,
$
and for on each $ \triangle\in\Sigma_* $, 
replace $ \Speed_{\scll,\sclll} $ with its limiting value $ \linlam\vee 1 $.
This, together with 
$
	\prf( \lambda\vee 1 )/(\lambda\vee 1) 
	= \prff( \lambda )/\lambda, 
$ 
gives
\begin{align}
	\label{eq:lwbH1bd:N::}
	\lim_{\scll\to\infty}\lim_{\scll\to\infty}
	\lim_{N\to\infty}
	H^1_{\scll,\sclll,N} 
	=
	\sum_{\triangle\in\Sigma_*} 
	\frac{\prff\big( \linlam\big)}{\linlam}
	\lim_{\scll\to\infty}\lim_{\sclll\to\infty}
	\int_{\triangle}
	\partial_t G_{\scll,\sclll}
	dtd\xi,
\end{align}
provided that the limit 
$ 
	(\lim_{\scll\to\infty}\lim_{\sclll\to\infty}
	\int_{\triangle}
	\partial_t G_{\scll,\sclll}
	dtd\xi)
$ 
exists, for each $ \triangle\in\Sigma_* $.
Fixing~$ \triangle\in\Sigma_* $,
We next show that the corresponding limit does exist, and calculate its value.
To this end, we parametrize the triangle as 
$ 
	\triangle = \{ 
		(t,\xi) :
		t\in[\und t_\triangle(\xi), \bar t_\triangle(\xi)],
		\,
		\xi\in[\xi^-_\triangle,\xi^+_\triangle]
	\} 
$,
and write
\begin{align}
	\label{eq:eachlimit}
	\int_{\triangle} \partial_t G_{\scll,\sclll}
	=
	\int_{\xi^-_\triangle}^{\xi^+_\triangle} G_{\scll,\sclll}(t,\xi)|_{\und t_\triangle(\xi)}^{\bar t_\triangle(\xi)} d\xi.
\end{align}
By Proposition~\ref{prop:speedhl},
the function $ G_{\scll,\sclll} $ converges uniformly to $ \ling $ on $ [0,T]\times[-r_*,r_*] $
under the relevant iterated limit.
Using this the take limit~\eqref{eq:eachlimit} gives
\begin{align}
	\label{eq:eachlimit:}
	\lim_{\scll\to\infty}\lim_{\sclll\to\infty}	
	\int_{\triangle} \partial_t G_{\scll,\sclll}
	=
	\int_{\xi^-_\triangle}^{\xi^+_\triangle} g(t,\xi)|_{\und t_\triangle(\xi)}^{\bar t_\triangle(\xi)} d\xi
	=
	\int_{\triangle} g_t dt d\xi
	=
	|\triangle| \linkap.	
\end{align}
Inserting~\eqref{eq:eachlimit:} into~\eqref{eq:lwbH1bd:N::},
together with $ \linkap/\linlam = \linrho(1-\linrho) $,
we conclude the desired result~\eqref{eq:lwbH1bd}.
\end{proof}

Combining Lemma~\ref{lem:lwbH2bd}--\ref{lem:lwbH1bd} immediately yields:
\begin{corollary}
\label{cor:lwbH}
We have that
\begin{align*}
	\limsup_{\scll\to\infty} \limsup_{\sclll\to\infty} \limsup_{N\to\infty}
	\frac{1}{N^2}H(\QN^{\Speed_{\scll,\sclll}}|\Pr_{\gIC})
	\leq
	\int_0^T \int_{-r^*}^{r^*} \lrf(g_t,g_\xi) dt d\xi
	+
	\int_0^T \int_{r_*<|\xi| < r^*} \prff\Big(\frac{g_t}{g_\xi(1-g_\xi)}\Big) dt d\xi.
\end{align*}
\end{corollary}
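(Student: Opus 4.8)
The final statement, Corollary~\ref{cor:lwbH}, follows almost immediately by combining Lemma~\ref{lem:lwbH2bd} and Lemma~\ref{lem:lwbH1bd}, so the plan is short.

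\textbf{The approach.} Recall from~\eqref{eq:ent:speed}--\eqref{eq:lwb:H2} that the relative entropy decomposes as
\[
	\frac{1}{N^2} H(\QN^{\Speed_{\scll,\sclll}}|\PN^\g) = H^1_{\scll,\sclll,N} + H^2_{\scll,\sclll,N},
\]
where $ H^1_{\scll,\sclll,N} $ collects the sites $ |x|\leq Nr_* $ and $ H^2_{\scll,\sclll,N} $ the sites $ Nr_*<|x|\leq Nr^* $ (the tail $ |x|>Nr^* $ contributing nothing by~\eqref{eq:speed:tail} and $ \prf(1)=0 $). First I would apply the elementary inequality $ \limsup (a_N+b_N) \leq \limsup a_N + \limsup b_N $ iteratively through the three nested limits $ N\to\infty $, $ \sclll\to\infty $, $ \scll\to\infty $, to get
\[
	\limsup_{\scll\to\infty} \limsup_{\sclll\to\infty} \limsup_{N\to\infty}
	\tfrac{1}{N^2}H(\QN^{\Speed_{\scll,\sclll}}|\PN^\g)
	\leq
	\limsup_{\scll,\sclll,N} H^1_{\scll,\sclll,N} + \limsup_{\scll,\sclll,N} H^2_{\scll,\sclll,N}.
\]
Then Lemma~\ref{lem:lwbH1bd} identifies the first term (in fact as a genuine limit) with $ \sum_{\triangle\in\Sigma_*} \linrho(1-\linrho)\prff(\linlam) = \int_0^T\int_{-r^*}^{r^*}\lrf(g_t,g_\xi)\,dtd\xi $, wait---one should be careful here: Lemma~\ref{lem:lwbH1bd} gives $ \int_{-r_*}^{r_*} $ as the geometric region $ \Sigma_* $, but the integral is written over $ [-r^*,r^*] $ because $ \lrf(g_t,g_\xi) $ vanishes outside the support of $ g $'s nontrivial part or, more precisely, because $ \Sigma_* = \{\triangle\subset[0,T]\times[-r_*,r_*]\} $ and one rewrites the sum over $ \Sigma_* $ as an integral, using that $ g $ restricted to the full strip still equals $ g $. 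Lemma~\ref{lem:lwbH2bd} identifies the second term with $ \sum_{\triangle\in\Sigma^*\setminus\Sigma_*}\prff(\linlam)|\triangle| = \int_0^T\int_{r_*<|\xi|<r^*}\prff(g_t/(g_\xi(1-g_\xi)))\,dtd\xi $. Adding the two gives exactly the claimed bound.

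\textbf{Key steps in order.} (1) Write the decomposition $ H = H^1 + H^2 $ from~\eqref{eq:ent:speed}--\eqref{eq:lwb:H2}. (2) Apply subadditivity of $ \limsup $ under the iterated limits; here one must check that the iterated $ \limsup $ of a sum is bounded by the sum of the iterated $ \limsup $s, which holds since each inner $ \limsup $ is finite (the integrands $ \prf(\Speed_{\scll,\sclll}) $ and the mobility are bounded by~\eqref{eq:speed:rg}). (3) Substitute the values from Lemma~\ref{lem:lwbH1bd} and Lemma~\ref{lem:lwbH2bd}. (4) Combine the two integral expressions. That is the whole proof.

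\textbf{Main obstacle.} There is essentially no obstacle: Corollary~\ref{cor:lwbH} is a bookkeeping consequence of the two preceding lemmas, and in the paper it is stated as an immediate corollary. The only minor subtlety worth a sentence is the interchange of the iterated limits with the finite sum $ H = H^1 + H^2 $, i.e.\ verifying that $ \limsup_{\scll}\limsup_{\sclll}\limsup_N (H^1 + H^2) \le \limsup_{\scll}\limsup_{\sclll}\limsup_N H^1 + \limsup_{\scll}\limsup_{\sclll}\limsup_N H^2 $; this is standard once one knows each term has a finite (indeed, explicitly computed) iterated $ \limsup $, so the inequality is never of the form $ \infty - \infty $. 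All the genuine analytic work---the hydrodynamic limit Corollary~\ref{cor:inhomo}, the Hopf--Lax estimates of Section~\ref{sect:speedd:hl}--\ref{sect:speed:cnst}, and the change-of-measure computation~\eqref{eq:ent}---has already been carried out in the lemmas being invoked.
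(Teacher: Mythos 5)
Your proposal is correct and is exactly the paper's argument: the paper derives the corollary by "combining Lemma~\ref{lem:lwbH2bd}--\ref{lem:lwbH1bd}", i.e.\ the decomposition $\frac{1}{N^2}H=H^1_{\scll,\sclll,N}+H^2_{\scll,\sclll,N}$ from \eqref{eq:ent:speed}--\eqref{eq:lwb:H2}, subadditivity of the iterated $\limsup$ (harmless since both limits are finite), and substitution of the two lemmas. Your aside about $[-r_*,r_*]$ versus $[-r^*,r^*]$ is immaterial here: since $\lrf\geq 0$, enlarging the domain of integration only helps the claimed upper bound, so the corollary follows verbatim from the lemma statements.
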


\subsection{Proof of Proposition~\ref{prop:ent:up}}
\label{sect:lwb:pf}
With $ \e_*>0 $ being given and fixed,
we apply Proposition~\ref{prop:speedhl} and Corollary~\ref{cor:lwbH},
to obtain \emph{fixed} $ \scll_*,\sclll_* \in \N $ such that
\begin{align}
	&
	\label{eq:lwb:Apprx}
	\sup_{[0,T]\times[-r_*,r_*]} \big| \hl{\Speed_{\scll_*,\sclll_*}}{\gIC}-\ling \big|
	< 
	\e_*,
\\
	&
	\label{eq:lwbH:}
	\limsup_{\sclll\to\infty} \limsup_{N\to\infty}
	\frac{1}{N^2}H(\QN^{\Speed_{\scll_*,\sclll_*}}|\Pr_{\gic})
	<
	\int_0^T \int_{\R} \lrf(g_t,g_\xi) dtd\xi + 
          \int_0^T \int_{{r_*<|\xi| < r^*}} \prff\Big(\frac{g_t}{g_\xi(1-g_\xi)}\Big) dt d\xi.
\end{align}
Given such $ \scll_*,\sclll_* \in \N $, we set $ \QN := \QN^{\Speed_{\scll_*,\sclll_*}} $.
The inequality verifies~\eqref{eq:lwbH:} the condition~\eqref{eq:lwb:ent}.
Next, combining~\eqref{eq:lwb:Apprx} with Corollary~\ref{cor:inhomo},
we see that  the condition~\eqref{eq:lwb:apprx} holds.

It remains only to check the condition~\eqref{eq:lwb:RD:bd}.
From the explicit formula~\eqref{eq:RNdrv} and by~\eqref{eq:speed:tail},
we have that
\begin{align}
	\notag
	\frac{1}{N^2} &\log \frac{d\QN}{d\PN^\g}
\\
	\label{eq:lwb:RD}
	&=
	\frac{1}{N}\sum_{\frac{x}{N}\in[-r^*,r^*]} \int_0^{T} 
		\Big(
			\log\Speed_{m_*,n_*}\big(Nt,\tfrac{x}{N}\big) d\hN(t,\tfrac{x}{N}) 
			- \mob(\h(Nt),x)\big(\Speed_{m_*,n_*}\big(Nt,\tfrac{x}{N}\big)-1\big) dt
		\Big).
\end{align}
From~\eqref{eq:speed:rg},
we have the bounds $ |\log(\Speed_{m_*,n_*})| \leq |\log\minlam|+|\log\maxlam| := A $
and $ |\Speed_{m_*,n_*}-1| \leq \maxlam+1 $.
Using these bounds in~\eqref{eq:lwb:RD} gives
\begin{align}
	\label{eq:lwb:RD:}
	\Big| \frac{1}{N^2} \log \frac{d\QN}{d\Pr_\gic}\Big|
	\leq
	\frac{1}{N}\sum_{\frac{x}{N}\in[-r^*,r^*]}
	\Big( A \hN(t,\tfrac{x}{N})|_{t=0}^{t=T} + (\maxlam+1)T \Big).
\end{align}
Taking $ (\Ex_{\QN}(\Cdot)^2) $ on both sides of~\eqref{eq:lwb:RD:} 
and using Jensen's inequality,
we arrive at
\begin{align}
\label{eq:lwb:RD::}
	\Ex_{\QN}\left(\Big[ \frac{1}{N^2} \log \frac{d\QN}{d\PN^\g}\Big]^2 \right)
	\leq
	\frac{4 r^* A}{N} \sum_{|x|\leq Nr^*}
	\Ex_{\QN}\left(\Big[ \hN(t,\tfrac{x}{N})|_{t=0}^{t=T}\Big]^2\right)  + 4 r^* (\maxlam+1)^2T^2 .
\end{align}
Using the $ L^2 $ bound~\eqref{eq:lwbH1bd:L2bd} on the r.h.s.\ of~\eqref{eq:lwb:RD::},
we concludes the desired condition~\eqref{eq:lwb:RD:bd}
and hence complete the proof.

\appendix
\section{Super-exponential One-block Estimate and Smaller Deviations}
\label{append:smalldevi}

This is section we gives some discussions about deviations at speed $ N^{-a} $, $ 1<a<2 $.
The purpose is to investigate deviations in a finer topology, where also the oscillations of $ h_\xi $ are taken into account.
This gives rise to the discussions about measure-valued solutions and Young measures in the following.
The results in this section are not used in the rest of the article.

Recall the definition of $ \mob $ from~\eqref{eq:mob} and recall that $ \mobb(\rho):= \rho (1-\rho) $.
To simply notation, hereafter we write $ \mobb(\rho)=\Phi(\rho) $.
In this section, we change the underlying space from the full-line $ \Z $
to a discrete torus $ \Tn:=\Z/(N\Z) \simeq \{0,1,\ldots,N-1\} $ of $ N $ sites,
as the latter is more convenient for our discussion here.
Correspondingly we let $ \T := \R/\Z \simeq $ denotes the continuous torus.
The height function $ \h $ and consequently the space $ \Sp $ need to be modified accordingly,
with heights being understood under suitable modulation.
Additionally, we will also view \ac{TASEP} as a Markov process of the occupation variables.
That is, we  consider the occupation variables
$ \eta(t,x) := \h(t,x+1)-\h(t,x) $, which is the indicator function of having a particle at $ x-\frac12 $,
and view \ac{TASEP} as a Markov process with state space $ \{0,1\}^{\Tn}=(\eta=(\eta(x))_{x\in\Tn}) $.
It is known that, for each fixed $ \rho\in[0,1] $, 
\ac{TASEP} has i.i.d.\ Bernoulli invariant distribution $ \mu_\rho := \text{Ber}(\rho)^{\otimes\Tn} $.
Under this setup, we write $ \mob(\eta,x) = \eta(x-1)(1-\eta(x)) $,
and $ \eta^\ic(x) := \hic(x+1)-\hic(x) $, and
\begin{align}
	\label{eq:bareta}
	\bar{\eta}_{k}(x) := \frac{1}{k}\sum_{|x'-x+\frac12|\leq\frac{k}{2}} \eta(x'), 
\end{align}
for the local average around $ x $ of width $ k $.

The first observation here is that the classical proof of super-exponential one-block estimate \cite{kipnis89}
extends to speed $ N^{2-\delta} $, $ \delta>0 $, for \ac{TASEP}.
This can be seen as a slight improvement of in \cite[Theorem~3.2]{varadhan04}.

\begin{lemma}
\label{lem:sexp1block}
For a given bounded $ G(t,\xi):[0,\infty)\times\T\to\R $, define
\begin{align}
	\label{eq:VofG}
	V_{N,k}(G,\eta,t) := \sum_{x\in\TN} G(t,\tfrac{x}{N}) \big( \mob({\eta},Nx) - \Phi(\bar{\eta}_{k}(Nx)) \big).
\end{align}
For any fixed $ a,\delta\in(0,1) $, $ T<\infty $, and a deterministic initial condition $ \eta^\ic $, we have
\begin{align*}
   \limsup_{k\to\infty}\limsup_{N\to\infty} 
   N^{-2+\delta} 
   \log \Pr_{\eta^\ic} \Big(\int_0^T V_{N,k}(G,\eta(t{N}),t) dt \ge aN \Big) = -\infty.
\end{align*}
\end{lemma}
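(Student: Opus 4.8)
The plan is to follow the classical Guo--Papanicolaou--Varadhan / Kipnis--Olla--Varadhan scheme for the super-exponential one-block estimate, tracking carefully how the argument tolerates a speed of $N^{2-\delta}$ rather than the usual $N$. First I would reduce to a local estimate: by the exponential Markov/Chebyshev inequality and the Feynman--Kac formula, for $\theta>0$,
\begin{align*}
	N^{-2+\delta}\log\Pr_{\eta^\ic}\Big( \int_0^T V_{N,k}(G,\eta(tN),t)dt \geq aN \Big)
	\leq
	-\theta a N^{\delta-1} + N^{-2+\delta}\int_0^T \lambda_N(t)\,dt,
\end{align*}
where $\lambda_N(t)$ is the top eigenvalue of the symmetrized operator $N^2\gen_N^{\mathrm{sym}} + \theta N^{2-\delta} V_{N,k}(G,\cdot,t)$ acting on $L^2(\mu_{1/2})$ (using that $\mu_{1/2}$ is reversible for the symmetric part of the \ac{TASEP} generator on $\Tn$). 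By the standard variational formula for $\lambda_N(t)$, it suffices to bound, uniformly in $t$ and in densities $f$ with $\int f\,d\mu_{1/2}=1$,
\begin{align*}
	\theta N^{2-\delta}\int V_{N,k}(G,\eta,t) f\,d\mu_{1/2} - N^2 D_N(\sqrt f),
\end{align*}
where $D_N$ is the Dirichlet form. Dividing through by $N^{2-\delta}$, the Dirichlet form comes with a large prefactor $N^\delta\to\infty$, which is precisely what makes the estimate work at the faster speed.

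Next I would localize. Since $V_{N,k}$ is a sum of terms supported on windows of width $O(k)$, a standard block decomposition (cutting $\Tn$ into $O(N/k)$ disjoint blocks of size $k$ and using convexity of the Dirichlet form) reduces the bound to a single-block problem: one must show that for each fixed macroscopic point, and for any density $f$ on a block of size $k$ (with respect to the uniform Bernoulli measure on $\{0,1\}^k$),
\begin{align*}
	\Big| \int \big( \mob(\eta,0) - \Phi(\bareta_k(0)) \big) f\,d\mu - C_0\, D_k(\sqrt f) \Big|
	\leq
	\varepsilon_k,
\end{align*}
with $\varepsilon_k\to 0$ as $k\to\infty$, where $D_k$ is the block Dirichlet form. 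This is exactly the classical one-block estimate: by the law of large numbers for the canonical (microcanonical) measures, conditioning on the empirical density $\bareta_k$ makes the difference $\mob(\eta,0)-\Phi(\bareta_k)$ average to something $o(1)$ uniformly, so either $f$ is already close to a convex combination of canonical measures (and the first integral is small) or $D_k(\sqrt f)$ is bounded below; the spectral gap of the symmetric exclusion process on a block of size $k$ (which is $O(k^{-2})$, hence harmless after the $C_0$ is absorbed) quantifies this. The key point for us is that the $N^\delta$ blowup of the Dirichlet-form prefactor means $C_0$ can be taken $\to\infty$ with $N$, so after sending $N\to\infty$ the first term $-\theta a N^{\delta-1}\to 0$ while the block error survives as $\limsup_k \varepsilon_k\, \theta \|G\|_\infty = 0$; finally letting $\theta\to\infty$ in the surviving bound $-\theta a \cdot 0$ is vacuous, so instead one keeps $\theta$ fixed, sends $k\to\infty$ to kill $\varepsilon_k$, and concludes the $\limsup$ equals $-\infty$ by then letting $\theta\to\infty$ against the (now strictly negative, for the "$\ge aN$" shift absorbed differently) — I would arrange the order of limits so that the $aN$ on the event, which is $N^{1-\delta}\cdot(aN^\delta)$ after rescaling, dominates.

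Let me re-examine that last point, since it is the crux. The event is $\{\int_0^T V_{N,k}\,dt \ge aN\}$ and the natural speed is $N^{2-\delta}$, so $aN = a N^{-1+\delta}\cdot N^{2-\delta}$; thus after the exponential-moment bound at rate $\theta N^{2-\delta}$ we get the term $-\theta a N^{-1+\delta}$, which tends to $0$, not $-\infty$. The resolution, as in the original argument, is that the eigenvalue term is not merely $\le 0$ but is $\le -c(\theta)\, N^{2-\delta}$-free once one exploits that the block error $\varepsilon_k$ can be made smaller than any prescribed $\eta'>0$ and that for the estimate to produce $-\infty$ one sends $k\to\infty$ \emph{first} (so $\varepsilon_k\to 0$), then $N\to\infty$, making the eigenvalue contribution $\le \theta\|G\|_\infty \varepsilon_k + o(1) \to 0$, and \emph{then} lets $\theta\to\infty$, so the whole bound becomes $\lim_{\theta\to\infty}(-\theta a N^{-1+\delta})$; but $N$ is already gone. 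The correct bookkeeping is the standard one: keep the event's shift as $aN$, note that $N^{-2+\delta}\log\Pr \le -\theta a N^{\delta-1} + N^{-2+\delta}\cdot o_k(N^{2-\delta})$ where the $o_k$ is uniform after $N\to\infty$; letting $N\to\infty$ kills \emph{both} the first term and leaves only the block error $\theta\|G\|_\infty\limsup_k\varepsilon_k$; since $\varepsilon_k\to 0$ this is $0$; and finally $\theta\to\infty$ forces the original $\limsup$ (which does not depend on $\theta$) to be $\le 0$ — to upgrade $0$ to $-\infty$ one uses that the shift can be taken as $a'N$ for \emph{any} $a'$, equivalently reruns the bound with $a$ replaced by arbitrarily large $a$, or (cleanest) replaces $aN$ by $aN$ and $\theta$ by $\theta N^{1-\delta}$, which is still admissible because the Dirichlet gain is $N^2$, not $N^{2-\delta}$; then the first term is $-\theta a$ and the eigenvalue term is still $o(1)$ as $N\to\infty$ once $\theta\le$ const, after which $\theta\to\infty$ gives $-\infty$. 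I would write the proof with this last substitution, since it makes the $-\infty$ transparent. The main obstacle, and the one place requiring care beyond citing \cite{kipnis89,varadhan04}, is precisely verifying that the Dirichlet-form prefactor is genuinely $N^{2-\delta}\cdot N^\delta = N^2$ (so that one may inflate $\theta$ by $N^{1-\delta}$), which rests on the reversibility of $\mu_{1/2}$ under the \emph{symmetric} part of the generator and on the mobility term $\mob(\eta,x)$ being a bounded local function so that the asymmetric part of the generator contributes nothing to the symmetrization; I would state this as a short lemma and otherwise invoke the classical one-block estimate verbatim for the block-level bound.
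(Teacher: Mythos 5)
Your plan is, in its final form, the same as the paper's: tilt by $\exp\bigl(\ell N^{1-\delta}\int_0^T V\,dt\bigr)$ so that the linear term contributes exactly $-a\ell$ at speed $N^{2-\delta}$, bound the exponential moment by Feynman--Kac plus a Gronwall/variational eigenvalue estimate in $L^2(\mu_{1/2})$ in which the Dirichlet form carries the prefactor $N^{\delta}\to\infty$, localize to blocks of size $k$, invoke the one-block estimate (zero Dirichlet form forces constancy on hyperplanes, then equivalence of ensembles kills $\frac1k W_k$ as $k\to\infty$), and finally send $\ell\to\infty$. The long back-and-forth in your middle paragraphs lands on precisely the paper's parametrization, so the bookkeeping is correct, even if the route to it is circuitous.

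One step is genuinely missing: the lemma is stated for a \emph{deterministic} initial condition $\eta^{\ic}$, but your eigenvalue bound lives in $L^2(\mu_{1/2})$ and therefore controls $\int \Pr_\eta(\cdots)\,d\mu_{1/2}(\eta)$, not $\Pr_{\eta^{\ic}}(\cdots)$ directly. The paper opens the proof by observing that $\mu_{1/2}$ assigns mass $2^{-N}$ to $\eta^{\ic}$, so
\begin{align*}
	N^{-2+\delta}\log \Pr_{\eta^{\ic}}(\cdots)
	\leq
	N^{-2+\delta}\log\Big(2^{N}\int \Pr_{\eta}(\cdots)\,d\mu_{1/2}(\eta)\Big)
	=
	N^{\delta-1}\log 2 + N^{-2+\delta}\log\int \Pr_{\eta}(\cdots)\,d\mu_{1/2},
\end{align*}
and the error $N^{\delta-1}\log 2$ vanishes precisely because $\delta<1$; this is the only place that hypothesis is used. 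You should add this reduction explicitly; without it the passage from $\Pr_{\eta^{\ic}}$ to the $\mu_{1/2}$-framework is unjustified. Two smaller remarks: the spectral gap of the block dynamics is not needed (the paper only uses that vanishing block Dirichlet form forces exchangeability within the block), and your sentence claiming the Dirichlet gain is ``$N^2$, not $N^{2-\delta}$'' is misleading for this hyperbolically scaled process, where the generator is $N\gen$ and the gain relative to the tilt $\ell N^{1-\delta}V$ is the factor $N^{\delta}$ — which is all that is required.
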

\begin{proof}
Fixing $ a,\delta>0 $, $ T<\infty $, a bounded $ G $,
and a deterministic initial condition $ \eta^\ic $,
throughout this proof we write $ C=C(\alpha,\delta,G,T) $ and $ V_{N,k}(G,\eta,t)=V(\eta,t) $ to simplify notations.

Set $ F(\eta) := \Pr_{\eta} (\int_0^T V_{N,k}(\eta(tN^2),t) dt \ge aN ) $.
Our aim is to bound $ F(\eta^\ic) $.
To this end, fixing the reference measure $ \mu_* := \mu_{\frac12} $,
we claim that, instead of $ F(\eta^\ic) $, it suffices to consider $ \int F(\eta)d\mu_*(\eta) $.
Indeed, the measure $ \mu_*(\eta^\ic) $ assigns weight $ 2^{-N} $ to $ \eta^\ic $, i.e., $ \Pr_{\mu_*}(\eta^\ic) = 2^{-N} $.
From this we have
$ \int F(\eta)d\mu_*(\eta) \geq 2^{-N} F(\eta^\ic) $.
Take logarithm on both sides and divide the result by $ N^{2-\delta} $. We have
\begin{align*}
	N^{-2+\delta} \log \int F(\eta) d\mu_*(\eta)
	\geq
	-N^{-1+\delta} \log 2 + N^{-2+\delta} \log F (\eta^\ic).
\end{align*}
Given this, with $ \delta<1 $,
it suffices to show that 
\begin{align}
	\label{eq:goal}
	\limsup_{k\to\infty}\limsup_{N\to\infty} N^{-2+\delta} \log \int F(\eta) d\mu_*(\eta) = -\infty.
\end{align}

To begin, fix $ \ell>0 $, and set
\begin{align}
	\label{eq:fk1}
 	u(\eta,T) := \Ex_{\eta} \Big( \exp\Big(\int_0^T N^{1-\delta}\ell V(\eta,t) dt \Big) \Big).	
\end{align}
Exponential Chebichef inequality gives
$ 
	F(\eta) 
	\leq
	e^{- a\ell N^{-2+\delta}} u(\eta,T).
$
Integrating both sides with respect to $ d\mu_* $, followed by using $ \int f d\mu_* \leq (\int f^2 d\mu_*)^{1/2}  $, we obtain
\begin{align}
	\label{eq:expchebichef}
	\int f(\eta) d\mu_*(\eta)
	\leq 
	e^{- a\ell N^{-2+\delta}} \Big(\int u^2(\eta,T) d\mu_*(\eta) \Big)^{1/2}.
\end{align}
We proceed to bound the last integral of $ u^2 $. 
Consider the solution of 
\begin{align}
\label{eq:fk}
	\partial_s u(\eta,s) = N \gen u(\eta,s) +\ell N^{1-\delta} V(\eta,T-s)u(\eta,s), \qquad 0\le s\le T,
\end{align}
with initial condition $ u(\eta,0) = 1 $. The Feynman--Kac formula asserts that $ u(\eta,T) $ is given by \eqref{eq:fk1}.

Here $ \gen f(\eta) := \sum_{x\in\Tn} \mob(\eta,x) (f(\eta^{x,x+1})-f(\eta)) $ denotes the generator of \ac{TASEP},
where $ \eta^{x,x+1} $ is the particle configuration obtained by swapping particles at sites $ x $ and $ x+1 $.
, we consider the Dirichlet form (corresponding to $ \gen $)
\begin{align}
	\label{eq:diri}
	\diri_x(f) := \int ( f(\eta^{x,x+1}) - f(\eta))^2 \; d\mu_*,
	\quad
	\diri(f) := \sum_{x\in\Tn} \diri_x(f).
\end{align}
From~\eqref{eq:fk}, we calculate
\begin{align*}
	\frac12\frac{d~}{dt}\int u(\eta,t)^2 d\mu_* 
	= 
	\int u(\eta,t) (N \gen +N^{1-\delta}\ell V(\eta,T-t)) u (\eta,t) \; d\mu_*.
\end{align*}
Further using the property
$ 	
	\int f \gen f \; d\mu_*
	=
	\diri(f),
$
we bound
\begin{align}
	\label{eq:fkbd}
	\frac12\frac{d~}{dt} \int u(\eta,t)^2 d\mu_* 
	= 
	&\Gamma\int u(\eta,t)^2 d\mu_*,
\\
	\label{eq:Gamma}
	&\Gamma := \sup_{f: \| f\|_2 =1,t\leq T} \left\{ \int N^{1-\delta}\ell  V(\eta,T-t)) f^2(\eta) d\mu_*(\eta) - N \diri(f) \right\}.
\end{align}
Here $ \| f\|_2 := \int f^2 d\mu_* $ denotes the $ L^2 $-norm with respect to the reference measure $ \mu_* $.
Applying Gronwall's inequality to~\eqref{eq:fkbd} gives
\begin{align}
	\label{eq:fkbd:}
	\int u(\eta,T)^2 d\mu_* \le e^{2T\Gamma}. 
\end{align}

We proceed to bound $ \Gamma $.
To this end, divide $ \Tn $ into blocks of length $ k $:
$ I_0 := \{x\in\Tn: |x-\frac12| \leq k\} $, and $ I_j := jk+I_0 $.
Let $ \tau_j \eta(x) := \eta(x+j) $, $ \tau_j f(\eta) := f(\tau_j\eta) $ denote the shift operator,
and set $ W_k = |\sum_{x=1}^k (\eta(x)(1-\eta(x-1))  - \Phi(\bar\eta_{k}(0)))| $.
We have
$
	V(\eta,t)
	\leq
	C \Vert G \Vert_{L^\infty(\R)}
	(
		\sum_{|j|\leq Nk^{-1}}
		W_k(\tau_{jk}\eta) + C k
	).
$
This gives
\begin{align}
\label{eq:Gamma:bd}
	N^{-2+\delta}\Gamma(t) 
	\leq
	&\sup_{f: \|f\|_2 =1} 
	\frac{1}{N}
	\hspace{-5pt}
	\sum_{|j|\leq Nk^{-1}}
	\hspace{-5pt}
	\Big(
		C \ell \int W_k(\eta) \big(\tau_{jk}f^2\big)(\eta) d\mu_*(\eta) 
	- N^{\delta} \sum_{x\in I_0}\diri_x(\tau_{jk} f) 
	\Big)
	+ C N^{-1-\delta} a k.
\end{align}
Consider the space of functions $ \calF_k := \{ g: \{0,1\}^{I_0} \to\R \} $
that depends only on the configurations within $ I_0 $.
Our plan is to localized the r.h.s.\ of \eqref{eq:Gamma:bd} onto $ \calF_k $.
To this end, decompose the product Bernoulli measure $ \mu_* = \text{Ber}(1/2)^{\otimes\Tn} $
into $ \mu_k := \text{Ber}(1/2)^{\otimes I_0} $ and $ \mu'_k := \text{Ber}(1/2)^{\otimes (\Tn\setminus I_0)} $.
Consider the average $ \til{f}_j := (\int( \tau_{jk}f^2) d\mu'_k)^{1/2} $, which is a function in $ \calF_k $.
Indeed, since $ W_k(\eta) \in \calF_k $, we have
$ \int W_k \big(\tau_{jk}f^2\big) d\mu_* = \int W_k \til{f}^2_j d\mu_k $.
Further, from the known variational formula for Dirichlet form (see~\cite[Theorem~10.2, Appendix~1]{kipnis13}),
we have $ \sum_{x\in I_0}\diri_x(\tau_{jk} f) \geq {\diri}_k(\til{f}_j) $, where, for $ g\in\calF_k $,
\begin{align*}
	\diri_k(g) := \sum_{x=0}^{k-1} \int ( g(\eta^{x,x+1}) - g(\eta))^2 \; d\mu_{k}.
\end{align*}
From these discussion we have
\begin{align*}
	N^{-2+\delta}\Gamma(t) 
	&\leq
	\sup_{g\in\calF_k, \|g\|_2 =1} 
	\Big(
		\frac{C\ell}{k} \int W_k g^2 d\mu_k
	- N^{\delta} \diri_k(g) 
	\Big)
	+ C N^{-1-\delta} a k
\\
	&\leq
	C\sup_{\begin{subarray}{}g\in\calF_k, \|g\|_2 =1 \\ \diri_k(g) \leq CaN^{-\delta}\end{subarray}}
		\frac{C\ell}{k} \int W_k g^2 d\mu_k
	+ C N^{-1-\delta} a k,
\end{align*}
where, the last inequality follows by using $ \int W_k g d\mu_k \leq C $.
Sending $ N\to\infty $ gives
\begin{align*}
	\limsup_{N\to\infty} N^{-2+\delta}\Gamma(t) 
	\leq
	C\ell
	\sup\Big\{ 
		\int \frac{1}{k}W_k g^2 d\mu_k:
		g\in\calF_k, \diri_k(g)=0, \|g\|_2 =1
	\Big\}.
\end{align*}
Now, the condition~$ \diri_k(g)=0 $ forces $ g $ to be a constant on the hyperplane $ \{\sum_{x\in I_0}\eta(x)=i\} $, $ i=0,\ldots,k $.
From this, it is standard (see, e.g., \cite[Chapter~5.4]{kipnis13}) to show that
\begin{align*}
	\limsup_{k\to\infty}
	\limsup_{N\to\infty} N^{-2+\delta}\Gamma(t) 
	\leq
	C\ell
	\limsup_{k\to\infty}
	\,
	\sup\Big\{ 
		\int \frac{1}{k}W_k g^2 d\mu_k:
		g\in\calF_k, \diri_k(g)=0, \|g\|_2 =1
	\Big\}
	=0.
\end{align*}
Combining this with \eqref{eq:expchebichef} and \eqref{eq:fkbd:},
with $ \ell >0 $ being arbitrary,
we conclude the desired result~\eqref{eq:goal}.
\end{proof}

We now proceed to define measure-valued solutions of Burgers equation.
Our definition differs slightly from the standard one (see, e.g, \cite[Section~8]{kipnis13}).
Let $ \probm(\Omega) $ denote the set of probability measures on a metric space $ \Omega $.
Fix a time horizon $ T\in(0,\infty) $ hereafter.
Consider the space 
\begin{align*}
	\young : =\{ \nu(t,\xi;d\rho): (t,x)\longmapsto\nu(t,\xi;d\rho) \in \probm[0,1] \}.
\end{align*}
of measurable maps $ [0,T]\times\T \to \probm[0,1] $.
Recall the definition of $ \Spd $ from~\eqref{eq:Spd}.
Consider
\begin{align*}
	\Spdm : = \Big\{ (h,\nu)\in\Spd\times\young : h_\xi(t,\xi) = \int_{[0,1]} \rho \nu(t,\xi;d\rho), \ \text{a.e. } (t,\xi)\in[0,T]\times\T \Big\}.
\end{align*}
We say $ (h,\nu)\in\Spdm $ is a \textbf{measure-valued} solution of~\eqref{eq:burgers} equation if
\begin{align}
	\label{eq:mburgers}
	h_t(t,\xi) = \int_{[0,1]} \Phi(\rho) \nu(t,\xi;d\rho), \ \text{a.e. } (t,\xi)\in[0,T]\times\T.
\end{align}

\begin{remark}
Every such solution of~\eqref{eq:burgers} is a measure-valued solution.
That is, for any $ h\in\Spd $ such that $ h_t \leq \Phi(h_\xi) $ a.e.,
there exists $ \nu\in\young $ such that $ (h,\nu)\in\Spdm $ and~\eqref{eq:mburgers} holds.
To see this, note that
for each given $ \kappa\in[0,\infty) $ and $ \bar\rho\in[0,1] $ such that $ \kappa \leq \Phi(\bar\rho) $,
there exists $ \nu_*(\kappa,\bar\rho;d\rho)\in\probm[0,1] $ such that
\begin{align*}
	\int_{[0,1]} \rho \nu_*(\kappa,\bar\phi;d\rho) = \bar\rho,
	\quad
	\int_{[0,1]} \Phi(\rho) \nu_*(\kappa,\bar\phi;d\rho) = \kappa.
\end{align*}
For example, one can consider measures of the type
\begin{align*}
	\pi(\rho_1,\rho_2,\bar\rho;d\rho)
	:=
	\tfrac{\rho_2-\bar\rho}{\rho_2-\rho_1} \delta_{\rho_2}(\rho)
	+
	\tfrac{\bar\rho-\rho_1}{\rho_2-\rho_1} \delta_{\rho_1}(\rho),
	\quad
	\rho_1 \leq \bar\rho \leq \rho_2 \in[0,1].
\end{align*}
Indeed, $ \int_{[0,1]} \rho \pi(\rho_1,\rho_2,\bar\rho;d\rho) = \bar\rho $,
and by varying $ \rho_1,\rho_2 $ across $ \rho_1\leq \bar\rho\leq \rho_2\in[0,1] $,
$ \int_{[0,1]} \Phi(\rho) \pi(\rho_1,\rho_2,\bar\rho;d\rho) $ exhausts all values within $ [0,\Phi(\bar\rho)] $. 
Hence there exists $ \rho_1,\rho_2 $ such $ \nu_*(\kappa,\bar\rho;d\rho):=\pi(\rho_1,\rho_2,\bar\rho;d\rho) $
satisfies the prescribed property.
\end{remark}

We endow the space $ \Spd $ with uniform norm.
For each $ \nu(t,\xi;d\rho)\in\young $, 
we view $ \frac{1}{T} \nu(t,\xi;d\rho) dtd\xi \in \probm([0,T]\times\T\times[0,1]) $ as a probability measure,
and endow $ \young $ with the weak-* topology, which is mobilizable on $ \probm([0,T]\times\T\times[0,1]) $.
The space $ \Spdm $ hence inherits a product metric from $ \Spd\times\young $.
Under this setup, for $ (h,\nu)\in\Spd $, we let 
$ \calU_r(h,\nu)\in\Spd\times\young $ denote the corresponding open ball centered at $ (h,\nu) $ with radius $ r $.
Recall the notation $ \bareta_{k}(x) $ from \eqref{eq:bareta}.
Taking into account the $ t $ dependence and scaling, 
we set $ \bareta_{k,N}(t,\xi) := \bareta_{k}(Nt,N\xi) $, which is defined for $ t\in[0,T] $ and $ \xi\in \TN $,
and linearly interpolate in $ \xi $ onto $ \T $ so that $ \bareta_{k,N}\in\young $.

\begin{proposition}
\label{prop:2-d}
Fix $ (h,\nu)\in\Spdm $ that is not a measure-valued solution of Burgers equation,
and fix $ \delta\in(0,1) $.
Then
\begin{align*}
	\limsup_{r\downarrow 0}
	\limsup_{k\to\infty} 
	\limsup_{N\to\infty}
	\,
	N^{-2+\d}
	\log  \Pr_{\eta^\ic} \Big( (\hN,\bareta_{k,N})\in \calU_r(h,\nu) \Big)
	=
	-\infty.
\end{align*}
\end{proposition}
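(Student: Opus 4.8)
The strategy is to show that the event $\{(\hN,\bareta_{k,N})\in\calU_r(h,\nu)\}$ forces the empirical profiles to approximately satisfy $h_t=\int\Phi(\rho)\,\nu(t,\xi;d\rho)$ in an integrated sense, and then use the fact that $(h,\nu)$ \emph{violates} this relation to exhibit a test function $G$ against which $\int_0^T V_{N,k}(G,\eta(tN),t)\,dt$ must be macroscopically large, contradicting the super-exponential one-block estimate of Lemma~\ref{lem:sexp1block}. First I would record the deterministic identity coming from the dynamics: for each $x$ and each $t\le t'$,
\begin{align}
	\label{eq:2d:pf:1}
	\hN(t',\tfrac{x}{N}) - \hN(t,\tfrac{x}{N})
	= \frac{1}{N}\int_{Nt}^{Nt'} (d\text{-jumps across }x)
	= \int_t^{t'} \mob(\eta(Ns),x)\,ds + M_N(t,t';x),
\end{align}
where $M_N$ is a martingale increment of size $O(N^{-1/2})$ after summing over $O(N)$ sites, hence negligible at speed $N^{2-\delta}$ with probability $1-\exp(-cN^{2-\delta})$ (a routine exponential martingale bound; one can also absorb this into the one-block estimate by testing against indicators of small space-time boxes). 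Combining \eqref{eq:2d:pf:1} with Lemma~\ref{lem:sexp1block} replaces $\mob(\eta(Ns),x)$ by $\Phi(\bareta_{k}(Ns,x))$ up to $o(N)$, again outside an event of super-exponentially small probability. Thus, on the relevant event, for every fixed smooth $G:[0,T]\times\T\to\R$,
\begin{align}
	\label{eq:2d:pf:2}
	\int_0^T\int_\T G(t,\xi)\,\partial_t \hN(t,\xi)\,dt\,d\xi
	\approx
	\int_0^T\int_\T G(t,\xi)\,\Phi(\bareta_{k,N}(t,\xi))\,dt\,d\xi,
\end{align}
with error $o(1)$ as $N\to\infty$, $k\to\infty$.

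\textbf{Passing to the limit.} The left side of \eqref{eq:2d:pf:2}: if $(\hN,\bareta_{k,N})\in\calU_r(h,\nu)$, then $\hN\to h$ uniformly (up to $r$) and $\bareta_{k,N}\to\nu$ in the Young-measure topology (up to $r$). Integrating by parts in $t$ and using $\hN\to h$, the left side of \eqref{eq:2d:pf:2} converges to $-\int\int \partial_t G\, h\,dtd\xi$ which, since $h\in\Spd$, equals $\int\int G\,h_t\,dtd\xi$ up to boundary terms that vanish if $G$ is chosen compactly supported in $(0,T)$. The right side of \eqref{eq:2d:pf:2}: because $\rho\mapsto\Phi(\rho)$ is continuous and bounded on $[0,1]$, the map $\mu\mapsto\int_0^T\int_\T\int_{[0,1]} G\,\Phi(\rho)\,\mu(dt\,d\xi\,d\rho)$ is weak-* continuous, so the right side converges to $\int_0^T\int_\T\big(\int_{[0,1]}\Phi(\rho)\,\nu(t,\xi;d\rho)\big)G(t,\xi)\,dtd\xi$. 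Hence in the limit, for \emph{every} admissible $G$,
\begin{align}
	\label{eq:2d:pf:3}
	\int_0^T\int_\T G(t,\xi)\Big( h_t(t,\xi) - \int_{[0,1]}\Phi(\rho)\,\nu(t,\xi;d\rho) \Big)\,dt\,d\xi = 0 .
\end{align}
Since $(h,\nu)$ is \emph{not} a measure-valued solution, the bracket in \eqref{eq:2d:pf:3} is not a.e.\ zero, so there exists a bounded measurable $G_0$ making the integral equal to some $2a_0>0$; by density we may take $G_0$ smooth and compactly supported in $(0,T)$ with the integral still $\ge a_0>0$. This $G_0$ is the test function fed into Lemma~\ref{lem:sexp1block}: on the event $\{(\hN,\bareta_{k,N})\in\calU_r(h,\nu)\}$, for $r$ small, $k$ large, $N$ large, the quantity $\int_0^T V_{N,k}(G_0,\eta(tN),t)\,dt$ must be $\ge \tfrac{a_0}{2}N$ (combining \eqref{eq:2d:pf:1}, the martingale bound, and the near-equality $\partial_t\hN\approx\mob$), which Lemma~\ref{lem:sexp1block} says has probability $\exp(-cN^{2-\delta})$ with $c=c(k)\to\infty$. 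Taking $\limsup$ in $N$, then $k$, then $r$ gives the claim.

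\textbf{Main obstacle.} The delicate point is making the two "$\approx$" in \eqref{eq:2d:pf:2} uniform enough: I need to convert the \emph{pointwise-in-time, summed-in-space} identity \eqref{eq:2d:pf:1} into a statement about the space-time integral $\int_0^T\int_\T G\,\partial_t\hN$, which requires either (i) applying the one-block estimate of Lemma~\ref{lem:sexp1block} directly with the given $G$ — this is exactly what $V_{N,k}$ is set up for, so it is clean — or (ii) controlling the $t$-regularity of $\hN$, which is where one must be slightly careful, since $\hN\in\Sp$ is only right-continuous in $t$ and $\partial_t\hN$ is a measure a priori. The cleanest route avoids differentiating $\hN$ altogether: test \eqref{eq:2d:pf:1} against $\partial_t G_0$, i.e.\ write $\int_0^T\partial_tG_0(t,\xi)\hN(t,\xi)\,dt = -\int_0^T G_0(t,\xi)\,d_t\hN(t,\xi) + \text{bdry}$ in the Stieltjes sense, and then $d_t\hN = \mob\,dt + dM_N$; summing over $x$ and rescaling, the $\mob$-term is handled by Lemma~\ref{lem:sexp1block}, the $M_N$-term by an exponential martingale estimate, and the left side converges because $\hN\to h$ uniformly. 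A secondary technical nuisance is the interpolation/linear-interpolation conventions for $\bareta_{k,N}$ in $\xi$ versus the block averages at scale $k$ in $\mob$; these differ by $O(k/N)$ and are harmless. I also need the density of smooth compactly-supported $G$ in the space of bounded measurable functions for the $L^1$-pairing, which is standard. Modulo these, the proof is a direct transcription of the classical Kipnis--Olla--Varadhan lower-bound argument, now at speed $N^{2-\delta}$ rather than $N$.
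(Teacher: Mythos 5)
Your argument is correct, but it is organized quite differently from the paper's. The paper runs the whole proof through a single Feynman--Kac exponential martingale $\calE=\exp(\calF(T))$ built from the test function $z$ rescaled by an arbitrary factor $\ell$: since $\Ex(\calE)=1$ and $\calE$ is deterministically of size $e^{N^{2-\delta}(\ell+o(1))}$ on $\calU_r(h,\nu)\cap\Omega(k,N)$, one reads off $\Pr\le e^{-N^{2-\delta}(\ell+o(1))}$ and sends $\ell\to\infty$ at the very end; the event $\Omega(k,N)^c$ is the only place Lemma~\ref{lem:sexp1block} enters, and no separate noise estimate is needed because the compensator of $\calE$ already absorbs the martingale part of the dynamics. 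You instead prove a deterministic \emph{containment}: outside a martingale bad event, $\calU_r(h,\nu)\subset\{\int_0^T V_{N,k}(G_0,\cdot)\,dt\ge \tfrac{a_0}{2}N\}$ for the fixed violating test function $G_0$, and then invoke Lemma~\ref{lem:sexp1block} directly, which already carries the ``$=-\infty$'' conclusion for a fixed threshold $a$ --- so you never need the $\ell\to\infty$ rescaling. The trade-off is that you must control the compensated-Poisson fluctuation separately. That estimate is indeed routine, but your stated bound ``probability $1-\exp(-cN^{2-\delta})$'' with a fixed $c$ would \emph{not} suffice: it would cap the final $\limsup$ at $-c$ rather than force it to $-\infty$. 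The correct routine computation (exponential Chebyshev with parameter $\theta\sim\epsilon N^2$, using that the total quadratic variation of $\tfrac{1}{N^2}\sum_x\int G_0\,dM_x$ is $O(N^{-2})$) gives $\exp(-c(\epsilon)N^{2})$, which is super-exponentially negligible at speed $N^{2-\delta}$; with that correction your proof closes. The remaining technical points you flag --- taking $G_0$ smooth and compactly supported in $(0,T)$ (legitimate since $h_t\in L^1$ and $\int\Phi\,\nu$ is bounded, as the paper also checks), the $O(k/N)$ mismatch between block averages and interpolation, and the fact that weak-$*$ proximity controls the pairing with the fixed continuous function $G_0\Phi$ only after $r\downarrow 0$ --- are all harmless and handled in the same spirit as the error terms $r_1,r_2,r_3$ in the paper's proof.
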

\begin{proof}
Fix $ (h,\nu)\in\Spdm $ that is not a measure-valued solution of Burgers equation.
Indeed, since $ h\in\Spd $, we have $ \int_0^T\int_{\T} |h_t|dtd\xi = \int_{\T} (h(T,\xi)-h(0,\xi)) d\xi <\infty $.
In particular, $ h_t\in L^1([0,T]\times\T) $.
Also, since $ \Phi(\rho)=\rho(1-\rho) $ is a bounded function on $ \rho\in[0,1] $,
$ \int_{[0,1]}\Phi(\rho)\nu(t,\xi;d\rho) $ is bounded.
Granted the preceding properties, and that $ (h,\nu) $ is not a measure-valued solution of Burgers equation,
there must exists $ \til{z}(t,\xi) \in C^1([0,T]\times\T) $ such that
\begin{align*}
	\int_0^T \int_\T \til{z}(t,\xi) \Big( h_t(t,x) - \int_{[0,1]} \Phi(\rho) \nu(t,\xi;d\rho) \Big) dtd\xi \, := a \in \R \setminus\set{0}.
\end{align*}
Fix arbitrary $ \ell \in\Z_{>0} $, and set $ z(t,\xi) := \frac{\ell}{a} \til{z}(t,\xi)\in C^1([0,T]\times\T) $, we then have
\begin{align}
	\label{eq:burgersell}
	\int_0^T \int_\T z(t,\xi) \Big( h_t(t,x) - \int_{[0,1]} \Phi(\rho) \nu(t,\xi;d\rho) \Big) dtd\xi = \ell.
\end{align}

Now, consider the exponential martingale: $ \calF := \exp(\calF(T)) $, where
\begin{align}
	\label{eq:expMG}
	\calF(\bart)
	:=
	\hspace{-5pt}
	\sum_{x\in\TN} \hspace{-5pt} \Big( N^{1-\d} z(t,x)\hN(t,x)\Big|_{t=0}^{t=\bar{t}}
	- 
	N \int_0^{\bart} \big( 
		N^{-\d}z_t(t,x)\hN(t,x) 
		+
 		(e^{N^{-\d} z(t,x)}-1) \mob_N(t,x)
 	\big)dt\Big),
\end{align}
where $ \mob_N(t,x) := \mob(\eta(Nt),Nx) $.
To extra useful information from this exponential martingale,
we next derive a few approximations of terms within~\eqref{eq:expMG}.
For convenience of notation, hereafter we use $ C<\infty $
to denote generic universal constants, 
which may change from line to line but are in particular independent of $ r,\ell,N $ and $ t\in[0,T], \xi\in\T $.
 
First, through Taylor-expansion we have
$ (e^{N^{-\d} z(t,x)}-1) \mob_N(t,x) = N^{-\d}z(t,x)\mob_N(t,x) + N^{-2\d}r_1(t,x) $,
for some remainder term $ r_1 $ such that
$ |r_1(t,x)| \leq c\,\Vert z { \Vert_{L^\infty}^2}  $.
Set
\begin{align*}
	\Omega(k,N) := \Big\{ \Big|\int_0^T V_{N,k}(z,\eta(tN^2),t) dt\Big| \leq N \Big\},
\end{align*}
we then have
\begin{align}
\label{eq:1bapprox}
	\int_0^T \sum_{x\in\TN} (e^{N^{-\d} z(t,x)}-1) \mob_N(t,x) dt
	=
	\int_0^T \sum_{x\in\TN}z(t,x) \Phi(\bareta_{k,N}(t,x) ) dt
	+
	r_2(t,x),
\end{align}
for some remainder term $ r_2 $ such that $ |r_2(t,x)| \leq N^{1-\delta} + c\, N^{1-2\delta}\Vert z \Vert_{L^\infty}^2 $.
By definition, on $ (\hN,\bareta_{k,N})\in\calU_r(h,\nu) $ we have
\begin{align}
	\label{eq:happrox}
	&
	\sup_{[0,T]\times\T}|\hN - h| < r,
\\
	\label{eq:barlam:approx}
	&
	\Big|
		\int_0^T \frac 1N \sum_{x\in\TN}z(t,x) \Phi(\bareta_{k,N}(t,x) )
		- 
		\int_0^T\int_{\T} z(t,\xi) dtd\xi\int_{[0,1]} \Phi(\rho) \nu(t,\xi;d\rho)
	\Big|
	< r. 
\end{align}
Using~\eqref{eq:1bapprox}--\eqref{eq:barlam:approx} to approximate the corresponding terms gives
\begin{align}
	\label{eq:ibp1}
	\calE &\ind_{\calU_r(h,\nu)\cap\Omega(k,N)}
	=
	\exp\Bigg(
		N^{2-\d} \int_{\T} \Big( z(t,\xi)h(t,\xi)d\xi\Big|_{t=0}^{t=T}
		- 
		\int_0^T \Big( z_t(t,\xi)h(t,\xi) dt \Big) d\xi
\\	
	\label{eq:ibp2}
		&-N^{2-\d}\int_0^T\int_\T z(t,\xi) \int_{[0,1]} \Phi(\rho) \nu(t,\xi;d\rho) \Big) dtd\xi
		+
		r_3
	\Bigg),
\end{align}
for some remainder term $ r_3 $ such that
\begin{align*}
	|r_3(t,x)| 
	\leq  
	c\, r N^{2-\delta} \, \big( \Vert z \Vert_{L^\infty}+\Vert z_t \Vert_{L^\infty} \big)
	+
	N^{2-\delta} 
	+ 
	c\, N^{2-2\delta}\Vert z \Vert_{L^\infty}^2.
\end{align*}
Integrate by part in $ t $ in~\eqref{eq:ibp1}, and combine the result with \eqref{eq:burgersell}.
This gives
\begin{align*}
	\calE \ind_{\calU_r(h,\nu)\cap\Omega(k,N)}
	=
	\exp\big(
		-N^{2-\d} \ell+r_3.
	\big).
\end{align*}
Given that $ \Ex(\calE)=1 $, we now have
\begin{align}
\label{eq:ibp3}
\begin{split}	
	N^{-2+\d}\log\Big( \Pr_{\eta^\ic}&\Big( \big\{ (\hN,\bareta_{k,N}) \in\calU_r(h,\nu) \big\} \cap\Omega(k,N) \Big) \Big)
\\
	&\leq
	-\ell+ c\,\big( r\Vert z \Vert_{L^\infty}+r\Vert z_t \Vert_{L^\infty}+N^{-\d}\Vert z \Vert^2_{L^\infty} \big) +1 .
\end{split}
\end{align}
Lemma~\ref{lem:sexp1block} asserts that $ \limsup\limits_{k\to\infty} \limsup\limits_{N\to\infty} \Pr_{\eta^\ic}(\Omega(k,N))=-\infty $.
Using this, and letting $ N\to\infty $, $ k\to\infty $, and $ r\downarrow 0 $ in~\eqref{eq:ibp3} in order, we now have 
\begin{align*}
	\limsup_{r\downarrow 0}
	\limsup_{k\to\infty} 
	\limsup_{N\to\infty}
	\,
	N^{-2+\d}
	\log \Big( \Pr_{\eta^\ic}\big( (\hN,\bareta_{k,N}) \in\calU_r(h,\nu) \big) \Big) 
	\leq
	-\ell + 1.
\end{align*}
With $ \ell\in\Z_{>0} $ being arbitrary, we conclude the desired result.
\end{proof}

\bibliographystyle{alphaabbr}
\bibliography{tasepLDP}
\end{document}